\numberwithin{equation}{section}
\theoremstyle{plain}
\newtheorem{thm}{Theorem}
\numberwithin{thm}{section}
\newtheorem{prop}{Proposition}
\numberwithin{prop}{section}
\newtheorem{lm}{Lemma}
\numberwithin{lm}{section}
\newtheorem{term}{Terminology}
\numberwithin{term}{section}
\numberwithin{claim}{section}
\newtheorem{cor}{Corollary} 
\numberwithin{cor}{section}
\numberwithin{problem}{section}
\theoremstyle{definition} 
\newtheorem{definition}{Definition} 
\numberwithin{definition}{section}
\newtheorem{example}{Example} 
\numberwithin{example}{section}
\numberwithin{conjecture}{section}
\numberwithin{condition}{section}
\theoremstyle{remark} 
\newtheorem{remark}{Remark}
\numberwithin{remark}{section}
\numberwithin{remark}{section}
\def\R{{\mathbb R}}
\def\N{{\mathbb N}}
\title{
\textsc{
Stochastic Integral Equations   for  Walsh Semimartingales} \thanks{~ We are grateful to Mykhaylo  Shkolnikov  for prompting us to think about angular dependence, and to Johannes  Ruf and Cameron Bruggeman for their critical readings and many suggestions.} 
} 
\author{TOMOYUKI ICHIBA  
\thanks{~
Department of Statistics and Applied Probability, South Hall, University of California, Santa Barbara, CA 93106, USA (E-mail:    {\it ichiba@pstat.ucsb.edu}). Research supported in part by the National Science Foundation under grant NSF-DMS-13-13373.
          } 
\and IOANNIS KARATZAS
\thanks{~
Department of Mathematics,  Columbia University, New York, NY 10027 (E-mail: {\it ik@math.columbia.edu}), and       \textsc{Intech} Investment Management,  One Palmer Square, Suite 441, Princeton, NJ 08542, USA.  Research   supported in part by  the National Science Foundation under  grant   NSF-DMS-14-05210.
	}  
\and VILMOS PROKAJ 
\thanks{~ Department of Probability Theory and Statistics, E\"otv\"os Lor\'and University, 1117 Budapest, P\'azm\'any P\'eter s\'et\'any 1/C, Hungary (Email: {\it prokaj@cs.elte.hu}), and Department of Statistics \& Applied Probability, South Hall, University of California, Santa Barbara, CA 93106, USA.
}
\and MINGHAN YAN
\thanks{~
Department of Mathematics,  Columbia University, New York, NY 10027 (E-mail: {\it  my2379@math.columbia.edu}).
	} 
	\date{October 19, 2015} 
}	
\begin{document}
\maketitle

\begin{abstract}  
\noindent \small  
We construct   planar  semimartingales  that include   the  Walsh  Brownian motion as a  special case, and derive   Harrison-Shepp-type  equations and a change-of-variable formula in the spirit of Freidlin-Sheu for these so-called ``Walsh  semimartingales". We examine the solvability of the resulting system of stochastic integral equations. In appropriate Markovian settings  we study two types of connections  to   martingale problems,   questions of uniqueness in distribution for such processes, and a few examples.    
\end{abstract}

\noindent {\small {\it Key Words and Phrases:} Skew and Walsh Brownian motions, spider and Walsh semimartingales, Skorokhod reflection, planar skew unfolding, Harrison-Shepp equations, Freidlin-Sheu formula, martingale problems, local time. }

\noindent  {\small {\it AMS 2000 Subject Classifications:}  Primary, 60G42; secondary, 60H10.}

\vspace{0.8mm}

\section{Introduction and Summary}

We consider the following   questions: {\it What is a two-dimensional analogue of the   skew Brownian motion on the real line? If such a process  exists, what is the corresponding stochastic   integral equation that realizes its  construction   and describes its dynamics? Are there more general  planar semimartingales with similar skew-unfolding-type structure?}

In order to answer   the first question, \textsc{Walsh} (1978) introduced   a singular planar diffusion with these properties. This diffusion   is known now as the  \textsc{Walsh}  Brownian motion. In its description by \textsc{Barlow, Pitman \& Yor} (1989), ``started at a point in the plane away from the origin, this process moves like a standard Brownian motion along the ray joining the starting point and the origin ${\bm 0}$, until it reaches ${\bm 0}$. Then it is kicked away  from ${\bm 0}$ by an entrance law that makes the radial part of the diffusion a reflecting Brownian motion, while randomizing the angular part". The  \textsc{Walsh}  Brownian motion has been generalized to the so-called spider martingales, and has been studied by several researchers (among them 
\textsc{Barlow, Pitman \& Yor} (1989), 
\textsc{Tsirel'son} (1997),  
\textsc{Watanabe} (1999),
\textsc{Evans \& Sowers} (2003),   
 \textsc{Picard} (2005), 
\textsc{Freidlin \& Sheu} (2000), 
\textsc{Mansuy \& Yor} (2006), 
\textsc{Hajri} (2011), 
\textsc{Fitzsimmons \& Kuter} (2014), 
\textsc{Hajri \& Touhami} (2014), 
\textsc{Chen \& Fukushima} (2015)). In this paper we   construct a family of planar  semimartingales that includes the spider martingales and the \textsc{Walsh} Brownian motion as   special cases.

 There are several constructions of \textsc{Walsh}'s Brownian motions in terms of resolvents, infinitesimal generators, semigroups, and excursion theory. Our approach in this paper can be thought of as a   bridge  between excursion theory and stochastic  integral equations,  via the folding and unfolding of semimartingales. It is also  an attempt to study higher-dimensional analogues of the skew-\textsc{Tanaka} equation, and the semimartingale properties of planar processes that hit points.   
 
 \medskip
 \noindent
 {\it Preview:}  We   provide in Section \ref{sec: Results} a system of stochastic   equations (\ref{eq: skewTanaka}) that these planar  semimartingales satisfy.  This system  is a two-dimensional  analogue of the equation introduced by \textsc{Harrison \& Shepp} (1981) for the skew Brownian motion, and     answers  the  second and third questions stated above.  Based on this  integral-equation description,  we develop in Sections \ref{disc} and \ref{FS} a stochastic calculus,     and establish a \textsc{Freidlin-Sheu} type change-of-variable formula (\ref{GenFS1}),  for such \textsc{Walsh} semimartingales. We also develop a condition (\ref{Gammaless1}) closely analogous to that of \textsc{Harrison \& Shepp} (1981), for the solvability of our   system of equations (\ref{eq: skewTanaka}).  
 In Section \ref{Pf} we   examine by the method of \textsc{Prokaj} (2009) this two-dimensional \textsc{Harrison-Shepp} equation driven by a continuous semimartingale, as in \textsc{Ichiba \& Karatzas} (2014).  
 
 Pathwise uniqueness fails for the system of   (\ref{eq: skewTanaka}). For the \textsc{It\^o} diffusion case, we recast this system  in the form (\ref{eq: SDEMP}), and study its connections to an appropriate martingale problem  in Sections \ref{sec: MP} and \ref{sec: AD}. The well-posedness of this martingale problem, in the form of  conditions under which a weak solution exists  for the  system of (\ref{eq: SDEMP}) and is unique in distribution, is based on the stochastic calculus of Section \ref{FS}.

The \textsc{Walsh} Brownian motion constructed via the \textsc{Feller} semigroup, is then shown in Section \ref{subsec: WBM} to be a special case of our ``\textsc{Walsh} diffusion" framework. Another type of connection to   martingale problems is established in Section \ref{SMP}, allowing us to show  that \textsc{Walsh} diffusions are the only time-homogeneous and strongly  Markovian solutions of the system (\ref{eq: SDEMP}).  
 A notable difference from the \textsc{Harrison-Shepp} equation is also given there;  whereas in Section \ref{Ex}    we  study additional examples.  Some auxiliary results and proofs are provided in the  appendices, Sections    \ref{sec: App_Lem} and \ref{sec: App}.

\section{The Setting and Results} \label{sec: Results}

On a filtered probability space $\,( \widetilde{\Omega}, \widetilde{\mathcal F}, \widetilde{\mathbb P}) \,$, $\, \widetilde{\mathbb F}= \big\{ \widetilde{\mathcal F}({t})\big\}_{0 \le t < \infty} \,$ that satisfies the ``usual conditions" of right-continuity and augmentation by null sets,  we consider a real-valued, continuous semimartingale 
\begin{equation} 
\label{eq: U}
\,U(t) \, =\,  M(t) + V(t) \,, \quad \,\,0 \le t < \infty\,. 
\end{equation}
Here $\,M(\cdot)\,$ is a continuous local martingale and $\,V(\cdot)\,$ has finite variation on compact intervals;   we   assume that the initial position $\, U(0)\ge 0 \,$ is a given real number. We denote  by 
\begin{equation}
 \label{eq: S}
S( t) \, :=\, U  ( t) +  \Lambda ( t)\,, \qquad \text{where} \quad 
 \Lambda ( t)= \max_{0 \le s \le \, t} \big( - U(s)\big)^{+}\,, \qquad 0 \le t < \infty\,,
\end{equation}
the \textsc{Skorokhod} reflection (or ``folding") of $\,U(\cdot)\,$;  see, for instance,  section 3.6 in \textsc{Karatzas \& Shreve} (1991) for   relevant theory. In particular, the continuous, increasing process $\, \Lambda (\cdot)\,$ is flat off the zero set 
\begin{equation} 
\label{eq: Z_{S}}
 \mathfrak Z  \, :=\, \big\{ 0 \le t < \infty: \,S(t)=  0 \big\} \,.   
 \end{equation}
We shall impose the ``non-stickiness" condition   
\begin{equation}
 \label{eq: SC}
\text{Leb} ( \mathfrak Z) \,\equiv \,\int^{\infty}_{0} {\bf 1}_{\{ S(t) \, =\,  0\}}\, {\mathrm d} t \, =\,  0 \,.
\end{equation}

Let us recall the   {\it (right) local time} $\,L^{\Xi}(\cdot)\,$ accumulated at the origin during the time-interval $[0,T]$ by  a generic one-dimensional continuous semimartingale $\, \Xi (\cdot)\,$,  namely 
\begin{equation} 
\label{eq: LT}
L^{\Xi}(T) \, :=\, \lim_{\varepsilon \downarrow 0} \frac{1}{\, 2\,\varepsilon\, }\int^{T}_{0} {\bf 1}_{\{ 0 \le \Xi(t) < \varepsilon \}} \,  {\mathrm d} \langle \Xi   \rangle (t)\,, \qquad 0 \le T < \infty  \,.  
\end{equation}
From (\ref{eq: S}) and by analogy with Lemma 3.1.5 in \textsc{Picard} (2005),   we have   the \textsc{It\^o-Tanaka-}type equation 
\begin{equation}
\label{Pic}
S(\cdot) \,=\, S(0) + \int_0^{\, \cdot} \mathbf{ 1}_{ \{ S(t) >0 \}} \, \mathrm{d} U (t) + L^S (\cdot)\,, \qquad S(0) = U(0) \ge 0\,.
\end{equation}
On the other hand,  the theory of semimartingale local time (e.g., section 3.7 in \textsc{Karatzas \& Shreve} (1991)) gives  the properties    
 \begin{equation}
\label{A.1}
\int^{\infty}_{0} {\bf 1}_{\{S(t) \, =\,  0\}} {\mathrm d} \langle S \rangle (t) \, =\,  0 \, ,\qquad L^{S}(\cdot) \, =\,  \int^{\cdot}_{0} {\bf 1}_{\{S(t) \, =\,  0\}} {\mathrm d}  S  (t) \, .  
\end{equation}

 \subsection{The Main Result} 
 \label{sec: Main}

  Theorem \ref{prop: skewTanaka} below is the first  key result of this paper. It produces a planar ``skew-unfolding" $X(\cdot)  =  (X_{1}(\cdot), X_{2}(\cdot))^{\prime}\,$  for the  folding  $\,S(\cdot)\,$ of the given continuous semimartingale $\, U(\cdot)\,$.  This planar ``skew-unfolded" process has radial part   $\, || X(\cdot)|| = S(\cdot)\,$, and its motion away from the origin follows the one-dimensional dynamics of $\, S(\cdot)\,$  along rays emanating from the origin. Once at the origin, the process $X(\cdot)$ chooses the next ray for  its   voyage $($according to the dynamics of $\, S(\cdot))$     independently of its past history and in a random fashion, according to a given probability measure   on the collection of angles in $\, [0, 2 \pi)\,$. Whenever $\,S(\cdot)\,$ is a reflecting Brownian motion or, more generally, a reflecting diffusion, these one-dimensional dynamics away from the origin are of course diffusive.

In order to describe this skew-unfolding with some detail and rigor, we shall need appropriate notation. Let us  consider the unit circumference  $$\, \mathfrak S   :=   \big\{(z_{1}, z_{2})^{\prime} : z_{1}^{2} + z_{2}^{2} = 1\big\}\,.$$ Here and throughout the paper, vectors are columns and the superscript $\,^{\prime}\,$ denotes transposition.  For every point $\,x \, :=\, (x_{1}, x_{2})^{\prime} \in \mathbb R^{2}\,$ we  introduce the mapping  $\, \mathfrak f     =  \big(\mathfrak f_{1} , \mathfrak f_{2}  \big)^{\prime} :\mathbb R^{2} \rightarrow \mathfrak S \cup \{{\bm 0}\}\,$   via   $\, \mathfrak f ({\bm 0})   :=   {\bm 0}\,$ and 
\begin{equation}
 \label{eq: f} 
 \mathfrak f(x) \, :=\,  \frac{ x}{\, \lVert x \rVert\, } \, =\,  \big( \cos( \text{arg}(x)) \, , \,  \sin (\text{arg}(x)) \, \big)^{\prime} \, ; \qquad x \in 
 E   :=  	\mathbb R^{2} \setminus \{ {\bm 0} \} 
\end{equation} 
with the notation $\,{\bm 0}:= (0,0)'\,$ and  with $\, \text{arg}(x) \in [0, 2\pi) \,$ denoting the {\it argument of the vector} $\,x \in \R^2 \setminus \{ \bm 0 \}\,$ in its polar co\"ordinates. 
We   fix   a probability measure $\, {\bm \mu} \,$ on the collection $\, \mathcal B(\mathfrak S)\,$ of Borel subsets of the unit circumference $\,  \mathfrak S \,$, and consider also its expression 
\begin{equation}
\label{nu_mu}
\,  {\bm \nu}  ( {\mathrm d} \theta )  \, :=\,  {\bm \mu} ( {\mathrm d} z )\,, \quad z = \big(\cos (\theta), \, \sin (\theta)\big)' \in \mathfrak S\, , \,\,\,\theta \in [0, 2\pi) 
\end{equation}
in polar co\"ordinates. We introduce the real constants    
\begin{equation}
\label{alpha}
\alpha_{i}^{(\pm)} \, :=\, \int_{\mathfrak S} \big({\mathfrak f}_{i}(z)\big)^{\pm} {\bm \mu } ({\mathrm d} z) \, , \quad \gamma_i \,:=\, \alpha_{i}^{(+)} - \alpha_{i}^{(-)}\,= \int_{\mathfrak S}  {\mathfrak f}_{i}(z) \,  {\bm \mu } ({\mathrm d} z) \,, \quad i \, =\, 1, 2  
\end{equation}
as well as  the vector on the unit disc 
\begin{equation}
\label{gamma}
 {\bm \gamma} \, := \, \big( \gamma_{1}, \gamma_{2}\big)^{\prime}  
 \,=\, \left(  \int_{0}^{2\pi} \cos (\theta) \,{\bm \nu}({\mathrm d} \theta )\,, \int_{0}^{2\pi} \sin (\theta) \, {\bm \nu}({\mathrm d} \theta )\right)^{\prime}.
\end{equation}
 
 Finally, we fix     a   vector $\, {\rm x} \, :=\, ({\rm x}_{1}, {\rm x}_{2})' \in \mathbb R^{2}\,$ with $\,{\rm x}_i = \mathfrak{ f}_i (\mathrm{x})\, S(0)\,, \,\, i=1, 2\,$.
 
\begin{thm} 
\label{prop: skewTanaka} {\bf Construction of \textsc{Walsh} Semimartingales:} Consider the \textsc{Skorokhod} reflection $\,S(\cdot)\,$ of the continuous semimartingale $ U(\cdot)\,$ as in (\ref{eq: U}) --    
(\ref{eq: SC}), and fix a vector $\, \mathrm x = (\mathrm{x}_{1}, \mathrm{x}_{2})^{\prime} \in \mathbb R^{2}  \,$   as above.

On a suitable enlargement $\, ( {\Omega}, {\mathcal F}, \mathbb P) \,$, $\, \mathbb F := \{\mathcal F(t)\}_{0 \le t < \infty}\,$ of the filtered probability space $\, ( \widetilde{\Omega}, \widetilde{\mathcal F} , \widetilde{\mathbb P})\,$, $\, \widetilde{\mathbb F}\,$ with a measure preserving map $\,{\bm \pi} : {\Omega} \to \widetilde{\Omega}\,$,  there exists a planar    
continuous  semimartingale $\,X(\cdot)  :=  (X_{1}(\cdot), X_{2}(\cdot))^{\prime}\,$       which solves   
the system of stochastic  integral equations
\begin{equation} 
\label{eq: skewTanaka} 
X_{i}(T) \, =\,  {\rm x}_{i} + \int^{T}_{0} \mathfrak f_{i}\big(X(t)\big) \,{\mathrm d} S(t) +      \Big( \alpha_{i}^{(+)}-  \alpha_{i}^{(-)} \Big) 
L^{ S }(T) \, , \qquad 0 \le T < \infty  
\end{equation}
for $\, i=1, 2\,$  and  whose radial part is 
\begin{equation} 
\label{length} 
\Vert X(\cdot) \Vert\,:=\,\sqrt{X_{1}^{2}(\cdot) + X_{2}^{2}(\cdot)\,} \, =\,  S (\cdot) \, .
\end{equation}

This continuous  semimartingale $\,X(\cdot)  :=  (X_{1}(\cdot), X_{2}(\cdot))^{\prime}\,$ has the following properties: 

\smallskip
\noindent
{\bf (i)} With $\,\mathrm{x} \in \R^2 \setminus \{ \mathbf{ 0}\}\,$  and $\, 
\,\tau(s)   :=  \inf \big\{ t > s : X(t) \, =\, {\bm 0} \big\} \,$ 
 the first time it reaches the origin after time $\,s \ge 0\,, $    this process $\, X(\cdot)\,$ satisfies for every $\, s \in (0, \infty) \,$, $\, B \in \mathcal B ( \mathfrak S)\,$ and for Lebesgue almost every $\, t \in (0, \infty)\,$   the properties 
 \begin{equation} 
\label{eq: DIST3}
 \mathfrak f(X(s))\, = \,{\mathfrak f}(\mathrm x)\, , \quad \mathbb P-\text{a.e. on}~~\{ \tau (0) > s\}\,,
 \end{equation}
\begin{equation} 
\label{eq: DIST}
\mathbb P \big(\, \mathfrak f(X(\tau (s) + t))   \in   B \,\big| \, {{\cal F}} ^{X}(\tau(s)) \big) \, =\,  {\bm \mu}(B)\, , \quad \mathbb P-\text{a.e. on}~~\{ \tau (s) < \infty\}\,. 
\end{equation}
 
\medskip
 \noindent
      {\bf (ii)} \,
The local times at the origin of the component processes $\, X_i (\cdot)\,$ are given as   
\begin{equation} 
\label{eq: LTXS}
 \,L^{X_{i}}(\cdot)  
  \, \equiv \, \alpha_{i}^{(+)}\, L^{\,|| X ||}(\cdot)  \,, \qquad i=1, 2
\end{equation}    
and are thus flat off the random set $\,\mathfrak Z\,$ 
 in (\ref{eq: Z_{S}}) which has    zero \textsc{Lebesgue} measure by (\ref{eq: SC}); in particular, 
\begin{equation} 
\label{eq: zeroLeb}
\int^{\infty}_{0} {\bf 1}_{\{X(t) \, =\,  {\bm 0} \}}  \,  {\mathrm d} t \, \equiv\,  0 \,.  
\end{equation}
      {\bf (iii)} \,Finally, for every $\, A \in \mathcal B ([0, 2\pi)) ,$ the semimartingale local time at the origin of the ``thinned" process  
             $\,R^{A}(\cdot) :=   \lVert X (\cdot) \rVert \cdot {\bf 1}_A \big(\text{arg}\big(X(\cdot)\big) \big)  \,$         is given by 
\begin{equation}
\label{eq: localDist}
L^{R^{A}}(\cdot) \, \equiv \, {\bm \nu} (A) \, L^{ \,\lVert X \rVert}(\cdot) \,.  
\end{equation}
\end{thm}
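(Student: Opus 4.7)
The plan is to construct $X$ by choosing an independent random angle for each excursion of $S$ away from the origin, and then to verify the integral equation (\ref{eq: skewTanaka}) together with properties (i)--(iii).

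\textbf{Construction.} Enumerate the connected components of $[0,\infty)\setminus\mathfrak Z$ as a countable collection of open intervals $(g_n,d_n)_{n\in\mathbb N}$ in an $\widetilde{\mathcal F}$-measurable manner, identifying the initial component (starting at $0$) when $S(0)>0$. On an enlargement $(\Omega,\mathcal F,\mathbb P)$, carry an iid sequence $(\Xi_n)_{n\geq 1}$ of $\bm\mu$-distributed random unit vectors, independent of $\widetilde{\mathcal F}_\infty$, and extend $\mathbb F$ so that $\Xi_n$ becomes measurable at the start time $g_n$. Define an angular process $\Theta$ by setting $\Theta\equiv\mathfrak f(\mathrm x)$ on the initial excursion, $\Theta\equiv\Xi_n$ on each non-initial $(g_n,d_n)$, and $\Theta\equiv\bm 0$ on $\mathfrak Z$. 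The process $X(t):=S(t)\,\Theta(t)$ is then continuous (since $S$ and $X$ vanish together at the boundary points of excursions, where $\Theta$ jumps), satisfies $X(0)=\mathrm x$, and obeys (\ref{length}).

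\textbf{Verification of (\ref{eq: skewTanaka}).} Set
\[Z_i(T)\,:=\,X_i(T)-\mathrm x_i-\int_0^T \mathfrak f_i(X(t))\,\mathrm dS(t)-\gamma_i L^S(T),\]
so that it suffices to prove $Z_i\equiv 0$. Using (\ref{Pic}), $\int_0^T \mathfrak f_i(X)\,\mathrm dL^S=0$ (as $\mathfrak f_i(\bm 0)=0$), so $\mathrm dS$ may be replaced by $\mathbf 1_{\{S>0\}}\mathrm dU$ in the integral above. On each excursion $(g_n,d_n)$, the product $X_i=\Theta_{n,i}\,S$ satisfies $\mathrm dX_i=\Theta_{n,i}\,\mathrm dS=\mathfrak f_i(X)\,\mathrm dS$, so the continuous martingale parts of $X_i$ and of $\int_0^\cdot\mathfrak f_i(X)\mathrm dS$ agree; consequently $\langle Z_i\rangle\equiv 0$ and $Z_i$ is a continuous process of bounded variation. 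Vanishing of $Z_i$ is then obtained via an excursion-theoretic averaging argument: the centered increments $\Xi_{n,i}-\gamma_i$ aggregated against $L^S$ cancel by independence and the law of large numbers, while the aggregated mean $\gamma_i$ produces precisely the correction $\gamma_i L^S(T)$. This correction accounts for the fact that $\int\mathfrak f_i(X)\mathrm dS$ is not countably additive over the family of excursion intervals in a pathwise sense; the non-stickiness hypothesis (\ref{eq: SC}) is essential here, ruling out any intervals contained in $\mathfrak Z$.

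\textbf{Properties (i)--(iii).} Property (i) is immediate from the construction: on $\{\tau(s)<\infty\}$, the next excursion is some $(g_n,d_n)$ with $g_n\geq\tau(s)$, hence $\mathfrak f(X(\tau(s)+t))=\Xi_n$ is independent of $\mathcal F^X(\tau(s))$ with distribution $\bm\mu$, giving (\ref{eq: DIST}); similarly (\ref{eq: DIST3}) holds because on $\{\tau(0)>s\}$ the process remains in the initial excursion with deterministic angle $\mathfrak f(\mathrm x)$. For (ii), apply the Tanaka--It\^o formula to $X_i^+$ and $X_i^-$: each excursion with $\Xi_{n,i}>0$ contributes $\Xi_{n,i}^+$ times the associated excursion-local-time of $\|X\|$ to $L^{X_i}$, and averaging over the iid $(\Xi_n)$ yields the coefficient $\alpha_i^{(+)}$, giving (\ref{eq: LTXS}); the flatness of $L^{X_i}$ off $\mathfrak Z$ and identity (\ref{eq: zeroLeb}) are inherited from (\ref{eq: SC}) through $\|X\|=S$. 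Property (iii) is a parallel computation in which the indicator $\mathbf 1_A(\text{arg}(X))$ replaces the sign of $X_i$, producing $\bm\nu(A)$ in place of $\alpha_i^{(+)}$.

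\textbf{Main obstacle.} The hardest step is identifying the local-time term $\gamma_i L^S$ in (\ref{eq: skewTanaka}): the stochastic integral $\int\mathfrak f_i(X)\mathrm dS$ cannot be additively decomposed over the countable family of excursions in a pathwise sense, and the ``missing'' contribution is recovered only through a careful probabilistic averaging of the independently chosen excursion angles against the local-time growth of $S$.
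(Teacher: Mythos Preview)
Your construction and overall strategy coincide with the paper's: assign an i.i.d.\ $\bm\mu$-distributed direction to each excursion of $S$, set $X=\Theta\,S$, and recover the local-time term by a law-of-large-numbers averaging of the excursion angles against the growth of $L^S$. However, the crucial technical step is not carried out, and the logical order in your ``Verification of (\ref{eq: skewTanaka})'' is inverted.

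You write that ``the continuous martingale parts of $X_i$ and of $\int_0^\cdot\mathfrak f_i(X)\,\mathrm dS$ agree; consequently $\langle Z_i\rangle\equiv 0$.'' But at this stage $X_i=\Theta_i\,S$ has not been shown to be a semimartingale: $\Theta$ jumps at the (typically uncountable, nowhere-dense) endpoints of excursion intervals, so the product rule does not apply directly, and speaking of the ``martingale part of $X_i$'' or of $\langle Z_i\rangle$ is premature. The paper resolves this by introducing, for each $\varepsilon>0$, the downcrossing stopping times $\tau^\varepsilon_m$ and a piecewise-constant $Z^\varepsilon$ that is genuinely of finite variation (finitely many jumps on compacts). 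Then $X^\varepsilon:=Z^\varepsilon S$ \emph{is} a semimartingale by the ordinary product rule, and the term $\int S\,\mathrm dZ^\varepsilon$ equals $\varepsilon\sum_{j\le N(T,\varepsilon)}\bm\xi_{\ell_j}+O(\varepsilon)$. The limit $\varepsilon\downarrow 0$ is handled by combining $\varepsilon\,N(T,\varepsilon)\to L^S(T)$ (the downcrossing representation of local time) with the SLLN for the i.i.d.\ $\bm\xi_{\ell_j}$, yielding $\mathbb E[\bm\xi_1]\,L^S(T)=\bm\gamma\,L^S(T)$. This single passage to the limit simultaneously establishes the semimartingale property of $X$ and the decomposition (\ref{eq: skewTanaka}); your proposal names this mechanism (``averaging against local-time growth'') but does not implement it, and without the $\varepsilon$-approximation the argument does not close.

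A smaller point: your treatment of (\ref{eq: DIST}) is too quick. At time $\tau(s)+t$ the process is not in ``the next'' excursion but in whichever excursion contains that time; the relevant index is a random, $\widetilde{\mathcal F}(\infty)$-measurable $\kappa_0(s,t)$, and the conclusion uses that $\{\bm\xi_k\}$ is independent of $\widetilde{\mathcal F}(\infty)\vee\mathcal F^Z(\tau(s))$. This also explains the ``Lebesgue a.e.\ $t$'' qualifier, since $\tau(s)+t$ may fall in $\mathfrak Z$ for a null set of times.
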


\begin{term}
{\rm 
The process $\, X(\cdot)\,$ constructed in the above Theorem can be thought of as a   planar skew-unfolding  of the \textsc{Skorokhod} reflection $\,S(\cdot)\,$ of the driving continuous semimartingale $\,U(\cdot)\,;$ we shall  call   it  a}      \textsc{Walsh} semimartingale with ``driver"  $\,U(\cdot)$ $(${\rm and} ``folded driver"  $ S(\cdot)).$   {\rm We shall call $\, X(\cdot)\,$ a}   \textsc{Walsh} diffusion, {\rm whenever the folded driver $\,S (\cdot) \,$ is an \textsc{It\^o} diffusion with  reflection at the origin. 

 With the family of increasing processes $\, L^{R^{A}}(\cdot)\,$, $\, A \in \mathcal B \big([0, 2\pi)\big)\,$ in (\ref{eq: localDist}), we shall find it conceptually convenient to associate a random product measure $\,{\bm \Lambda}^{X}  ( {\mathrm d} t , {\mathrm d} \theta)  \,$ on $\, [0, \infty) \times [0, 2\pi)\,$ via} 
\begin{equation} 
\label{eq: RM} 
{\bm \Lambda}^{X} \big([0, t) \times A\big) \, :=\, L^{R^{A}}(t) \, =\, {\bm \nu} (A) \, L^{ \lVert X \rVert}(t) \, ; \qquad 0 \le t < \infty \, , \quad A \in \mathcal B \big([0, 2\pi)\big)\, . 
\end{equation}
\end{term}

\section{Discussion and Ramifications}
 \label{disc}

An intuitive interpretation of the stochastic integral equations (\ref{eq: skewTanaka}) with the property (\ref{length}) is as follows: We  first ``fold" the driving semimartingale $\,U(\cdot)\,$ to get its \textsc{Skorokhod} reflection $ S(\cdot) $ as in (\ref{eq: S}) and then,  starting  from the point $\, \mathrm x = (\mathrm{x}_{1}, \mathrm{x}_{2})^{\prime} \in \mathbb R^{2} \setminus \{ {\bm 0} \}\,$ with $\,{\rm x}_i = \mathfrak{ f}_i (\mathrm{x})\, S(0)\,, \,\, i=1, 2\,$  and up until the time $\, \tau (0) \,$ of Theorem \ref{prop: skewTanaka}(i), we   run  the planar process  $\,X (\cdot) = ( X_{1}(\cdot),  \, X_{2}(\cdot))'\,$ according to the     integral equation  
\begin{equation} 
\label{eq: skewTanaka_flat}
X_i (\cdot)\,=\, {\rm x}_{i} + \int^{\cdot}_{0} {\mathfrak f}_{i}\big(X( t)\big)\, {\mathrm d} S( t) \,, \quad \text{for}\,\,\,\,\,i=1,2  
\end{equation} 
 \newpage
\noindent on $ [0, \tau(0))$. This is the equation to which (\ref{eq: skewTanaka}) reduces on the interval $\, [0, \tau(0)) $. By the definition of the function $\, {\mathfrak f}= ({\mathfrak f}_{1} , {\mathfrak f}_{2} )^{\prime}\,$ of (\ref{eq: f}), {\it the motion of the two-dimensional process $\,X(\cdot) = ( X_1(\cdot), X_2(\cdot))' \,$ during the time-interval $\, [0, \tau(0))\,$ is along the ray that connects the origin ${\bm 0}$ to the starting point $\, \mathrm x\,$.  }

Here is an argument  for this claim, which proceeds by applying  \textsc{It\^o}'s rule to the stochastic integral equation (\ref{eq: skewTanaka}) in conjunction with (\ref{length}): Given $\, \varepsilon \in (0,  \lVert \mathrm x \rVert)   $, let us define the stopping time $\, \sigma_{\varepsilon} \, :=\, \inf\{ t > 0 :  S(t) \, =\, \lVert X (t) \rVert  \le \varepsilon \} \,$. Let us recall that the local time $\, L^{S} (\cdot)\,$ is flat off  the random set $\, \mathfrak Z\,$ in (\ref{eq: Z_{S}}); thus  (\ref{eq: skewTanaka}) reduces to (\ref{eq: skewTanaka_flat})  on $\,[ 0, \sigma_{\varepsilon}] \, $. Applying \textsc{It\^o}'s rule, we observe
\[
\hspace{-3.5cm} \frac{\, X_{i}(t \wedge \sigma_{\varepsilon})\, }{ \, \lVert X(t \wedge \sigma_{\varepsilon})\rVert\, } \, =\,  \frac{\, {\rm x}_{i}\, }{\, \lVert \mathrm x \rVert\, } + \int^{t\wedge \sigma_{\varepsilon}}_{0} \frac{ \, {\mathrm d} X_{i}(s)\, }{\, \lVert X(s) \rVert\, } -  \int^{t\wedge \sigma_{\varepsilon}}_{0} \frac{ \, X_{i}(s) \, }{ \lVert X(s)\rVert^{2}\, } \,{\mathrm d} \lVert X(s) \rVert   
\] 
\begin{equation} \label{eq: ratio}
\hspace{3.9cm} {} +  \int^{t\wedge \sigma_{\varepsilon}}_{0} \frac{ \, X_{i}(s) \, }{ \, \lVert X(s) \rVert^{3}\, } \,{\mathrm d} \langle \lVert X \rVert\rangle(s) -  \int^{t\wedge \sigma_{\varepsilon}}_{0} \frac{1}{\, \lVert X(s) \rVert^{2}\, } \,{\mathrm d} \langle X_{i}, \lVert X \rVert\rangle (s)     
\end{equation}
for $\, t \ge 0 \,$ and $\,i \, =\, 1, 2\,$. Because of the definition of $\,\mathfrak f(\cdot)\,$, (\ref{eq: skewTanaka_flat}) and (\ref{length}),    
we obtain  
\[
X_{i}(t) \, =\,  \lVert X(t) \rVert \, \mathfrak f_{i}(X(t))\,, \qquad 
{\mathrm d} X_{i} (t) \, =\, \mathfrak f _{i}(X(t)) {\mathrm d} S(t)  
\, =\, \frac{\, X_{i}(t)\, }{\, \lVert X(t) \rVert\, } \, {\mathrm d} \lVert X(t) \rVert \, , 
\]
\[
\langle X_{i}, \lVert X\rVert\rangle(t) \, =\,  \int^{t}_{0} \mathfrak f_{i}(X(s) )\,  {\mathrm d} \langle \lVert X\rVert\rangle(s) \, =\, \int^{t}_{0} \frac{\, X_{i}(s) \, }{\, \lVert X(s) \rVert\, }  \, {\mathrm d} \langle \lVert X\rVert\rangle(s) 
\]
on $\,[0, \sigma_{\varepsilon}] \,$, for $\,i \, =\, 1, 2\,$. Substituting these relations into (\ref{eq: ratio}), we deduce 
\[
\mathfrak f_{i} (X(t )) \, =\, \frac{\, X_{i}(t )\, }{ \, \lVert X(t )\rVert\, } \, =\,  \frac{\, {\rm x}_{i}\, }{\, \lVert \mathrm x \rVert\, } 
\, =\, \mathfrak f_{i} (\mathrm x) \quad \text{ on } \, \, \,\,[0, \sigma_{\varepsilon}] \, , \, \, i \, =\, 1, 2 \, . 
\]
Since $\,\varepsilon > 0\,$ is arbitrary, this concludes the proof of the above claim, in accordance  with (\ref{eq: DIST3}).  

\smallskip
Now, every time the planar process  $\,X(\cdot)\,$ visits the origin, the direction of the next ray for its $\,S(\cdot)$-governed  motion is instantaneously chosen  at random    according to the probability distribution $\,{\bm \mu} $, the ``spinning measure" of the process $\,X(\cdot) \,$, in a manner described in more detail later. If the origin  is visited infinitely often during a  time-interval of finite length, it is not surprising that this random choice should lead to the accumulation of  local time at the origin, as indicated in the equations   (\ref{eq: skewTanaka}). It follows from (\ref{eq: zeroLeb}) that the set of    times spent by   $\,X(\cdot)\,$   at the origin has zero Lebesgue measure.  The process continues to move then along the newly chosen ray, its motion governed by the stochastic integral equations of  (\ref{eq: skewTanaka_flat}) just described, as long as it stays away from the origin.  The path $\,t \mapsto X(t) \,$ is, with probability one,  continuous in the topology induced by the tree-metric (French railway metric) on the plane, namely 
\begin{equation} 
\label{eq: tree}
\varrho (x,y) \,:=\, (r_1 + r_2) \,  \mathbf{ 1}_{ \{ \theta_1 \neq \theta_2 \} } + |r_1 - r_2| \,  \mathbf{ 1}_{ \{ \theta_1 = \theta_2 \} }\,, \qquad x= (r_1, \theta_1)\,, \,\,y= (r_2, \theta_2)\,. 
\end{equation} 
The reader may find it useful at this  juncture to think of a   roundhouse at the origin,   of the spokes of a bicycle wheel  -- or of the Aeolian winds of Homeric lore, that blow the raft of Odysseus  in all directions at once.    
 
 \subsection{Spider   Semimartingales}
 \label{Sp}
 
 Suppose that the   measure $\, {\bm \mu}  \,$   charges only a finite   number $\,m\,$ of points on the unit circumference. 
 We can think then of the planar process $\, X(\cdot)\,$  in Theorem \ref{prop: skewTanaka}  as a   {\it Spider Semimartingale,} whose radial part     $\, \Vert X(\cdot) \Vert = S(\cdot) \,$ is the \textsc{Skorokhod} reflection of the driver  $\, U(\cdot)$  according to (\ref{length}).    
 
When the driving semimartingale $\,U(\cdot)\,$ is Brownian motion,     the process $\,X(\cdot)\,$ of Theorem \ref{prop: skewTanaka} becomes   the original {\it \textsc{Walsh} Brownian Motion} (as constructed, for instance, in \textsc{Barlow, Pitman \& Yor} (1989)) with roundhouse singularity in a multipole field; this will be shown in Proposition \ref{prop: ID} below. When $\,m=2\,$ and $\, {\bm \nu} (\{ 0\}) = \alpha \in (0,1)\,$, $\, {\bm \nu} (\{ \pi\}) = 1- \alpha\,$, this construction recovers the familiar {\it Skew Brownian Motion}, introduced in \textsc{It\^o \& McKean} (1963) and studied by  \textsc{Walsh} (1978) and by \textsc{Harrison \& Shepp} (1981).

\subsection{Generalized  Skew-\textsc{Tanaka} and \textsc{Harrison-Shepp} Equations}
 \label{La}

 In the context of Theorem \ref{prop: skewTanaka} (in particular, with the property  (\ref{length})),   the equations of (\ref{eq: skewTanaka})  can be cast in   equivalent forms, now driven by the  original semimartingale $\,U (\cdot)$, as follows:   
\begin{equation} 
\label{eq: skewTanaka_2} 
X_{i}(\cdot) \, =\,  {\rm x}_{i} + \int^{\,\cdot}_{0} \mathfrak f_{i}\big(X(t)\big) \,{\mathrm d} U(t) +   \gamma_i\, L^{|| X||}(\cdot) \,, \qquad  i=1, 2  \,,
\end{equation} 
\begin{equation} 
\label{eq: skewTanaka_1} 
X_{i}(\cdot) \, =\,  {\rm x}_{i} + \int^{\,\cdot}_{0} \mathfrak f_{i}\big(X(t)\big) \,{\mathrm d} U(t) +  \left( 1- \frac{\, \alpha_{i}^{(-)}\,}{\, \alpha_{i}^{(+)}\, }\right) L^{ X_{i} }(\cdot) \, ,  \quad  i=1, 2   
\end{equation}
 (the latter when $\,\alpha_{i}^{(+)}>0$).   This last system    (\ref{eq: skewTanaka_1}) is a two-dimensional version of the  {\it skew-\textsc{Tanaka} equation} studied by \textsc{Ichiba \& Karatzas} (2014).   
 
 The system of equations (\ref{eq: skewTanaka_2}) 
, on the other hand,  can be thought of as a planar semimartingale version of the  \textsc{Harrison \& Shepp} (1981) equation for the skew Brownian motion. 
For two fixed real constants $\,\gamma_1, \gamma_2\,$, and a folded driver  $\, S(\cdot)\,$ that satisfies the condition 
 \begin{equation}
\label{eq: Pos_Loc_Time}
\mathbb{P} \big( L^S (\infty) >0 \big) \,>\,0 
\end{equation}
(e.g., reflecting Brownian motion),  we have the following necessary and sufficient condition (\ref{Gammaless1}), for the  solvability of the system (\ref{eq: skewTanaka_2})  subject to the condition (\ref{length}). The requirement (\ref{Gammaless1}) is a two-dimensional analogue of the condition in \textsc{Harrison \& Shepp} (1981) for the solvability of the stochastic equation that characterizes the skew Brownian motion. The proof of Theorem \ref{IFF} right below, is given in section \ref{Pf}.

\begin{thm} {\bf A Generalized  \textsc{Harrison-Shepp} System of Equations:}  
 \label{IFF}
Consider  a continuous  semimartingale $U(\cdot)$ along with its \textsc{Skorokhod} reflection $S(\cdot)$ as in section \ref{sec: Main},   two real numbers $\,\gamma_1, \gamma_2\,$, and  a   vector $\, {\rm x} \, :=\, ({\rm x}_{1}, {\rm x}_{2})' \in \mathbb R^{2}\,$ with $\,{\rm x}_i = \mathfrak{ f}_i (\mathrm{x})\, S(0)\,, \,\, i=1, 2\,$. 
 
 \smallskip
 \noindent
 (i) Suppose that   
 $\,\gamma_1, \gamma_2\,$ satisfy the condition
 \begin{equation}
 \label{Gammaless1}
  \,\gamma_{1}^{2}+\gamma_{2}^{2}  \, \leq  \,1\, .
 \end{equation}
  There exists  then a continuous planar semimartingale    $X(\cdot)=  (X_{1}(\cdot), X_{2}(\cdot))^{\prime} $ that satisfies the system     (\ref{eq: skewTanaka_2})  and the condition  (\ref{length}).
 
  \smallskip
  \noindent
 (ii) Conversely, suppose that   (\ref{eq: Pos_Loc_Time}) holds, and that there exists a continuous planar semimartingale    $X(\cdot)=  (X_{1}(\cdot), X_{2}(\cdot))^{\prime} $ that satisfies  the system      (\ref{eq: skewTanaka_2}) and the condition   (\ref{length}). Then (\ref{Gammaless1}) is satisfied by $\,\gamma_1, \gamma_2\,$. 
\end{thm}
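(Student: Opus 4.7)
\textbf{Plan for proving Theorem \ref{IFF}.} The overall strategy is to handle part (i) by reducing to Theorem \ref{prop: skewTanaka} via an explicit choice of spinning measure, and to handle part (ii) by computing $\mathbb{E}\big[\lVert X(T)\rVert^2\big]$ in two different ways and closing with Cauchy--Schwarz.

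\emph{For part (i),} given $\bm\gamma$ with $\gamma_1^2+\gamma_2^2 \le 1$, I would exhibit a probability measure on $\mathfrak S$ whose center of mass equals $\bm\gamma$. Writing $\bm\gamma = r(\cos\theta_0, \sin\theta_0)'$ with $r \in [0, 1]$, the two-point measure
\begin{equation*}
\bm\mu \;:=\; \tfrac{1+r}{2}\, \delta_{(\cos\theta_0,\, \sin\theta_0)'} \;+\; \tfrac{1-r}{2}\, \delta_{(-\cos\theta_0,\, -\sin\theta_0)'}
\end{equation*}
satisfies $\int z\,\bm\mu(\mathrm d z) = \bm\gamma$, so that the real numbers $\alpha_i^{(+)}-\alpha_i^{(-)}$ of (\ref{alpha}) coincide with our $\gamma_i$. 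Theorem \ref{prop: skewTanaka} then produces a continuous planar semimartingale $X(\cdot)$ satisfying both (\ref{eq: skewTanaka}) and the radial identity (\ref{length}). The equations (\ref{eq: skewTanaka}) and (\ref{eq: skewTanaka_2}) are in fact identical: writing $S = U + \Lambda$, the extra integral $\int \mathfrak f_i(X)\, \mathrm d\Lambda$ vanishes because $\Lambda$ is flat off $\{S = 0\} = \{X = \bm 0\}$ while $\mathfrak f_i(\bm 0) = 0$, and $L^{\lVert X\rVert} = L^S$ by (\ref{length}). This yields part (i).

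\emph{For part (ii),} suppose $X$ solves (\ref{eq: skewTanaka_2}) together with (\ref{length}). For clarity I first treat the case in which $U$ is a continuous local martingale; the general semimartingale case follows by an analogous argument after localizing the bounded-variation contribution. Set $Y_i := X_i - {\rm x}_i - \gamma_i L^S = \int_0^\cdot \mathfrak f_i(X)\,\mathrm d U$, a continuous local martingale with
\begin{equation*}
\langle Y_1\rangle + \langle Y_2\rangle \,=\, \int (\mathfrak f_1^2 + \mathfrak f_2^2)(X)\,\mathrm d\langle U\rangle \,=\, \langle U\rangle,
\end{equation*}
since $\mathfrak f_1^2+\mathfrak f_2^2 = \mathbf 1_{\{X \neq \bm 0\}}$ and $\mathrm d\langle U\rangle = \mathrm d\langle S\rangle$ places no mass on $\{S = 0\}$ by (\ref{A.1}). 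Hence, after the customary localization, $\mathbb E\big[\lVert Y(T)\rVert^2\big] = \mathbb E\big[\langle U\rangle(T)\big]$. On the other hand, expanding $\lVert X(T)\rVert^2 = \lVert{\rm x} + Y(T) + \bm\gamma L^S(T)\rVert^2$, taking expectations (using $\mathbb E[Y_i(T)] = 0$), and comparing with the reflection identity $\mathbb E[S(T)^2] = \lVert{\rm x}\rVert^2 + \mathbb E[\langle U\rangle(T)]$, almost every term cancels to leave
\begin{equation*}
\lVert \bm\gamma\rVert^2\, \mathbb E\big[(L^S(T))^2\big] \;=\; 2\, \bm\gamma \cdot \mathbb E\big[X(T)\,L^S(T)\big].
\end{equation*}

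By Cauchy--Schwarz together with the radial identity, $\bm\gamma \cdot \mathbb E[X(T)\,L^S(T)] \le \lVert\bm\gamma\rVert\, \mathbb E[S(T)\,L^S(T)]$; and an independent It\^o computation based on $\mathrm d(S L^S) = L^S\,\mathrm d S = L^S\,\mathrm d U + L^S\,\mathrm d L^S$ (using $S\,\mathrm d L^S = 0$ from (\ref{A.1}), and the vanishing of $\int L^S \mathbf 1_{\{S = 0\}}\,\mathrm d U$ as a local martingale with zero quadratic variation) yields $\mathbb E[S(T)\,L^S(T)] = \tfrac12\, \mathbb E[(L^S(T))^2]$. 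Substituting these into the above identity gives $\lVert\bm\gamma\rVert^2\, \mathbb E[(L^S(T))^2] \le \lVert\bm\gamma\rVert\, \mathbb E[(L^S(T))^2]$, and under (\ref{eq: Pos_Loc_Time}) one can choose $T$ with $\mathbb E[(L^S(T))^2] > 0$, so that (\ref{Gammaless1}) follows at once. The principal subtlety I anticipate is in upgrading from a local-martingale $U$ to a general continuous semimartingale: the bounded-variation part of $U$ produces cross-terms in the $Y_i^2$-expansion that must be tracked separately via additional localization, but no genuinely new ideas are required.
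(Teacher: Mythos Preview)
Your argument for part (i) is correct and essentially identical to the paper's: both choose the two-point measure $\frac{1+\beta}{2}\delta_{z_0}+\frac{1-\beta}{2}\delta_{-z_0}$ on $\mathfrak S$ with barycenter $\bm\gamma$, then invoke Theorem~\ref{prop: skewTanaka}.

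For part (ii), your moment-based approach is genuinely different from the paper's, and in the local-martingale case it is correct and rather elegant. The paper instead argues pathwise: decomposing $X(T)$ over the downcrossing stopping times $\{\tau^\varepsilon_m\}$ of (\ref{eq: tau rec}), one shows that the ``small-excursion'' sum converges in probability to $\bm\gamma\,L^{\lVert X\rVert}(T)$, while the same sum equals $\varepsilon\sum_{\ell=0}^{N(T,\varepsilon)-1}\mathfrak f(X(\tau^\varepsilon_{2\ell+1}))+O(\varepsilon)$. Combined with $\varepsilon\,N(T,\varepsilon)\to L^{\lVert X\rVert}(T)$, this forces the empirical average of the unit vectors $\mathfrak f(X(\tau^\varepsilon_{2\ell+1}))$ to converge to $\bm\gamma$ on $\{L^{\lVert X\rVert}(T)>0\}$, whence $\lVert\bm\gamma\rVert\le 1$ from $\lVert\mathfrak f(\cdot)\rVert\le 1$. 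This argument is entirely pathwise and makes no use of the martingale property of $U$.

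The gap in your proposal is precisely the passage to a general continuous semimartingale $U=M+V$, which you dismiss as ``additional localization''. The difficulty is not one of localization at all. Write $Y_i=\int\mathfrak f_i(X)\,\mathrm dM+\int\mathfrak f_i(X)\,\mathrm dV$; the second summand has no reason to have zero mean, so the cancellation that produces your key identity $\lVert\bm\gamma\rVert^2\,\mathbb E[(L^S)^2]=2\,\bm\gamma\cdot\mathbb E[X\,L^S]$ fails. If one keeps track of the $\mathrm dV$-terms, integration by parts on $L^S\,\bm\gamma\cdot X$ shows that every identity you write down collapses to a tautology once the $\mathrm dM$-contributions are removed in expectation; no inequality survives. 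Concretely, after taking expectations one is left with extra terms of the form $\mathbb E\bigl[\int_0^T L^S\,\bm\gamma\cdot\mathfrak f(X)\,\mathrm dV\bigr]$ and $\mathbb E\bigl[\int_0^T L^S\,\mathbf 1_{\{S>0\}}\,\mathrm dV\bigr]$, neither of which has a definite sign, and bounding them via $|\bm\gamma\cdot\mathfrak f|\le\lVert\bm\gamma\rVert$ only adds a nonnegative quantity to the right-hand side, destroying the conclusion. The paper's downcrossing argument sidesteps this entirely, since it never takes expectations and never separates $M$ from $V$.
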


\subsection{Open Questions} 

\noindent $\,\bullet \,$ It would be of considerable interest to extend the methodology of this paper to a situation with   an entire  family $\, U(\cdot\,; \mathbf{ z})\,,~\mathbf{ z} \in \mathfrak{ S}\,$  of semimartigales so that, when the point $\,\mathbf{ z}\,$ is selected on the unit circumference by the spinning measure $\, {\bm \mu}\,$, the motion along the corresponding ray is according to the \textsc{Skorokhod} reflection $\, S(\cdot\,; \mathbf{ z})  \,$ of this semimartingale $\, U(\cdot\,; \mathbf{ z})\,$. 
Some  results in this vein are obtained in section \ref{sec: AD}, in the context of the  diffusion case and by the method of scale function and time-change.

\noindent $\,\bullet \,$ What are the descriptive statistics of the \textsc{Walsh} semimartingale? For example, what is the area of the convex hull of its path $\,\{ X(s) , \,\,0 \le s \le T\}\,$ for some time $\,T\,$? 

\section{A  \textsc{Freidlin-Sheu-}type Formula}
 \label{FS}

 Let us consider now a twice continuously differentiable function $\, g : \R^2 \rightarrow  \R\,$. If $\,X(\cdot)\,$ is a continuous planar semimartingale that satisfies the system of equations (\ref{eq: skewTanaka}) with the property (\ref{length}),   an application of \textsc{It\^o}'s rule with the notation of (\ref{eq: f})  gives
 \newpage  
      $$
 g (X(T))\, =\,g (\mathrm{x}) \,+ \int_0^T \left( \sum_{i=1}^2 D_i g (X(t)) \,\mathfrak{f}_i (X(t)) \right) \mathrm{d} S(t)\, + \,\sum_{i=1}^2 D_i g (\mathbf{ 0})\, 
 \gamma_i \cdot L^S(T) 
 $$
 $$
~~~~~~~~~~~~  \qquad \qquad \qquad ~~~~ + \,\frac{1}{\,2\,} \int_0^T \left( \sum_{i=1}^2 \sum_{j=1}^2 D_{ij}^2 g (X(t)) \,\mathfrak{f}_i (X(t)) \,\mathfrak{f}_j (X(t)) \right) \mathrm{d} \langle S \rangle (t)\,, \quad 0 \le T < \infty\,.
 $$
 We define now the functions
\begin{equation}
\label{primes}
  G^{\prime} (x) := \sum_{i=1}^2 D_i g (x)\,\mathfrak{f}_i (x)\,, \qquad  G^{\prime \prime} (x) := \sum_{i=1}^2 \sum_{j=1}^2 D_{ij}^2 g (x) \,\mathfrak{f}_i (x) \,\mathfrak{f}_j (x)
\end{equation}
on the punctured plane $\,E= \R^2 \setminus \{ \mathbf{ 0}\}$, and  consider them   as the first and second derivatives,  
respectively, of the function $\,g(\cdot)\,$ in its radial argument $\, r = \sqrt{x_1^2 + x_2^2\,}\,$. 
 With this notation and that of (\ref{alpha}), (\ref{gamma}), the above decomposition can be written in the   \textsc{Freidlin-Sheu} (2000) form
\begin{equation}
\label{GenFS}
 g (X(\cdot)) = g (\mathrm{x}) + \int_0^{\, \cdot} \mathbf{ 1}_{\{ X(t) \neq \mathbf{ 0}\}} \left( G^{\prime} (X(t)) \,    \mathrm{d} S(t)  + \frac{1}{\,2\,} \, G^{\prime \prime} (X(t)) \, \mathrm{d} \langle S \rangle (t) \right) +\sum_{i=1}^2 \,\gamma_i\,D_i g (\mathbf{ 0})\cdot L^S(\cdot)\,.
\end{equation}
We  also note that the   constant  $\,\sum_{i=1}^2 \gamma_i\,D_i g (\mathbf{ 0})  \,$  which multiplies the local time term in (\ref{GenFS}), can be cast as 
\begin{equation}
\label{H2}
\sum_{i=1}^2 \,\gamma_i\,D_i g (\mathbf{ 0})\, =\, \int_{0}^{2\pi} h(\theta) {\bm \nu}({\mathrm d} \theta)\,,\quad \,\,\,\text{the integral   of } \quad 
\, h(\theta) \, :=\, \lim_{ \lVert x \rVert \downarrow 0} G^{\prime} (x) \Big|_{ \, \text{arg}(x) \, =\, \theta}\,
\end{equation}
  with respect to the spinning   measure 
 expressed here in polar co\"ordinates,  as in \textsc{Hajri \& Touhami} (2014). 
 
 


\subsection{A Generalization of the Change-of-Variable Formula (\ref{GenFS})}
  \label{FSa}

Let us   try to refine   the above considerations.  It is   clear that, along the paths of the process $\, X(\cdot)\,$ constructed in Theorem \ref{prop: skewTanaka}, only derivatives of the form indicated in (\ref{primes})  -- i.e., radial -- appear in the     \textsc{Freidlin-Sheu-}like   formula (\ref{GenFS}). This suggests that the smoothness assumption in (\ref{GenFS}) 
can be relaxed, as follows.

 \begin{definition}
  \label{def: D}
  We consider    the class $\, \mathfrak{D}\, $ of  \textsc{Borel}-measurable  functions $ \, g:    \mathbb R^{2}   \rightarrow \R\,$  with    the   properties:   
 
\noindent
 (i)~~~~   they are  continuous in the topology induced by the tree-metric (\ref{eq: tree}) on the plane;  
 
 \noindent 
(ii)   ~~\,for every $\,\theta \in [0, 2 \pi)\,$, the function $\,  r \,\longmapsto \,g_{\theta}(r)   :=  g(r, \theta)    \,$ 
 is twice continuously differentiable on $\, ~~~~~\,~~~~(0, \infty)\,$ and has 
 finite first and second right-derivatives at the origin;    
 
  \smallskip
\noindent  (iii) ~  the resulting functions $\, (r, \theta) \mapsto g^{\prime}_\theta (r)\,$ and  $\, (r, \theta) \mapsto g^{\prime \prime}_\theta (r)\,$ are \textsc{Borel} measurable; and 

 \smallskip
\noindent  (iv)     $\,\, \,\sup_{\, 0 <r<K  \atop \theta \in [0, 2 \pi)} \big( | g^{\prime}_\theta (r)|+ | g^{\prime \prime}_\theta (r)| \big) < \infty\,$ holds for all $\, K \in (0, \infty)\,$.  

\smallskip 

\noindent Here we consider \textsc{Borel} sets with respect to the  Euclidean topology.  We introduce also   the subclasses
\begin{equation} 
\label{eq: Dmu} 
\mathfrak D^{\bm \mu} \, :=\,  \Big \{ \, g \in \mathfrak D \, : \, \int_{0}^{2\pi} g^{\prime}_\theta (0+) \, {\bm \nu} ({\mathrm d} \theta) \, =\,  0 \,\Big\}\,, \quad \mathfrak D^{\bm \mu}_{+} \, :=\,  \Big \{ \, g \in \mathfrak D \, : \, \int_{0}^{2\pi} g^{\prime}_\theta (0+) \, {\bm \nu} ({\mathrm d} \theta) \, \ge\,  0    
\,\Big\}\,.
\end{equation}
\end{definition}

 \begin{definition}
  \label{def: G}
For every given function  $ \, g :    \mathbb R^{2}   
\rightarrow \R\,$ in   $\,\mathfrak{D} $ we    set  by analogy with (\ref{primes}):  
 $\, G^{\prime} (x)  :=  g^{\prime}_\theta (r)\,$ and  $ \, G^{\prime \prime} (x)  :=  g^{\prime \prime}_\theta (r)\,$ for $\, x=(r, \theta)\,$ with $\,  r>0\,.$    
\end{definition}

   With this notation in place, we can formulate our second major result. 
   
     \newpage

 \smallskip
 \begin{thm} 
\label{Gen_FS} {\bf A Generalized \textsc{Freidlin-Sheu} Formula:}     With the above notation, every continuous  semimartingale $\,X(\cdot)  =  (X_{1}(\cdot), X_{2}(\cdot))^{\prime}\,$ which solves the system of equations (\ref{eq: skewTanaka}) and satisfies the properties (\ref{length}) and  (\ref{eq: localDist}),  also 
satisfies for every $\, g \in \mathfrak{D}\,$  the generalized \textsc{Freidlin-Sheu} identity  
\[
g (X(\cdot)) = g (\mathrm{x}) \,+ \int_0^{\, \cdot} \mathbf{ 1}_{\{ X(t) \neq \mathbf{ 0}\}} \left( G^{\prime} (X(t)) \,    \mathrm{d} S(t)  + \frac{1}{\,2\,} \, G^{\prime \prime} (X(t)) \, \mathrm{d} \langle S \rangle (t) \right) +\Big(\int_{0}^{2\pi} g^{\prime}_\theta (0+) \,  {\bm \nu}({\mathrm d} \theta) \Big) L^S(\cdot) 
\]
\begin{equation}
\label{GenFS1}
 \,\,\,\,\, \,\,\,\,\, = g (\mathrm{x}) \,+ \int_0^{\, \cdot} \mathbf{ 1}_{\{ X(t) \neq \mathbf{ 0}\}} \left( G^{\prime} (X(t)) \,    \mathrm{d} U(t)  + \frac{1}{\,2\,} \, G^{\prime \prime} (X(t)) \, \mathrm{d} \langle U \rangle (t) \right) +\Big(\int_{0}^{2\pi} g^{\prime}_\theta (0+) \,  {\bm \nu}({\mathrm d} \theta) \Big) L^S(\cdot) \,.
\end{equation}
\end{thm}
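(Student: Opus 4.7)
The equivalence of the two displayed forms of (\ref{GenFS1}) is immediate from (\ref{Pic}), which gives $\mathrm{d} S = \mathbf{1}_{\{S > 0\}}\, \mathrm{d} U + \mathrm{d} L^S$: since $\{X \neq \mathbf{0}\} = \{S > 0\}$ by (\ref{length}) while $\mathrm{d} L^S$ is supported on $\{S = 0\}$, we have $\mathbf{1}_{\{X \neq \mathbf{0}\}}\, \mathrm{d} S = \mathbf{1}_{\{X \neq \mathbf{0}\}}\, \mathrm{d} U$ and likewise $\mathbf{1}_{\{X \neq \mathbf{0}\}}\, \mathrm{d}\langle S \rangle = \mathbf{1}_{\{X \neq \mathbf{0}\}}\, \mathrm{d}\langle U \rangle$, so it suffices to establish the $\mathrm{d} S$-form. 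When $g \in C^2(\mathbb R^2)$ in the usual Euclidean sense, the derivation of (\ref{GenFS}) at the head of Section \ref{FS} via direct It\^o applied to (\ref{eq: skewTanaka_2}) already yields (\ref{GenFS1}) after identifying the local-time coefficient $\sum_i \gamma_i D_i g(\mathbf{0}) = \int_0^{2\pi} g'_\theta(0+)\, \bm{\nu}(\mathrm{d} \theta)$ through (\ref{H2}).

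For general $g \in \mathfrak{D}$, my plan is first to establish (\ref{GenFS1}) for ``piecewise-radial'' functions $g_\mathcal{P}(x) := g_{\theta_k}(\|x\|)$ when $\arg(x) \in I_k$, where $\mathcal{P} = \{I_1, \ldots, I_N\}$ is a finite Borel partition of $[0, 2\pi)$ with representatives $\theta_k \in I_k$. The pointwise identity
\[ g_\mathcal{P}(X(t)) - g(\mathbf{0}) \,=\, \sum_{k=1}^N \bigl[\tilde g_{\theta_k}(R^{I_k}(t)) - \tilde g_{\theta_k}(0)\bigr] \]
--- with $R^{I_k}(\cdot) := \|X\|\cdot \mathbf{1}_{I_k}(\arg(X))$ the thinned continuous semimartingale of Theorem \ref{prop: skewTanaka}(iii), and $\tilde g_{\theta_k} \in C^2(\mathbb R)$ the Taylor extension of $g_{\theta_k}$ matching its value and first two one-sided derivatives at $0$, which agrees with $g_{\theta_k}$ on $[0,\infty)$ since $R^{I_k} \geq 0$ --- is easily checked term by term (only the summand with $\arg(X(t)) \in I_k$ is nontrivial). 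Applying one-dimensional It\^o to each $\tilde g_{\theta_k}(R^{I_k}(\cdot))$, using $L^{R^{I_k}} = \bm{\nu}(I_k)\, L^S$ from (\ref{eq: localDist}) to absorb the local time at $0$, and then summing while invoking $\sum_k \mathbf{1}_{\{\arg(X) \in I_k\}} = \mathbf{1}_{\{X \neq \mathbf{0}\}}$ and the identities $R^{I_k} = S$, $\mathrm{d} R^{I_k} = \mathrm{d} S$, $\mathrm{d}\langle R^{I_k}\rangle = \mathrm{d}\langle S\rangle$ on $\{\arg(X) \in I_k\}$, produces (\ref{GenFS1}) for $g_\mathcal{P}$ with local-time constant $C_\mathcal{P} = \sum_k g'_{\theta_k}(0+)\, \bm{\nu}(I_k)$.

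For arbitrary $g \in \mathfrak{D}$, I then approximate by a sequence $g_{\mathcal{P}_n}$ associated to refining partitions $\mathcal{P}_n$ of $[0, 2\pi)$, selecting representatives $\theta_{n,k}$ so that $g_{\theta_{n,k}}(r), g'_{\theta_{n,k}}(r), g''_{\theta_{n,k}}(r)$ converge to $g_\theta(r), g'_\theta(r), g''_\theta(r)$ for $\bm{\nu}$-almost every $\theta$ and each $r \geq 0$. Dominated convergence for stochastic integrals --- valid since $G'_{\mathcal{P}_n}(X), G''_{\mathcal{P}_n}(X)$ are uniformly bounded on $\{\|X\| \leq K\}$ by Definition \ref{def: D}(iv) --- combined with bounded convergence $C_{\mathcal{P}_n} \to \int_0^{2\pi} g'_\theta(0+)\, \bm{\nu}(\mathrm{d}\theta)$, then passes (\ref{GenFS1}) to the limit. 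The main obstacle is engineering this approximating sequence: a general $g \in \mathfrak{D}$ need not be Euclidean-continuous in $\theta$, so no uniform approximation in $\theta$ is available. This is addressed by a Lusin-type or $L^1(\bm{\nu})$-density selection exploiting the Borel measurability in Definition \ref{def: D}(iii), which allows us to pick $\theta_{n,k}$ realizing $\bm{\nu}$-a.e.\ pointwise convergence --- precisely what is needed, because the angular distribution of the excursions of $X(\cdot)$ is governed by $\bm{\nu}$ through (\ref{eq: localDist}).
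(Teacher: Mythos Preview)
Your piecewise-radial reduction via the thinned processes $R^{I_k}$ is a genuinely different route from the paper's, and the first step can be made rigorous with some work --- the assertion ``$\mathrm{d} R^{I_k} = \mathrm{d} S$ on $\{\text{arg}(X) \in I_k\}$'' is not immediate, but follows from $R^{I_k} + R^{I_k^c} = S$, the vanishing of $R^{I_k^c}$ on the open random set $\{R^{I_k} > 0\}$, and the identification $\mathbf{1}_{\{R^{I_k}=0\}}\,\mathrm{d} R^{I_k} = L^{R^{I_k}} = \bm{\nu}(I_k)\, L^S$ from (\ref{eq: localDist}). The real gap is the approximation step. You propose to pass from $g_{\mathcal{P}_n}$ to $g$ via representatives $\theta_{n,k}$ chosen to give $\bm{\nu}$-a.e.\ angular convergence, on the grounds that ``the angular distribution of the excursions of $X(\cdot)$ is governed by $\bm{\nu}$ through (\ref{eq: localDist})''. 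But (\ref{eq: localDist}) constrains only the local time accumulated at the origin, not the $\mathrm{d}\langle S\rangle$-occupation of the angular coordinate along the path. The integrands $G'_{\mathcal{P}_n}(X(t))$ and the left side $g_{\mathcal{P}_n}(X(T))$ are evaluated at the \emph{actual} angles $\Theta(t) = \text{arg}(X(t))$, and nothing in the hypotheses forces these into a $\bm{\nu}$-full set: the starting angle $\text{arg}(\mathrm{x})$ is arbitrary, and whole excursions can sit on rays in your exceptional $\bm{\nu}$-null set, where $g'_{\theta_{n,k}}(S(t))$ has no reason to approach $g'_{\Theta(t)}(S(t))$. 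So dominated convergence for the stochastic integrals, and even the pointwise limit $g_{\mathcal{P}_n}(X(T)) \to g(X(T))$, both fail.

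The paper's $\varepsilon$-stopping-time decomposition avoids this entirely: on each interval $[\tau^\varepsilon_{2\ell+1}, \tau^\varepsilon_{2\ell+2}]$ it applies one-dimensional It\^o with the \emph{exact} angle $\Theta(\tau^\varepsilon_{2\ell+1})$ the path carries there, so no angular approximation whatsoever is needed away from the origin. The only place $\bm{\nu}$ enters is in identifying the limit of the small-excursion increments as $\big(\int_0^{2\pi} g'_\theta(0+)\, \bm{\nu}(\mathrm{d}\theta)\big)\, L^S$; there a \emph{uniform} simple-function approximation of the single bounded Borel map $\theta \mapsto g'_\theta(0+)$ --- which always exists --- combined with (\ref{eq: localDist}) does the job. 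Your scheme would instead require a uniform-in-$\theta$ piecewise-radial approximation of $g_\theta(r),\, g'_\theta(r),\, g''_\theta(r)$ for all $r$ simultaneously, sharing a common angular partition, and the class $\mathfrak{D}$ does not provide this.
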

In particular, the continuous semimartingale $\,X(\cdot) \,$ of Theorem \ref{prop: skewTanaka} satisfies  
(\ref{GenFS1}).   

\subsection{Slope-Averaging Martingales}  

For any given  bounded, measurable    $\,\varphi : [0, 2\pi) \to \mathbb R \,$, let us define the functions
\begin{equation} 
\label{eq: gh} 
  h_{(\varphi)} (x) \, :=\,  \big(\varphi (\text{arg} (x)) - \mathbb E[ \varphi ( \text{arg}({\bm \xi}_{1}))] \big) \cdot {\bf 1}_{\{x \neq {\bm 0}\}}\,,\qquad \,\,\,g_{(\varphi)}(x) \, :=\, \lVert x \rVert \cdot h_{(\varphi)}(x)     
\end{equation}
for $\,x \in \mathbb R^{2}\,$, where $\,{\bm \xi}_{1}\,$ is an $\,{\mathfrak S}\,$-valued random variable with distribution $\,{\bm \mu}\,$ as in (\ref{eq: exp}). Such functions were first introduced  by \textsc{Barlow, Pitman \& Yor} (1989),  in their study of the \textsc{Walsh} Brownian motion. Using polar co\"ordinates, we observe that   $\, g_{(\varphi)} (x) \equiv  g_{(\varphi)}(r, \theta) \,$ belongs to the class $\, \mathfrak{D}\, $ and    satisfies
\[
G'_{(\varphi)} (x) \,\equiv \,\big(g_{(\varphi)}\big)_{\theta}^{\prime}(r) \, =\, h_{(\varphi)} (r, \theta)\, , \qquad G''_{(\varphi)} (x)\, \equiv \,\big(g_{(\varphi)}\big)_{\theta}^{\prime\prime}(r) \, =\, 0 \, , \qquad 
\int^{2\pi}_{0} h_{(\varphi)}(r, \theta) \, {\bm \nu} ( {\mathrm d} \theta ) \, =\,  0 \, .
\]
Here $\, ^{\prime}\,$ denotes differentiation with respect to $\,r\in(0,\infty)\,$. 

Direct application   of Theorem \ref{Gen_FS} gives the following result. 

 \begin{prop} 
\label{prop: MART} 
Assume that $ U(\cdot) $ in (\ref{eq: U}) is a continuous local martingale, and  construct its  \textsc{Skorokhod} reflection $  S(\cdot) $ as in (\ref{eq: S}). Consider any continuous  semimartingale $\,X(\cdot)  :=  (X_{1}(\cdot), X_{2}(\cdot))^{\prime}\,$ which satisfies the system of equations (\ref{eq: skewTanaka}), along  with  the properties (\ref{length}) and  (\ref{eq: localDist}). 
 
\smallskip
\noindent
(i)\,  For any  function $\, g : \mathbb R^{2} 
\rightarrow \mathbb R\,$ in the class $\, \mathfrak{D}^{\bm \mu}\,$ of (\ref{eq: Dmu}), 
the process 
$$ 
g(X(\cdot)) - g(X(0))  
- \frac{1}{\,2\,}\int_0^{\, \cdot } G^{\prime \prime} (X( t)) \,\mathbf{ 1}_{ \{ X( t) \neq \mathbf{ 0}\}}\, {\mathrm d} \langle U \rangle (t) 
$$
  is a continuous local martingale  with quadratic variation   \,$\,   \int_0^{\, \cdot } \big( G^{\prime  } (X( t)) \big)^2\,\mathbf{ 1}_{ \{ X( t) \neq \mathbf{ 0}\}}\, {\mathrm d}  \langle U \rangle (t) \,$.

\noindent
(ii)\, For any given  bounded, measurable function $\,\varphi : [0, 2\pi) \to \mathbb R \, $ and with the notation of (\ref{eq: gh}),  the process  below   is a continuous local martingale:
 $$\,
g_{(\varphi)}\big(X(\cdot)\big) \, =\, \big| \big| X(\cdot) \big| \big| \, h_{(\varphi)}(X(\cdot))    \, =\,  g_{(\varphi)}( \mathrm{x} ) + \int^{\,\cdot}_{0}  h_{(\varphi)}(X(t)) \, {\mathrm d} U(t) \,.
$$       
   \end{prop}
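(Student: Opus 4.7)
The plan is to deduce both parts directly from the generalized Freidlin-Sheu formula (\ref{GenFS1}) of Theorem \ref{Gen_FS}, essentially reading it off after verifying that the relevant functions belong to the class $\mathfrak{D}^{\bm\mu}$.

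For part (i), I would start from the second line of (\ref{GenFS1}) written in terms of the driver $U(\cdot)$ rather than its Skorokhod reflection $S(\cdot)$:
\[
g(X(\cdot)) - g(\mathrm{x}) = \int_0^{\,\cdot} \mathbf{1}_{\{X(t)\neq\mathbf{0}\}} G'(X(t))\, \mathrm{d}U(t) + \frac{1}{\,2\,}\int_0^{\,\cdot} \mathbf{1}_{\{X(t)\neq\mathbf{0}\}} G''(X(t))\, \mathrm{d}\langle U\rangle(t) + \Bigl(\int_0^{2\pi} g'_\theta(0+)\,\bm\nu(\mathrm{d}\theta)\Bigr) L^S(\cdot).
\]
The hypothesis $g \in \mathfrak{D}^{\bm\mu}$ forces the coefficient of $L^S(\cdot)$ to vanish by (\ref{eq: Dmu}). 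Since $U(\cdot)$ is assumed to be a continuous local martingale, $\langle U\rangle = \langle M\rangle$ and the stochastic integral of the locally bounded integrand $\mathbf{1}_{\{X\neq\mathbf{0}\}} G'(X)$ against $\mathrm{d}U$ is a continuous local martingale; the local boundedness follows from property (iv) in Definition \ref{def: D} together with the continuity of $X(\cdot)$. The quadratic variation formula for stochastic integrals then yields the claimed $\int_0^{\,\cdot}(G'(X(t)))^2\,\mathbf{1}_{\{X(t)\neq\mathbf{0}\}}\,\mathrm{d}\langle U\rangle(t)$.

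For part (ii), the task is to check that $g_{(\varphi)}$ fits into the framework of part (i). Writing $c:=\mathbb{E}[\varphi(\mathrm{arg}({\bm\xi}_1))] = \int_0^{2\pi}\varphi(\theta)\,\bm\nu(\mathrm{d}\theta)$ and using polar coordinates one sees $g_{(\varphi)}(r,\theta) = r\,(\varphi(\theta)-c)$, which vanishes at $r=0$ and is linear in $r$ along each ray. Hence $g_{(\varphi)}$ is continuous in the tree-metric (\ref{eq: tree}), lies in $\mathfrak{D}$ with $(g_{(\varphi)})'_\theta(r)\equiv \varphi(\theta)-c$ and $(g_{(\varphi)})''_\theta(r)\equiv 0$, and the boundedness of $\varphi$ ensures (iv). Finally,
\[
\int_0^{2\pi}(g_{(\varphi)})'_\theta(0+)\,\bm\nu(\mathrm{d}\theta) = \int_0^{2\pi}\bigl(\varphi(\theta)-c\bigr)\,\bm\nu(\mathrm{d}\theta) = 0,
\]
so $g_{(\varphi)}\in \mathfrak{D}^{\bm\mu}$. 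Substituting into (\ref{GenFS1}) — with $G'_{(\varphi)}(x)=h_{(\varphi)}(x)$ (which already carries the indicator $\mathbf{1}_{\{x\neq\mathbf{0}\}}$) and $G''_{(\varphi)}\equiv 0$ — collapses the formula to the explicit representation $g_{(\varphi)}(X(\cdot)) = g_{(\varphi)}(\mathrm{x}) + \int_0^{\,\cdot} h_{(\varphi)}(X(t))\,\mathrm{d}U(t)$, which is a continuous local martingale by part (i).

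The only non-routine point is the verification that $g_{(\varphi)}$ really enjoys tree-metric continuity at the origin and satisfies the uniform bound in Definition \ref{def: D}(iv); both follow cleanly from $|g_{(\varphi)}(x)|\le 2\|\varphi\|_\infty\,\|x\|$ and the constancy of the radial derivative along each ray, so I do not anticipate any serious obstacle — the substance of the proposition is already contained in Theorem \ref{Gen_FS}.
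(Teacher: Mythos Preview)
Your proposal is correct and follows essentially the same route as the paper: the paper states explicitly that the result is a ``direct application of Theorem \ref{Gen_FS}'' after first recording, in the paragraph preceding the proposition, that $g_{(\varphi)}\in\mathfrak D$ with $G'_{(\varphi)}=h_{(\varphi)}$, $G''_{(\varphi)}=0$, and $\int_0^{2\pi} h_{(\varphi)}(r,\theta)\,\bm\nu(\mathrm d\theta)=0$. Your argument reproduces exactly this reasoning, with the additional (and appropriate) care of spelling out why the stochastic integral against $\mathrm dU$ is a local martingale and why the quadratic variation takes the stated form.
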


 \section{The Proofs of Theorems \ref{prop: skewTanaka}, \ref{Gen_FS} and  \ref{IFF}}
 \label{Pf}

The way we construct a process $X(\cdot)$ 
which  satisfies the equation (\ref{eq: skewTanaka}) is via ``folding and unfolding of semimartingales",  with additional randomness   coming from a sequence $\, {\bm \xi}_{1}, {\bm \xi}_{2}, \ldots \,$ of $\,\mathfrak S-$valued, I.I.D. random variables. These have common probability distribution $\, {\bm \mu}   \,$ on $\,\mathfrak S\,$, such that the components of the random vector $\, {\bm \xi}_{1} \, :=\, \big(\xi_{1,1}, \xi_{1,2}\big)^{\prime} \,$ have expectations that are matched with the parameter vector $\, \big(\alpha_{1}^{(+)}, \alpha_{1}^{(-)}, \alpha_{2}^{(+)}, \alpha_{2}^{(-)}\big) \in [0,1]^{4}\,$ in  (\ref{alpha}), (\ref{eq: skewTanaka}) as  
 \begin{equation} 
\label{eq: exp}
\, \mathbb E\big( \xi_{1,i}^{\pm} \big) \, =\,  \alpha_{i}^{(\pm)}\,, \quad \mathbb E\big( \xi_{1,i} \big) \, =\,  \alpha_{i}^{(+)} - \alpha_{i}^{(-)} \,=\, \gamma_i\,, \quad \mathbb E\big( \lvert \xi_{1,i} \rvert \big) \, =\,  \alpha_{i}^{(+)} + \alpha_{i}^{(-)} \, ; \quad i \, =\, 1, 2\,.  
\end{equation}
  \newpage

 \noindent
 {\bf Proof of Theorem \ref{prop: skewTanaka}:}
For simplicity, we consider the case $ \,\mathrm{x}_{1} = \mathrm{x}_{2} = 0\,$ first. Following \textsc{Prokaj} (2009) and \textsc{Ichiba \& Karatzas} (2014), we   enlarge the original probability space by means of the above  sequence $ \big\{ {\bm \xi}_{k}  \big\}_{k \in \mathbb N}\,$ of $\,\mathfrak S\,$-valued, I.I.D.  random variables. These are independent of the $\, \sigma-$algebra $\, \widetilde{\mathcal F}(\infty)   :=  \bigvee_{0 \le t < \infty} \widetilde{\mathcal F}(t)\,$ and  have expectation $\, \mathbb{E} ({\bm \xi}_1) = {\bm \gamma}\,$ as in (\ref{eq: exp}),  (\ref{gamma}).

\medskip
\noindent
$\bullet~$ 
Let us decompose the nonnegative half-line into the zero set $\,\mathfrak Z     \,$ of $\,S(\cdot) \,$ as in (\ref{eq: Z_{S}}) on the one hand,   and the countable collection $\, \{\mathcal C_{k}\}_{k \in \mathbb N} \,$ of open disjoint components   of $\,[0, \infty) \setminus \mathfrak Z \,$ on the other. Each of these components represents an excursion  interval away from the origin for the \textsc{Skorokhod} reflection process $\,S(\cdot)\,$ in (\ref{eq: S}). Here we enumerate these countably-many   excursion intervals $\, \{\mathcal C_{k}\}_{k \in \mathbb N} \,$  in a measurable manner, so that $\,\{ t \in \mathcal C_{k}\} \in \widetilde{\mathcal F}(\infty)\,$ holds for all $\, t \ge 0\,$, $\,k \in \mathbb N\,$. For notational simplicity, we declare also  $\,\mathcal C_{0}  :=  \mathfrak Z\, , \,\,\,\, {\bm \xi}_{0}  :=  \mathbf{ 0}\, .$  We shall  denote 
\begin{equation} 
\label{eq: ZX}
Z(t) \, :=\,  \sum_{k\in \mathbb{N}_0}  \, {\bm  \xi}_{k} \cdot {\bf 1}_{\mathcal C_{k}}(t) \, , \qquad X(t) \, :=\, Z(t) S(t) \,, \qquad \mathcal F^{Z}(t) \, :=\, \sigma( Z(s) , 0 \le s \le t) \,  
\end{equation}
for $\, 0 \le t < \infty \,$ and introduce the enlarged filtration $\, {\mathbb F}   :=  \big\{ {\mathcal F}(t) \big\}_{0 \le t < \infty} \, $ via  $\,  {\mathcal F}(t)   :=  \widetilde{\mathcal F}(t) \vee \mathcal F^{Z}(t) \, $.

 \smallskip
This procedure corresponds exactly to the program   outlined by \textsc{J.B. Walsh} in the appendix to his 1978 paper, as follows: {\sl ``The idea is to take each excursion of (reflecting Brownian motion) and, instead of giving a random sign, to assign it a random variable with a given distribution in $[0,2 \pi)$, and to do so independently for each excursion"}. Of course the process $\,S(\cdot)\,$ is one-dimensional, while  $\,Z(\cdot)    = \big(Z_{1}(\cdot), Z_{2}(\cdot)\big)^{\prime}\,$  and $\, X(\cdot)   =  \big(X_{1}(\cdot), X_{2}(\cdot)\big)^{\prime} \,$  are two-dimensional processes with $\,\,\mathfrak f(X(\cdot)) = \big(\mathfrak f_{1}(X(\cdot)), ~\mathfrak f_{2}(X(\cdot))\big)^{\prime}= Z(\cdot) \,$: 
\begin{equation} 
\label{eq: fXZ}
\mathfrak f_{i}(X(\cdot)) \, =\,  Z_{i}(\cdot)\, , \quad i \, =\,  1, 2  
\end{equation}
with the functions $\, \mathfrak f_{i}(\cdot)\,$    as defined in (\ref{eq: f}). In particular, the zero set of (\ref{eq: Z_{S}}) is
\begin{equation} 
\label{eq: Z_{S}a}
 \mathfrak Z \,= \, \big\{t \ge 0: S(t)= 0\big\} = \big\{t \ge 0: Z(t) = {\bm 0} \big\} = \big\{t \ge 0: X(t) =  {\bm 0} \big\} \, .  
\end{equation} 
We can also think of the vector process $  X(\cdot)  
 $ of (\ref{eq: ZX}) as expressed in its polar co\"ordinates  
 \begin{equation} \label{eq: Theta}
    S(t) = \sqrt{X_{1}^{2}( t) + X_{2}^{2}( t)\,} \qquad \text{and} \qquad \Theta ( t) =\text{arg} \big(Z( t)\big) = \sum_{k \in \mathbb N } \text{arg} ( {\bm \xi}_{k} ) \cdot {\bf 1}_{\mathcal C_{k}}( t) \,   
    \end{equation} 
    from (\ref{eq: ZX}). We   shall see presently that this process $\, X(\cdot)\,$ satisfies the system of equations  (\ref{eq: skewTanaka}).   

\medskip
\noindent
$\bullet~$ 
We claim that, because of independence and of the way the probability space was enlarged, {\it both processes $\,U(\cdot) \,$ and $\,S(\cdot) \,$ are continuous $\,{\mathbb F}-$semimartingales}. This claim can be established as in the proof  of Proposition 2 in \textsc{Prokaj} (2009); see also  Proposition 3.1  in \textsc{Ichiba \& Karatzas} (2014).

\medskip
\noindent
$\bullet~$ 
In order to describe the dynamics of the process $\,X(\cdot)\,$ defined in (\ref{eq: ZX}), we approximate the process $\,Z(\cdot)\,$ also defined there  by a family of processes $\,Z^{\varepsilon}(\cdot)\,$ with finite first variation over compact intervals indexed by $\,\varepsilon \in (0,1)$,  as  follows.  We  define the sequence of stopping times $\,\tau_{0}^{\varepsilon} : = \inf \big\{ t\geq 0 : \Vert X(t) \Vert = 0 \big\}$  and 
\begin{equation} 
\label{eq: tau rec}
\tau^{\varepsilon}_{2\ell+1} \, :=\,  \inf \big\{ t > \tau^{\varepsilon}_{2\ell} : \Vert X(t) \Vert \geq \varepsilon \big\} \, , \quad \tau^{\varepsilon}_{2\ell+2} \, :=\,  \inf \big\{ t > \tau^{\varepsilon}_{2\ell+1} : \Vert X(t) \Vert \, =\, 0 \big\} \, ; \quad \ell \in \mathbb N_{0} 
\end{equation}
recursively. We also introduce a piecewise-constant process $\, Z^{\varepsilon} (\cdot) \, :=\, \big(Z^{\varepsilon}_{1}(\cdot), Z^{\varepsilon}_{2}(\cdot)\big)^{\prime}\,$ with 
\begin{equation} 
\label{eq: Ze}
Z^{\varepsilon}(t) \, :=\,  \sum_{\ell \in \mathbb N_{0}} \,Z(t) \,{\bf 1}_{[\tau^{\varepsilon}_{2\ell +1}, \tau^{\varepsilon}_{2\ell +2})}(t)  \,  =\,  \sum_{(k, \ell) \in \mathbb N_{0}^2} \,{\bm \xi}_k  \,{\bf 1}_{\mathcal{C}_k \cap [\tau^{\varepsilon}_{2\ell +1}, \tau^{\varepsilon}_{2\ell +2})}(t)\, , \qquad 0 \le t < \infty \, , 
\end{equation}
i.e.,  constant on each of the ``downcrossing intervals" $\,[\tau^{\varepsilon}_{2\ell +1}, \tau^{\varepsilon}_{2\ell +2})\,$. For this process, the product rule gives 
\begin{equation} 
\label{eq: ZeS}
X^{\varepsilon}(T) \, :=\, Z^{\varepsilon}(T) S(T) \, =\,  \int^{T}_{0} Z^{\varepsilon}(t) {\mathrm d} S(t) + \int^{T}_{0} S(t) {\mathrm d} Z^{\varepsilon}(t) \, , \qquad 0 \le T < \infty \, . 
\end{equation}
  \newpage
Passing to the limit  as $\,\varepsilon \downarrow 0\,$  and using (\ref{eq: exp})-(\ref{eq: fXZ}), as well as the characterization of the local time $\,L^{S}(\cdot) \,$ of the semimartingale $\,S(\cdot)\,$ in terms of  the number of its downcrossings, we obtain the decomposition   
\begin{equation} 
\label{eq: XZS} 
X(T) \, =\, Z(T) S(T)  \, =\,  \int^{T}_{0} Z(t)\, {\mathrm d} S(t) + \mathbb E[ {\bm \xi}_{1}] \,L^{S} (T) \, =\, \int^{T}_{0} \mathfrak f (X(t)) \,{\mathrm d} S(t) + {\bm \gamma} \, L^{S} (T)  \, 
\end{equation}
in the notation of (\ref{gamma}).    Indeed,  the second term on the right-hand side of (\ref{eq: ZeS}) can be   estimated by the strong law of large numbers and Theorem VI.1.10 in \textsc{Revuz \& Yor} (1999): namely, we have   
\begin{equation} \label{eq: SZe}
\begin{split}
\int^{T}_{0} S(t) {\mathrm d} Z^{\varepsilon}(t) \, &=\, \sum_{\{\ell : \,\tau^{\varepsilon}_{2\ell + 1} < T\}} S(\tau^{\varepsilon}_{2 \ell + 1}) Z^{\varepsilon}(\tau^{\varepsilon}_{2\ell + 1}) \, =\, \varepsilon \sum_{j=1}^{N(T, \varepsilon)} {\bm \xi}_{\ell_{j}} + O(\varepsilon)\\ \, &=\, \varepsilon \, N(T, \varepsilon) \cdot \frac{1}{\, N(T, \varepsilon)\, }  \sum_{j=1}^{N(T, \varepsilon)} {\bm \xi}_{\ell_{j}}+O(\varepsilon) \, \xrightarrow[\varepsilon \downarrow 0]{} \,  L^{S}(T) \cdot \mathbb E [ {\bm \xi}_{1}] \,  
\end{split}
\end{equation}
in probability. Here $\,\big\{ {\bm \xi}_{\ell_{j}}\big\}_{j=1}^{N(T, \varepsilon)}\,$ is an enumeration of   $\,Z^{\varepsilon}(\tau^{\varepsilon}_{2 \ell + 1})\,$, and    $\,N(T, \varepsilon)   :=  \sharp \big\{ \ell \in \mathbb{N}: \tau^{\varepsilon}_{2 \ell } < T \big\}\, $    the number of downcrossings of the interval $\,(0, \varepsilon)\,$ that the process  $\,S(\cdot)\,$ has completed  during $\,[0,T)\,$. We deduce from (\ref{eq: XZS}), in particular,  that the process $\,X(\cdot)\,$ is a continuous planar $\,{\mathbb F}-$semimartingale.

\smallskip
By analogy with (\ref{eq: XZS}), we can approximate the process $\, \lvert Z_{i} (\cdot) \rvert \,$ by $\, \lvert Z_{i} ^{\varepsilon}(\cdot) \rvert\,$, the absolute value of each of the components $\,Z_{i}^{\varepsilon}(\cdot)\,$ of the piecewise-constant process in (\ref{eq: Ze});  passing to the limit as $\, \varepsilon \downarrow 0\,$,  we obtain   
\begin{equation} 
\label{eq: |X||ZS|} 
\,\,\, \big \lvert X_{i}(T) \big \rvert \, =\, \big \lvert Z_{i}(T)  \big \rvert S(T)  \, =\,  \int^{T}_{0} \big  \lvert Z_{i}(t) \big \rvert \, {\mathrm d} S(t) + \mathbb E \big( \lvert {\xi}_{1, i} \rvert \big) L^{S} (T)  \, , \qquad 0 \le T < \infty   
\end{equation} 
for $\, i=1, 2\,$. We appeal now to  Exercise VI (1.16) $3^{o})$ of \textsc{Revuz \& Yor} (1999); recalling the form of $\,S(\cdot)\,$ in (\ref{eq: S}) along with   (\ref{eq: Z_{S}a})  we deduce that, 
with the normalization of (\ref{eq: LT}), the continuous, nonnegative semimartingale $\, \lvert X_{i}(\cdot) \rvert\,, \,\, i=1, 2\, $ with the decomposition (\ref{eq: |X||ZS|}) has local time at the origin 
\begin{equation} 
\label{eq: L|X|}
 L^{ \lvert X_{i}\rvert}(\cdot) \, =\,\int_0^{\,\cdot} \mathbf{ 1}_{\{ X_i (t) =0\}}\,\big[\, \big| Z_i (t)\big|\,     \mathrm{d}  S(t)   + \big(\alpha_{i}^{(+)} + \alpha_{i}^{(-)}\big) \, \mathrm{d} L^{S}(t)\, \big]
 \, =\, \big(\alpha_{i}^{(+)} + \alpha_{i}^{(-)}\big) \, L^{S}(\cdot)\,. 
 \end{equation}

\smallskip
\noindent
$\bullet~$
At this point,  we need to identify the local times $\,L^{X_{i}}(\cdot)\,$ of each component $\,X_{i}(\cdot)\,$  in terms of the local time $\,L^{S}(\cdot)\,$. Since $\,X_{i}(\cdot) \, =\, Z_{i}(\cdot) S(\cdot)\,$ is a continuous semimartingale for $\,i \, =\, 1, 2\,$, we recall the decomposition (\ref{eq: XZS}) and properties of semimartingale local time and  obtain  the string of identities 
\[
\,2 \, L^{X_{i}}(\cdot) - L^{ \lvert X_{i}\rvert}(\cdot) \, =\,  \int^{\cdot}_{0} {\bf 1}_{\{X_{i}(t) \, =\, 0\}} \,{\mathrm d} X_{i}(t) \, =\, \int^{\cdot}_{0} {\bf 1}_{\{X_{i}(t) \, =\, 0\}} \big[ Z_{i}(t) {\mathrm d} S(t) + \mathbb E ( \xi_{1,i}) {\mathrm d} L^{S}(t) \big]
\]
\begin{equation} 
\label{eq: diff}
\qquad \qquad   
\qquad \qquad \qquad \qquad  
\qquad \qquad \, =\, \mathbb E ( \xi_{1,i})  \, L^{S}(\cdot) \, =\, \big(\alpha_{i}^{(+)} - \alpha_{i}^{(-)}\big) \, L^{S}(\cdot)  
\end{equation}
 (cf. subsection 2.1 in \textsc{Ichiba, Karatzas \& Prokaj} (2013)).  Thus, combining with (\ref{eq: L|X|}),   we deduce
\begin{equation} 
\label{eq: LXLS}\,\,
2 \, L^{X_{i}}(\cdot) \, =\, \big( \mathbb E ( \lvert \xi_{1,i}\rvert )+ \mathbb E ( \xi_{1,i} ) \big) L^{S}(\cdot) \, =\, 2 \,\mathbb E( \xi_{1,i}^{+}) \,L^{S}(\cdot) \, =\, 2\, \alpha_{i}^{(+)} L^{S}(\cdot)\,, \quad i \, =\, 1, 2\,,   
\end{equation}
\noindent
i.e., property (\ref{eq: LTXS}). The equations (\ref{eq: skewTanaka}) and   (\ref{length}), (\ref{eq: zeroLeb}) follow now from (\ref{eq: SC}), (\ref{alpha}), (\ref{eq: Theta}) and (\ref{eq: XZS}).

\smallskip
\noindent
$\bullet~$
The property (\ref{eq: localDist}) can be shown by an approximation   similar in spirit and manner to that just carried out in the proof of (\ref{eq: LTXS}). We take now throughout the ``thinned" sequence $\,{\bm \xi}_{k}^{A} \, :=\,  
{\bf 1}_A ( \text{arg}({\bm \xi}_{k}) )\,$, $\,k \in \mathbb N_{0}\, $  in place of $\,{\bm \xi}_{k} \,$, $\,k \in \mathbb N_{0}\,;$ and in lieu of $\,Z(\cdot)\,$ and $\,S(\cdot)\,$   in (\ref{eq: ZX}), respectively, the processes  
$$\,
Z^{(A)}(\cdot) \, :=\, \sum_{k\in \mathbb N_{0}} \, {\bm \xi}_{k}^{A}\, \cdot  {\bf 1}_{\mathcal C_{k}}(\cdot)
\qquad \text{and} \qquad \,
R^{A}(\cdot) \, =\, \big \lVert X (\cdot) \big \rVert \cdot {\bf 1}_A \big(\text{arg}(X(\cdot)\big)   \, =\, S(\cdot) \,Z^{(A)}(\cdot)  \,.
$$
  \noindent
 $\bullet~$ 
When the initial value $\, \mathrm x \, =\, (\mathrm{x}_{1}, \mathrm{x}_{2})^{\prime}\,$  is not the origin, we define 
\begin{equation*} 
\label{reduced2}
 Z(0) \, :=\,  {\mathfrak f}(X(0)) \, =\, {\mathfrak f}( {\mathrm x})\qquad \text{and}\qquad X(t) \, :=\,   
 Z(0) S(t) \,, \quad \text{for}\,\,\,\, 0 \le t < \tau (0) \,,
 \end{equation*}
   \newpage
   \noindent
  i.e., until the process $\,X(\cdot) \,$ first attains the origin, very much in accordance with (\ref{eq: skewTanaka_flat}). Here $\, \tau(0) \,$ is defined as in Theorem \ref{prop: skewTanaka}(i). The so-constructed process $\,X(\cdot)\,$ satisfies the stochastic differential equation (\ref{eq: skewTanaka_flat}), to which (\ref{eq: skewTanaka}) reduces on the interval $\,[0, \tau(0))\,$ as in the discussion at the start of  section \ref{disc}. On  the interval  $\,[ \tau(0), \infty)\,$  we use the recipe (\ref{eq: ZX}) above, to construct  $\,X(\cdot)\,$ starting from the origin.

With these considerations  we obtain $\, \{ {\mathfrak f}(X(s)) =   {\mathfrak f}({\mathrm x}) , \, \, s < \tau(0) \} = \{ s< \tau (0) \} \, $,  mod. $\mathbb P$,   and hence we verify (\ref{eq: DIST3}).  Moreover, for every $\, (s,t) \in (0, \infty)^{2} \,$, there exists  by   construction an $\, \widetilde{\mathcal F} (\infty)-$measurable random index $\, \kappa_{0} (s,t) : \Omega \rightarrow \N \,$ such that we have, either $\, \tau (s) + t \in \mathcal C_{\kappa_{0} (s,t)}\,$,   or $\, \tau(s) + t \in \mathfrak Z \,$   on $\, \{ \tau (s) < \infty\} \,$. If $\,\tau(s) + t \in \mathfrak Z\,$ and $\,\tau(s) < \infty \,$, then $\, \mathfrak f ( X(\tau(s) + t)) \, =\, {\bm 0 } \,$. By (\ref{eq: SC}) and the construction of $\,X(\cdot) \,$ we obtain $\,\mathbb P ( \mathfrak f (X(\tau(s) + t)) = {\bm 0} ) \, =\, \mathbb  P ( S(\tau(s) + t) = 0 ) \, =\,  0 \,$ for a.e. $\,t \in (0, \infty)\,$. Therefore,  
\[
\big\{{\mathfrak f}(X(\tau (s) + t)) \in B,\,\, \tau (s)< \infty \big\} =   \Big\{ \sum_{k \in \mathbb N_{0}} {\bm \xi}_{k} \, {\bf 1}_{\mathcal C_{k}}( \tau (s) + t) \in B \, , \,\,\tau (s)< \infty \Big\}
=  \big\{ {\bm \xi}_{\kappa_{0} (s,t)} \in B \, , \,\,\tau (s) < \infty  \big\}   
\]
\noindent
holds mod. $\mathbb P\, $ for every   $\, B \in \mathcal B (\mathfrak S) \,$ and almost every $\,t \in (0, \infty)\,$. We conclude that (\ref{eq: DIST}) holds, namely  
\[
\mathbb P
\big(\, {\mathfrak f}(X(\tau  (s) + t)) \in B\,\big| \, {\cal F}^{X} (\tau(s)) \big) \, =\, \mathbb P 
\big( {\bm \xi}_{\kappa_{0}(s, t)} \in B\,\big| \, {\cal F}^{X} (\tau(s)) \big) \]
\[
\, =\, \mathbb E \Big [ \mathbb P \big[  {\bm \xi}_{\kappa_{0}(s, t)} \in B \, \big \vert \, \widetilde{\mathcal F}(\infty) \vee \mathcal F^{Z}(\tau(s))  \big] \, \big \vert \, \mathcal F^{X}(\tau(s))   \Big] 
\,=\, \mathbb E
\big[ \, \mathbb P
( {\bm \xi}_{1} \in B )  \,\big| \, {\cal F}^{X} (\tau(s)) \,\big] \, =\,  {\bm \mu} (B) \, ,  
\]
for every $\,s \in (0, \infty)\,$, $\, B \in {\mathcal B}({\mathfrak S})\,$ and almost every $\,t \in (0, \infty)\,$. We have used here the definitions of $\, \mathcal F^{X}(\cdot) \subseteq \widetilde{\mathcal F} (\cdot) \vee \mathcal F^{Z}(\cdot)\,$ and $\, Z(\cdot) \,$ in (\ref{eq: ZX}), the $\, \widetilde{\mathcal F}(\infty)-$measurability of the stopping time $\,\tau (s)  $ and of the random index $\, \kappa_{0} (s,t) \,$, and  the independence between $\, \widetilde{\mathcal F}(\infty) \,$ and the sequence   $  \big\{{\bm \xi}_{k} \big\}_{ k \in \mathbb N}\,$ of I.I.D. random variables. 
This completes the proof of Theorem \ref{prop: skewTanaka}. \qed

 \medskip
 \noindent
 {\bf Proof of Theorem \ref{Gen_FS}:} Let us fix a function  $\, g:    \mathbb R^{2}  \rightarrow \R\,$ in the class $\,\mathfrak{D}\,$ as in the statement of the theorem,   and recall  the notation established in Definitions \ref{def: D}, \ref{def: G}. Consider also a continuous planar semimartingale $\,X(\cdot) \,$ satisfying the equations of (\ref{eq: skewTanaka}) along with the properties  (\ref{length}) and  (\ref{eq: localDist}).   With $\,\{\tau_{k}^{\varepsilon}\}_{k \in \mathbb N_{0}}\,$   defined as in (\ref{eq: tau rec}), and with $\, \tau_{-1}^{\varepsilon} \equiv 0\,$ and $\, \mathbb N_{-1} :=  \mathbb N_{0} \cup \{-1\}\,$, the value $\,g (X(T)) \,$ is decomposed into 
\begin{equation} 
\label{eq: gXe}
g(X(T)) = g({\mathrm x}) + \sum_{\ell \in \mathbb N_{-1}} \big ( g(X(T \wedge \tau^\varepsilon_{2\ell+2})) - g(X(T \wedge \tau^\varepsilon_{2\ell+1})) 
\big) 
+ \sum_{\ell \in \mathbb N_{0}}   \big ( g(X(T \wedge \tau^\varepsilon_{2\ell+1})) - g(X(T \wedge \tau^\varepsilon_{2\ell})) \big) \, .
\end{equation}

\noindent
$\bullet~$ We recall from the discussion at the beginning of section \ref{disc}, that the  process $X(\cdot)$ moves along the ray that connects $\,{\bm 0}\,$ to the starting point $\,\mathrm x \neq {\bm 0}\,$, during the time-interval $\,[0, \tau (0))=[\tau_{-1}^{\varepsilon}, \tau_{0}^{\varepsilon})\,$. 
In a similar manner, the processes  $\, \mathfrak f_{i}(X(\cdot)) \,$ are constant on every interval $\, [\tau_{2 \ell+1}^{\varepsilon}, \tau_{2 \ell+2}^{\varepsilon})\, $ for $\, \ell \in \mathbb N_{0}\,$, $\,i \, =\, 1, 2\,$. 

The first summation in (\ref{eq: gXe}) can thus be rewritten as    
\[
\sum_{\ell \in \mathbb N_{-1}}  \big ( g(X(T \wedge \tau^\varepsilon_{2\ell+2})) - g(X(T \wedge \tau^\varepsilon_{2\ell+1}))  \big) \, =\, \sum_{\ell \in \mathbb N_{-1}}  \Big ( g_{\theta}(S(T \wedge \tau^\varepsilon_{2\ell+2})) - g_{\theta}(S(T \wedge \tau^\varepsilon_{2\ell+1})) \Big)\Big|_{\theta \, =\, \Theta 
(T \wedge \tau^\varepsilon_{2 \ell +1})} \,  
\]
\begin{equation*} 
\, =\, \sum_{\ell \in \mathbb N_{-1}} \int^{T\wedge \tau^{\varepsilon}_{2\ell+2}}_{T\wedge \tau^{\varepsilon}_{2\ell+1}} \Big( g^{\prime}_{\theta}(S(t)) \,  {\mathrm d} S(t) + \frac{1}{\, 2\, } \, g^{\prime\prime}_{\theta}(S(t)) \, {\mathrm d} \langle S\rangle (t) \Big) \Big|_{\theta \, =\, \Theta 
(t)}  
\end{equation*}
\[
~~~~~\, =\, \int^{T}_{0} \Big( \sum_{\ell \in \mathbb N_{-1}} {\bf 1}_{(\tau^\varepsilon_{2 \ell + 1}, \, \tau^\varepsilon_{2\ell+2}) }(t)\Big) \Big( G^{\prime}(X(t)) \,{\mathrm d} S(t) + \frac{1}{\, 2\, } \,G^{\prime\prime} (X(t)) \, {\mathrm d}   \langle S \rangle(t) \Big) .
\]
We have set here $\,\Theta(\cdot)  :=  \text{arg} (X(\cdot)) \,$,  and   applied \textsc{It\^o}'s rule (Problem 3.7.3 in \textsc{Karatzas \& Shreve} (1991)) to the process $\,g_{\theta}(S(\cdot)) \,$. Letting $\,\varepsilon \downarrow 0\,$, we obtain in the limit 
\[
\int^{T}_{0} {\bf 1}_{\{X(t) \neq \mathbf{ 0}\}} \Big( G^{\prime}(X(t)) \,  {\mathrm d} S(t) + \frac{1}{\, 2\, } \,G^{\prime\prime} (X(t)) \, {\mathrm d}   \langle S \rangle(t) \Big) \qquad \qquad  \qquad \qquad 
\]
\noindent
  \begin{equation}
\label{eq: gXe3}
 \qquad \qquad  \qquad \qquad
= \int^{T}_{0} {\bf 1}_{\{X(t) \neq \mathbf{ 0}\}} \Big( G^{\prime}(X(t)) \,  {\mathrm d} U(t) + \frac{1}{\, 2\, } \,G^{\prime\prime} (X(t)) \, {\mathrm d}   \langle U \rangle(t) \Big)\,. 
\end{equation}
\newpage

\noindent
$\bullet~$
For the second summation in (\ref{eq: gXe}), we observe $\, g({\bm 0}) = g_{\theta} (0) \,$ by definition and hence 
\begin{equation} 
\label{eq: gXe2}
\sum_{\ell \in \mathbb N_{0}}  \big ( g(X(T \wedge \tau^\varepsilon_{2\ell+1})) - g(X(T \wedge \tau^\varepsilon_{2\ell}) ) \big)   
\end{equation}
\[
  = \!\! \!\! \sum_{\{\ell \, : \, \tau^{\varepsilon}_{2\ell + 1} < T\}} \big( g_{\theta} ( S(\tau^{\varepsilon}_{2\ell+1}) )  - g_{\theta} (0) \big) \Big\vert_{\theta \, =\, \Theta(\tau^{\varepsilon}_{2\ell+1})} \, + O(\varepsilon) \, = \sum_{\{\ell \, : \, \tau^{\varepsilon}_{2\ell + 1} < T\}} \big( g_{\theta} ( \varepsilon)  - g_{\theta} (0) \big)\Big\vert_{\theta \, =\, \Theta(\tau^{\varepsilon}_{2\ell + 1})} + O(\varepsilon)
\]
\[
  =\, \sum_{\{\ell \, : \, \tau^{\varepsilon}_{2\ell + 1} < T\}} \Big( \varepsilon g^{\prime}_{\theta}(0+) +  \int^{\varepsilon}_{0} (\varepsilon - u) g_{\theta}^{\prime\prime}(u) {\mathrm d} u  \Big) \Big\vert_{\theta \, =\, \Theta(\tau^{\varepsilon}_{2\ell + 1})} + O(\varepsilon) \xrightarrow[\varepsilon \downarrow 0 ]{} \, L^{S}(T)    \int_{0}^{2\pi} g^{\prime}_{\theta}(0+) {\bm \nu}({\mathrm d} \theta)  
\]

\medskip
\noindent
in probability. Indeed,  by analogy with  (\ref{eq: SZe}) we can verify 
\[
\Bigg \lvert \sum_{\{\ell \, : \, \tau^{\varepsilon}_{2\ell + 1} < T\}}\Big(  \int^{\varepsilon}_{0} (\varepsilon - u) \,g_{\theta}^{\prime\prime}(u) {\mathrm d} u \Big) \bigg \vert_{\theta \, =\, \Theta 
(\tau^{\varepsilon}_{2\ell + 1})}   \Bigg| \, \le \,c \sum_{ \{\ell \, : \, \tau^{\varepsilon}_{2\ell + 1} < T\}} \varepsilon^{2} \, =\,  c \, \varepsilon \cdot \big( \varepsilon\, N(T, \varepsilon) + O(\varepsilon) \big) \, \xrightarrow[\varepsilon \downarrow 0]{} 0
\]
in probability, where $\,c \, :=\, \sup_{\theta \in \text{supp} ({\bm \mu})} \max_{0 \le u \le 1} \big(g^{\prime\prime}_{\theta}(u) \, / \, 2\big)< + \infty\,$ by assumption. 

 \smallskip
  \noindent
$\bullet~$
We also check that  for every $\, A \in \mathcal B ([0, 2\pi)) \, $ we have, on account of the property (\ref{eq: localDist})  for the process $\, R^{A}(\cdot) \, =\, \lVert X (\cdot) \rVert \, {\bf 1}_A ( \Theta(\cdot))\,$, the convergence
\[
\sum_{\{\ell \, :\, \tau^{\varepsilon}_{2\ell+1}  < T\}} \varepsilon \, {\bf 1}_{\{\Theta(\tau^{\varepsilon}_{2\ell+1}) \in A\}}  \, =\, \sum_{\{\ell \, :\, \tau^{\varepsilon}_{2\ell+1}  < T\}} S(\tau^{\varepsilon}_{2\ell+1}) \, {\bf 1}_{\{\Theta(\tau^{\varepsilon}_{2\ell+1}) \in A\}} \,= \!\! \sum_{\{\ell \, :\, \tau^{\varepsilon}_{2\ell+1}  < T\}}\!\! \lVert X (\tau^{\varepsilon}_{2\ell+1}) \rVert \, {\bf 1}_{\{\Theta(\tau^{\varepsilon}_{2\ell+1}) \in A\}} 
\]
\[
\, =\, \sum_{\{\ell \, :\, \widetilde{\tau}^{\varepsilon}_{2\ell+1}  < T\}} R^{A}( \widetilde{\tau}^{\varepsilon}_{2\ell+1}) \, =\,  \varepsilon \,\widetilde{N}(T, \varepsilon) +O(\varepsilon) \, \xrightarrow[\varepsilon \downarrow 0]{} \, L^{R^{A}}(T) =\, {\bm \nu} (A) \, L^{S}(T) \, 
\]
in probability. Here we define $\, \widetilde{\tau}^{\varepsilon}_{0} \, :=\, \inf \big\{t \geq 0  \, : \, R^{A}(t) \, =\, 0 \big\}\,$,  and recursively $$ \widetilde{\tau}^{\varepsilon}_{2\ell+1} \, :=\, \inf \big\{t >  \widetilde{\tau}^{\varepsilon}_{2\ell} \, : \, R^{A}(t) \geq \varepsilon \big\}\, , 
\qquad  \widetilde{\tau}^{\varepsilon}_{2\ell+2} \, :=\, \inf \big\{t >  \widetilde{\tau}^{\varepsilon}_{2\ell+1} \, : \, R^{A}(t) \, =\, 0 \big\}\,$$ for $\,\ell \in \mathbb N_{0}\,$,  and   denote by  $\, \widetilde{N}(\varepsilon, T)\,$ the number of downcrossings of the interval $\,(0, \varepsilon) \,$ that the process $\, R^{A} (\cdot)\,$ has completed during the interval $ [0, T) $  $($please note that  we count here the number   of downcrossings corresponding to the rays in the directions in the subset $\,A\,$ of $\,[0, 2\pi))$.  Thus,  approximating the function $\,\theta \mapsto  g^{\prime}_{\theta}(0+)\,$ by   indicators   $\, \theta \mapsto {\bf 1}_A (\theta )\,$,  $\,A \in \mathcal B([0, 2\pi))\,,$ we verify the convergence   
\begin{equation}
\label{007}
\sum_{\{\ell \, :\, \tau^{\varepsilon}_{2\ell+1}  < T\}} \varepsilon \, g^{\prime}_{\Theta(\tau^{\varepsilon}_{2\ell + 1})}(0+) \,\,\xrightarrow[\varepsilon \downarrow 0 ]{} \, \,L^{S}(T)\int_{0}^{2\pi} g^{\prime}_{\theta}(0+) {\bm \nu}({\mathrm d} \theta) \,, \quad \text{ in probability.}
\end{equation}
$\bullet~$
Therefore, the limit of the expression in (\ref{eq: gXe}) is the sum of the limits   of the expressions in (\ref{eq: gXe3}) and (\ref{eq: gXe2}), and we conclude   with 
\[
g(X(T)) = g({\mathrm x})  +  \Big(\int_{0}^{2\pi} g^{\prime}_{\theta}(0+) {\bm \nu}({\mathrm d} \theta)  \Big) L^{S}(T) + \int^{T}_{0} {\bf 1}_{\{X(t) \neq \mathbf{ 0} \}} \Big( G^{\prime}(X(t)) {\mathrm d} S(t) + \frac{1}{\, 2\, } G^{\prime\prime} (X(t))   {\mathrm d}   \langle S \rangle(t) \Big),
\]
which establishes (\ref{GenFS1})  and proves the first claim in Theorem \ref{Gen_FS}; the second   follows then   directly. \qed

\medskip
\noindent
{\bf Proof of Theorem \ref{IFF}:  (i)} Assume $\, \gamma_{1}^{2}+\gamma_{2}^{2}\leq 1\,$, and consider the vector  $\, {\bm \gamma}:= (\gamma_{1}, \gamma_{2})^{\prime} \in \R^2\,$. Then  we define the  probability measure   $\, {\bm \mu} := \big( (1+\beta) / 2 \big) \,\delta_{z_{0}}   + \big( (1-\beta) / 2 \big) \,\delta_{-z_{0}}  \,$ on $\, (\mathfrak S, \mathcal B(\mathfrak S))\,$ with $\, \beta := \Vert \bm \gamma \Vert \leq 1\,$ and $\, z_{0}:=   {\bm \gamma} / \beta  \in \mathfrak S\,$ provided that $\, \beta \neq 0\,$ (if $\, \beta=0\,$, we simply pick up an arbitrary $\, z_{0} \in \mathfrak S$), 
  and note 
\[
\int_{ \mathfrak S}  \mathfrak f (z) \,  {\bm \mu} (\mathrm{d} z) \, = \,\int_{ \mathfrak S}  z \,  {\bm \mu} (\mathrm{d} z)\, = \,\frac{1+\beta}{2}\, z_{0} + \frac{1-\beta}{2} \,(-z_{0})\, = \,\beta z_{0}\, =\, \bm \gamma \,.
\]
Thus, if we take the  process $\, S(\cdot) \,$ in section 2 as the ``folded driver"  and $\, \bm \mu \,$ as the ``spinning measure", Theorem 2.1   constructs a   continuous planar semimartingale    $X(\cdot)=  (X_{1}(\cdot), X_{2}(\cdot))^{\prime} $ that satisfies the condition (\ref{length})  and the system of equations (\ref{eq: skewTanaka})  -- thus also the system    (\ref{eq: skewTanaka_2}).

 \noindent
{\bf (ii)}  Suppose now that  (\ref{eq: Pos_Loc_Time})  holds, and that there exists a continuous   semimartingale    $X(\cdot)=  (X_{1}(\cdot), X_{2}(\cdot))^{\prime} $ which satisfies        (\ref{length}) and the system of equations (\ref{eq: skewTanaka_2}), thus also of (\ref{eq: skewTanaka}). For every $\, \varepsilon > 0\,$, we define    
$\,  \tau^{\varepsilon}_{-1}\equiv 0\,$ and $\, \big\{ \tau^{\varepsilon}_{m} \big\}_{m \in \N_0}\,$ as in (\ref{eq: tau rec}). 
Following the idea in the proof of Theorem \ref{Gen_FS}, we write 
\begin{equation*} 
X(T) = {\mathrm x} + \sum_{\ell \in \mathbb N_{-1}} \big ( X(T \wedge \tau^\varepsilon_{2\ell+2}) - X(T \wedge \tau^\varepsilon_{2\ell+1}) 
\big) 
+ \sum_{\ell \in \mathbb N_{0}}   \big ( X(T \wedge \tau^\varepsilon_{2\ell+1}) - X(T \wedge \tau^\varepsilon_{2\ell}) \big) \, .
\end{equation*}

\noindent
Then as $\, \varepsilon \downarrow 0 \,$,    on account of (\ref{eq: skewTanaka}) and  in the same manner as in the proof of Theorem \ref{Gen_FS}, the first summation in the above expression converges in probability to $\,\int^{T}_{0} \mathfrak f\big(X(t)\big) \,{\mathrm d} U(t)\,$. Thus, the second summation converges in probability to $\, \bm \gamma \, L^{ \,||X||}(T)\,$, thanks to (\ref{eq: skewTanaka_2}).  This  implies the convergence  in probability
$$ 
\sum_{\ell \in \mathbb N_{0}}   \big ( X(T \wedge \tau^\varepsilon_{2\ell+1}) - X(T \wedge \tau^\varepsilon_{2\ell}) \big) \,=\,\sum_{\ell =0}^{N(T, \varepsilon)-1} \varepsilon \, {\mathfrak f}(X(\tau_{2\ell+1}^{\varepsilon})) +O(\varepsilon)\,\xrightarrow[\varepsilon \downarrow 0] {}\, {\bm \gamma }L^{ \,||X||}(T)\,.
 $$ 
 We also have the convergence in probability $\, \varepsilon \, N(T, \varepsilon)  \rightarrow L^{\Vert X \Vert} (T)\, $ as $\,
 \varepsilon \downarrow 0\,$ by Theorem VI.1.10 in \textsc{Revuz \& Yor} (1999), where $\,N(T, \varepsilon) \, :=\, \sharp \big\{ \ell \in \mathbb{N} : \tau^{\varepsilon}_{2 \ell } < T \big\}\,$.   Therefore, 
  we have $$ \qquad \frac{1}{N(T, \varepsilon)} \sum_{\ell =0}^{N(T, \varepsilon)-1} {\mathfrak f} \big(X(\tau_{2\ell+1}^{\varepsilon})\big)\,\,\xrightarrow[\varepsilon \downarrow 0] {}\, \,{\bm \gamma }\, \quad \qquad \text{in probability, \, on the event $\, \big\{ L^{ \,||X||}(T) >0 \big\}\,. $}  $$
  Now $\, \Vert \bm \gamma \Vert \leq 1 \,$ follows from $\, \Vert \mathfrak f (\cdot)  \Vert \leq 1 \,$, since we can select a sufficiently large $T \in (0, \infty)$  such that $\,\mathbb{P} \big( L^{ \,||X||}(T) >0 \big) \,>\,0 $ (thanks to  (\ref{eq: Pos_Loc_Time}) and (\ref{length})).    \qed

\section{\textsc{Walsh} Diffusions and the Associated Martingale Problems}
\label{sec: MP}

 We cannot expect pathwise uniqueness, therefore neither can we expect  strength, to hold for the equations of (\ref{eq: skewTanaka}) or (\ref{eq: skewTanaka_2}). Any such lingering hope is dashed by the realization that, when $\,U(\cdot)\,$ is standard Brownian motion, thus $\,S(\cdot)\,$ a reflecting Brownian motion, the process $\,X(\cdot)\,$ constructed in Theorem \ref{prop: skewTanaka} is the \textsc{Walsh} Brownian motion -- a process whose filtration cannot be generated by {\it any} Brownian motion of {\it any} dimension. For this result see the celebrated  paper by \textsc{Tsirel'son} (1997), as well as $\,$Proposition \ref{prop: ID} below  and  \textsc{Mansuy \& Yor} (2006), pages 103-116. In light of these observations,  it is natural to ask whether the next best thing, that is, {\it uniqueness in distribution}, might hold for these equations under appropriate conditions. We try in this section to provide some affirmative answers to this question, when the folded driving semimartingale $\, S(\cdot)\,$ is a reflected diffusion; the main results appear in Propositions \ref{prop: MP1} and \ref{Cor: MP1}.

\subsection{The Folded Driving Semimartingale  as a Reflected Diffusion}   
\label{sec_511}

Let us start by  considering the canonical space $\,\Omega_1    :=  C([0, \infty); [0, \infty)) \,$ of nonnegative, continuous functions on $\,[0, \infty)\,$. We   endow this space with the usual topology of uniform convergence over compact intervals  and with the  $\sigma-$algebra $\, \mathcal F_1    :=  \mathcal B (\Omega_1 ) \,$ of its   \textsc{Borel} sets. We   consider also    the  filtration $\, \mathbb F_{1}   :=  \{\mathcal F_{1}(t)  \}_{0 \le t < \infty } \,$ generated by its co\"ordinate mapping, i.e.,   $\,\mathcal F_{1}(t) = \sigma \big( \omega_1 (s),\, \, 0 \le s \le t \big) \,$. 

Given  \textsc{Borel}-measurable co\"efficients $\,{\bm b} : [0, \infty) \to \mathbb R\,$ and $\,{\bm \sigma} : [0, \infty) \to \R \setminus \{ 0\}\,$ and setting $\,{\bm a} (\cdot) := {\bm \sigma}^{2}(\cdot)\,$, we define the process    
\begin{equation} 
\label{eq: MP1}
K^{\psi}  (\cdot\, ; \omega_1) \, :=\, \psi (\omega_{1}(\cdot)) - \psi (\omega_{1}(0)) -  \int^{\cdot}_{0} \mathcal G \psi (\omega_{1} (t))\cdot {\bf 1}_{\{ \,   \omega_{1} (t)  > 0 \}} \, {\mathrm d} t \, ,  
\end{equation}
 where
\[
\mathcal G  \psi (r) \, :=\, {\bm b} (r) \, \psi^{\prime}(r) + \frac{1}{\, 2\, } \,{\bm a} (r) \,\psi^{\prime\prime}(r) \, ; \, \, \, \, \,\,r \in [0, \infty)\, , \, \,\,\,\psi \in C^{2}_{0}\big( [0, \infty); \mathbb R\big) \, . 
\]

\subsubsection{Local Submartingale Problem for a Reflected Diffusion}

In the manner of \textsc{Stroock \& Varadhan} (1971), we formulate the  {\bf Local   Submartingale Problem associated with the pair   (${\bm \sigma}, {\bm b}$)\,}   as follows.

\smallskip
 {\it For every given $\,x \in [ 0, \infty) ,$ to find a   probability measure $\, {\mathbb Q}^{\bullet}\,$ on the     space $\,(\Omega_{1}, \mathcal F_{1}) \,$, under which: \\ (i) $\,\, \,\omega_{1}(0) \, =\,  x \,$ and $\, \,\int_0^\infty \mathbf{ 1}_{ \{ \omega_1 (t) =0\}} \, \mathrm{d}t =0\,\,$ hold $\,\mathbb Q^{\bullet}-\,$a.e.; and moreover, \\ (ii) \, for every function $\,\psi \in C^{2} ( [0, \infty); \mathbb R) \,$ with $\,\psi^{\prime}(0+) \ge 0\,$, the process $\, K^{\psi} (\cdot)\,$ is a continuous local submartingale, and    a continuous 
 local martingale whenever $\,\psi^{\prime}(0+) \, =\, 0\,,$ with respect to the filtration}  $\, \mathbb F_{1}^{\bullet} = \big\{ \mathcal F_{1}^{\bullet}(t)\big\}_{  0 \le t < \infty }\,$   with $\,  \mathcal F_{1}^{\bullet}(t) \, :=\, \mathcal F_{1}^{\circ}(t+) \,.$      
 
 \smallskip
Here we have   denoted by $\,\mathbb F_{1}^{\circ}:=\{ \mathcal F_{1}^{\circ}(t),  \, 0 \le t < \infty \}\,$   the augmentation of $\, \mathbb F_{1}\,$ under $\, {\mathbb Q}^{\bullet}\,$.  As usual, we shall say that this problem is {\it well-posed,} if it admits exactly one solution. For a recent study of the  well-posedness of submartingale problems for obliquely reflected diffusions, in domains with piecewise smooth boundaries, see \textsc{Kang \& Ramanan} (2014), where a random measure similar to (\ref{eq: RM}) is derived.

\subsection{A Local Martingale Problem for the Planar  Diffusion}   
\label{sec_512}

Next, we consider the canonical space $\,\Omega_{2}   := C([0, \infty); \mathbb R^{2}) \,$ of $\,\mathbb R^{2}-$valued continuous functions on $\, [0, \infty)\,$ endowed with the  $\sigma-$algebra $\, \mathcal F_{2}   :=  \mathcal B (\Omega_{2}) \,$ of its   \textsc{Borel} sets. We  consider also its co\"ordinate mapping  and the natural filtration $\, \mathbb F_{2} \, :=\, \{\mathcal F_{2}(t)  \}_{0 \le t < \infty } \,$ with $\,\mathcal F_{2}(t) = \sigma \big( \omega_2 (s),\, \, 0 \le s \le t \big) $.  We recall, here and in what follows, the Definitions \ref{def: D} and \ref{def: G}.

Given a  probability measure $\,{\bm \mu}\,$ on the   \textsc{Borel} subsets of the unit circumference $\,\mathfrak S\,$,  and  \textsc{Borel}-measurable functions $\,{\bm b} : [0, \infty) \to \mathbb R\,,$  $\,{\bm \sigma} : [0, \infty) \to \R \setminus \{ 0\}\,$ as in subsection \ref{sec_511},  we define  for every function $\,g \in \mathfrak D_{}\,$ the process 
\begin{equation} 
\label{eq: MP2}
M^{g} (\cdot\,;\omega_2) \, :=\, g(\omega_{2}(\cdot)) - g(\omega_{2}(0)) - \int^{\cdot}_{0} \mathcal L  \, g (\omega_{2} (t)) \cdot {\bf 1}_{\{ \, \lVert \omega_{2} (t) \rVert > 0 \}} \, {\mathrm d} t    \,, \qquad \,\,\,\text{where}
\end{equation}
\[
\mathcal L  \,g (x) \, :=\, {\bm b} ( \lVert x \rVert ) \, G^{\prime}(x) + \frac{1}{\, 2\, } \,{\bm a} ( \lVert x \rVert ) \,G^{\prime\prime}(x) \, ; \, \, \, \, \,\,\,x \in \mathbb R^{2}\,   .   
\]

\subsubsection{The Local Martingale Problem}
\label{sec: LMP}
 
Motivated by the generalized \textsc{Freidlin-Sheu} formula (\ref{GenFS1}) in Theorem \ref{Gen_FS}, we   formulate now the   {\bf Local Martingale Problem associated with the triple  (${\bm \sigma}, {\bm b}, {\bm \mu}$) }  as follows. 

\smallskip
{\it For every fixed $\, \mathrm x \in \mathbb R^{2}\,,$ to find a probability measure $\, {\mathbb Q} \,$ on  the canonical space $\,(\Omega_{2}, \mathcal F_{2}) \,$,  such that: \\ (i) $\, \omega_{2}(0) \, =\, \mathrm x \,$ holds $\, \mathbb Q-\,$a.e.; \\ 
(ii)     the analogue of the ``non-stickiness"  property (\ref{eq: zeroLeb}) holds, namely 
\begin{equation} 
\label{eq: zeroLeb2}
\, \int_0^\infty \mathbf{ 1}_{ \{ \omega_2 (t) \, =\, \mathbf{ 0}\}} \, \mathrm{d}t =0\,, \qquad \, {\mathbb Q}-a.e.;
\end{equation}
(iii) for every  function $\,g\,$ in   $\, \mathfrak D_{+}^{{\bm \mu}} $ $($respectively, $\, \mathfrak D^{{\bm \mu}} )$ as in (\ref{eq: Dmu}), the process $\,M^{g} (\cdot\,;\omega_2) \,$ of (\ref{eq: MP2}) is a continuous local submartingale (resp., martingale) with respect to the filtration  $\, \mathbb F_{2}^{\bullet}   :=  \{ \mathcal F_{2}^{\bullet}(t)  \}_{   0 \le t < \infty} \,$.   
 }  

  \medskip
Here we have set $\,\mathcal F_{2}^{\bullet}(t)  :=   \mathcal F_{2}^{\circ}(t+)\,$, and   denoted by $\,\mathbb F_{2}^{\circ}= \big\{ \mathcal F_{2}^{\circ}(t) \big\}_{0 \le t < \infty }\,$  the $\, {\mathbb Q}-$augmentation of the filtration $\, \mathbb F_{2}\,$.  Again,   this problem is called ``well-posed" if it   admits exactly one solution.

\smallskip
\noindent
$\bullet~$
The  theory  of the \textsc{Stroock \& Varadhan} martingale problem  is extended in Proposition \ref{prop: SDEMP} right below, for a  continuous planar semimartingale $X(\cdot)$ that satisfies the   properties    (\ref{eq: zeroLeb}), (\ref{eq: localDist}) and, with  co\"efficients $\,  \gamma_i\,,\, i= 1, 2\,$   given through (\ref{gamma}) and (\ref{alpha}), the system of stochastic integral equations  
\begin{equation} 
\label{eq: SDEMP}
X_{i}(\cdot) \, =\, X_{i}(0) + \int^{\,\cdot}_{0} {\mathfrak f}_{i}(X(t)) \Big[ {\bm b} \big( \lVert X(t) \rVert \big) {\mathrm d} t\, +\, {\bm \sigma} \big( \lVert X(t) \rVert \big) {\mathrm d} W(t) \Big] +   
\gamma_i\, L^{ \lVert X\rVert}(\cdot) \,, \quad i=1, 2\,.  
\end{equation}

\begin{prop} 
\label{prop: SDEMP}   {\bf Stochastic Equations for \textsc{Walsh} Diffusions:} 
 (a)\, For every weak   solution $\,(X, W)\,$, $\,( {\Omega}, {\mathcal F}, \mathbb P)\,$, $\, {\mathbb F}  = \{ {\mathcal F} (t) \}_{   0 \le t < \infty}\,$ to the system of stochastic equations   (\ref{eq: SDEMP}), we have   
 \begin{equation} 
\label{eq: ||X||} 
\lVert X(\cdot) \rVert \, =\,  \lVert X(0) \rVert + \int^{\cdot}_{0} {\bf 1}_{\{ \lVert X(t) \rVert > 0 \}} \, \Big( {\bm b} ( \lVert X(t) \rVert) {\mathrm d} t + {\bm \sigma} ( \lVert X(t) \rVert) {\mathrm d} W(t)\Big) + L^{||X||} (\cdot) \, ; 
\end{equation}
  and if this weak solution also satisfies the conditions (\ref{eq: zeroLeb})-(\ref{eq: localDist}), then it  induces a solution to the local martingale problem associated with the triple $\,({\bm \sigma}, {\bm b}, {\bm \mu})\,.$

 \smallskip
\noindent
(b)\, Conversely, every solution to the local martingale problem associated with the triple $\,({\bm \sigma}, {\bm b}, {\bm \mu})\, $ induces a weak   solution  to the system     (\ref{eq: SDEMP}) which satisfies the properties (\ref{eq: zeroLeb}), (\ref{eq: localDist}). The state process $X(\cdot)$ in  this weak solution solves also the system of stochastic equations (\ref{eq: skewTanaka}) with   ``folded driver" $\, S(\cdot)  = \lVert X(\cdot) \rVert \,$.  

\smallskip
\noindent
(c)\, Uniqueness holds  for the local martingale problem associated with   $\,({\bm \sigma}, {\bm b}, {\bm \mu}),$ if and only if uniqueness in distribution holds for the system of  (\ref{eq: SDEMP})   
subject to the conditions     
(\ref{eq: zeroLeb}), (\ref{eq: localDist}).  
\end{prop}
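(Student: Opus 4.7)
For part (a), starting from a weak solution $(X,W)$ to (\ref{eq: SDEMP}), I first derive (\ref{eq: ||X||}) by direct semimartingale calculus. Since $dL^{\|X\|}$ is supported on $\{X=\mathbf{0}\}$ where each $X_{i}$ vanishes, It\^o's formula applied to $X_{i}^{2}$, summed over $i$ and simplified via $\sum_{i}\mathfrak{f}_{i}^{2}(X) = \mathbf{1}_{\{X \neq \mathbf{0}\}}$, yields
\[
d\|X\|^{2} \,=\, 2\|X\|\,\bm{b}(\|X\|)\,dt + 2\|X\|\,\bm{\sigma}(\|X\|)\,dW + \mathbf{1}_{\{\|X\|>0\}}\,\bm{\sigma}^{2}(\|X\|)\,dt.
\]
Applying It\^o to $\sqrt{\cdot}$ on $\{\|X\|>0\}$ (via the mollification $\phi_{\varepsilon}(x) := \sqrt{\|x\|^{2}+\varepsilon^{2}}$ and letting $\varepsilon \downarrow 0$) produces $d\|X\| = \bm{b}\,dt + \bm{\sigma}\,dW$ on that set; the nonnegativity of $\|X\|$ forces the residual finite-variation term to equal $L^{\|X\|}$ by the uniqueness in Skorokhod's reflection decomposition, giving (\ref{eq: ||X||}).

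For the second claim of (a), assume additionally (\ref{eq: zeroLeb})--(\ref{eq: localDist}) and set $U(t) := \int_{0}^{t}\bm{b}(\|X\|)\,ds + \int_{0}^{t}\bm{\sigma}(\|X\|)\,dW$. Under (\ref{eq: zeroLeb}) the identity (\ref{eq: ||X||}) identifies $\|X\|$ as the Skorokhod reflection of $U$, so (\ref{eq: SDEMP}) becomes an instance of (\ref{eq: skewTanaka_2}), equivalently (\ref{eq: skewTanaka}). Theorem \ref{Gen_FS} then applies: for any $g \in \mathfrak{D}^{\bm{\mu}}_{+}$ (respectively $\mathfrak{D}^{\bm{\mu}}$), the formula (\ref{GenFS1}) rearranges to
\[
M^{g}(t) \,=\, \int_{0}^{t}G'(X)\,\bm{\sigma}(\|X\|)\,\mathbf{1}_{\{X \neq \mathbf{0}\}}\,dW \,+\, \Big(\int_{0}^{2\pi}g'_{\theta}(0+)\,\bm{\nu}(d\theta)\Big)\,L^{\|X\|}(t),
\]
which is a continuous local submartingale (respectively martingale), delivering the solution to the local martingale problem.

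For part (b), I reverse direction. Starting from a solution $\mathbb{Q}$ of the local martingale problem, I test first with $g(x) = \psi(\|x\|)$ for $\psi \in C^{2}([0,\infty);\R)$: these lie in $\mathfrak{D}^{\bm{\mu}}$ when $\psi'(0+)=0$ and in $\mathfrak{D}^{\bm{\mu}}_{+}$ when $\psi'(0+)\ge 0$, and the corresponding $M^{g}$ exactly encode that $\|X(\cdot)\|$ solves the local submartingale problem for the pair $(\bm{\sigma},\bm{b})$ on $[0,\infty)$. Hence it admits the decomposition $\|X(t)\| = \|X(0)\| + N(t) + L^{\|X\|}(t)$, with $N$ a continuous local martingale of quadratic variation $\int_{0}^{\cdot}\bm{\sigma}^{2}(\|X\|)\,\mathbf{1}_{\{\|X\|>0\}}\,ds$. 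Martingale representation (on a slight enlargement of the probability space, if necessary, to cover the Lebesgue-null set $\{\|X\|=0\}$ from (\ref{eq: zeroLeb2})) produces a standard Brownian motion $W$ with $dN = \bm{\sigma}(\|X\|)\mathbf{1}_{\{\|X\|>0\}}\,dW$, yielding (\ref{eq: ||X||}). Testing next with $g_{i}(x) := x_{i} - \gamma_{i}\|x\| \in \mathfrak{D}^{\bm{\mu}}$ and matching the resulting local martingales---whose cross-variations are pinned down by the rank-one structure of $\mathfrak{f}(X)\mathfrak{f}(X)^{\prime}$---to $\int \mathfrak{f}_{i}(X)\bm{\sigma}(\|X\|)\,dW$ produces (\ref{eq: SDEMP}). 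Finally, for (\ref{eq: localDist}), I test with $x \mapsto \|x\|\,\mathbf{1}_{A}(\mathrm{arg}(x)) \in \mathfrak{D}^{\bm{\mu}}_{+}$, compare the submartingale decomposition so obtained with the Tanaka-type decomposition of the thinned process $R^{A}(\cdot)$, and extract $L^{R^{A}} = \bm{\nu}(A)\,L^{\|X\|}$. The identification with (\ref{eq: skewTanaka}) under the folded driver $S(\cdot) = \|X(\cdot)\|$ is then immediate, exactly as in (a).

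Part (c) is a formal consequence: parts (a) and (b) establish a bijection between laws of weak solutions to (\ref{eq: SDEMP}) satisfying (\ref{eq: zeroLeb})--(\ref{eq: localDist}) and solutions $\mathbb{Q}$ to the local martingale problem, so uniqueness in distribution for the former is equivalent to uniqueness for the latter. \emph{The main obstacle} will be in part (b): constructing the single Brownian motion $W$ that simultaneously drives all components $X_{i}$ with the correct angular weights $\mathfrak{f}_{i}(X)$---the martingale part of $X$ being essentially rank one, so the scalar driver must be isolated carefully---and, in parallel, verifying (\ref{eq: localDist}), which is a structural statement not directly built into the martingale formulation and requires choosing test functions that cleanly separate angular information and then matching Tanaka-like decompositions on each thinned process $R^{A}$.
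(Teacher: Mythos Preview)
Your proposal is correct and follows the paper's route closely. The one genuine technical difference is in deriving (\ref{eq: ||X||}) for part (a): you mollify via $\phi_\varepsilon(x)=\sqrt{\|x\|^2+\varepsilon^2}$, whereas the paper applies It\^o to $\|X\|$ away from the origin and then telescopes over the excursion stopping times $\{\tau_m^\varepsilon\}$ of (\ref{eq: tau rec}) as in (\ref{Decompose||X||}). Both approaches work; yours is slightly more direct, though it implicitly relies on the integrability bound (\ref{ineq: finite}), which the paper extracts first from the very fact that (\ref{eq: SDEMP}) defines a semimartingale. For part (b) your sketch is faithful to the paper but underplays the two places where the real labor lies. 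First, identifying the Doob--Meyer increasing part of the local submartingale $\|X\|$ with $L^{\|X\|}$: the paper does this explicitly via an auxiliary radial test function $g_4\in\mathfrak D^{\bm\mu}$ and a stopping-time argument showing that the increasing part is flat on $\{\|X\|>c_2\}$ for every $c_2>0$. Second, tying all the local martingales $M_i$ to a \emph{single} scalar Brownian motion $W$: the paper computes the full $3\times 3$ cross-variation matrix $(\langle M_i,M_k\rangle)_{1\le i,k\le 3}$ using the product test functions $g_{i,k}$, $g_{i,i}^\circ$ of (\ref{eq: G1})--(\ref{eq: G2}), checks it has rank one, and invokes a joint representation (Theorem 3.4.2 and Proposition 5.4.6 in \textsc{Karatzas \& Shreve}). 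Your sequential plan---build $W$ from $\|X\|$ first, then match each $M_i$ to $\int(\mathfrak f_i-\gamma_i)\,\bm\sigma\,dW$---is also viable, but closing it requires showing $\langle M_i-\int(\mathfrak f_i-\gamma_i)\bm\sigma\,dW\rangle\equiv 0$, for which you still need $\langle M_i,M_3\rangle$ and hence those same product test functions. Your treatment of (\ref{eq: localDist}) via $g_5$, $g_6$ and of part (c) matches the paper.
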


\noindent
{\it Proof of Part (a):}  We first validate (\ref{eq: ||X||}) for any weak solution to (\ref{eq: SDEMP}). From (\ref{eq: SDEMP}) we see 
\[
\int^{T}_{0}  \big( \vert {\mathfrak f}_{i}(X(t)) {\bm b} ( \lVert X(t) \rVert) \vert + {\mathfrak f}_{i}^{2} (X(t)) {\bm a} ( \lVert X(t) \rVert) \big)\, {\mathrm d} t  \,< \,\infty \, , \,\,\,\,\quad i=1,2\,, \,\, \,\,\,\, 0 \leq T < \infty\, .
\]
Since $\, {\mathfrak f}_{1}^{2}({\rm x}) + {\mathfrak f}_{2}^{2} ({\rm x}) =1 \,$ and $\, \vert {\mathfrak f}_{1}({\rm x}) \vert + \vert {\mathfrak f}_{2} ({\rm x})\vert \geq 1 \,$ hold for any $\, {\rm x} \in \mathbb R^{2} \setminus \{ \bm 0 \} \,$, we obtain then
\begin{equation}
\label{ineq: finite}
\int^{T}_{0} {\bf 1}_{\{ || X (t) ||  > 0 \}}\, \big( \vert {\bm b} ( \lVert X(t) \rVert) \vert + {\bm a} ( \lVert X(t) \rVert) \big) {\mathrm d} t < \infty \, , \,\,\,\,\,\,\, 0 \leq T < \infty\, . 
\end{equation}

Let us recall the stopping time $\, \sigma_{\varepsilon}= \inf\{ t > 0 :  \lVert X (t) \rVert  \le \varepsilon \} \,$ for every $\,\varepsilon > 0\,$. Since the function $\, x \mapsto  \Vert x \Vert = \sqrt{x_{1}^{2}+x_{2}^{2}\,} \,$ is smooth on $\, \mathbb R^{2} \setminus \{ \bm 0 \} \,$, we get the following from (\ref{eq: SDEMP}) by \textsc{It\^o}'s formula: 

\begin{equation}
\label{||X||sigmavarepsilon}
\lVert X(\cdot \wedge \sigma_{\varepsilon})\rVert\ = \Vert X(0) \Vert + \int_{0}^{\cdot \wedge \sigma_{\varepsilon}} \, \Big( {\bm b} \big( \lVert X(t) \rVert \big) \,{\mathrm d} t + {\bm \sigma} \big( \lVert X(t) \rVert \big) \,{\mathrm d} W(t)\Big) \, .  
\end{equation}
With $\, \tau(0):= \inf\{ t \geq 0 :  \lVert X (t) \rVert  = 0  \} = \lim_{\varepsilon \downarrow 0 } \sigma_{\varepsilon} \,$, we let $\, \varepsilon \downarrow 0 \,$ in (\ref{||X||sigmavarepsilon}) and obtain (from (\ref{ineq: finite}))
\[
\lVert X(\cdot \wedge \tau (0))\rVert\ = \Vert X(0) \Vert + \int_{0}^{\cdot} {\bf 1}_{(0, \, \tau(0)) }(t)\, \Big( {\bm b} \big( \lVert X(t) \rVert \big) \,{\mathrm d} t + {\bm \sigma} \big( \lVert X(t) \rVert \big) \,{\mathrm d} W(t)\Big) \, .
\]
Recall now the stopping times $\,\big\{ \tau_{m}^{\varepsilon} \,, \, m \in \N_0\big\} \,$ defined in (\ref{eq: tau rec}). In the same manner as above we obtain 
$$
\, \lVert X(\cdot \wedge \tau_{2 \ell + 2}^{\varepsilon})\rVert - \lVert X(\cdot \wedge \tau_{2 \ell + 1}^{\varepsilon})\rVert = \int_{0}^{\cdot} {\bf 1}_{(\tau^\varepsilon_{2 \ell + 1}, \, \tau^\varepsilon_{2\ell+2}) }(t)\, \Big( {\bm b} \big( \lVert X(t) \rVert \big) \,{\mathrm d} t + {\bm \sigma} \big( \lVert X(t) \rVert \big) \,{\mathrm d} W(t)\Big) \, 
$$
for $\, \ell \in \mathbb N_{-1} =  \mathbb N_{0} \cup \{-1\}\,$ with $\, \tau_{-1}^{\varepsilon} \equiv 0\,$. We decompose $\, \Vert X(T) \Vert \,$ as in the proof of Theorem \ref{Gen_FS}:
\begin{equation}\label{Decompose||X||}
\Vert X(T) \Vert = \Vert X(0) \Vert + \sum_{\ell \in \mathbb N_{-1}} \big ( \Vert X(T \wedge \tau^\varepsilon_{2\ell+2}) \Vert - \Vert X(T \wedge \tau^\varepsilon_{2\ell+1}) \Vert \big) 
+ \sum_{\ell \in \mathbb N_{0}}   \big ( \Vert X(T \wedge \tau^\varepsilon_{2\ell+1}) \Vert - \Vert X(T \wedge \tau^\varepsilon_{2\ell}) \Vert \big) \, .
\end{equation}
With the previous considerations, 
 letting $\, \varepsilon \downarrow 0 \,$,  
 we obtain 
 (\ref{eq: ||X||}) for the radial process $\, \lVert X(\cdot) \rVert\,$.   The continuous  semimartingale $X(\cdot)$ thus solves also the system (\ref{eq: skewTanaka}) with the ``folded driver" $\, S(\cdot) = \lVert X(\cdot) \rVert  \,$.   
 \newpage
 
  Suppose now that the properties (\ref{eq: zeroLeb})-(\ref{eq: localDist}) are also satisfied by the weak solution we have posited. Thanks to Theorem \ref{Gen_FS}, for every  given function $\, g \in \mathfrak D_{+}^{{\bm \mu}}  \,$ $($resp., $\, g \in \mathfrak D^{{\bm \mu}}  )$, the process  $\,M^{g} (\cdot \,; X) \,$ as in (\ref{eq: MP2}) is then a local submartingale (resp., martingale).  The property (\ref{eq: zeroLeb2}) comes from (\ref{eq: zeroLeb}). Consequently, a  solution $\,{\mathbb Q} \,$ to the local martingale problem  associated with the triple $\,({\bm \sigma}, {\bm b}, {\bm \mu}) \,$ is given by     the  probability  measure $\, {\mathbb Q} = \mathbb P X^{-1}\,$  induced by the process $\,X(\cdot) \,$  on the canonical space $\, (\Omega_{2}, \mathcal F_{2})\,$.  \qed

\medskip
\noindent
{\it Proof of Part (b):} 
Conversely, suppose that the local martingale problem  associated with the triple $\,({\bm \sigma}, {\bm b}, {\bm \mu}) \,$ has a   solution $\, \mathbb Q\,$. We recall the notation in (\ref{gamma}) and define on the canonical space the processes
\begin{equation} \label{eq: XMPSDE}
X(\cdot) \, \equiv \,  \big(X_{1}(\cdot), X_{2}(\cdot)\big)^{\prime} \, :=\,   \big( \, \lVert \omega_{2} (\cdot) \rVert \, {\mathfrak f}_{1} ( \omega_{2}(\cdot))  \, , \,\, \lVert \omega_{2} (\cdot)\rVert \, {\mathfrak f}_{2} ( \omega_{2}(\cdot)) \,\big)^{\prime}   , 
\end{equation}
\begin{equation}\label{eq: M_i}
M_{i}(\cdot)  \, :=\, X_{i}(\cdot) - X_{i}(0) - \int^{\cdot}_{0} {\bm b}( \lVert X(t) \rVert) {\mathfrak f}_{i}( X(t)) {\mathrm d} t \,-\, \gamma_i \Big( \lVert X(\cdot) \rVert - \lVert X(0) \rVert - \int^{\cdot}_{0} {\bm b} (\lVert X(t) \rVert)\, \mathbf{ 1}_{ \{ \lVert X(t) \rVert>0 \}}  {\mathrm d} t \Big) , 
\end{equation}
\[
M_{i,k}(\cdot) \, :=\, g_{i,k}(X(\cdot)) - g_{i,k}(X(0)) - 2 \int^{\cdot}_{0} \lVert X(t) \rVert \, {\bm b} ( \lVert X(t) \rVert) \, \big({\mathfrak f}_{i}(X(t)) - \gamma_{i}\big) ( {\mathfrak f}_{k}(X(t)) - \gamma_{k}\big)\, \mathbf{ 1}_{ \{ \lVert X(t) \rVert>0 \}}  {\mathrm d} t  \,  
\]
\[
- \int^{\cdot}_{0} {\bm a} (\lVert X(t) \rVert )\big({\mathfrak f}_{i}(X(t)) - \gamma_{i}\big) ( {\mathfrak f}_{k}(X(t)) - \gamma_{k}\big)\, \mathbf{ 1}_{ \{ \lVert X(t) \rVert>0 \}}  {\mathrm d} t    
\]
for $\, 1 \le i, k \le 2\,$, as well as  
\begin{equation} 
\label{eq: XMPSDE.a}
M_{i,i}^\circ (\cdot) \, := \, X_{i}^2(\cdot) - X_{i}^2(0) - \int^{\cdot}_{0} {\mathfrak f}_{i}^2( X(t)) \, \Big[ \,2\, \lVert X(t) \rVert \, {\bm b}( \lVert X(t) \rVert)  + {\bm a}( \lVert X(t) \rVert) \,\Big] \, {\mathrm d} t   \,.
\end{equation}
$\bullet~$
 Here, as in Proposition \ref{prop: MART}, we consider the following  functions  in the family   $\,\mathfrak D^{{\bm \mu} }\,$ of (\ref{eq: Dmu}): 
\begin{equation}
\label{eq: G1}
g_{1}(x) \, :=\, r \big( \cos (\theta) - \gamma_1  \big) \, , \quad 
g_{2}(x) \, :=\, r \big( \sin (\theta) -\gamma_2 \big) \, , \quad 
g_{i,k}(x) \, :=\, g_{i}(x) \, g_{k}(x)   
\end{equation} 
for $\, x = (r, \theta) \in {\mathbb R}^{2}\,$, $\, 1 \le i, k \le 2\,$. We consider also the functions $\,g_{i,i}^{\circ} \in {\mathfrak D}^{\bm \mu}\,$ and $\,g_{3} \in {\mathfrak D}^{\bm \mu}_{+}\,$ defined by 
\begin{equation}
\label{eq: G2}
\, g_{1,1}^{\circ}(x) \, :=\, r^{2} \cos^{2} (\theta) \,, \quad \,g_{2,2}^{\circ}(x) \, :=\, r^{2} \sin^{2}(\theta) \,, \quad \, g_3 (x) := r\, ; \qquad x \in {\mathbb R}^{2} 
\, . 
\end{equation} 
\noindent
$\bullet~$  
We deduce then from (\ref{eq: MP2}) that the processes 
 $\,
M_{i}(\cdot) \equiv  M^{g_{i}}( \cdot\,;X)\,, \,\, 
M_{i,k}(\cdot) \equiv M^{g_{i,k}}( \cdot\,;X)\,$ as well as 
$\, M_{i,i}^\circ (\cdot)  \equiv  M^{g_{i,i}^\circ}( \cdot\,;X)    \, ,  
$  
are continuous local martingales for $\,1 \le i, k \le 2\,$;  and that so are the processes 
\[
M_{i,k}(\cdot) - g_{i}(X(0)) M_{k}(\cdot) - g_{k}(X(0)) M_{i}(\cdot)  - \int^{\cdot}_{0} \Big( \int^{t}_{0}  \big( {\mathfrak f}_{k}(X(s)) - \gamma_{k}\big) {\bm b}( \lVert X(s) \rVert) \, \mathbf{ 1}_{ \{ \lVert X(s) \rVert>0 \}}  {\mathrm d} s \Big) {\mathrm d} M_{i}(t)
~~~~~~~~~~~~~~~~
\]
\[
 ~~~~~~~~~~~~~~~~ - \int^{\cdot}_{0} \Big( \int^{t}_{0}  \big( {\mathfrak f}_{i}(X(s)) - \gamma_{i} \big) {\bm b}( \lVert X(s) \rVert) \, \mathbf{ 1}_{ \{ \lVert X(s) \rVert>0 \}}  {\mathrm d} s \Big) {\mathrm d} M_{k}(t) \, =\, M_{i}(\cdot) M_{k}(\cdot) - \int^{\cdot}_{0} r_{i,k}(t) \, {\mathrm d} t \, .
\]
 This way, we identify  for $\,1\le i , k \le 2\,$ the cross-variation structure 
\begin{equation} 
\label{eq: rikt}
\langle M_{i} , M_{k} \rangle (\cdot) = \int^{\cdot}_{0} r_{i,k}(t) {\mathrm d} t \,  ,\, \quad 
r_{i,k} ( t)   :=  {\bm a}( \lVert X( t) \rVert) \,\big( {\mathfrak f}_{i}(X( t)) - \gamma_i \big) \big( {\mathfrak f}_{k}(X( t)) - \gamma_k  \big)\,\mathbf{ 1}_{ \{ \lVert X(t) \rVert>0 \}}     \,.
\end{equation}

\noindent
$\bullet~$ 
We also observe  that the continuous process 
\begin{equation} \label{eq: N}
N(\cdot) \, :=\, M^{g_3 }  (\cdot\,;X) \,=\, \lVert X(\cdot) \rVert - \lVert X(0) \rVert - \int^{\cdot}_{0} {\bm b} (\lVert X(t) \rVert)\, \mathbf{ 1}_{ \{ \lVert X(t) \rVert>0  \}} \, {\mathrm d} t 
\end{equation}
is a  local submartingale; this way we obtain the semimartingale property of the radial process $\, || X(\cdot)||\,$.  
By the \textsc{Doob-Meyer} decomposition   (e.g., \textsc{Karatzas \& Shreve} (1991), Theorem 1.4.10), there exists then an adapted, continuous and increasing process $\, A(\cdot) \,$ such that 
\begin{equation} 
\label{eq: M3}
 M_{3}(\cdot)  \,:=\,  N(\cdot) - A(\cdot) \,=\,\lVert X(\cdot) \rVert - \lVert X(0) \rVert - \int^{\cdot}_{0} {\bm b} (\lVert X(t) \rVert)\, \mathbf{ 1}_{ \{ \lVert X(t) \rVert>0  \}} \, {\mathrm d} t - A(\cdot)
 \end{equation}
  is a continuous local martingale. {\it We claim that this increasing process is   $\, A(\cdot) = L^{||X||} (\cdot)\,$, the local time at the origin of the continuous, nonnegative semimartingale $\, || X(\cdot)||\,$.}
 \newpage
\noindent
$\bullet~$ 
In order to substantiate this claim, let us fix two arbitrary positive constants $\, c_{1} , c_{2} \,$ with $\, c_{1} < c_{2}\,$ and define a sequence of stopping times inductively, via $\, \sigma_{0}    :=   \inf \{ t \ge 0 : \lVert X(t) \rVert \, =\, c_{2} \} \,$  if $\, \lVert X(0) \rVert < c_{2}\,$  and $\, \sigma_{0}    :=0\,$  otherwise; as well as 
\[
\sigma_{2n+1} \, :=\,  \inf \{ t \ge \sigma_{2n} : \lVert X(t) \rVert \, =\, c_{1} \} \, , \quad \sigma_{2n+2} \, :=\,  \inf \{ t \ge \sigma_{2n+1} : \lVert X(t) \rVert \, =\, c_{2} \} \, ; \quad n \in \mathbb N_{0} \, . 
\] 
We note that  $\, \Vert X(t) \Vert \geq c_{1}\,$ holds for $\, t \in (\sigma_{2n}, \sigma_{2n+1})\,$; and conversely, that  $\, \Vert X(t) \Vert > c_{2}\,$ implies $\, t \in (\sigma_{2n}, \sigma_{2n+1})\,$ for some $\, n \in \mathbb N_{0}\,$. Thus,  by taking an appropriate smooth function $\, g_{4}  \in \mathfrak D^{\bm \mu}\,$ of the form $\, g_{4}(r, \theta) = \psi(r)\,$ where $\, \psi : [0, \infty) \to [0, \infty) \,$ is smooth with $\,\psi(r) = r\,$ for $\,r \geq c_{1}\,$, one can show that $  N(\cdot \wedge \sigma_{2n+1}) - N(\cdot \wedge \sigma_{2n})\, $ is a continuous local martingale. 

\smallskip
Then, since both processes $\, N(\cdot \wedge \sigma_{2n+1}) - A(\cdot \wedge \sigma_{2n+1}) \,$ and $\, N(\cdot \wedge \sigma_{2n}) - A(\cdot \wedge \sigma_{2n}) \,$ are continuous local martingales, so is $\,A(\cdot \wedge \sigma_{2n+1}) - A(\cdot \wedge \sigma_{2n}) \,$. But this last process  is of bounded variation, so $\,A(\cdot \wedge \sigma_{2n+1}) \equiv  A(\cdot \wedge \sigma_{2n})\,$ for every $\,n \in \mathbb N_{0}\,$. In other words, the process $\,A(\cdot)\,$ is flat on $\,[\sigma_{2n}, \sigma_{2n+1}]\,$ for every $n$. Therefore we have $\, 
\int^{\infty}_{0} {\bf 1}_{\{\lVert X (t) \rVert \in (c_{2}, \infty) \}}  \, {\mathrm d} A(t) \, \equiv\,  0 \,$, because $\,\lVert X (t) \rVert \in (c_{2}, \infty)\,$ implies $\, t\in (\sigma_{2n}, \sigma_{2n+1})\,$ for some $\, n \in \mathbb N_{0}\,$. Since $\, c_{2} > 0\,$ can be chosen arbitrarily small, we obtain
\begin{equation} \label{eq: A}
\, A(\cdot)  \, =\,  \int^{\cdot}_{0} {\bf 1}_{\{ \lVert X(t) \rVert \, =\, 0\}} \, {\mathrm d} A(t) \, , \quad \text{ and } \quad \int^{\cdot}_{0} \lVert X(t) \rVert   {\mathrm d} A(t) \, =\,  0 \, . 
\end{equation}
In conjunction with (\ref{eq: N})--(\ref{eq: A}),  the characterization      $\, L^{ \lVert X\rVert}(\cdot) =  \int^{\cdot}_{0} {\bf 1}_{\{ \lVert X (t) \rVert =0 \}} {\mathrm d} \lVert X(t) \rVert  \, $ for the  local time of a continuous, nonnegative semimartingale such as $||X(\cdot)||\,$,   establishes then the claim,  since  
$$
L^{||X||} (\cdot)\,=\int^{\cdot}_{0} \mathbf{ 1}_{ \{ \lVert X(t) \rVert =0 \}} \Big(  {\bm b} (\lVert X(t) \rVert)\,\mathbf{ 1}_{ \{ \lVert X(t) \rVert>0 \}}\,   {\mathrm d} t + {\mathrm d} A(t) \Big) \, =\,  A(\cdot) \,.
$$

\noindent
$\bullet~$ 
We return   to the computation of the cross-variations $\, \langle M_{i}, M_{k}\rangle(\cdot)\,$ for $\, 1 \le i, k \le 3\,$.  
Recalling (\ref{eq: M3}), (\ref{eq: A})  and $\,  \langle \lVert X \rVert \rangle(\cdot) =  \langle M_{3}\rangle(\cdot) =  \langle N\rangle(\cdot)  $, an  application of \textsc{It\^o}'s rule to $\, \lVert X (\cdot) \rVert^{2}\,$ gives 
\[
\lVert X(\cdot) \rVert^{2} -  \lVert X (0) \rVert^{2} -  2 \int^{\cdot}_{0}  \lVert X(t) \rVert{\bm b}( \lVert X(t) \rVert) \, {\bf 1}_{\{ \lVert X(t) \rVert > 0 \}} {\mathrm d} t - \langle  N \rangle (\cdot) \, =\, 2  \int^{\cdot}_{0}  \lVert X(t) \rVert \, {\mathrm d}   M_{3}(t)    \,.
\]
Combining the last identity with (\ref{eq: XMPSDE.a}),  we observe that 
\begin{equation} 
\label{eq: M1122}
M_{1,1}^\circ (\cdot) + M_{2,2}^\circ (\cdot) 
- 2 \int^{\cdot}_{0} \lVert X(t) \rVert {\mathrm d} M_{3}(t)\, =\,  \langle N \rangle(\cdot) - \int^{\cdot}_{0}  {\bm a}( \lVert X(t) \rVert) \, {\bf 1}_{\{ \lVert X(t) \rVert>0\}}  \,{\mathrm d} t 
\end{equation}
is both a local martingale and a continuous process of bounded variation; thus we identify 
\begin{equation} 
\label{eq: <||X||>} 
\langle \lVert X \rVert \rangle(\cdot) = \langle N\rangle (\cdot)  = \langle M_{3}\rangle(\cdot) =  \int^{\cdot}_{0} r_{3,3}(t) {\mathrm d} t \quad \, \text{where}\quad   r_{3,3}(t)   :=     {\bm a}( \lVert X(t) \rVert)\,{\bf 1}_{\{ \lVert X(t) \rVert>0  \}}  \, . 
\end{equation} 
By analogy with the derivation of (\ref{eq: M1122}), and taking (\ref{eq: M_i}) into account,
 we observe that 
\[
M_{i,i}^\circ (\cdot)- 2 \int^{\cdot}_{0} X_{i}(t) {\mathrm d} M_{i}(t) -2 \, \gamma_{i} \int^{\cdot}_{0} X_{i}(t) {\mathrm d} M_{3}(t) \, =\,  \langle X_{i}\rangle (\cdot) - \int^{\cdot}_{0} {\bm a}( \lVert X (t) \rVert) \, \big[{\mathfrak f}_{i}(X(t))\big]^{2} {\mathrm d} t 
\]
is both a local martingale and a continuous process of bounded variation for $\,i = 1, 2\,$; thus we identify 
\begin{equation} \label{eq: Xi} 
\, \langle X_{i} \rangle(\cdot) \, =\, \int^{\cdot}_{0} {\bm a}( \lVert X (t) \rVert) \, \big[ \, {\mathfrak f}_{i}(X(t))\big]^{2} \, {\mathrm d} t \,, \quad i=1,2 \,.
\end{equation} 
It follows   from (\ref{eq: XMPSDE}) that   $\, \langle M_{i}\rangle(\cdot) =  \langle X_{i}\rangle(\cdot) + \gamma_{i}^{2} \langle \lVert X\rVert\rangle(\cdot) - 2 \gamma_{i} \langle X_{i}, \lVert X \rVert\rangle(\cdot)\,$; and in conjunction with (\ref{eq: <||X||>}), (\ref{eq: Xi}), (\ref{eq: rikt}), this gives  $\, \, \langle X_{i}, \lVert X \rVert \rangle (\cdot) \, =\, \int^{\cdot}_{0} {\bf 1}_{\{X(t) \neq {\bm 0}\}} {\bm a} ( \lVert X (t) \rVert  )\, {\mathfrak f}_{i}( X(t)) \,{\mathrm d} t\, $. Hence, with  $\,r_{i,3}(t) \, \equiv \, r_{3,i}(t) \, :=\, {\bm a}( \lVert X( t) \rVert) \,\big( {\mathfrak f}_{i}(X( t)) - \gamma_i \big) \,\mathbf{ 1}_{ \{ X(t) \neq \mathbf{ 0}\}}  \,$   for $\,i \, =\, 1, 2\,$, we obtain  
\[
\, \langle  M_{i}, M_{3}\rangle (\cdot) \, \equiv \, \langle  M_{3}, M_{i} \rangle (\cdot) \, =\,  \langle X_{i}, \lVert X\rVert \rangle (\cdot) - \gamma_{i} \langle \lVert X \rVert \rangle (\cdot) \, = \int^{\cdot}_{0} r_{i,3}(t) {\mathrm d} t \,.
\]
\noindent $\bullet~$  
We have now computed all elements of the $\,(3\times 3) \,$ matrix $\, ({\mathrm d} \langle M_{i}, M_{k}\rangle (t) \, / \, {\mathrm d} t)_{1\le i, k \le 3} \, =\,  ( r_{i,k}(t))_{1\le i, k \le 3} \,$;   we observe also that this matrix is of rank 1, on $  \{ t \ge 0: X(t) \neq {\bm 0} \}  $.  By Theorem 3.4.2  and     Proposition 5.4.6  in  \textsc{Karatzas \& Shreve} (1991),    there exists an extension of the original probability space, and on it \\ (i) a three-dimensional standard Brownian motion $\, \widetilde{W}(\cdot) \, =\, \big( \widetilde{W}_{1}(\cdot), \widetilde{W}_{2}(\cdot) , \widetilde{W}_{3}(\cdot) \big)^{\prime}\,$, \\ (ii) a one-dimensional standard Brownian motion $\, {W}(\cdot) \,$, and \\ (iii) measurable, adapted, matrix-valued processes $\,  (\rho_{i,k}(\cdot) )_{1\le i,k \le 3} \,$ with  $\, \int^{T}_{0}[\rho_{i,k}(t)]^{2} {\mathrm d} t < \infty \,$,  \\ such that we have  the representations 
\begin{equation} 
\label{eq: Mi}
M_{i}(\cdot) \, =\, \sum_{k=1}^{3} \int^{\cdot}_{0} \rho_{i,k}(t) \, {\mathrm d} \widetilde{W}_{k}(t) \, =   \int^{\cdot}_{0} {\bm \sigma} ( \lVert X(t) \rVert) \,\big( \mathfrak f_{i}(X(t)) -  \gamma_{i} \big)\,\mathbf{ 1}_{ \{ X(t) \neq \mathbf{ 0}\}}\, {\mathrm d} W(t)    \,, \qquad i=1,2    
\end{equation}
and  $\,M_{3}(\cdot) \, = \int^{\cdot}_{0} {\bm \sigma} ( \lVert X(t) \rVert ) \, {\bf 1}_{\{ X(t) \neq {\bm 0}\}}\, {\mathrm d} W(t) \,$.  Substituting this into the decomposition $\, N(\cdot) = M_{3}(\cdot) + L^{||X||}(\cdot) \,$ and then into (\ref{eq: N}), we obtain the stochastic equation (\ref{eq: ||X||})  for the radial process $\, || X(\cdot) ||\,$. Substituting the expressions of (\ref{eq: Mi}), (\ref{eq: ||X||}) into $\,M_{i}(\cdot) \,$ in (\ref{eq: XMPSDE}) for $\,i =1, 2\,$,  
we observe that the process $ X(\cdot)  $ defined in (\ref{eq: XMPSDE}) satisfies the system of   (\ref{eq: SDEMP}). It  follows from    
(\ref{eq: zeroLeb2}) that $ X(\cdot)  $ satisfies the property  (\ref{eq: zeroLeb}) .

\medskip 
\noindent 
$\bullet~$ Finally, for every set $\, A \in \mathcal B ( [0, 2\pi) ) \,,$  we  consider the functions 
\begin{equation} 
\label{eq: G3}
g_{5}(x) \, :=\, g_{5}(r, \theta) \, =\,  r \big( {\bf 1}_{  A } (\theta) - {\bm \nu} (A)\big)\, \qquad \text{and} \qquad \, g_{6}(x) \, :=\, g_{6}(r, \theta) \, =\,  r \,{\bf 1}_{  A } (\theta)
\end{equation} 
in polar co\"ordinates. Since $\,g_{5} \in {\mathfrak D}^{{\bm \mu}}\,$ and $\,g_{6} \in {\mathfrak D}^{{\bm \mu}}_{+}\,$  we obtain that  the process
\begin{equation} 
\label{eq: Mg4}
g_{5}(X(\cdot)) -  g_{5}(X(0)) - \int^{\cdot}_{0}  {\bm b} ( \lVert X(t) \rVert) \big( {\bf 1}_{ \{ \text{arg} (X(t)) \, \in\,  A\} } - {\bm \nu} (A) \big){\bf 1}_{\{ \lVert X(t) \rVert > 0\}} {\mathrm d} t 
\end{equation}
is a continuous local martingale, and that the process 
\[
g_{6} (X(\cdot)) - g_{6}(X(0)) -  \int^{\cdot}_{0}  {\bm b} ( \lVert X(t) \rVert)  {\bf 1}_{ \{ \text{arg} (X(t)) \, \in \, A\} \cap \{ \lVert X(t) \rVert > 0\}} {\mathrm d} t 
\]
 is a continuous local submartingale. 

Repeating an  argument similar to the one deployed above, we identify  $\,{\bm \nu} (A) L^{ \lVert X\rVert } (\cdot) \,$  as the local time $\,L^{R_{A}}(\cdot) \,$ at the origin for the continuous, non-negative semimartingale  $\,R^{A}(\cdot) \, :=\, g_{6}(X(\cdot)) \, $. Indeed,  
\begin{equation} 
\label{eq: Mg5}
R^{A} (\cdot) - R^{A} (0) -  \int^{\cdot}_{0}  {\bm b} ( \lVert X(t) \rVert)  {\bf 1}_{ \{ \text{arg} (X(t)) \in A\} \cap \{ \lVert X(t) \rVert > 0\}} {\mathrm d} t  - L^{R^{A} }(\cdot) 
\end{equation}
is a continuous local martingale. 
Moreover, on account of (\ref{eq: M3}), we see that 
\begin{equation} 
\label{eq: nuM}
{\bm \nu} (A) \Big ( \lVert X (\cdot) \rVert - \lVert X(0) \rVert - \int^{\cdot}_{0} {\bm b} ( \lVert X(t) \rVert) {\bf 1}_{\{ \lVert X(t) \rVert > 0 \}} {\mathrm d} t - L^{ \lVert X \rVert}(\cdot) \Big) 
\end{equation}
is also a continuous local martingale. Subtracting (\ref{eq: Mg5})  from (\ref{eq: Mg4}) and adding (\ref{eq: nuM}),  we deduce  that the finite variation process $\,  L^{R^{A}}(\cdot) - {\bm \nu} (A) L^{ \lVert X \rVert}(\cdot) \, $   is a continuous local martingale, and hence identically zero, i.e., \,$
L^{R^{A}} (\cdot)   \equiv   {\bm \nu} (A) \,L^{ \lVert X\rVert } (\cdot)  
\,$ as in (\ref{eq: localDist}).   
 
We conclude from this analysis, that the system of equations (\ref{eq:
  SDEMP}) admits a weak solution with the properties (\ref{eq:
  zeroLeb}) and (\ref{eq: localDist}).   This proves Part (b). Part
(c) is now evident. \qed 

\smallskip 
\begin{remark} \label{remark: 6.1}
Looking back to the definition of the above local martingale problem for the planar diffusion, we recall Definition \ref{def: D} and observe  that the following statements (i)-(ii) are equivalent: 

\noindent (i) For every $\,g \in \mathfrak D^{\bm \mu}_{+}\,, $ the process $\,M^{g}(\cdot \,; \omega_{2}) \,$ is a continuous local submartingale;  

\noindent (ii)  For every $\,g \in \mathfrak D^{\bm \mu}\,, $ the process $\,M^{g}(\cdot \,; \omega_{2}) \,$ is a continuous local martingale, and the process  $\, M^{g_{3}}(\cdot \,; \omega_{2}) \,$ is a continuous local submartingale, where $\,g_{3}(x) =  \lVert x \rVert = r \,$ is defined in (\ref{eq: G2}). 

\smallskip
If (i) holds, $\, M^{g_{3}}(\cdot \,; \omega_{2}) \,$ is a continuous local submartingale since $\, g_{3} (x)  = \lVert x \rVert\,$ belongs to $\, \mathfrak D^{\mu}_{+}\,$. For every $\,g \in \mathfrak D^{\bm \mu}\,$ we have $\, g \in \mathfrak D^{\bm \mu}_{+} \,$ and $\, - g \in \mathfrak D^{\mu}_{+}\,$,   hence both $\,M^{g}(\cdot \,; \omega_{2})\,$ and    $\, M^{-g}(\cdot \,; \omega_{2}) = - M^{g}(\cdot \,; \omega_{2}) \,$ are continuous local submartingales.  Thus $\, M^{g}(\cdot\,; \omega_{2}) \,$ is a continuous local martingale, and (ii) follows.

  \newpage

Next, let us assume (ii). Every $\, g \in \mathfrak D^{\bm \mu}_{+}\,$ can then be decomposed as $\, g     :=  g_{(1)} + g_{(2)}\,$, where $\,g_{(1)}(x) = c \lVert x \rVert   \,$ with $\,c   :=  \int^{2\pi}_{0} g_{\theta}^{\prime}(0+) {\bm \nu} ({\mathrm d} \theta) \ge 0 \,$ and $\,g_{(2)} (\cdot) \, :=\, g(\cdot) - g_{(1)}(\cdot) \in \mathfrak D^{\bm \mu} \,$. Thus the above condition (ii) implies that $\,M^{g_{(1)}}(\cdot\,; \omega_{2}) = c \lVert \omega_{2} (\cdot) \rVert\,$ is a local submartingale and $\, M^{g_{(2)}}(\cdot \,; \omega_{2}) \,$ is a local martingale, and hence $\, M^{g}(\cdot\,; \omega_{2}) = M^{g_{(1)}}(\cdot\,; \omega_{2}) + M^{g_{(2)}}(\cdot\, ; \omega_{2}) \,$ is a local submartingale, and (i) follows. 
\end{remark}

 \begin{remark} In the proof of Proposition \ref{prop: SDEMP}(b) let us define a random measure $\, \mathbf{ R}(t, {\mathrm d} z)\, $ on $\, \mathcal B (\mathfrak S) \, $ for every $\,t \ge 0\,$ with $\, \mathbf{ R}(t, {\mathrm d} z ) \equiv \mathbf{ R}(t, {\mathrm d} \theta) \,$ for $\, \theta \in [0, 2\pi)\,$, $\, z \in \mathfrak S\,$ via $\, g_{6}(X(t)) = R^{A}(t) = \int_{A} \mathbf{ R}(t, {\mathrm d} \theta)\,$, $\,A \in \mathcal B([0, 2\pi))\,$. Then for every $\, t \ge 0 \,$ we may write $\,  X(t) \, =\,  \int_{\mathfrak S} z \, \mathbf{ R}(t, {\mathrm d} z)  \, $. 
 \end{remark}

\subsection{Well-Posedness}   
\label{sec_513}

We conjecture that,  if the local submartingale problem  associated with the pair $\, ({\bm \sigma}, {\bm b}) \,$    is well-posed, then the same is true for the local martingale problem  associated with the triple $\, ({\bm \sigma}, {\bm b},  {\bm \mu}) \,$.   

The result that follows settles this conjecture in the affirmative,     for the driftless case $\, {\bm b} \equiv \mathbf{ 0}\,$.  Proposition \ref{Cor: MP1} then deals    with the case of a drift $\, {\bm b} = {\bm \sigma} {\bm c}\,$ with $\, {\bm c}: \R_+ \to \R\,$ bounded and measurable.

\begin{prop} {\bf Well-Posedness for Driftless \textsc{Walsh} Diffusions:} 
\label{prop: MP1}
Suppose that  \\ (i)  the drift $\,{\bm b} \,$ is identically equal to  zero;  and that \\ (ii) the reciprocal of the dispersion co\"efficient $\, {\bm \sigma} : [0, \infty) \rightarrow \mathbb{R} \setminus \{ 0\}\,$ is   locally square-integrable; i.e.,   
\begin{equation} 
\label{eq: ES}
\int_K \,  \frac{{\mathrm d} y}{\, {\bm \sigma}^{2} (   y) \, }  \,<\,    \infty  \,  \qquad \text{holds for every compact set } \,K  \subset [0, \infty)\,. 
\end{equation}
Then the local submartingale problem of   subsection \ref{sec_511}, associated with the pair $\, ({\bm \sigma}  , \mathbf{ 0}) \,$,   is  well-posed. 

Moreover,  
the local martingale problem of subsection \ref{sec_512}  associated with the triple $\, ({\bm \sigma}, \mathbf{ 0},  {\bm \mu}) \,$   is also   well-posed;  and uniqueness in distribution holds,  subject to the properties in    
(\ref{eq: zeroLeb}) and (\ref{eq: localDist}),   for the corresponding system of stochastic integral equations in (\ref{eq: SDEMP})  
with $\, {\bm b} \equiv 0\,$,  namely,  
\begin{equation}
\label{5.5}
 X_i (\cdot) \, =\, X_i (0)+\int_0^{\, \cdot} {\mathfrak f}_i(X(t)) \, {\bm \sigma} \big( \lVert X( t) \rVert \big) \, {\mathrm d} W(t) +     
 \gamma_i \, L^{ \,||X|| }  (\cdot) \, , \quad      \quad i=1, 2  \,.
\end{equation}
 \end{prop}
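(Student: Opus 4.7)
\medskip
\noindent
\textbf{Proof Proposal for Proposition \ref{prop: MP1}:}

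The plan is to treat the three claims in order: well-posedness of the one-dimensional submartingale problem for $(\bm\sigma,\mathbf{0})$; existence for the planar local martingale problem for $(\bm\sigma,\mathbf{0},\bm\mu)$; and uniqueness for the latter. The final statement about the stochastic system (\ref{5.5}) will then follow from Proposition \ref{prop: SDEMP}(c).

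For the submartingale problem I would construct the reflected diffusion by time change: starting from a reflecting Brownian motion $B(\cdot)$ on $[0,\infty)$, set $T(t):=\int_0^t \bm\sigma^{-2}(B(u))\,\mathrm{d}u$ and $\tau:=T^{-1}$; condition (\ref{eq: ES}) makes $T$ finite and strictly increasing off the origin, so that $S(\cdot):=B(\tau(\cdot))$ solves the problem. Uniqueness is the reverse direction: any solution has local-martingale part with quadratic variation $\int_0^{\,\cdot}\bm\sigma^2(S(u))\,\mathrm{d}u$, and time-changing by the integrated reciprocal identifies $S$ with reflecting Brownian motion, whose law is unique. Existence for the planar local martingale problem then follows by taking this $S$ as the folded driver in Theorem \ref{prop: skewTanaka} to build $X(\cdot)$ with properties (\ref{eq: skewTanaka}), (\ref{length}), (\ref{eq: zeroLeb}), and (\ref{eq: localDist}); the generalized \textsc{Freidlin-Sheu} identity (\ref{GenFS1}), specialized to $\bm b\equiv 0$, writes each $M^g(\cdot\,;X)$ as a stochastic integral against the local martingale $\int_0^{\,\cdot}\bm\sigma(\|X(t)\|)\,\mathrm{d}W(t)$ plus the nondecreasing term $\bigl(\int g'_\theta(0+)\,\bm\nu(\mathrm{d}\theta)\bigr)\,L^S(\cdot)$, delivering the required submartingale/martingale property.

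Uniqueness for the martingale problem is the central step. Given a solution $\mathbb{Q}$, I would first test with radial functions $g(r,\theta)=\psi(r)$ for $\psi\in C^2$ with $\psi'(0+)\geq 0$; these lie in $\mathfrak{D}^{\bm\mu}_+$, so the radial process $\|\omega_2(\cdot)\|$ solves the one-dimensional submartingale problem for $(\bm\sigma,\mathbf{0})$ and hence has uniquely determined law by the previous paragraph. To pin down the joint distribution conditional on the radial part, I would invoke the slope-averaging local martingales of Proposition \ref{prop: MART}(ii): for every bounded measurable $\varphi:[0,2\pi)\to\mathbb{R}$, the process $g_{(\varphi)}(\omega_2(\cdot))=\|\omega_2(\cdot)\|\bigl[\varphi(\arg\omega_2(\cdot))-\int\varphi\,\mathrm{d}\bm\nu\bigr]$ is a continuous local martingale under $\mathbb{Q}$. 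Since the argument $\arg(\omega_2(\cdot))$ is constant on each excursion interval $\mathcal{C}_k=(\alpha_k,\beta_k)$ of $\|\omega_2\|$ (by the ray-motion argument at the beginning of Section \ref{disc}), writing its value as $\vartheta_k$ and optionally sampling $g_{(\varphi)}(\omega_2(\cdot))$ at $\rho_\varepsilon:=\inf\{t>\alpha_k:\|\omega_2(t)\|=\varepsilon\}$ yields, on the event $\{\rho_\varepsilon<\beta_k\}$, the identity $\mathbb{E}^{\mathbb{Q}}[\varphi(\vartheta_k)\mid\mathcal{F}_2^\bullet(\alpha_k)]=\int\varphi\,\mathrm{d}\bm\nu$. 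Running $\varphi$ through a countable determining family and iterating to finite products across distinct excursions forces the $\{\vartheta_k\}$ to be I.I.D.\ with common law $\bm\mu$ conditionally on the radial process, pinning down the law of $\omega_2$.

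The principal obstacle will be this last step: converting the local-martingale property of $g_{(\varphi)}(\omega_2(\cdot))$ into the I.I.D.\ structure of the excursion angles with full rigour. Excursion endpoints $\alpha_k,\beta_k$ are not predictable stopping times, so one cannot sample $g_{(\varphi)}$ at them directly; the technical route is the $\varepsilon$-downcrossing/upcrossing scheme of (\ref{eq: tau rec}) familiar from the proofs of Theorems \ref{prop: skewTanaka} and \ref{Gen_FS}, together with a uniform-integrability argument to pass to the limit $\varepsilon\downarrow 0$. Once this is in hand, Proposition \ref{prop: SDEMP}(c) translates well-posedness of the local martingale problem directly into uniqueness in distribution for (\ref{5.5}) subject to (\ref{eq: zeroLeb}) and (\ref{eq: localDist}).
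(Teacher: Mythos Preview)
Your existence argument is essentially the paper's (which cites \textsc{Schmidt} (1989) for well-posedness of the reflected SDE rather than building the time change by hand, but this is the same content).

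For uniqueness you take a genuinely different route. The paper does not try to recover the I.I.D.\ excursion-angle structure. Instead, for each Borel set $A\subset[0,2\pi)$ with $0<{\bm\nu}(A)<1$, it observes that the \emph{scalar} process $M_A(\cdot):=g_A(\omega_2(\cdot))$ is a continuous local martingale with quadratic variation $\int_0^{\cdot} h_A^2(\omega_2(t))\,{\bm a}(\|\omega_2(t)\|)\,\mathrm{d}t$, so after a suitable normalization $M_A$ solves a one-dimensional driftless SDE $\mathrm{d}M_A={\bm\varrho}(M_A)\,\mathrm{d}U_A$ with $U_A$ a Brownian motion and ${\bm\varrho}$ a dispersion built from ${\bm\sigma}$ and ${\bm\nu}(A)$ that inherits condition (\ref{eq: ES}). \textsc{Engelbert--Schmidt} theory gives uniqueness in law for $M_A$, and $M_A$ is strongly Markovian with respect to $\mathbb{F}_2$; this yields directly
\[
\mathbb{Q}_j\bigl(\|\omega_2(t)\|\in C,\ \arg(\omega_2(t))\in A\,\bigm|\,\mathcal{F}_2(s)\bigr)\;=\;\mathbb{Q}_j\bigl(g_A(\omega_2(t))\in{\bm\nu}(A^c)\,C\,\bigm|\,g_A(\omega_2(s))\bigr),
\]
which is independent of $j$. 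The boundary cases ${\bm\nu}(A)\in\{0,1\}$ are handled separately, using that $g_A(\omega_2(\cdot))$ is then a nonnegative local martingale and hence absorbed at zero.

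Your proposal has a gap at the step ``pinning down the law of $\omega_2$.'' The optional-sampling argument (which does go through: $\|\omega_2\|\le\varepsilon$ on $[\tau_{2\ell}^\varepsilon,\tau_{2\ell+1}^\varepsilon]$ bounds $g_{(\varphi)}(\omega_2)$ there) shows that $\Theta(\tau_{2\ell+1}^\varepsilon)$ has conditional law ${\bm\nu}$ given $\mathcal{F}_2^\bullet(\tau_{2\ell}^\varepsilon)$ --- independence from the \emph{past}. To determine the finite-dimensional distributions of $\omega_2$, however, you also need the angle to be conditionally independent of the \emph{future} of the radial path: something like ``$\|\omega_2(\tau_{2\ell+1}^\varepsilon+\cdot)\|$ is conditionally independent of $\Theta(\tau_{2\ell+1}^\varepsilon)$.'' This does \emph{not} follow from the local-martingale property of $g_{(\varphi)}(\omega_2)$ alone; it requires the strong Markov property of $\|\omega_2(\cdot)\|$ with respect to the \emph{larger} filtration $\mathbb{F}_2$ (not just its own), which is an additional ingredient you have not supplied. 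Under the bare \textsc{Engelbert--Schmidt} hypothesis (\ref{eq: ES}) the reflected equation (\ref{eq: REFL}) need not have a strong solution, so the radial future is not a measurable functional of Brownian increments after $\tau_{2\ell+1}^\varepsilon$, and its independence from the angle is not automatic. The paper's device --- pushing the entire conditional-distribution computation onto the one-dimensional diffusion $g_A(\omega_2(\cdot))$ and its strong Markov property with respect to $\mathbb{F}_2$ --- is precisely what closes this gap without any excursion-theoretic reconstruction.
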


\noindent
{\it Proof of Existence:}  Let us consider the stochastic integral equation 
 \begin{equation} 
 \label{eq: REFL}
 S(\cdot) \, =\,  r + \int^{\cdot}_{0} {\bm \sigma} ( S(t)) \, {\mathrm d} W(t) + L^{ S}(\cdot) \,   
 \end{equation}
driven by one-dimensional Brownian motion $\,W(\cdot)$. 
 It is shown in \textsc{Schmidt} (1989) that, under   
 (\ref{eq: ES}), this equation (\ref{eq: REFL}) has a non-negative, unique-in-distribution weak solution;    equivalently,   the  \textsc{Stroock \& Varadhan} (1971) local   submartingale problem associated with   $\, ({\bm \sigma}, \mathbf{ 0}) \,$ for $\, K^{\psi}(\cdot) \,$   in (\ref{eq: MP1})  is well-posed. 
 
 Let us also verify the property (\ref{eq: SC}).  From Exercise 3.7.10 in \textsc{Karatzas \& Shreve} (1991),   we get
$$
0 \, =\int_0^\infty \, \mathbf{ 1}_{ \{ S(t) =0\} }\, \mathrm{d} \langle S \rangle (t) \, =\int_0^\infty \, \mathbf{ 1}_{ \{ S(t) =0\} }\,{\bm \sigma}^2 (S(t))\, \mathrm{d} t \,, \qquad \text{thus also} \qquad \int_0^\infty \, \mathbf{ 1}_{ \{ S(t) =0\} } \, \mathrm{d} t =0\,
$$
 because $\, {\bm \sigma} (\cdot)\,$ never vanishes. It follows then from Theorem \ref{prop: skewTanaka} that, on a suitably enlarged  probability space, we may construct from this reflected diffusion  $\,  S (\cdot)\,$   a continuous, planar semimartingale $\,X(\cdot) \,$ which satisfies $\, ||X(\cdot)|| = S(\cdot)\,$, the system of  equations (\ref{5.5}), and the properties (\ref{eq: DIST3})--(\ref{eq: localDist}).  On the strength  of Proposition \ref{prop: SDEMP}(a), the local martingale problem associated with the triple $\,({\bm \sigma}, \mathbf{ 0}, {\bm \mu}) \,$  admits a solution. \qed

 \medskip
\noindent
{\it Proof of Uniqueness:} We adopt the idea of proof in Theorem 3.2 of \textsc{Barlow, Pitman \& Yor} (1989). 
Suppose   there are {\it two} solutions $\,\mathbb Q_{j} \,$, $\,j = 1, 2\,$ to this local martingale problem associated with the triple $\,({\bm \sigma}, \mathbf{ 0}, {\bm \mu})\,$. Let us take an arbitrary  set $\,A \in \mathcal B ([0, 2\pi))\,$  and consider the  functions $\, h_{A}(\cdot) \,$ and $\,g_{A}(\cdot) \,$ defined as in (\ref{eq: gh}) for the  indicator $\, \varphi = \mathbf{ 1}_A\,$, namely  
\begin{equation} 
\label{eq: HA}
 h_{A}(x) \, :=\, \big({\bf 1}_{\{\text{arg} (x) \in A\}} - {\bm \nu}(A) \big) \cdot  {\bf 1}_{\{\lVert x \rVert > 0\}} =\, \big( {\bm \nu}(A^c)\,{\bf 1}_{\{\text{arg} (x) \in A\}} - {\bm \nu}(A)\,{\bf 1}_{\{\text{arg} (x) \in A^c\}} \big) \cdot  {\bf 1}_{\{\lVert x \rVert > 0\}}  \,,\,\,\,
\end{equation}
\begin{equation} 
\label{eq: GA}
g_{A}(x) \,: =\,    \lVert x   \lVert \, h_{A}(x) \,, \qquad x \in \mathbb R^{2}\,.
\end{equation}
  \newpage  
The above function $\, \, g_{A}(\cdot)\,$ belongs to the family $\,\mathfrak D^{{\bm \mu}}\,$ in (\ref{eq: Dmu}), as does the function $\, [g_{A}(\cdot)]^{2}\,$. By assumption and Proposition \ref{prop: MART}, the process $\, M_A  (\cdot )  :=   g_{A}(\omega_{2}(\cdot)) \,$ is then a $\,\mathbb Q_{j}-$local martingale,  with   
 \[
\langle M_A \rangle (T) \, =\, \big\langle g_{A}(\omega_{2}(\cdot) ) \big\rangle (T) \, =\, \int^{T}_{0} \big[h_{A}(\omega_{2}(t)) \big]^{2} {\bm a}( \lVert \omega_{2}(t) \rVert ) \, {\mathrm d} t \, ; \quad   0 \le T < \infty\,, \,\,\,\,j \, =\, 1, 2 \,.
 \] 
 Note that $\,\omega_2 (\cdot)\,$ and $\, \Vert \omega_{2}(\cdot) \Vert \,$ solve  in the weak sense the equations (\ref{5.5}) and (\ref{eq: REFL}), respectively. The argument at the beginning of Section \ref{disc} implies that $\,\omega_2 (\cdot)\,$ stays on the same ray on each of its excursion away from the origin. Moreover, $\, \Vert \omega_{2}(\cdot) \Vert \,$ is strongly Markovian with respect to the filtration $\,\mathbb F_{2}\,$, and its distribution is uniquely determined. 
 
 \smallskip
 \noindent
 $\bullet~$ 
Let us assume $\, 0 < {\bm \nu } (A) < 1\,$ first. We note that  $ g_{A}(x) > 0 $, if $  \text{arg} (x) \in A $; $\,g_{A}(x) < 0 \,$ if $\,\text{arg} (x) \in A^{c}\,$; and $\,g_{A}(x) = 0\,$ if $\, x =  {\bm 0}\,$. 
   It is also easy to verify that  the process  
\begin{equation} 
\label{eq: UA0}
 U_{A}(\cdot) \, :=\, \int^{\cdot}_{0} \Big( \frac{1}{\, {\bm \nu} (A^{c})\, } \cdot {\bf 1}_{\{ g_{A}(\omega_{2}(t)) > 0\}} + \frac{1}{\, {\bm \nu}(A)\, } \cdot {\bf 1}_{\{ g_{A}(\omega_{2}(t)) \le 0\}}\Big) \cdot \frac{{\mathrm d} M_A (t)}{\, {\bm \sigma} ( \lVert \omega_{2}(t) \rVert ) \, }\,      
\end{equation}
is a continuous $\,\mathbb Q_{j}-$local martingale with $\, \langle U_{A}\rangle (t)   =  t \,$ for $\, t \ge 0 \,$; i.e.,   a $\,\mathbb Q_{j}-$Brownian motion for $\,j \, =\, 1, 2\,$.   
The  probability distribution of the process $\, M_A (\cdot) = g_{A}(\omega_2 (\cdot)) \,$ is then determined uniquely and {\it independently of the solution  $\,\mathbb Q_{j} \,$, $\,j = 1, 2\,$ to the local martingale problem}. This is because, under the assumption (\ref{eq: ES}) on the dispersion co\"efficient and thanks to the theory of \textsc{Engelbert \& Schmidt} (1984), the stochastic differential equation driven by the Brownian motion $\,U_{A}(\cdot)\,$ and   derived from (\ref{eq: UA0}),    
\begin{equation} 
\label{eq: MgA}
{\mathrm d} M_A (t) \, =\, {\bm \varrho} \big(M_A (t)\big) \, {\mathrm d} U_{A}(t) \, , \quad 0 \le t < \infty   
\end{equation}
with $\,c_{0} \, :=\, {\bm \nu}(A^{c})\,$, $\,c_{1} \, :=\, {\bm \nu}(A)\,$ and the new dispersion function 
\begin{equation} 
\label{eq: SigmaTilde}
{\bm \varrho} (x) \, :=\, c_{0} \cdot {\bm \sigma} \Big( \frac{ \, x\, }{ c_{0}} \Big) \cdot {\bf 1}_{\{ x > 0\}} +  c_{1}\cdot {\bm \sigma} \Big( {} - \frac{ \, x\, }{ \, c_{1}\, }  \Big) \cdot {\bf 1}_{\{ x \le 0\}}  \, ; \quad x \in \mathbb R \,,
\end{equation}
admits a weak solution, which is unique in the sense of the probability distribution.  This follows from Theorem 5.5.7 in \textsc{Karatzas \& Shreve} (1991), and from the fact that   the reciprocal of the  function $\, {\bm \varrho} (\cdot)\,$ inherits the local square-integrability property (\ref{eq: ES}) of the reciprocal of   $\, {\bm \sigma} (\cdot)\,$. Moreover,   $\, M_A  (\cdot )  =   g_{A}(\omega_{2}(\cdot)) \,$ is strongly Markovian with respect to the filtration $\,\mathbb F_{2}\,$ (cf. the proof of Lemma \ref{LM ||X||} in Section \ref{SMP}). Therefore, for an arbitrary $\, C \in {\cal B} ((0, \infty))\,$, recalling (\ref{eq: HA}) and (\ref{eq: GA}), we have 
$$
\mathbb{Q}_j \big( \, || \omega_2 (t) || \in C\,, \, \text{arg} ( \omega_2 (t) ) \in A\, \big| \,   \mathcal F_{2}(s) \,\big) \, =\, \mathbb{Q}_j \big( \, g_{A}(\omega_{2}(t)) \in {\bm \nu} (A^c)\, C\, \big| \,   \mathcal F_{2}(s) \,\big)\, 
$$
$$
= \, \mathbb{Q}_j \big( \, g_{A}(\omega_{2}(t)) \in {\bm \nu} (A^c)\, C \, \big| \,   g_{A}(\omega_{2}(s)) \,\big),   ~~~~~ ~~~\quad \quad ~~ ~~ ~0 \leq s < t < \infty , ~ ~ j = 1, 2 .
$$
Since the distribution of the process $\, g_{A}(\omega_{2}(\cdot)) \,$ is uniquely determined, the above probability does not depend on $\, j =1,2 \,$. We conclude then that $\,\mathbb{Q}_j \big( \, || \omega_2 (t) || \in C\,, \, \text{arg} ( \omega_2 (t) ) \in A\, \big| \,   \mathcal F_{2}(s) \,\big)\,$ does not depend on $\,j=1,2\,$, if $\, 0 < {\bm \nu } (A) < 1\,$. 

 \smallskip
\noindent
$\bullet~$ 
For the resulting diffusion process in natural scale, we shall denote by
$$
p_A \big( s,t; y, B) \,:=\, \mathbb{Q}_j \big( g_A (\omega_2 (t) ) \in B \, \big| \, g_A (\omega_2 (s) ) = y \big) 
$$
$$
p_A^* \big( s,t; y, B) \,:=\, \mathbb{Q}_j \big( g_A (\omega_2 (t) ) \in B\,, \, \tau_s (\omega_2) >t \, \big| \, g_A (\omega_2 (s) ) = y \big)
$$
$$
q_A  \big( s,t; y ) \,:=\, p_A^* \big( s,t; y, \R) \,:=\,\mathbb{Q}_j \big(   \tau_s (\omega_2) >t \, \big| \, g_A (\omega_2 (s) ) = y \big)
$$
its transition, taboo-transition, and survival probabilities (for both $\,j=1, 2\,$ on the strength of uniqueness in distribution for (\ref{eq: MgA})). Here $\, 0 \leq s<t<\infty\,$, $\, y \in \R\,$ and $\, B \in {\cal B}\,$  are arbitrary, and we have denoted the first hitting time of the origin by 
$$\,
\tau_s (\omega_2) \,:=\, \inf \big\{ u \ge s \,:\, || \omega_2 (u)|| = 0 \big\}\, =\, \inf \big\{ u \ge s \,:\, g_{A}(\omega_2 (u)) = 0 \big\} 
 $$
 \noindent
For an arbitrary $\, C \in {\cal B} ((0, \infty))\,$ and recalling (\ref{eq: HA}), (\ref{eq: GA}), we have then the expression
$$
 \mathbb{Q}_j \big( || \omega_2 (t) || \in C\,, \, \text{arg} ( \omega_2 (t) ) \in A\,,   \, \tau_s (\omega_2) >t \, \big| \,   \omega_2 (s)   = x \big)\,=\, p^*_A \big( s,t; {\bm \nu} (A^c) \, || x||, {\bm \nu} (A^c)\, C \big) \, \mathbf{ 1}_A (\text{arg}(x))\,,
$$
 \newpage  
\noindent
whose right-hand side  does  not depend on $\,j=1,2\,$. Similarly, we observe that the transition probability 
$$
 \mathbb{Q}_j \big( || \omega_2 (t) || \in C\,, \, \text{arg} ( \omega_2 (t) ) \in A\,,   \, \tau_s (\omega_2) < t \, \big| \,   \omega_2 (s)   = x \big) 
$$
is given, with 
  $\, m := {\bm \nu } (A^c) \, \mathbf{ 1}_{A} (\text{arg}(x)) - {\bm \nu } (A )\, \mathbf{ 1}_{A^{c} } (\text{arg}(x))\,$, by the expression  
$$
\int_s^t  \mathbb{Q}_j \big( || \omega_2 (t) || \in C\,, \, \text{arg} ( \omega_2 (t) ) \in A   \, \big| \,   \tau_s (\omega_2)  = \theta  ,\,\omega_2 (s)   = x  \big)\cdot  \mathbb{Q}_j \big(   \tau_s (\omega_2)  \in \mathrm{d} \theta  \, \big| \,   \omega_2 (s)   = x \big)
$$
$$
~~~~~~~~~=\,    
\int_s^t  p_{A}\big(\theta, t; 0,  {\bm \nu}(A^{c}) C\big)\cdot \Big( - \mathrm{d}_\theta \,q_A \big( s,\theta\,; \,m\, ||x||  \big) \Big).
$$
\noindent
Once again, the expression  on the right-hand side  does  not depend on $\,j=1,2\,$. 

\smallskip
 \noindent
 $\bullet~$ 
Next, we  consider the case $\, \bm \nu (A) \in \{ 0 ,\,1\}$.  
%
%
Let  $\, \bm \nu(A) =0 \,$ first; then $\, g_{A}(x) = \Vert x \Vert \, {\bf 1}_{\{\text{arg}(x) \in A,\,\lVert x \rVert > 0\}}  ,$ and the process  $\, M_A (\cdot) = g_{A}(\omega_{2}(\cdot)) \,$ is a nonnegative, continuous $\,\mathbb Q_{j} -$local martingale, thus also a supermartingale -- so it stays at the origin $\, \mathbf{ 0}\,$ after hitting it  for the first time. It follows that with $\,\mathbb Q_{j} -$probability one, the angular part $\, \text{arg}(\omega_{2}(\cdot))  $ never again visits    the set $A\,,$ after the radial part $\,\Vert \omega_{2}(\cdot) \Vert\,$ first becomes zero. Thus for an arbitrary $\, C \in {\cal B} ((0, \infty))\,$ and for every $\, 0 \le s < t < \infty \,$, $\,x \in \mathbb R^{2}\,$, $\,j \, =\, 1, 2\,$ we have
$$
\, \mathbb{Q}_j \big( || \omega_2 (t) || \in C\,, \, \text{arg} ( \omega_2 (t) ) \in A\,,   \, \tau_s (\omega_2) <t \, \big| \,   \mathcal F_{2}(s) \big)\, =\, 0\, .
$$ 
If, on the other hand,  $\, \bm \nu(A) =1 \,$ holds, then $\, \bm \nu(A^{c}) =0 \,$ and   therefore 
$$ 
\, \mathbb{Q}_j \big( || \omega_2 (t) || \in C\,, \, \text{arg} ( \omega_2 (t) ) \in A^{c}\,,   \, \tau_s (\omega_2) <t \, \big| \,   \mathcal F_{2}(s) \big)\, =\, 0\, ,
$$  
which implies  
$$
\mathbb{Q}_j \big( || \omega_2 (t) || \in C, \, \text{arg} ( \omega_2 (t) ) \in A\,,   \, \tau_s (\omega_2) <t \, \big| \,   \mathcal F_{2}(s) \big) = \mathbb{Q}_j \big( || \omega_2 (t) || \in C,  \, \tau_s (\omega_2) <t \, \big| \,  \mathcal F_{2}(s) \big)
$$
$$
 = \mathbb{Q}_j \big( || \omega_2 (t) || \in C,  \, \tau_s (\omega_2) <t \, \big| \,   ||\omega_2 (s)||   \big).
$$
We have also the following in both cases:
$$
 \mathbb{Q}_j \big( || \omega_2 (t) || \in C\,, \, \text{arg} ( \omega_2 (t) ) \in A\,,   \, \tau_s (\omega_2) >t \, \big| \,   \mathcal F_{2}(s) \big)~~~~~~~~~~~~~~~~~\qquad 
 $$
$$
~~~~~~~~~~~~~~~~~~~\qquad  
\,=\, \mathbb{Q}_j \big( || \omega_2 (t) || \in C\,,  \, \tau_s (\omega_2) >t \, \big| \,   || \omega_2 (s) || \big)\, \mathbf{ 1}_A (\text{arg}(\omega_2 (s))) \, .
$$
Since the distribution of $\, \Vert \omega_{2}(\cdot) \Vert \,$ is uniquely determined and independent of $\,j=1,2\,$, we conclude that $\,\mathbb{Q}_j \big( || \omega_2 (t) || \in C\,, \, \text{arg} ( \omega_2 (t) ) \in A\, \big| \,   \mathcal F_{2} (s) \big)\,$ does not depend on $\,j=1,2\,$, if $\, {\bm \nu } (A) =0\,$ or $\, 1\,$.   

\smallskip
\noindent
$\bullet~$  Finally, we note that $\,\mathbb{Q}_j \big(   \omega_2 (t)   =0 \, \big| \,   \omega_2 (s)   = x \big)=\,\mathbb{Q}_j \big( || \omega_2 (t) || =0\, \big| \,   ||\omega_2 (s) ||  = ||x ||\big)\,$,   where the right-hand side is also uniquely determined.

\smallskip
 \noindent
 $\bullet~$
It is clear that the conditional distribution of $\,\omega_{2}(t)\,$ given $\, \mathcal F_{2} (s)\,$ is uniquely determined for $\, 0 \le s < t < \infty\,$. Standard arguments show then, that the finite-dimensional distributions of $\,\omega_{2}(\cdot) \,$ are uniquely determined. Therefore, the local martingale problem of the Proposition is well-posed.  \qed 
 
\begin{prop} {\bf Well-Posedness for  \textsc{Walsh} Diffusions with Drift:}  
\label{Cor: MP1}  
Under the setting of Proposition \ref{prop: MP1}, and in addition to the  assumptions imposed there,  let us consider another   function $\, {\bm c} : \mathbb R_{+}\to \mathbb R\,$ which is bounded and measurable. We denote by $\,{\mathbb Q}^{(\mathbf{ 0})}\,$ the solution to the local martingale problem of subsection \ref{sec_512} associated with the triple  $\, ({\bm \sigma}, \mathbf{ 0}, {\bm \mu}) \,$.

\smallskip
\noindent
{\it (i)} For every $\,T \in ( 0, \infty)\,$, the local martingale problem associated with the triple  $\, ({\bm \sigma}, {\bm \sigma} {\bm c}, {\bm \mu})\,$ for $\, M^{g}(t)\,$,  $\, 0 \le t \le T\,$ in (\ref{eq: MP2}), is then well posed, and its solution is given by the probability measure 
$\, \mathbb Q^{(  
{\bm c})}_{T}\, $ with 
\[
\frac{\,  {\mathrm d} \mathbb Q^{(  
{\bm c})}_{T}\, }{ \, {\mathrm d} \mathbb Q^{(\mathbf{ 0})}\, } \bigg \rvert_{\mathcal F_{2}^{\bullet}(t)}\, :=\, \exp \Big( \int^{t}_{0}   
{\bm c} ( \lVert \omega_{2}(u)\rVert ) \,{\mathrm d} W(u) - \frac{1}{\, 2\, } \int^{t}_{0}  
{\bm c}^{2} ( \lVert \omega_{2} (u) ) \rVert ) {\mathrm d} u \Big) \, ; \quad 0 \le t  \le T\, . 
\] 

\smallskip
\noindent
{\it (ii)}
Under the assumptions in {\bf (i)}, suppose that $\,\mathbb{Q}^{(  
{\bm c})}\,$ solves the local martingale problem associated with the triple $\, ({\bm \sigma}, {\bm \sigma} {\bm c}, {\bm \mu})\,$. Then there exists an $\, \mathbb{F}_{2}-$Brownian motion $\, B(\cdot)\,$, such that    every $ \mathbb{F}_{2}-$local martingale $ M(\cdot) $ with $M(0)=0$ can be  represented in the integral form   $\, M(\cdot)=\int_0^{\, \cdot} H(t) \, \mathrm{d} B (t)\, $  for some $\,\mathbb{F}_{2}-$progressively measurable and locally square-integrable process  $\, H(\cdot)\,$.
\end{prop}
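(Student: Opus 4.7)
}

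For part (i), the plan is a direct Girsanov transformation. Since $\bm c$ is bounded, Novikov's condition is satisfied, so the stochastic exponential displayed in the statement is a true positive martingale and $\mathbb{Q}_T^{(\bm c)}$ is a bona fide probability measure on $\mathcal{F}_2^\bullet(T)$. Under $\mathbb{Q}^{(\mathbf{0})}$, Proposition \ref{prop: SDEMP}(b) produces an $\mathbb{F}_2$-Brownian motion $W$ under which $\omega_2(\cdot)$ satisfies the driftless system (\ref{5.5}); note $W$ is genuinely $\mathbb{F}_2$-adapted, since it can be recovered from $\|\omega_2\|$ and its local time by inverting (\ref{eq: ||X||}) (valid because $\bm\sigma$ never vanishes). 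By Girsanov, the process $B(t) := W(t) - \int_0^t \bm c(\|\omega_2(u)\|)\,\mathrm{d}u$ is a $\mathbb{Q}_T^{(\bm c)}$-Brownian motion on $[0,T]$, and substitution into (\ref{5.5}) shows that $\omega_2(\cdot)$ solves the drifted system (\ref{eq: SDEMP}) with coefficients $(\bm\sigma,\bm\sigma\bm c,\bm\mu)$ on $[0,T]$. Applying the Freidlin-Sheu formula (\ref{GenFS1}) to an arbitrary $g \in \mathfrak{D}$ then produces exactly the decomposition (\ref{eq: MP2}) whose operator $\mathcal{L}$ carries the new drift $\bm b = \bm\sigma\bm c$, which verifies the requisite (sub)martingale properties and shows that $\mathbb{Q}_T^{(\bm c)}$ solves the local martingale problem. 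For uniqueness, given any other solution $\mathbb{Q}'$, Proposition \ref{prop: SDEMP}(b) supplies an $\mathbb{F}_2$-Brownian motion $B'$ driving (\ref{eq: SDEMP}) under $\mathbb{Q}'$; the reverse Girsanov density $\exp\bigl(-\int_0^T \bm c(\|\omega_2\|)\,\mathrm{d}B' - \tfrac{1}{2}\int_0^T \bm c^2(\|\omega_2\|)\,\mathrm{d}u\bigr)$ transforms $\mathbb{Q}'$ into a solution of the driftless local martingale problem, which by Proposition \ref{prop: MP1} must equal $\mathbb{Q}^{(\mathbf{0})}|_{\mathcal{F}_2^\bullet(T)}$, forcing $\mathbb{Q}' = \mathbb{Q}_T^{(\bm c)}$.

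For part (ii), the strategy is to first establish the martingale representation property under $\mathbb{Q}^{(\mathbf{0})}$ with respect to $W$, and then transfer it to $\mathbb{Q}^{(\bm c)}$ via Girsanov; the Brownian motion $B$ in the statement is precisely the one produced in part (i). The key tool is the Jacod-Yor extremality criterion: the convex set of probability measures under which every $M^g\ (g \in \mathfrak{D}^{\bm\mu})$ is a local martingale and $M^{g_3}$ is a local submartingale contains $\mathbb{Q}^{(\mathbf{0})}$ as its unique member (by Proposition \ref{prop: MP1} and Remark \ref{remark: 6.1}), so $\mathbb{Q}^{(\mathbf{0})}$ is extremal. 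Jacod-Yor then gives that every $\mathbb{Q}^{(\mathbf{0})}$-local martingale null at zero is a stochastic integral against the generating family $\{M^g : g \in \mathfrak{D}^{\bm\mu}\}$. But the driftless Freidlin-Sheu formula (\ref{GenFS1}) expresses each such $M^g$ as $\int_0^{\cdot} G'(\omega_2(t))\,\bm\sigma(\|\omega_2(t)\|)\,\mathbf{1}_{\{\omega_2(t) \neq \mathbf{0}\}}\,\mathrm{d}W(t)$, so every such integral collapses to a single stochastic integral against $W$, yielding MRP under $\mathbb{Q}^{(\mathbf{0})}$ with respect to $W$. Finally, standard preservation of MRP under Girsanov transfers this to $\mathbb{Q}^{(\bm c)}$ with respect to $B$.

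The main obstacle will be the Jacod-Yor step: making rigorous the passage from uniqueness in distribution (a statement about finite-dimensional marginals) to extremality in the convex set of solution measures, and checking that the family $\{M^g : g \in \mathfrak{D}^{\bm\mu}\}$ is rich enough for the extremality-MRP equivalence to apply. This parallels the argument used by \textsc{Barlow, Pitman \& Yor} (1989) (already invoked in the proof of Proposition \ref{prop: MP1}) for the canonical Walsh Brownian motion case, and adapting it to our Walsh diffusion setting via (\ref{GenFS1}) forms the technical core of the proof.
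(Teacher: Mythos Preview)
Your proposal is correct and follows essentially the same route as the paper. Part~(i) is identical: Girsanov in both directions, with Proposition~\ref{prop: SDEMP} mediating between the SDE and the local martingale problem (the paper invokes Proposition~\ref{prop: SDEMP}(a) rather than reapplying Freidlin--Sheu directly, but that is the same thing). For part~(ii), the paper also relies on the Jacod--Yor extremality criterion, citing Theorem~4.1 of \textsc{Barlow, Pitman \& Yor} (1989) and \textsc{Jacod} (1977); the only minor difference is that the paper applies this argument directly under $\mathbb{Q}^{(\bm c)}$ using the well-posedness established in part~(i), whereas you first establish MRP under $\mathbb{Q}^{(\mathbf 0)}$ and then transfer via Girsanov --- both routes work and cost the same.
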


\begin{proof} {\it (i)} This is a direct consequence of Propositions \ref{prop: SDEMP}, \ref{prop: MP1} and \textsc{Girsanov}'s change of measure. Indeed, it follows from Proposition \ref{prop: SDEMP} that, under $\,\mathbb Q^{(\mathbf{ 0})}\,$, the co\"ordinate process $\, \omega_{2}(\cdot) \,$ satisfies the system of stochastic integral equations   (\ref{5.5}), subject to (\ref{eq: zeroLeb}) and (\ref{eq: localDist}). Because of the boundedness of  the function    
$\,   
{\bm c} (\cdot)\,$, the measure $\, \mathbb Q^{( 
{\bm c})}_{T}\,$ just introduced is a probability. 

By \textsc{Girsanov}'s theorem (e.g., \textsc{Karatzas \& Shreve} (1991), Theorem 3.5.1) we see that for every fixed $\,T \in ( 0, \infty)\,$,   the process  
$
W^{( 
{\bm c})}(u)   :=   W(u) - \int^{u}_{0}    
{\bm c}  \big(\Vert \omega_{2}(t)\Vert\big) \, {\mathrm d} t \, ,\,\,\, 0 \le u \le T 
$
is standard Brownian motion under this probability measure $\, {\mathbb Q}^{(  
{\bm c})}_{T}$, and  thus    the co\"ordinate process $\, \omega_{2}(\cdot) \,$ satisfies on the time-horizon $\, [0,T]\,$ the system  of stochastic integral equations   
\[
  X_i(\cdot) \, =\, \mathrm{x}+ \int_0^{\, \cdot} {\mathfrak f}_i(X(t)) \, {\bm \sigma} \big( \lVert X( t) \rVert \big) \Big[ \, {\mathrm d} W^{( {\bm c})}(t) +  {\bm c}  \big( \lVert X( t) \rVert \big) \, {\mathrm d}t \, \Big] +    \gamma_i \, L^{\lVert X  \rVert}(\cdot) \,, \quad i=1, 2 \,.    
\]
Moreover, since the probability measure $\, {\mathbb Q}^{({\bm c})}_{T}$ is absolutely continuous with respect to $\, {\mathbb Q}^{({\bm 0})}$, we  obtain (\ref{eq: zeroLeb}) and (\ref{eq: localDist}) with $\, X(\cdot)\,$ replaced by $\, \omega_{2}(\cdot)\,$, a.e.\,under $\,\mathbb Q^{({\bm c})}_{T}\,$.
Thanks to Proposition \ref{prop: SDEMP} again, 
 $\, {\mathbb Q}^{({\bm c})}_{T}\,$ solves the  local martingale problem of subsection \ref{sec_512} associated with the triple $\,({\bm \sigma}, {\bm \sigma}{\bm c}, {\bm \mu}) \,$.  

Conversely, for any solution $\, {\mathbb Q}^{  
({\bm c})}_{T}\,$ to the  local martingale problem  associated with $\,({\bm \sigma}, {\bm \sigma}{\bm c}, {\bm \mu}) \,$ for  $\,M^{g}(t)\,$,  
$\, 0 \le t \le T\,$ as in (\ref{eq: MP2}), the probability measure $\,\mathbb Q^{(\mathbf{ 0})}\,$ defined via 
 \[
\frac{\,  {\mathrm d} \mathbb Q^{(  
\mathbf{ 0})}\, }{ \, {\mathrm d} \mathbb Q^{({\bm c})}_{T}\, } \bigg \rvert_{\mathcal F_{2}^{\bullet}(t)}\, :=\, \exp \Big( - \int^{t}_{0}   
{\bm c} ( \lVert \omega_{2}(u)\rVert ) \,{\mathrm d} W^{( 
{\bm c})}(u) - \frac{1}{\, 2\, } \int^{t}_{0}  
{\bm c}^{2} ( \lVert \omega_{2} (u) ) \rVert ) {\mathrm d} u \Big) \, ; \quad 0 \le t  \le T  
\] 
is  seen to solve the local martingale problem of subsection \ref{sec_512} associated with  the triple  $\, ({\bm \sigma}, \mathbf{ 0}, {\bm \mu}) \,$. Since this problem is well-posed, the same  holds for the local martingale problem associated with    $\, ({\bm  \sigma}, {\bm \sigma}{\bm c}, {\bm \mu}) \,$. 

\smallskip
\noindent 
{\it (ii)}\,\, From part {\it (i)} we know that $\,\mathbb{Q}^{(  
{\bm c})} \big\rvert_{\mathcal F_{2}^{\bullet}(T)} =\mathbb{Q}^{(  
{\bm c})}_{T} \, ,\,\, 0 \leq T < \infty\,,$ and $\, B(\cdot):= W^{( 
{\bm c})}(\cdot) \,$ is a standard Brownian motion under $\,\mathbb{Q}^{(  
{\bm c})}\,$. Since the local martingale problem in part {\it (i)} is well-posed, we can adapt the proof of Theorem 4.1 of \textsc{Barlow,  Pitman  \& Yor} (1989) to show that the required $\, H(\cdot)\,$ exists up to any finite time $\, T\,$  (see also \textsc{Jacod} (1977)), and can thus be   defined on all of $\,  [0, \infty)\,$. 
\end{proof}

\section{Martingale Characterization of the \textsc{Walsh} Brownian Motion} 
\label{subsec: WBM} 

We still have to show that, when $\, U(\cdot) \equiv B(\cdot)\,$ is standard Brownian motion, the construction of Theorem \ref{prop: skewTanaka} leads to the \textsc{Walsh} Brownian motion as defined, for instance, in \textsc{Barlow, Pitman \& Yor} (1989) or \textsc{Fitzsimmons \& Kuter} (2014). In the present section we establish this connection; cf. Proposition \ref{prop: ID}.

Following these sources, 
we may characterize the \textsc{Walsh} Brownian motion $\, {\bm W}(\cdot)\,$ in terms of its  \textsc{Feller} semigroup $\,\{\mathcal{P}_{t}\,, \, t \ge 0\}\,$ defined for $\,f \in C_{0}(\overline{E}) \,$ via 
\begin{equation} 
\label{eq: FSEMIG}
\begin{split}
\big[\mathcal{P}_{t} f \big](0, \theta) \, &:=\, T_{t}^{+} \overline{f}(0) \, , \\
\big[\mathcal{P}_{t} f\big](r, \theta) \, & :=\, T_{t}^{+} \overline{f}(r) + \big[T_{t}^{0} \big(f_{\theta} - \overline{f}\, \big)\big](r) \, ; \quad r > 0 \, , \, \, \theta \in [0, 2\pi) \, . 
\end{split}
\end{equation}  
 \noindent
  Here $\, \{T_{t}^{+}, \,0 \le t < \infty \} \,$ is the semigroup of  reflected Brownian motion on $\,[0, \infty)\,$, and $\, \{T_{t}^{0}, \,0 \le t < \infty \} \,$   the semigroup of Brownian motion on $\,[0, \infty)\,$ killed upon reaching  the origin. For the sake of simplicity, we use   polar co\"ordinates in the punctured plane $\,E\,$ of (\ref{eq: f}). Abusing notation slightly,   we  define  also
\begin{equation}
\label{abuse}
\overline{f}(r) \, :=\, \int_{[0, 2\pi)} f(r, \theta)   \, {\bm \nu}  ({\mathrm d} \theta) \, , \qquad f_{\theta}(r) \, :=\, f(r, \theta) \, ; \qquad (r, \theta) \in  \overline{E} \, ,  
\end{equation}
for $\,f \in C( \overline{E})\,$,  as in   (\ref{nu_mu}). Let us assume that $\,{\bm W}(0) \, =\, {\rm x} \in \mathbb R^{2}\,$. 
\textsc{Barlow, Pitman \& Yor} (1989) show that there is a \textsc{Feller} and strong \textsc{Markov}   process $\, {\bm W}(\cdot)$  with values in $\,\R^2\,$, continuous paths, and     $\,\{\mathcal{P}_{t}\,, \, 0 \le t < \infty \}\,$ as its semigroup. This is the process these authors call ``\textsc{Walsh} Brownian motion". They show that the radial part $\,|| {\bm W}(\cdot) ||\,$ is one-dimensional reflecting Brownian motion. For this planar process $\, {\bm W}(\cdot)\,,$ \textsc{Hajri \& Touhami} (2014) derive a version of the \textsc{Freidlin-Sheu} formula, that involves the standard, one-dimensional Brownian motion   of the filtration $\,\mathbb{F}^{{\bm W}} = \big\{ \mathcal{F}^{{\bm W}} (t) \big\}_{0 \le t < \infty}\,$, given by 
 \begin{equation}
\label{eq: new_BM}
 {\bm \beta}^{{\bm W}}(\cdot)  \,:=\, ||{\bm W}(\cdot)||- ||\mathrm{x}||- L^{||{\bm W||}}(\cdot)  \,.
\end{equation}

Here is an extension of Proposition 3.1 in \textsc{Barlow, Pitman \& Yor} (1989); we recall the Definitions \ref{def: D} and \ref{def: G}, as well as the notation of \eqref{eq: gh}.   
It shows that the \textsc{Walsh} Brownian motion with spinning measure $\,{\bm \mu} \,$, defined via the semigroup (\ref{eq: FSEMIG}),  generates a solution to the local martingale problem associated with the triple $\, (\bm 1, \bm 0, \bm \mu)\,$ (cf. Remark \ref{remark: 6.1}).

 \begin{prop} {\bf Properties of \textsc{Walsh} Brownian Motion:}
 \label{MartProb}
Let $\, {\bm W} (\cdot)\,$ be the \textsc{Walsh} Brownian motion defined via the semigroup (\ref{eq: FSEMIG})  and with spinning measure $\,{\bm \mu} \,$. Then: 
\\
 (i) The process $\, \Vert {\bm W}(\cdot) \Vert\,$ is reflecting Brownian motion; and 
$    {\bm W}(\cdot)   $ satisfies the properties in (\ref{eq: DIST3})--(\ref{eq: DIST}). 

\noindent
 (ii) For any   $\, g : \mathbb{R}^{2} \rightarrow \mathbb R\,$ 
in the class $\, \mathfrak{D}^{{\bm \mu}}\,$ of   (\ref{eq: Dmu}),  the continuous process   below is  a   local martingale:
$$
g({\bm W}(\cdot)) -g(\mathrm x)- \frac{1}{\,2\,}  \int_0^{\, \cdot } G^{\prime \prime} ({\bm W}( t)) \,\mathbf{ 1}_{ \{ {\bm W}( t) \neq \mathbf{ 0}\}}\, {\mathrm d}  t\,=\, \int_0^{\, \cdot } G^{\prime  } ({\bm W}( t)) \,\mathbf{ 1}_{ \{ {\bm W}( t) \neq \mathbf{ 0}\}}\, {\mathrm d} {\bm \beta}^{{\bm W}}( t)\,.
$$
  \end{prop}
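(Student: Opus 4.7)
For a function $f(r,\theta) = \phi(r)$ depending only on $r$, one has $\bar f = \phi$ and $f_\theta - \bar f \equiv 0$, so (\ref{eq: FSEMIG}) reduces to $[\mathcal P_t f](r,\theta) = T_t^+ \phi(r)$. Hence the radial part $\|\bm W(\cdot)\|$ is a Markov process with the reflecting Brownian semigroup, and therefore is reflecting Brownian motion. For (\ref{eq: DIST3}), note that the term $T_t^0(f_\theta - \bar f)$ in (\ref{eq: FSEMIG}) employs the killed-at-origin semigroup, which propagates along the ray fixed by the starting angle; testing against indicators of angular sets $B \not\ni \mathfrak f(\mathrm{x})$ shows that $\mathfrak f(\bm W(s)) = \mathfrak f(\mathrm x)$ on $\{\tau(0) > s\}$. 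For (\ref{eq: DIST}), I would invoke the strong Markov property at $\tau(s)$ together with the origin formula $[\mathcal P_t f](\bm 0) = T_t^+ \bar f(0)$: taking $f(x) = \mathbf{1}_B(\mathfrak f(x))$ with $B \in \mathcal B(\mathfrak S)$ gives $\bar f(r) = \bm\mu(B)$ for $r>0$ and $\bar f(0)=0$, and since reflecting Brownian motion started at zero is strictly positive for Lebesgue-a.e.\ $t > 0$, one has $T_t^+\bar f(0) = \bm\mu(B)$, as required.

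\textbf{Part (ii).} Building on (i), the strong Markov property applied at each successive upcrossing of $(0,\varepsilon)$ by $\|\bm W(\cdot)\|$ shows that the sequence of excursion directions is i.i.d.\ with law $\bm\nu$; combined with the standard downcrossing normalization $\varepsilon N(T,\varepsilon) \to L^{\|\bm W\|}(T)$ this yields the thinned local-time identity (\ref{eq: localDist}) for $\bm W$. With (\ref{length}) and (\ref{eq: localDist}) available, I would mirror the proof of Theorem \ref{Gen_FS}: decompose $[0,T]$ along the stopping times (\ref{eq: tau rec}); on each downcrossing interval $\bm W$ lies on a single ray, so classical It\^o applied to $g_\Theta(\|\bm W\|)$ produces, in the limit $\varepsilon \downarrow 0$, the integrals $\int G'(\bm W)\mathbf{1}_{\{\bm W \neq \bm 0\}}\,d\|\bm W\|$ and $\tfrac12\int G''(\bm W)\mathbf{1}_{\{\bm W \neq \bm 0\}}\,d\langle\|\bm W\|\rangle$. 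The cumulative upcrossing contributions, after Taylor expansion and (\ref{007}), amount to $L^{\|\bm W\|}(\cdot)\int_0^{2\pi} g'_\theta(0+)\,\bm\nu(d\theta)$, which vanishes for $g \in \mathfrak D^{\bm\mu}$ by definition of that class.

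\textbf{Identification and main obstacle.} Because $\|\bm W(\cdot)\| = \|\mathrm{x}\| + \bm\beta^{\bm W}(\cdot) + L^{\|\bm W\|}(\cdot)$ is reflecting Brownian, we have $d\langle\|\bm W\|\rangle(t) = dt$ and $\mathbf{1}_{\{\bm W \neq \bm 0\}}\,dL^{\|\bm W\|} \equiv 0$ (the local time being supported on $\{\bm W = \bm 0\}$). Substituting these into the preceding display collapses the $d\|\bm W\|$-integral to $\int_0^{\cdot} G'(\bm W(t))\,\mathbf{1}_{\{\bm W(t) \neq \bm 0\}}\,d\bm\beta^{\bm W}(t)$, giving simultaneously both displayed equalities and the local-martingale property. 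The chief technical point is the derivation of (\ref{eq: localDist}) directly from the Feller-semigroup description: upgrading the distributional identity $[\mathcal P_t f](\bm 0) = T_t^+ \bar f(0)$ to the genuine i.i.d.\ statement for successive excursion directions requires the kind of careful excursion-theoretic bookkeeping that combines the strong Markov property at excursion start times with the downcrossing normalization of semimartingale local time.
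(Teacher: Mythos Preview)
Your approach is correct but substantially more elaborate than the paper's. The paper's proof is pure citation: Part~(i) is attributed to \textsc{Barlow, Pitman \& Yor} (1989), and Part~(ii) follows immediately from the \textsc{Freidlin--Sheu}-type formula of Theorem~1.2 in \textsc{Hajri \& Touhami} (2014), which already supplies the displayed identity for $g(\bm W(\cdot))$ with the Brownian motion $\bm\beta^{\bm W}$ of (\ref{eq: new_BM}) in place. You instead rebuild the \textsc{Freidlin--Sheu} formula for $\bm W$ from scratch, working directly from the semigroup description and mimicking the proof of Theorem~\ref{Gen_FS}.

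Your strategy for Part~(ii) is in fact the one the paper itself deploys \emph{later}, in the proof of Theorem~\ref{thm: Gen} (Section~\ref{sec: App}): use the strong \textsc{Markov} property at the origin to show the directions $\{\Theta(\tau_{2\ell+1}^\varepsilon)\}_{\ell\in\N_0}$ are i.i.d.\ with law $\bm\nu$, then combine the strong law of large numbers with $\varepsilon\,N(T,\varepsilon)\to L^{\|\bm W\|}(T)$ to obtain the convergence (\ref{007}). One small correction: this argument does not literally produce the semimartingale local-time identity (\ref{eq: localDist}) for $R^A$ --- that would require first exhibiting $R^A(\cdot)$ as a semimartingale and identifying its local time --- but rather delivers (\ref{007}) directly via the SLLN, which is all the Theorem~\ref{Gen_FS} machinery actually needs. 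Your final identification step (substituting $d\|\bm W\|=d\bm\beta^{\bm W}+dL^{\|\bm W\|}$ and using that $L^{\|\bm W\|}$ is carried by $\{\bm W=\bm 0\}$) is correct. What your route buys is independence from the \textsc{Hajri--Touhami} reference and internal consistency with the paper's own methods; what the paper's route buys is a two-line proof.
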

 
 \noindent
 {\it Proof:} The claims of (i) are proved    in \textsc{Barlow, Pitman \& Yor} (1989). Claim (ii) follows  by  applying the \textsc{Freidlin-Sheu}-type formula of Theorem 1.2 in \textsc{Hajri \& Touhami} (2014)  
  to the process $\,g ({\bm W}(\cdot))$. We also note that, with the notation of \eqref{eq: gh},  both processes  below are continuous   
 martingales:
\[
{\cal M}^{{\bm W}}_{(\varphi)}(\cdot)=  g_{(\varphi)}\big({\bm W}(\cdot)\big)- g_{(\varphi)}({\mathrm x})=    \int^{\,\cdot}_{0} h_{(\varphi)} \big({\bm W}(t)\big) \,{\mathrm d}  {\bm \beta}^{{\bm W}}( t)  \,, \quad {\cal N}_{(\varphi)}^{{\bm W}}(\cdot) = \big({\cal M}_{(\varphi)}^{{\bm W}}(\cdot)\big)^2 - \langle {\cal M}_{(\varphi)}^{{\bm W}} \rangle (\cdot). \qed
\]
 
Our next result shows that, as we expected all along, \textsc{Walsh} semimartingales driven by Brownian motions $\, U(\cdot)\,$ are  \textsc{Walsh} Brownian motions defined via the semigroup (\ref{eq: FSEMIG}).  

\begin{prop} {\bf Stochastic Equations for \textsc{Walsh} Brownian Motions:}   
\label{prop: ID} 
Let us place ourselves in the context of Theorem  \ref{prop: skewTanaka},  and suppose that the semimartingale $\,U(\cdot)\equiv B(\cdot)\,$ of (\ref{eq: U})  is Brownian motion. Then the planar process $ X(\cdot) $ constructed there,  has the following properties: \\ (i) It is the unique-in-distribution weak solution, subject to the properties (\ref{eq: zeroLeb}), (\ref{eq: localDist}), of the system of stochastic integral equations in (\ref{eq: skewTanaka_2}), namely 
$\,\,
X_{i}(\cdot) =  {\rm x}_{i} + \int^{\, \cdot}_{0} \mathfrak f_{i}\big(X(t)\big) \,{\mathrm d} B(t) + \gamma_i \, L^{ \,||X||}(\cdot) \, ,  \,\,\,\, i=1,2  \,$.

\noindent
(ii)\, It is   a \textsc{Walsh} Brownian motion.

\noindent
 (iii)
Every $ \mathbb{F}^X-$local martingale $ M(\cdot) $ with $M(0)=0$   has an integral representation  $\, M(\cdot) = \int_0^{\, \cdot} H(t) \, \mathrm{d} B (t)\, $,  \,for some $\,\mathbb{F}^X-$progressively measurable and locally square-integrable process  $\, H(\cdot)\,$.   
\end{prop}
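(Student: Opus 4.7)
\medskip
\noindent
\textbf{Proof plan.} The strategy for all three parts is to reformulate the assertions in the language of the local martingale problem associated with the triple $\,(\mathbf{1}, \mathbf{0}, {\bm \mu})\,$ of subsection \ref{sec_512}, and to exploit its well-posedness. Since $\,{\bm \sigma} \equiv 1\,$ trivially satisfies the local integrability condition (\ref{eq: ES}), Proposition \ref{prop: MP1} applies and yields the well-posedness of this local martingale problem. For part (i), the process $\,X(\cdot)\,$ supplied by Theorem \ref{prop: skewTanaka} with $\,U(\cdot) \equiv B(\cdot)\,$ satisfies (\ref{eq: skewTanaka_2}), which is precisely (\ref{eq: SDEMP}) with $\,{\bm \sigma} \equiv 1,\, {\bm b} \equiv 0, \, W = B\,$, together with the properties (\ref{eq: zeroLeb}) and (\ref{eq: localDist}); existence of a weak solution is therefore automatic. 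Uniqueness in distribution, subject to (\ref{eq: zeroLeb}) and (\ref{eq: localDist}), follows from Proposition \ref{prop: SDEMP}(c) combined with the well-posedness just mentioned.

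\smallskip
\noindent
For part (ii), I would show that the \textsc{Walsh} Brownian motion $\,\bm W(\cdot)\,$ of Proposition \ref{MartProb}, defined through the \textsc{Feller} semigroup (\ref{eq: FSEMIG}), also induces a solution to the same local martingale problem on the canonical space $\,(\Omega_2, \mathcal F_2)\,$; well-posedness then forces $\,X(\cdot) \stackrel{(d)}{=} \bm W(\cdot)\,$. Proposition \ref{MartProb}(i) gives $\,\bm W(0)=\mathrm x\,$ together with the non-stickiness (\ref{eq: zeroLeb2}), because $\,\|\bm W(\cdot)\|\,$ is reflecting Brownian motion; and Proposition \ref{MartProb}(ii) provides the continuous local martingale property of $\,M^{g}(\cdot\,;\bm W)\,$ for every $\,g \in \mathfrak D^{\bm \mu}\,$. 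To upgrade this to the continuous local submartingale property for every $\,g \in \mathfrak D^{\bm \mu}_{+}\,$ demanded by the formulation of subsection \ref{sec: LMP}, I invoke the equivalence spelled out in Remark \ref{remark: 6.1}: the only extra input needed is that the process $\,M^{g_{3}}(\cdot\,;\bm W)= \|\bm W(\cdot)\|-\|\mathrm x\|\,$ associated with the radial function $\,g_{3}(x)=\|x\|\,$ be a continuous local submartingale, and this is immediate from the reflecting Brownian character of $\,\|\bm W(\cdot)\|\,$.

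\smallskip
\noindent
For part (iii), I would apply Proposition \ref{Cor: MP1}(ii) with $\,{\bm \sigma} \equiv 1\,$ and $\,{\bm c} \equiv 0\,$. Tracing through its proof, the representing Brownian motion is $\,W^{(\bm c)}(\cdot)=W(\cdot)\,$, which by the construction in the proof of Proposition \ref{prop: SDEMP}(b) is precisely the driving Brownian motion appearing in (\ref{eq: SDEMP}) — here $\,B(\cdot)\,$ of (\ref{eq: skewTanaka_2}). Since the law of $\,X(\cdot)\,$ induces a solution to the local martingale problem for $\,(\mathbf{1},\mathbf{0},\bm \mu)\,$, the representation transfers back to the filtration $\,\mathbb F^{X}\,$ on the original probability space. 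The main point of interest — rather than a true obstacle — is that $\,\mathbb F^{X}\,$ is typically strictly larger than the filtration generated by $\,B(\cdot)\,$ alone (this is the content of \textsc{Tsirel'son}'s theorem whenever $\,\bm \mu\,$ charges more than two points), so the conclusion is a genuine integral representation of $\,\mathbb F^{X}$-local martingales as stochastic integrals against $\,B(\cdot)\,$, and is strictly weaker than strong solvability of (\ref{eq: skewTanaka_2}).
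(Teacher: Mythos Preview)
Your proposal is correct and follows essentially the same route as the paper's proof: part (i) via Propositions \ref{prop: SDEMP} and \ref{prop: MP1} with $\bm\sigma\equiv 1$; part (ii) by showing that the \textsc{Walsh} Brownian motion of Proposition \ref{MartProb} solves the same well-posed local martingale problem (your explicit appeal to Remark \ref{remark: 6.1} for the upgrade from $\mathfrak D^{\bm\mu}$ to $\mathfrak D^{\bm\mu}_{+}$ is a detail the paper leaves implicit); and part (iii) via Proposition \ref{Cor: MP1}(ii). Your additional care in identifying the representing Brownian motion with $B(\cdot)$ is more than the paper spells out, but is the right point to raise.
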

 
\begin{proof}   
The first claim follows    from Propositions \ref{prop: SDEMP},   \ref{prop: MP1} with $\, {\bm \sigma}(\cdot) \equiv 1 \,$;   the second claim, that $\, X(\cdot)\,$ is \textsc{Walsh} Brownian motion, is   a consequence of   Propositions {\ref{prop: SDEMP}}, \ref{prop: MP1}   and \ref{MartProb}.  With  $\,U(\cdot)\equiv B(\cdot) \,$ a standard Brownian motion,   Proposition \ref{prop: MART} shows that both processes below are       continuous  local martingales     $$ \,M_{(\varphi)}(\cdot) =  g_{(\varphi)}(X(\cdot))- g_{(\varphi)}( \mathrm{x} ) = \int^{\,\cdot}_{0}  
 h_{(\varphi)}(X(t)) \, {\mathrm d} B(t) \,, \quad \,N_{(\varphi)}(\cdot) =\big[M_{(\varphi) } (\cdot)\big]^{2} - \int^{\,\cdot}_{0} \big[h_{(\varphi)}(X(t))\big]^{2} {\mathrm d} t \,$$ 
(cf.\,Theorem 3.1 of \textsc{Barlow,  Pitman  \& Yor} (1989)). The third claim follows from Proposition \ref{Cor: MP1}. 
\end{proof}

 In the terminology adopted by \textsc{Mansuy \& Yor} (2006), and for a spinning measure ${\bm \mu}$ that does not concentrate on one or two   points in  $\,\mathfrak S\,$, this last property says that the natural filtration $ \mathbb{F}^X$ of the \textsc{Walsh} Brownian motion   is a  {\it weak Brownian filtration} (has the martingale representation property with respect to $U$) but {\it not a strong Brownian filtration} (cannot be generated by a Brownian motion of {\it any} dimension).

\section{Angular Dependence}  
\label{sec: AD}

Let us admit now  bounded, \textsc{Borel}-measurable co\"efficients $\,{\bm b} : \mathbb R \times [0, 2 \pi)\to \mathbb R\,$ and $\,{\bm a} : \mathbb R \times [0, 2 \pi) \to (0, \infty)\,$ which may depend  on the angular variable $\, \theta \in [0, 2\pi) \,$ in (\ref{eq: MP2}).  We assume also that $\,{\bm a} \,$ is bounded away from zero, and consider the local  martingale problem of subsection \ref{sec_512}   
but now with the infinitesimal generator re-defined as  
\begin{equation} 
\label{eq: L2g}
\mathcal L^{*} g(x) \, :=\, {\bm b} \big( \lVert x \rVert , \text{arg} (x) \big)\, G^{\prime}(x) + \frac{1}{\, 2\, } \, {\bm a} \big( \lVert x \rVert , \text{arg} (x) \big)\, G^{\prime\prime}(x) \, ; \, \, \, \, \,\,\,x \in \mathbb R^{2}\, , \, \,\,\,g \in \mathfrak D_{} \, .
\end{equation}

For every given, fixed $\, \theta \in [0, 2\pi) \,$, we set $\, {\bm \sigma}_{\theta}(r)   :=  {\bm \sigma}(r, \theta)  \,$ as well as  $\, {\bm a} (r, \theta) = [{\bm \sigma} (r, \theta)]^{2}\,$, and   define the scale function $\, {\bm p}_{\theta}(\cdot)\, $ by
\[
{\bm p}_{\theta}(r) \, =\, {\bm p}(r, \theta) \, :=\,  \int^{r}_{0} \exp \Big( - 2 \int^{\xi}_{0} \frac{ {\bm b}(\zeta, \theta)\, }{\,   {{\bm a}}(\zeta, \theta)\, } {\mathrm d} \zeta \Big) {\mathrm d} \xi \, ,  \qquad r \in [0, \infty)\,, 
\]
as well as its inverse $\, {\bm q}_{\theta}(r)= {\bm q} (r, \theta) \,$ in the radial component with $\, {\bm q}_{\theta}( {\bm p}_{\theta}(r) ) =  r\,$. We note that  these functions satisfy $\, {\bm p}_{\theta}( {0}) = 0 = {\bm q}_{\theta}({ 0}) \, $ and $\,{\bm p}_{\theta}^{\prime}( {0+}) = 1 = {\bm q}_{\theta}^{\prime}({ 0+})$;   that  $\, {\bm p}_{\theta}(\cdot) \,$ has an absolutely continuous, strictly positive derivative;    that the second derivative $\, {\bm p}_{\theta}^{\prime\prime}(\cdot)\,$ exists almost everywhere; and that   both of these derivatives are bounded. Therefore, by the generalized \textsc{It\^o} rule, we see that Theorem \ref{Gen_FS} holds   also for the function $\, {\bm p}_{\theta}(\cdot) \,$, which may not be in the class $\,\mathfrak D\,$; the same is true for the function $\, {\bm q}_{\theta}(\cdot)\,$.   
 
Let us consider   an auxiliary diffusion co\"efficient 
\begin{equation}
\label{eq: sigma^tilde}
\widetilde{{\bm \sigma}}_{\theta}(r) \, \equiv \, \widetilde{{\bm \sigma}} ( r, \theta)  \,:= \,   {\bm p}^{\prime}_{\theta}( {\bm q}_{\theta}( r)) \, {\bm \sigma}_{\theta} ( {\bm q}_{\theta}(r)) \,, \qquad  0 < r < \infty
\end{equation}
 and $\,\theta \in [0, 2\pi) \,$, and write $\, \widetilde{{\bm \sigma}} (y) \equiv  \widetilde{{\bm \sigma}} ( r, \theta)\,$ for $\, y = (r, \theta) \in \R^2.$  We introduce also  the stochastic clock
\[
{\mathcal Q}(\cdot):=
\int^{\, \cdot}_{0} \frac{ {\mathrm d} u}{\, \big[ \widetilde{ {\bm \sigma}} \big( \lVert X(u) \rVert , \Theta(u) \big)\big]^{2}\, } \qquad \text{and its inverse} \quad 
{\mathcal T}(t)   :=  \inf \big \{ v \ge 0 :  {\mathcal Q}(v)   > t \big \}  \, ; \quad 0 \le t < \infty \,.  
\, 
\]
Here $\,X(\cdot) = Z(\cdot) S(\cdot) \,$ is a \textsc{Walsh} semimartingale as constructed as in (\ref{eq: ZX}),   starting from a one-dimensional  Brownian motion $\, U(\cdot) = B(\cdot) $ in Proposition \ref{prop: ID}. In particular, $  X(\cdot)$ is   \textsc{Walsh} Brownian motion; whereas $\,\Theta(\cdot)= \text{arg} ( X   (\cdot))  \,$ is as in (\ref{eq: Theta}).  We consider now the time-changed, rescaled version  $\,Y(\cdot)   =  (Y_{1}(\cdot), Y_{2}(\cdot))^{\prime}\,$ of this \textsc{Walsh} Brownian motion $ X(\cdot)  $,     defined in  polar co\"ordinates  via  
\begin{equation}  \label{eq: Y}
\lVert Y(\cdot) \rVert  \, :=\,  {\bm q} \big( \lVert X ( {\mathcal T}(\cdot)) \rVert , \text{ arg} ( X( {\mathcal T} (\cdot))) \big) \, , \qquad  \text{arg} (Y(\cdot) ) \, :=\, \text{arg} ( X( {\mathcal T} (\cdot))) \, =\, \Theta( {\mathcal T} (\cdot)) \, . 
\end{equation}
In terms of this rescaling, we have the representation 
\begin{equation}  
\label{eq: inv_time_change}
\mathcal{T} (\cdot) =  \int_0^{\, \cdot} \Big(  {\bm p}^{\prime}_{\theta}( r) \, {\bm \sigma}_{\theta} ( r) \Big)^2\Big\vert _{\,\theta = \text{arg}(Y(t)), \,r = \lVert Y(t)\rVert}  \, \, \mathrm{d} t\, 
\end{equation}
   for the inverse clock. The resulting process $\,Y(\cdot) \,$  turns out to be a \textsc{Walsh} semimartingale with  angular dependence in its local characteristics $\,({\bm \sigma}, {\bm b}, {\bm \mu})\,$, as  follows.

\begin{prop} 
\label{prop: AD} 
The process $\,Y(\cdot) \,$ defined in (\ref{eq: Y})     
satisfies the   integral equations 
\begin{equation} 
\label{eq: dynY}
Y(\cdot)  \, =\,  Y(0) + \int^{\, \cdot}_{0} {\mathfrak f}(Y(t)) \big[ \, {{\bm \sigma}}  \big( \lVert Y (t) \rVert , \text{arg} (Y(t)) \big)\, {\mathrm d} W(t)
+  {\bm b}  \big(\lVert Y (t) \rVert , \text{arg} (Y(t)) \big) {\mathrm d} t \, \big] + {\bm \gamma} \,  L^{ \lVert Y \rVert}(\cdot) ~~
\end{equation}
\begin{equation*} 
\label{eq: ||Y||0} 
\lVert Y(\cdot) \rVert \, =\,  \lVert Y(0) \rVert + \int^{\cdot}_{0} {\bf 1}_{\{ \lVert Y(t) \rVert > 0 \}} \, \big( {\bm b} ( \lVert Y(t) \rVert, \text{arg} ( Y(t)) ) {\mathrm d} t + {\bm \sigma} ( \lVert Y(t) \rVert , \text{arg} ( Y(t))) {\mathrm d} W(t)\big) + L^{||Y||} (\cdot) \,  
\end{equation*} 
as well as  the properties   
 $\, 
\int^{\cdot}_{0}{\bf 1}_{\{ Y(t) =  0 \}} {\mathrm d} t  \equiv  0 \,$ and $\,L^{R^{A}_{\ast}} (\cdot)   \equiv   {\bm \nu}(A) \, L^{ \lVert Y \rVert}(\cdot) \,, \,\, \forall \,\,\, A \in \mathcal B ([0, 2\pi))\,. 
$   
   Furthermore, it   induces a solution to the local martingale problem associated with the triple $\, ({\bm \sigma}, {\bm b}, {\bm \mu}) \,$ and $\,\mathcal L^{\ast}\,$    in (\ref{eq: L2g}).

In the above expressions $\,{\mathfrak f} = ({\mathfrak f}_{1}, \mathfrak f_{2})^{\prime}\,$
is defined in (\ref{eq: f}), $ \,W(\cdot)\,$ is one-dimensional   Brownian motion, and the ``thinned'' process $\, \,R^{A}_{\ast}(\cdot)  :=  \lVert Y (\cdot) \rVert \cdot {\bf 1}_{A}(\text{arg}(Y(\cdot)) \, \,$ is defined for $\, A \in \mathcal B ([0, 2\pi))\,$. 
\end{prop}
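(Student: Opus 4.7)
\medskip
\noindent
\textbf{Proof Proposal.} The plan is to push the dynamics of $X(\cdot)$ through the two deterministic transformations, first a radial rescaling $r \mapsto \mathbf{q}_\theta(r)$ along each ray, and then the stochastic time-change $u = \mathcal{T}(s)$, using the generalized Freidlin-Sheu formula of Theorem \ref{Gen_FS} as the main engine. The scale-function identity $\frac{1}{2} \boldsymbol{a}(y,\theta)\,\mathbf{p}_\theta''(y) + \boldsymbol{b}(y,\theta)\,\mathbf{p}_\theta'(y) = 0$ will be what ultimately produces the desired drift $\boldsymbol{b}(\|Y\|,\arg Y)$. As the authors note in the paragraph containing \eqref{eq: sigma^tilde}, Theorem \ref{Gen_FS} applies to $\mathbf{q}_\theta(\cdot)$ and $\mathbf{p}_\theta(\cdot)$ even though they may fail the full smoothness of the class $\mathfrak{D}$. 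First, since $\mathcal{T}$ is continuous and strictly increasing and $\arg Y(\cdot) = \arg X(\mathcal{T}(\cdot))$, the excursion intervals and the i.i.d.\,$\boldsymbol{\mu}$-sampling of arguments at origin visits are preserved, so $Y(\cdot)$ has the Walsh angular structure. Non-stickiness $\int_0^\infty \mathbf{1}_{\{Y(t)=\mathbf{0}\}}\mathrm{d}t = 0$ follows because $\mathrm{d}\mathcal{T}/\mathrm{d}s = \widetilde{\boldsymbol{\sigma}}^2(\|X\circ \mathcal{T}\|,\Theta\circ\mathcal{T})$ is locally bounded above on $\{Y \neq \mathbf{0}\}$, so the null set $\{u : X(u) = \mathbf{0}\}$ pulls back to a null set under $\mathcal{Q}$.

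\smallskip
\noindent
For the radial equation, apply Theorem \ref{Gen_FS} to $g(x) := \mathbf{q}(\|x\|,\arg x)$. Since $\mathbf{q}_\theta'(0+) = 1$ the boundary coefficient is $\int_0^{2\pi} \boldsymbol{\nu}(\mathrm{d}\theta) = 1$, so, using $\mathrm{d}\langle S\rangle = \mathrm{d}u$ and that $\mathrm{d}S$-increments outside $\{X = \mathbf{0}\}$ are driven by the Brownian motion $\beta$ of $S = \|X\|$,
\[
\mathbf{q}_{\Theta(u)}(\|X(u)\|) = \|Y(0)\| + \int_0^u \mathbf{1}_{\{X\neq \mathbf{0}\}}\Bigl[\mathbf{q}_\Theta'(\|X\|)\mathrm{d}\beta + \tfrac{1}{2}\mathbf{q}_\Theta''(\|X\|)\mathrm{d}v\Bigr] + L^S(u).
\]
Substituting $u = \mathcal{T}(t)$ and using $\mathrm{d}\mathcal{T} = [\mathbf{p}'_{\arg Y}(\|Y\|)]^2 \boldsymbol{\sigma}^2(\|Y\|,\arg Y)\,\mathrm{d}t$ together with $\mathbf{q}_\theta''(\mathbf{p}_\theta(y))\,[\mathbf{p}_\theta'(y)]^2 = -\mathbf{p}_\theta''(y)/\mathbf{p}_\theta'(y) = 2\boldsymbol{b}(y,\theta)/\boldsymbol{a}(y,\theta)$ transforms the drift into $\int_0^t \mathbf{1}_{\{Y\neq \mathbf{0}\}}\boldsymbol{b}(\|Y\|,\arg Y)\mathrm{d}s$, while $\mathbf{q}_\theta'(\mathbf{p}_\theta(y)) = 1/\mathbf{p}_\theta'(y)$ turns the quadratic variation of the martingale part into $\int_0^t \mathbf{1}_{\{Y\neq \mathbf{0}\}}\boldsymbol{\sigma}^2(\|Y\|,\arg Y)\mathrm{d}s$. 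A Dambis-Dubins-Schwarz representation (on an enlargement if necessary) supplies a Brownian motion $W$ so that this martingale equals $\int_0^{\,\cdot}\boldsymbol{\sigma}(\|Y\|,\arg Y)\mathrm{d}W$. The residual increasing process $L^S(\mathcal{T}(\cdot))$ is flat off $\{Y=\mathbf{0}\}$, so uniqueness of the Skorokhod decomposition forces $L^S(\mathcal{T}(\cdot)) \equiv L^{\|Y\|}(\cdot)$, delivering the radial equation.

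\smallskip
\noindent
For each component $Y_i$, apply Theorem \ref{Gen_FS} to $g_i(x) := \mathbf{q}_\theta(r)\,\mathfrak{f}_i(x)$; in polar form $g_{i,\theta}(r) = \mathbf{q}_\theta(r)\,c_i(\theta)$ with $c_1=\cos,\,c_2=\sin$, so $g'_{i,\theta}(0+) = c_i(\theta)$ and $\int_0^{2\pi} g'_{i,\theta}(0+)\,\boldsymbol{\nu}(\mathrm{d}\theta) = \gamma_i$. The same time-change computation turns the drift into $\int_0^t \mathbf{1}_{\{Y\neq \mathbf{0}\}}\mathfrak{f}_i(Y)\,\boldsymbol{b}(\|Y\|,\arg Y)\mathrm{d}s$, while the martingale part has quadratic variation $\int_0^t \mathfrak{f}_i^2(Y)\boldsymbol{\sigma}^2(\|Y\|,\arg Y)\mathrm{d}s$ and cross-variation $\int_0^t \mathfrak{f}_i(Y)\boldsymbol{\sigma}^2(\|Y\|,\arg Y)\mathrm{d}s$ with the radial martingale of Step 2. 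Hence the difference between this martingale and $\int_0^{\,\cdot}\mathfrak{f}_i(Y)\boldsymbol{\sigma}(\|Y\|,\arg Y)\mathrm{d}W$ is a continuous local martingale of vanishing quadratic variation, so the two coincide and, combined with $L^S\circ\mathcal{T} = L^{\|Y\|}$, we obtain \eqref{eq: dynY}. For the thinning identity, apply Theorem \ref{Gen_FS} to $x \mapsto \|x\|\mathbf{1}_A(\arg x)$ and argue exactly as at the end of the proof of Proposition \ref{prop: SDEMP}(b) to identify $L^{R^A_*}(\cdot) = \boldsymbol{\nu}(A) L^{\|Y\|}(\cdot)$. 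Finally, plugging the dynamics \eqref{eq: dynY} into Theorem \ref{Gen_FS} applied to an arbitrary $g \in \mathfrak{D}_+^{\boldsymbol{\mu}}$ (resp.\,$\mathfrak{D}^{\boldsymbol{\mu}}$) yields $M^g(\cdot\,;Y) = \int_0^{\,\cdot} \mathbf{1}_{\{Y\neq\mathbf{0}\}}G'(Y)\boldsymbol{\sigma}(\|Y\|,\arg Y)\mathrm{d}W + \bigl(\int g'_\theta(0+)\boldsymbol{\nu}(\mathrm{d}\theta)\bigr) L^{\|Y\|}(\cdot)$, a continuous local submartingale (resp.\,martingale), proving that $Y$ solves the local martingale problem attached to $\mathcal{L}^\ast$.

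\smallskip
\noindent
The hard part will be the bookkeeping of the time-change in the presence of the boundary term: precisely, justifying the extension of Theorem \ref{Gen_FS} to $\mathbf{q}_\theta$ and $\mathbf{q}_\theta \mathfrak{f}_i$ (whose second radial derivatives exist only almost everywhere), identifying $L^S\circ \mathcal{T}$ with $L^{\|Y\|}$, and showing that the single Brownian motion $W$ produced for $\|Y\|$ simultaneously drives the two components (through the cross-variation argument). Once these steps are in place, the rest is a direct transcription of the calculations in the proofs of Theorems \ref{prop: skewTanaka} and \ref{Gen_FS}.
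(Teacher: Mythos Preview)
Your proposal is correct and follows essentially the same route as the paper: apply the generalized \textsc{Freidlin--Sheu} formula of Theorem~\ref{Gen_FS} to $\mathbf q(X)$, to $\mathbf q(X)\,\mathfrak f_i(X)$, and finally to the composite $g\circ(\mathbf q,\arg)(X)$ for $g\in\mathfrak D$, then push everything through the time change $\mathcal T$. The scale-function identities you single out are exactly those used in the paper (cf.\ \eqref{eq: derivatives}), and your identification $L^S(\mathcal T(\cdot))=L^{\|Y\|}(\cdot)$ via $\mathbf q_\theta'(0+)=1$ matches \eqref{eq: L^|| Y||}.

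The one place where you take a slightly longer path is the construction of the driving Brownian motion $W$. The paper defines it \emph{explicitly} as
\[
W(\cdot)\;=\;\int_0^{\mathcal T(\cdot)}\frac{\mathrm dB(u)}{\widetilde{\bm\sigma}\bigl(\|X(u)\|,\arg X(u)\bigr)}
\]
and invokes \textsc{L\'evy}'s characterization; since the same $W$ then appears automatically in both the radial equation and the two component equations (via \eqref{eq: Qp}), no cross-variation matching is needed. Your ``DDS'' step produces $W$ only for the radial part, and you then need the separate cross-variation argument to show it also drives the $Y_i$. (Strictly speaking, what you want is the \textsc{L\'evy}-based representation $W=\int \mathrm dM/\bm\sigma$, not a \textsc{Dambis--Dubins--Schwarz} time-change; the latter would give $M(\cdot)=\widetilde B(\langle M\rangle(\cdot))$ rather than $\int\bm\sigma\,\mathrm dW$.) This is a cosmetic difference: $\bm\sigma$ is bounded away from zero by assumption, so your construction goes through, and the cross-variation step is routine. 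For the final local-martingale-problem claim, note that the paper bypasses checking that $Y$ satisfies the hypotheses of Theorem~\ref{Gen_FS} by applying that theorem directly to the composite function $g(\mathbf q_\theta(r),\theta)$ of $X$ (see \eqref{eq: GenFS2}); your route of first establishing \eqref{eq: dynY}, the thinning identity, and then invoking Theorem~\ref{Gen_FS} for $Y$ itself is equally valid but requires the extra verification you flag.
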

  \newpage  

\noindent
{\it Proof:} 
Applying the \textsc{Freidlin-Sheu} formula in Theorem \ref{Gen_FS} to $\, {\bm q}(X(\cdot))\,$, we obtain 
\[
\lVert Y(\cdot) \rVert \, =\, {\bm q} ( X({\mathcal T}(\cdot))) \, =\,  {\bm q} ({\mathrm x}) + \int^{{\mathcal T}(\cdot)}_{0} {\bm Q}^{\prime}(X(u)) \,  {\bf 1}_{\{ X(u) \neq {\bm 0} \}} 
{\mathrm d} B(u) + \Big( \int^{2\pi}_{0} {\bm q}^{\prime}_{\theta}(0+) {\bm \nu} ({\mathrm d} \theta) \Big) \cdot L^{ \lVert X\rVert }({\mathcal T}(\cdot)) 
\]
\begin{equation} \label{eq: VertY}
~~~~~~~~~~~~~~~~~~~~~~~~~~~~~~~~~~~~~~~~~~~ + \frac{1}{\, 2\, } \int^{{\mathcal T}(\cdot)}_{0} {\bm Q}^{\prime\prime}(X(u)) \, {\bf 1}_{\{ X(u) \neq 0 \}} {\mathrm d} u \,  .  
\end{equation}

 \smallskip
 \noindent
Here by direct calculation 
\begin{equation} \label{eq: derivatives}
\,{\bm Q}^{\prime}(x) \, :=\, {\bm q}_{\theta}^{\prime}(r) \, =\, \frac{1}{ \, {\bm p}_{\theta}^{\prime}\big({\bm q}_{\theta}(r)\big)\,} \, , \qquad {\bm Q}^{\prime\prime} (x) \, :=\, {\bm q}_{\theta}^{\prime\prime}(r) \, =\, \frac{ 2\, {\bm b}({\bm q}_{\theta}(r), \theta)}{ \,     \,{\bm a} \big({\bm q}_{\theta}(r), \theta\big) \cdot \big( {\bm p}_{\theta}^{\prime}({\bm q}_{\theta}(r)) \big)^2\,   \,}
\end{equation}
hold for every $\, x = (r, \theta) \, $, where $\,r\,$ is not in a set of Lebesgue measure zero that depends on $\,\theta \in [0, 2\pi)\,$. 
  Thanks to the \textsc{P. L\' evy} Theorem, the continuous local martingale 
\begin{equation*} 
\label{eq: W0}
W(\cdot) \, :=\, \int^{{\mathcal T}(\cdot)}_{0} \frac{{\mathrm d} B(u)}{\,\widetilde{\,  {\bm \sigma}}( \lVert X(u) \rVert , \text{arg} ( X(u)))\,\, }  \, 
\end{equation*}
is one-dimensional standard Brownian motion. Since $\, \, \text{Leb} (\{ t: X(t) =  {\bm 0} \}) \, =\, \text{Leb} ( \{t: S(t) =  0\}) = 0 \,  $ 
 a.s. and $\,{\bm q}({\bm 0}) \, =\,  0 \,$ from the construction, we obtain 
\begin{equation} \label{eq: Leb}
\,  \text{Leb} \big(\{t: \lVert Y(t) \rVert   = {\bm q} (X({\mathcal T}(t)))  =   {\bm 0}   \} \big) \, =\, \text{ Leb} \big( {\mathcal T}^{-1} \{ t : X(t) \, =\,  {\bm 0}  \}\big) \, =\,  0 \, \,  \text{ a.s. }
\end{equation}
In conjunction with the definitions (\ref{eq: sigma^tilde}) and (\ref{eq: Y}),   we obtain now the representations
$$
 \int^{{\mathcal T}(\cdot)}_{0} {\bm Q}^{\prime}(X(u)) \,  {\bf 1}_{\{ X(u) \neq {\bm 0} \}} {\mathrm d} B(u) \, =\, \int^{\cdot}_{0} {\bm Q}^{\prime}(X( {\mathcal T}(u) ) )   \,  {\bf 1}_{\{ X({\mathcal T}(u)) \neq {\bm 0} \}} \,{\mathrm d} B({\mathcal T}(u)) 
 $$
\begin{equation} 
\label{eq: Qp}
\, =\, \int^{\cdot}_{0}  {\bf 1}_{\{ \lVert Y(u) \rVert > 0 \}} \, {\bm \sigma} \big( \lVert Y(u) \rVert, \text{arg} (Y(u))\big) \,{\mathrm d} W(u) 
\end{equation}
  \noindent
(on the strength of Proposition 3.4.8 in \textsc{Karatzas \& Shreve} (1991)), as well as 
$$
\int^{{\mathcal T}(\cdot)}_{0} {\bm Q}^{\prime\prime}(X(u)) \, {\bf 1}_{\{ X(u) \neq 0 \}} \,{\mathrm d} u  \, =\, \int^{\cdot}_{0} {\bm Q}^{\prime\prime}(X({\mathcal T}(u)))\, \frac{ \, {\mathrm d} {\mathcal T} (u)\, }{ {\mathrm d} u}  \,  {\bf 1}_{\{ X({\mathcal T}(u)) \neq {\bm 0} \}} \, {\mathrm d} u
 $$
\begin{equation} \label{eq: Qpp}
  \, =\,  2\, \int^{\cdot}_{0}  {\bf 1}_{\{ \lVert Y(u) \rVert > 0 \}} \,{\bm b} \big( \lVert Y(u) \rVert, \text{arg}(Y(u)\big)\, {\mathrm d} u  
\end{equation}
(by time-change). From these considerations and (\ref{eq: Leb}) we also obtain the identification of local time 
\begin{equation} \label{eq: L^|| Y||}
\, L^{|| Y||} (\cdot)\, =\int_0^{\,\cdot} \mathbf{ 1}_{ \{ Y(u) =0\} } \, \mathrm{d} || Y|| (u) \,=\, \Big( \int^{2\pi}_{0} {\bm q}^{\prime}_{\theta}(0+) {\bm \nu} ({\mathrm d} \theta) \Big) \cdot L^{ \lVert X\rVert }({\mathcal T}(\cdot)) \, \,,
\end{equation}
  thus also the dynamics for the radial part of the process $\, Y(\cdot)\,$, namely 
$$
\lVert Y(\cdot) \rVert \, =\,  \lVert Y(0) \rVert + \int^{\cdot}_{0} {\bf 1}_{\{ \lVert Y(t) \rVert > 0 \}} \, \big( {\bm b} ( \lVert Y(t) \rVert, \text{arg} ( Y(t)) ) {\mathrm d} t + {\bm \sigma} ( \lVert Y(t) \rVert , \text{arg} ( Y(t))) {\mathrm d} W(t)\big) + L^{||Y||} (\cdot).
$$

\noindent
$\bullet~$ Recalling \eqref{eq: Y}, and applying the \textsc{Freidlin-Sheu} formula in Theorem \ref{Gen_FS} to the process 
$\,Y_{i} (\cdot) \, =\, {\bm q}( X( \mathcal T(\cdot)) ) \,\mathfrak f_{i}(X( \mathcal T(\cdot)))\,$, 
we obtain 
\[
Y_{i}(\cdot) \, =\, {\mathrm y}_{i} + \int^{ \mathcal T(\cdot)}_{0} {\bm Q}^{\prime} (X(u)) \mathfrak f_{i}(X(u)) \, {\bf 1}_{\{X(u) \neq {\bm 0}\}} {\mathrm d} B(u) + \frac{1}{\,2\,} \int^{ \mathcal T(\cdot)}_{0} {\bm Q}^{\prime\prime}(X(u)) \mathfrak f_{i}(X(u)) {\bf 1}_{\{X(u) \neq {\bm 0}\}} {\mathrm d} u \,  
\]
\[
+ \Big( \int^{2\pi}_{0} {\bm q}_{\theta}^{\prime}(0+) \cos \Big( \theta -  \frac{\, \pi\, }{2} (i-1)\Big )   
 {\bm \nu}({\mathrm d} \theta) \Big)  L^{ \lVert X\rVert}(\mathcal T(\cdot)) \, ; \quad i \, =\, 1, 2 \, .  
\]
 
\smallskip
\noindent
Hence, combining   with  (\ref{eq: Qp})-(\ref{eq: L^|| Y||}) and $\, {\mathfrak f}({\bm 0}) =  0\,$ and $\, {\bm q}_{\theta}^{\prime}({ 0+})=1\,$, we obtain the   dynamics (\ref{eq: dynY}). 
 \newpage  

\smallskip

\noindent $\bullet\,$ Furthermore, for every $\, g \in {\mathfrak D}\,$  by another application of the \textsc{Freidlin-Sheu} formula in Theorem \ref{Gen_FS} to $\, 
\,g(Y(\cdot)) \, =\,  g({\bm q}(r, \theta), \theta)\big \vert_{\,r \,=\, \lVert X(\mathcal T (\cdot)\rVert\, , \, \theta \,=\, \text{arg} (X(\mathcal T(\cdot)))} \, 
\, $ with $\,{\bm q}_{\theta}(0+) \, =\,  0\,$, we derive 
\[
g(Y(T)) \, =\,  g({\mathrm y}) + \int^{T}_{0} {\bf 1}_{\{ Y(t) \neq {\bm 0} \}}\, \big( {\bm b} ( \lVert Y(t) \rVert, \text{arg} ( Y(t)) ) G^{\prime}(Y(t)) + \frac{1}{\, 2\, } {\bm a} ( \lVert Y(t) \rVert, \text{arg} ( Y(t)) ) G^{\prime\prime}(Y(t))  \big) {\mathrm d} t 
\]
\begin{equation} 
\label{eq: GenFS2}
+ \int^{T}_{0} {\bf 1}_{\{ Y(t) \neq {\bm 0} \}}\, G^{\prime}(Y(t)){\bm \sigma} ( \lVert Y(t) \rVert , \text{arg} ( Y(t))) {\mathrm d} W(t) + \Big( \int^{2\pi}_{0} g^{\prime}_{\theta}(0+)\, {\bm \nu} ({\mathrm d} \theta)\Big) \cdot L^{ \lVert Y \rVert}(T) \, , \quad 0 \le T < \infty   \, ,
\end{equation}
in conjunction with (\ref{eq: derivatives})-(\ref{eq: L^|| Y||}) and $\, {\bm q}_{\theta}^{\prime}({ 0+})=1\,$.  When $\, g \in {\mathfrak D}^{{\bm \mu}} $, we can apply this to $\,M^{g}(\cdot \, ; Y) \,$ in (\ref{eq: MP2}) --      now  redefined with the operator $\,\mathcal L^{*} \,$ of (\ref{eq: L2g}) --    to conclude that   
  $\, M^{g}(\cdot \,; Y)\,$ is equal to the local martingale  
\[
 g(Y(\cdot)) - g({\mathrm y}) - \int^{\cdot}_{0} {\mathcal L}^{*} g(Y(t))  \, {\bf 1}_{\{Y(t) \neq \mathbf{ 0} \}} {\mathrm d} t \, =\, \int^{\cdot}_{0} G^{\prime}(Y(t)) \, {\bf 1}_{\{ Y(t) \neq \mathbf{ 0} \}} \, {\bm \sigma} \big( \lVert Y(t) \rVert , \text{arg} (Y(t)) \big) \, {\mathrm d} W(t). 
\]
     Therefore, the local martingale problem associated with the triple $\, ({\bm \sigma}, {\bm b}, {\bm \mu}) \,$ and the second-order differential operator $\,\mathcal L^{*}\,$    in (\ref{eq: L2g}), is seen to have a solution. The properties of $\,Y(\cdot)\,$  are now verified readily. \qed

 \begin{prop} 
\label{prop: AD_2} 
With the  assumptions and notation of this section,  the local martingale problem of subsection \ref{sec_512},  associated with the triple $\, ({\bm \sigma}, {\bm b}, {\bm \mu}) \,$   and   the  operator $\,\mathcal L^{*}\,$ in (\ref{eq: L2g}), is well-posed.   
\end{prop}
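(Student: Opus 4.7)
\textbf{Existence} is essentially free. Proposition~\ref{prop: AD} constructs the time-changed, scale-transformed Walsh Brownian motion $Y(\cdot)$ and verifies in (\ref{eq: GenFS2}) that for every $g\in \mathfrak D^{\bm \mu}$ the process $M^{g}(\cdot;Y)$ (as redefined via $\mathcal L^{*}$) is a continuous local martingale, while $M^{g_3}(\cdot;Y) = \lVert Y(\cdot) \rVert -\lVert Y(0)\rVert - \int_0^{\,\cdot}{\bm b}(\lVert Y(t)\rVert,\mathrm{arg}(Y(t)))\,\mathbf 1_{\{Y(t)\neq \mathbf 0\}}\,\mathrm{d}t$ is a local submartingale. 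By the equivalence recorded in Remark~\ref{remark: 6.1}, the law $\mathbb Q=\mathbb P Y^{-1}$ solves the local martingale problem associated with $(\bm \sigma,\bm b,\bm\mu)$ and $\mathcal L^{*}$.

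\textbf{Uniqueness} will proceed by reversing the construction of Proposition~\ref{prop: AD}. Let $\mathbb Q$ be any solution. My plan is to define, on an enlargement of $(\Omega_2,\mathcal F_2,\mathbb Q)$, the clock
\[
\widetilde{\mathcal Q}(t)\, :=\, \int_0^{t} \big[\widetilde{\bm \sigma}\big({\bm p}(\lVert \omega_2(u)\rVert,\mathrm{arg}(\omega_2(u))),\mathrm{arg}(\omega_2(u))\big)\big]^{-2}\,\mathrm{d}u,\qquad \widetilde{\mathcal T}(\cdot)\,:=\,\widetilde{\mathcal Q}^{-1}(\cdot),
\]
and the process $X(\cdot)$ in polar coordinates through $\lVert X(t)\rVert :={\bm p}(\lVert \omega_2(\widetilde{\mathcal T}(t))\rVert,\mathrm{arg}(\omega_2(\widetilde{\mathcal T}(t))))$ and $\mathrm{arg}(X(t)) := \mathrm{arg}(\omega_2(\widetilde{\mathcal T}(t)))$. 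I would then apply Theorem~\ref{Gen_FS} (extended as in the proof of Proposition~\ref{prop: AD} to the scale-type function $g(r,\theta)={\bm p}_\theta(r)$, which is not in $\mathfrak D$ but admits a generalized It\^o rule with bounded first/second radial derivatives on compacts). Since ${\bm p}_\theta$ is the scale function of the one-dimensional diffusion with coefficients $(\bm\sigma(\cdot,\theta),\bm b(\cdot,\theta))$, the radial drift is annihilated on each excursion away from the origin; and because $\bm p_\theta'(0+)=1$ uniformly in $\theta$, the resulting local-time coefficient is $\int_0^{2\pi} 1\,{\bm \nu}(\mathrm d\theta) = 1$.

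\textbf{Identifying $X(\cdot)$ as a driftless Walsh diffusion}. The next step is to verify that $X(\cdot)$ satisfies the local martingale problem associated with the triple $(\mathbf 1,\mathbf 0,\bm\mu)$. The post-scale radial process is a continuous local martingale with quadratic variation $\int \widetilde{\bm \sigma}^{2}\,\mathrm{d}u$, so the time change by $\widetilde{\mathcal T}$ turns it into a local martingale with quadratic variation equal to elapsed time — hence (P.\,L\'evy) a Brownian motion on each excursion; the local-time boundary coefficient computed above is exactly $1$, as required for reflecting Brownian motion. The non-stickiness (\ref{eq: zeroLeb2}) for $X(\cdot)$ follows from that of $\omega_2(\cdot)$ and the strict monotonicity of $\widetilde{\mathcal T}$. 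The spinning-measure property (\ref{eq: localDist}) for $X(\cdot)$ follows from that for $\omega_2(\cdot)$, because neither the angle-wise scaling ${\bm p}_\theta$ nor the time change alters the partition of $[0,\infty)\setminus\mathfrak Z$ into excursion intervals or the angular label of any of them. Proposition~\ref{prop: MP1} then pins down the law of $X(\cdot)$ uniquely, and since $\omega_2(\cdot)$ is recovered from $X(\cdot)$ by the measurable inverse transformation $\lVert \omega_2(s)\rVert = {\bm q}(\lVert X(\mathcal Q(s))\rVert,\mathrm{arg}(X(\mathcal Q(s))))$ (with inverse clock $\mathcal Q$ expressible from $X(\cdot)$ via (\ref{eq: inv_time_change})), the finite-dimensional distributions of $\omega_2(\cdot)$ under $\mathbb Q$ are uniquely determined.

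\textbf{Expected main obstacle}. The delicate point is the transfer of the thinned local-time identity (\ref{eq: localDist}) from $\omega_2(\cdot)$ to the transformed process $X(\cdot)$ across the angle-dependent scale change. One must show, excursion by excursion, that ${\bm \nu}(A) L^{\lVert X\rVert}(\cdot) = L^{R^{A}}(\cdot)$ is preserved despite the fact that ${\bm p}_\theta$ varies with $\theta$; this hinges on the normalization ${\bm p}'_\theta(0+)=1$ being independent of $\theta$, so that the contribution to local time from each excursion of angle $\theta\in A$ scales by the same factor $1$, and the argument of Proposition~\ref{prop: SDEMP}(b) applied to functions $g_5,g_6$ of (\ref{eq: G3}) can be rerun after the transformation. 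A secondary technical point is the justification of the generalized Freidlin--Sheu identity for $g(r,\theta)={\bm p}_\theta(r)$ despite its possible unboundedness; this will be handled by a localization via the stopping times $\sigma_\varepsilon$ and $\tau_{2\ell+2}^\varepsilon$ of (\ref{eq: tau rec}), exactly as in the decomposition (\ref{Decompose||X||}).
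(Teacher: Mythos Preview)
Your overall strategy coincides with the paper's: existence from Proposition~\ref{prop: AD}, uniqueness by undoing the scale-and-time-change so as to reduce to the Walsh Brownian motion, whose law is uniquely determined, and then recovering $\omega_2(\cdot)$ as a measurable functional of that Walsh Brownian motion. That is exactly the argument the paper gives.

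There is, however, a concrete error in your inverse construction: your clock $\widetilde{\mathcal Q}$ is set up with the wrong sign of the exponent. Unwinding your definition via $\widetilde{\bm\sigma}_\theta(\bm p_\theta(r))=\bm p'_\theta(r)\,\bm\sigma_\theta(r)$ gives
\[
\widetilde{\mathcal Q}(t)\;=\;\int_0^{t}\big[\bm p'_{\theta}(\lVert\omega_2(u)\rVert)\,\bm\sigma_{\theta}(\lVert\omega_2(u)\rVert)\big]^{-2}\,\mathrm du\Big|_{\theta=\mathrm{arg}(\omega_2(u))}\,,
\]
whereas the quadratic variation of the local martingale $s\mapsto\bm p_{\theta}(\lVert\omega_2(s)\rVert)$ is $\int_0^{s}(\bm p'_{\theta}\,\bm\sigma_{\theta})^{2}|_{\omega_2}\,\mathrm du$. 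To obtain $\langle\lVert X\rVert\rangle(t)=t$ you must time-change by the \emph{inverse} of $s\mapsto\int_0^{s}(\bm p'_{\theta}\,\bm\sigma_{\theta})^{2}|_{\omega_2}\,\mathrm du$, i.e.\ take the paper's $\mathcal T(\cdot)$ from (\ref{eq: inv_time_change}) as the forward clock in $\omega_2$-time and evaluate $\omega_2$ at its inverse $\mathcal Q(\cdot)$, exactly as in (\ref{eq: X}). With your $\widetilde{\mathcal T}$ the resulting $\lVert X(\cdot)\rVert$ is not a reflecting Brownian motion, so your identification step would fail as written. The same reversal contaminates your recovery formula at the end: (\ref{eq: inv_time_change}) expresses $\mathcal T$ in terms of $Y$, not $\mathcal Q$ in terms of $X$; the clock you actually need in terms of $X$ is $\mathcal Q(\cdot)=\int_0^{\cdot}[\widetilde{\bm\sigma}(\lVert X(u)\rVert,\mathrm{arg}(X(u)))]^{-2}\,\mathrm du$, as in the forward construction preceding (\ref{eq: Y}).

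One streamlining remark: the paper does not separately transfer the thinned local-time identity (\ref{eq: localDist}) to $X(\cdot)$. Instead it invokes (the angular-dependence analogue of) Proposition~\ref{prop: SDEMP}(b) to get the Freidlin--Sheu formula for $Y=\omega_2$, applies it to $\bm p_\theta$ and time-changes to see directly that $X(\cdot)$ solves the local martingale problem for $(\mathbf 1,\mathbf 0,\bm\mu)$, and then concludes $X(\cdot)$ is Walsh Brownian motion by well-posedness (Propositions~\ref{prop: MP1}, \ref{MartProb}, \ref{prop: ID}). The final step is to write $Y(t)=\Psi_t(X(\cdot))$ for an explicit measurable map $\Psi$; uniqueness of the law of $X(\cdot)$ then forces uniqueness of the law of $Y(\cdot)$. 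Your route via (\ref{eq: localDist}) and Proposition~\ref{prop: MP1} is equivalent but slightly longer; once the clock direction is fixed, either approach goes through.
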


\noindent
{\it Proof:} Existence of a solution to this local martingale problem is established by Proposition \ref{prop: AD}.  

To prove uniqueness, we can reverse the steps of the construction in Proposition \ref{prop: AD}, as follows.  Consider {\it any} solution of  the local martingale problem of subsection \ref{sec_512}, associated with the triple and $\, ({\bm \sigma}, {\bm b}, {\bm \mu}) \,$   and   the  operator $\,\mathcal L^{*}\,$, and the co\"ordinate process $\, Y(\cdot) := \omega_2 (\cdot)\,$ on the canonical space for that problem.  We introduce the time change $\, 
\mathcal{T} (\cdot)\,$ as in (\ref{eq: inv_time_change}),  along with its inverse $\, \mathcal{Q} (\cdot)\,;$ as well as the time-changed, rescaled version  $\,X(\cdot)   =  (X_{1}(\cdot), X_{2}(\cdot))^{\prime}\,$ of the process  $ Y(\cdot)  $,     defined in  polar co\"ordinates  via  
\begin{equation}  \label{eq: X}
\lVert X(\cdot) \rVert  \, :=\,  {\bm p} \big( \lVert Y ( {\mathcal Q}(\cdot)) \rVert , \text{ arg} ( Y( {\mathcal Q} (\cdot))) \big) \, , \qquad  \text{arg} (X(\cdot) ) \, :=\, \text{arg} ( Y( {\mathcal Q} (\cdot)))   \, . 
\end{equation}
Using Proposition \ref{prop: SDEMP} (rather, its obvious generalization to co\"efficients with angular dependence) and Theorem \ref{Gen_FS}, we have for the  planar process $\, Y(\cdot)\,$ the appropriate \textsc{Freidlin-Sheu-}formula. With this at hand,   the planar process $\,X(\cdot)\,$    is seen to be a \textsc{Walsh} Brownian motion with spinning measure $\,\bm\mu\,$, in a  manner similar to that in the proof of Proposition \ref{prop: AD}. The path $\, t \mapsto X(t) \,$ is, with probability one, continuous in the topology induced by the tree metric (\ref{eq: tree}), and hence so is the path $\,t \mapsto Y(t)\,$. In terms of this   \textsc{Walsh} Brownian motion,  we can express the   time change $\, \mathcal{Q} (\cdot)\, $ as 
$$
\mathcal{Q} (\cdot)\,=\int^{\, \cdot}_{0} \frac{ {\mathrm d} u}{\, \big[ \widetilde{ {\bm \sigma}} \big( \lVert X(u) \rVert , \text{arg} (X( u) \big)\big]^{2}\, }\,.
$$

The crucial step now, is to note that the process $\,Y(\cdot) \,$ can be written as $\,Y(t) \, =\, \Psi_{t}(X(\cdot))\,$. Here $\,\Psi_{\cdot} \, $  
is a measurable mapping  defined by $\, \Psi_{t}(\omega_{2}) \, =\, {\bm q}\big(\Pi_{\mathcal T(t; \omega_{2})} (\omega_{2}) \big)\, $, in terms of the measurable projection mapping $\,\Pi_{t}(\omega_{2}) \, :=\, \omega_{2}(t)\,$ and the continuous time change 
\[
\mathcal T(t; \omega_{2}) \, :=\,  \inf \bigg \{ v \ge 0 \, :\, \int^{v}_{0} \frac{ {\mathrm d} u }{\,  \big[ \widetilde{\bm \sigma} ( \lVert \omega_{2} (u) \rVert, \text{arg} ( \omega_{2} (u)))\big]^{2}\, } > t \bigg \} \, , \qquad 0 \le t < \infty \,.  
\] 
Since the distribution of the \textsc{Walsh} Brownian motion $\, X(\cdot) \,$ is uniquely determined (see  section \ref{subsec: WBM}), the distribution of $\,Y(\cdot) \,$ is also determined uniquely from these considerations.

We conclude that the local martingale problem associated with the triple  $\,({\bm \sigma}, {\bm b}, {\bm \mu})\,$ is well-posed.  \qed
 \newpage

\section{The Time-Homogeneous Strong \textsc{Markov} Property}
 \label{SMP}

From section \ref{subsec: WBM},   we know that the unique solution to the well-posed local martingale problem associated with the triple $\, (\mathbf{ 1}, \mathbf{ 0},  {\bm \mu}) \,$ induces a \textsc{Walsh} Brownian motion, which is  a time-homogeneous strong \textsc{Markov} process as shown in   \textsc{Barlow,  Pitman  \& Yor} (1989). We   generalize this result in subsection \ref{subsec: LMP}, by showing that every solution to a well-posed local martingale problem as in subsection \ref{sec_512}, associated with a triple $\, (\bm \sigma , \bm b,  {\bm \mu})  ,$ induces a time-homogeneous strong \textsc{Markov} process.

\smallskip
Next, we   pick up the thread of  Proposition \ref{prop: SDEMP}(a), and try to see what we can say about   solutions to {\it the system of stochastic equations (\ref{eq: SDEMP}) for given $(\gamma_1, \gamma_2) \in \R^2$,  subject only to the non-stickiness condition (\ref{eq: zeroLeb}).}  We find that for some such  solutions there is no   spinning measure  $\, \bm \mu \,$ such   that  the ``thinning condition" (\ref{eq: localDist}) is satisfied. We show that  the time-homogeneous strong \textsc{Markov} property can be used to rule out these solutions. Then for every solution with an appropriate version of this  property,  we prove the existence of a spinning measure $\, \bm \mu \,$   for which the local martingale problem associated with the triple $\, (\bm \sigma , \bm b,  {\bm \mu}) \,$ is solved by the distribution of the state process $\,X(\cdot)\,$ in the solution. In this spirit we obtain in subsection \ref{subsec: THSMP} a similar conclusion as in Part (a) of Proposition \ref{prop: SDEMP}, but with the notable difference that here $\, \bm \mu \,$ is not given in advance; its existence is established in the proof of Theorem \ref{thm: Gen}, the next major result of this work. As a corollary of this result, we show in subsection \ref{subsec: THSMP_WBM} that with $\bm b = \mathbf{ 0} ,\, \bm {\sigma} = \mathbf{ 1}$   the equations (\ref{eq: SDEMP}), subject to (\ref{eq: zeroLeb}) and to the time-homogeneous strong \textsc{Markov} property, characterize \textsc{Walsh} Brownian motions. 

\smallskip
Throughout this section, we shall always refer to subsection \ref{sec_511} for local submartingale problems associated with   pairs $\, (\bm \sigma , \bm b) \,$ (corresponding to one-dimensional reflected diffusions), and   to subsection \ref{sec_512} for local martingale problems associated with   triples $\, (\bm \sigma , \bm b,  {\bm \mu}) \,$ (corresponding to planar diffusions). 

\subsection{On Well-posed Local Martingale Problems}   \label{subsec: LMP}

\begin{definition}
\label{def: THSM}
 Given a filtered probability space $\,( \Omega, \mathcal {F}, \mathbb P) \,$, $\, \mathbb F= \big\{ \mathcal F({t})\big\}_{0 \le t < \infty} \,$, we say that a progressively measurable process $X(\cdot)$ with values in some Euclidean space $\,\mathbb R^{d}\,$ is {\it time-homogeneous strongly Markovian} with respect to it  if, for every stopping time $\, T \,$ of $\,\mathbb F\,$, real number $\,t \ge 0\,$, and set $\,\Gamma \in \mathcal B (\mathbb R^{d})\,$,  
\[
\mathbb P \big(     X( T + t) \in \Gamma \, \vert \, \mathcal F (T)  \big) \, =\, \mathbb P  \big( X(T+t) \in \Gamma  \, \vert  \, X(T) \big) \, =\, \mathfrak g \big( X(T)\big)    \quad \text{holds}\,\,\mathbb P-\text{a.e. on } \{T < \infty\} \, . 
\]
Here $\,\mathfrak g: \mathbb R^{d} \to \mathbb R \,$ is a bounded measurable function that may depend  on $\, t \,$ and $\, \Gamma \,$, but not on $\, T$.    \qed
 \end{definition}
 
It is clear that every strong \textsc{Markov} process with a one-parameter transition semigroup is time-homoge- neous strongly Markovian. Also, a diffusion, or a strong \textsc{Markov} family, is time-homogeneous strongly Markovian under every probability measure in the family (see Definition 5.1, Chapter IV of \textsc{Ikeda \& Watanabe} (1989), and Definition 2.6.3 in \textsc{Karatzas \& Shreve} (1991)). We show here that every solution to a well-posed local martingale problem associated with the triple $\, (\bm \sigma , \bm b,  {\bm \mu}) \,$ induces a time-homogeneous strongly Markovian process. This is an extension of Theorem 5.4.20 in \textsc{Karatzas \& Shreve} (1991) in the context of subsection \ref{sec_512}. Its proof given here  is in the same context.

\begin{prop}
\label{SM,Unique}
Suppose that the local martingale problem associated with the triple $(\bm \sigma, \bm b, \bm \mu)$ is well-posed, and let $\, {\mathbb Q^{\rm x}} \,$ be its solution  with $\,\omega_{2}(0)=\rm x\,$, $\, \mathbb Q^{\rm x}-$a.e. Then for every stopping time $T$ of $\,\mathbb F_{2}\,$, $C \in \mathcal F_{2}\,$, and $\,\rm x \in \mathbb R^{2}$, the process $\,\omega_{2}(\cdot)\,$ satisfies the property 
\[
\mathbb Q^{\rm x} \big( \theta_{T}^{-1} C\, \big \vert \,\mathcal F_{2}(T) \big) (\omega_{2})= \mathbb Q^{\,\omega_{2}(T)} \big( C \big) \, , \,\,\,\, ~~~ \mathbb Q^{\rm x}-\textit{a.e.} \,\,\, \textit{on} \,\,\{ T < \infty \} ,
\]
where $\theta_{T}$ is the shift operator  $(\theta_{T}(\omega_{2}))(\cdot):=\omega_{2}(T(\omega_{2})+\cdot)\,$. In particular, $\,\omega_{2}(\cdot)\,$ is time-homogeneous strongly Markovian with respect to $\,(\Omega_{2}, \mathcal F_{2}, \mathbb Q^{\rm x})\,$ and the filtration $\,\mathbb F_{2}\,$, \,for every $\,\rm x \in \mathbb R^{2}$. 
\end{prop}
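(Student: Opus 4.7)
The plan is to follow the classical argument of Theorem 5.4.20 in \textsc{Karatzas \& Shreve} (1991), adapted to our setting of submartingale/martingale characterizations. First I would fix $\mathrm{x} \in \mathbb R^2$ and a stopping time $T$ of $\mathbb F_2$, and invoke the existence of a regular conditional probability (r.c.p.) $\,Q_{\omega_2}(\cdot)\,$ for $\,\mathbb Q^{\mathrm{x}}\,$ given $\,\mathcal F_2(T)\,$; such r.c.p.'s exist since $(\Omega_2, \mathcal F_2)$ is a Polish measurable space. For each $\omega_2$ with $T(\omega_2) < \infty$, define $\,P_{\omega_2} := Q_{\omega_2} \circ \theta_{T(\omega_2)}^{-1}\,$, the image of $Q_{\omega_2}$ under the time shift. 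The goal is to prove that, for $\,\mathbb Q^{\mathrm{x}}\,$-a.e. $\omega_2$ in $\{T < \infty\}$, the measure $\,P_{\omega_2}\,$ solves the local martingale problem associated with the triple $(\bm \sigma, \bm b, \bm \mu)$ with initial condition $\omega_2(T(\omega_2))$.

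Once this is established, well-posedness gives $P_{\omega_2} = \mathbb Q^{\omega_2(T(\omega_2))}$ for $\mathbb Q^{\mathrm{x}}$-a.e. such $\omega_2$, and the strong Markov identity follows: for $C \in \mathcal F_2$, using the defining property of the r.c.p.,
\[
\mathbb Q^{\mathrm{x}}(\theta_T^{-1} C \mid \mathcal F_2(T))(\omega_2) = Q_{\omega_2}(\theta_{T(\omega_2)}^{-1} C) = P_{\omega_2}(C) = \mathbb Q^{\omega_2(T(\omega_2))}(C).
\]
To verify that $P_{\omega_2}$ solves the local martingale problem, I need to check three conditions: (i) the initial condition $\omega_2(0) = \omega_2(T(\omega_2))$ holds $P_{\omega_2}$-a.e.; (ii) the non-stickiness property (\ref{eq: zeroLeb2}); and (iii) for every $g \in \mathfrak D^{\bm \mu}_+$ (resp.\ $\mathfrak D^{\bm \mu}$), the process $M^g(\cdot;\omega_2)$ of (\ref{eq: MP2}) is a continuous local submartingale (resp.\ martingale) under $P_{\omega_2}$ with respect to the augmented natural filtration. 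Condition (i) is immediate, while (ii) descends from the corresponding property under $\mathbb Q^{\mathrm{x}}$ by Fubini applied to the $\mathcal F_2(T)$-disintegration.

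The main obstacle, and the technical heart of the argument, is condition (iii): one must show that the continuous local submartingale property of $M^g$ under $\mathbb Q^{\mathrm{x}}$ descends, via the r.c.p.\ and the time shift, to $P_{\omega_2}$ for almost every $\omega_2$. The standard reduction is to first localize each $M^g$ by the stopping times $\sigma_n := \inf\{t : \lVert \omega_2(t)\rVert \ge n \}$, producing bounded continuous submartingales on each compact interval. One then tests the submartingale property against a countable separating family of bounded nonnegative $\mathcal F_2(s)$-measurable cylinder functions $\varphi_s$ indexed by $s \in \mathbb Q_+$. Applying the optional sampling theorem to $M^g$ at $T+s$ and $T+t$ (with $s < t$ rational), and using the tower property together with the definition of r.c.p., yields
\[
\int \varphi_s\big(\theta_{T(\omega_2)} \cdot\big)\,\big[ M^g(T+t) - M^g(T+s)\big]\,\mathrm{d}Q_{\omega_2} \;\ge\; 0
\]
for $\mathbb Q^{\mathrm{x}}$-a.e.\ $\omega_2$; the exceptional null set can be chosen uniformly because only countably many triples $(s,t,\varphi_s)$ are tested. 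Translating this into the submartingale inequality for $M^g$ under $P_{\omega_2}$ and then removing the localization proves (iii), first for $g \in \mathfrak D^{\bm \mu}_+$ and hence (by Remark \ref{remark: 6.1}) for $g \in \mathfrak D^{\bm \mu}$. The time-homogeneous strong Markov property stated in the proposition is then obtained by specializing $C = \{\omega_2(t) \in \Gamma\}$.
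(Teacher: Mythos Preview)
Your overall strategy matches the paper's: reduce to \textsc{Karatzas \& Shreve} (1991), Theorem 5.4.20 and its supporting Lemmas 5.4.18--5.4.19, using a regular conditional probability, the shift $\theta_T$, localization, and optional sampling. Conditions (i) and (ii) are handled exactly as you say.

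There is, however, a genuine gap in your verification of (iii). You write that ``the exceptional null set can be chosen uniformly because only countably many triples $(s,t,\varphi_s)$ are tested.'' But the null set also depends on the test function $g$, and the class $\mathfrak D^{\bm \mu}_+$ is \emph{uncountable}. For each fixed $g$ you obtain a $\mathbb Q^{\mathrm{x}}$-null set $N_g$ outside of which $M^g$ is a local submartingale under $P_{\omega_2}$; taking the union $\bigcup_{g \in \mathfrak D^{\bm \mu}_+} N_g$ over an uncountable family need not be null. This is precisely the ``final step of the proof of Lemma 5.4.19'' that the paper singles out as requiring additional work.

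The paper closes this gap via Lemma \ref{Count}: it exhibits a \emph{countable} subcollection $\mathfrak E \subset \mathfrak D^{\bm \mu}_+$ (Definition \ref{Class_E}) with the property that if $M^g(\cdot;\omega_2)$ has the required local (sub)martingale property for every $g \in \mathfrak E$, then it has it for every $g \in \mathfrak D^{\bm \mu}_+$ (resp.\ $\mathfrak D^{\bm \mu}$). With $\mathfrak E$ countable, one can take $N := \bigcup_{g \in \mathfrak E} N_g$, which \emph{is} $\mathbb Q^{\mathrm{x}}$-null; for $\omega_2 \notin N$, Lemma \ref{Count} then upgrades the local submartingale property from all $g \in \mathfrak E$ to all $g \in \mathfrak D^{\bm \mu}_+$. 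Your argument becomes correct once you insert this reduction to a countable determining class.
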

 
We shall need a countable determining class for our local martingale problem, so we introduce it next. A  crucial result in this regard, Lemma \ref{Count} below, is proved in an Appendix, section \ref{sec: App_Lem}. 
\newpage

\begin{definition}
 \label{Class_E}
We shall denote by $\, \mathfrak E \subseteq  \mathfrak D^{\bm \mu}_{+}\,$   the collection   that consists  of  

\noindent
(i)\, the functions $\,g_{A}(x)\,: =\,    \lVert x   \lVert \,\big( {\bf 1}_A ( \text{arg} (x) ) - {\bm \nu}(A) \big) \,$ as in (\ref{eq: G3}), where $\, A \subset [0, 2 \pi)\,$ is of the form $\, [a, b) \,$ and $\, a, b \,$ are rational numbers; and of 

\noindent
(ii)\, the following functions in $\,   \mathfrak D^{\bm \mu}_{+}\,$   used in the proof of Part (b) of Proposition \ref{prop: SDEMP}: namely, $\, g_{1}, \, g_{2},\,g_{i,k},\, 1\leq i, k \leq 2 \,$  in (\ref{eq: G1}); $  \, g_{1,1}^{\circ}, \, g_{2,2}^{\circ}, \, g_{3} \,$ in (\ref{eq: G2});  as well as, for every rational $\,c_{1} > 0\,$, a function  $\, g_{4}  \in \mathfrak D^{\bm \mu} \,$   of the form $\, g_{4}(r, \theta) \, =\, \psi(r)\,$ where $\, \psi : [0, \infty) \to [0, \infty) \,$ is smooth with $\,\psi(r) \, =\, r\,$ for $\,r \geq c_{1}\,$.

 In particular,   $\,\mathfrak E \,$ is    a {\it  countable collection.}  \qed
\end{definition}

\begin{lm}
\label{Count} 
    Suppose $\,\mathbb Q\,$ is a probability measure  on $\,(\Omega_{2}, \mathcal F_{2})\,$ with $\, \omega_{2}(0) = \rm x\,,$ $\, \mathbb Q-$a.e., under which   
    $\,M^{g} (\cdot\,;\omega_2)\,$ is a continuous local martingale (resp., submartingale) of the filtration $\, \mathbb F_{2} \,$ for every function $\,g \in  \mathfrak D^{\bm \mu} \cap \mathfrak E$ (resp., $    \mathfrak E$). Then this is also true for every function $\,g \in  \mathfrak D^{\bm \mu}$ (resp., $\mathfrak D^{\bm \mu}_{+} $).
\end{lm}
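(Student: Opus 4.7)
The plan is to reduce the claim to what has already been established in Sections \ref{FS} and \ref{sec: MP}: I would show that the hypothesis on $\mathfrak E$ alone is enough to ensure that $\omega_2(\cdot)$ is a continuous planar semimartingale satisfying the system of equations (\ref{eq: SDEMP}), the non-stickiness property (\ref{eq: zeroLeb2}), and the thinning identity $L^{R^A}(\cdot) = {\bm \nu}(A)\,L^{\lVert \omega_2\rVert}(\cdot)$ of (\ref{eq: localDist}) \emph{for every} $A \in \mathcal B([0,2\pi))$. Once this is secured, the Generalized \textsc{Freidlin-Sheu} formula of Theorem \ref{Gen_FS} applies to every $g \in \mathfrak D$ and yields, in a single stroke, both implications: the nontrivial finite-variation term in (\ref{GenFS1}) is $\big(\int_0^{2\pi} g_\theta'(0+)\,{\bm \nu}(\mathrm d\theta)\big)L^{\lVert \omega_2\rVert}(\cdot)$, which is a nonnegative multiple of $L^{\lVert \omega_2\rVert}$ when $g \in \mathfrak D_+^{\bm \mu}$ and vanishes when $g \in \mathfrak D^{\bm \mu}$.

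\textbf{Stage 1.} The core step is to re-execute the argument of the proof of Proposition \ref{prop: SDEMP}(b) using only the countable family $\mathfrak E$. This is possible because, by design, $\mathfrak E$ contains all the ``structural'' functions $g_1,g_2,g_{i,k},g_{1,1}^{\circ},g_{2,2}^{\circ},g_3,g_4$ employed there, together with the family $\{g_{[a,b)}\}$ of test-functions for rational intervals $A = [a,b)\subset[0,2\pi)$. In the local martingale hypothesis case, these structural functions give the local martingales $M_i,M_{i,k},M_{i,i}^\circ$ at once; the submartingale property of $M^{g_3}$ follows from that of $M^{g_4}$ applied on the event $\{\lVert\omega_2\rVert\geq c_1\}$, together with a stopping and limiting argument as $c_1\downarrow 0$, or directly from the semimartingale structure of $\lVert\omega_2\rVert$ deduced via $g_{1,1}^\circ + g_{2,2}^\circ$. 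In the submartingale hypothesis case $M^{g_3}$ is already given as a submartingale, so $\lVert\omega_2\rVert$ is a continuous nonnegative semimartingale; the structural functions $g_1,\ldots,g_4 \in \mathfrak D^{\bm \mu}\cap\mathfrak E$ must then in fact yield \emph{martingales}, not merely submartingales, which I would see from a Doob-Meyer argument: the increasing part of the Doob-Meyer decomposition of $M^{g_i}$ is necessarily supported on the zero set of $\lVert \omega_2 \rVert$ (this falls out from mimicking the argument around (\ref{eq: N})-(\ref{eq: A})) and hence is a nonnegative constant multiple of $L^{\lVert \omega_2\rVert}$; but since $g_i \in \mathfrak D^{\bm \mu}$ has $\int (g_i)_\theta'(0+)\,d{\bm \nu}=0$, one can identify that constant with $0$ by using the relations between the $M^{g_{i,k}}, M^{g_{i,i}^\circ}$, and $\lVert \omega_2\rVert$. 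From this point on, the computation of the cross-variation structure (\ref{eq: rikt}), the identification (\ref{eq: A}), and the representation (\ref{eq: Mi}) go through verbatim, delivering the SDE system (\ref{eq: SDEMP}) with \textsc{Brownian} driver $W(\cdot)$ on a suitable extension.

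\textbf{Stage 2 and main obstacle.} It remains to propagate the thinning identity from rational half-open intervals to all Borel $A$. For $A=[a,b)$ with rational endpoints, the martingale property of $M^{g_{[a,b)}}$ is automatic in the martingale hypothesis case, and in the submartingale case follows at once from the observation $-g_{[a,b)}=g_{[0,a)}+g_{[b,2\pi)}$, so that $M^{-g_{[a,b)}}$ is a sum of submartingales. Repeating the argument of (\ref{eq: Mg4})--(\ref{eq: nuM}) on such rational intervals then yields $L^{R^A} = {\bm \nu}(A)\,L^{\lVert \omega_2\rVert}$. Both sides are countably additive set functions of $A \in \mathcal B([0, 2\pi))$ (the left side through $\omega_2 \mapsto L^{\lVert \omega_2\rVert \cdot{\bf 1}_A(\arg(\omega_2))}$, the right side trivially), and they agree on the $\pi$-system of rational half-open intervals, which generates $\mathcal B([0,2\pi))$; a standard $\pi$-$\lambda$ argument therefore propagates the identity to all Borel subsets. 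Having the full thinning identity, we are in the situation of Theorem \ref{Gen_FS}, which closes the argument as outlined in the first paragraph. The most delicate point, in my view, is the submartingale-to-martingale upgrade for $g_i,g_{i,k},g_{i,i}^{\circ},g_4$ in Stage 1: one must verify that the Doob-Meyer increasing part cannot contribute any absolutely continuous (in time) piece beyond what is already absorbed in the drift term $\int_0^\cdot \mathcal Lg(\omega_2(t)){\bf 1}_{\{\lVert \omega_2\rVert>0\}}dt$, and this is where the combined use of the several structural functions of $\mathfrak E$---rather than any single one---is indispensable.
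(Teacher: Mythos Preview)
Your Stage 1 is essentially the paper's first move, but the ``submartingale-to-martingale upgrade'' you flag as delicate is a non-issue: the hypotheses of the lemma are to be read conjunctively (both the local-martingale property for $g\in\mathfrak D^{\bm\mu}\cap\mathfrak E$ \emph{and} the local-submartingale property for $g\in\mathfrak E$ are assumed), so the martingale property of $M^{g_i},M^{g_{i,k}},M^{g_{i,i}^\circ},M^{g_4}$ is already given. With that, rerunning Proposition~\ref{prop: SDEMP}(b) to obtain (\ref{eq: SDEMP}) and (\ref{eq: ||X||}) is exactly what the paper does.

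The genuine gap is in Stage 2. Your $\pi$--$\lambda$ argument rests on the assertion that $A\mapsto L^{R^A}(\cdot)$ is a countably additive set function, but this is neither obvious nor argued. Two problems arise. First, for a general Borel $A$ you have not shown that $R^A(\cdot)=\lVert\omega_2(\cdot)\rVert\,\mathbf 1_A(\arg\omega_2(\cdot))$ is even a semimartingale, so $L^{R^A}$ is undefined; the route via (\ref{eq: Mg4})--(\ref{eq: nuM}) requires the local-submartingale property of $M^{g_6}$ with $g_6(x)=r\,\mathbf 1_A(\theta)\in\mathfrak D^{\bm\mu}_+$, which for non-rational $A$ is part of what you are trying to prove. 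Second, local time is not linear in the underlying semimartingale, so even finite additivity of $A\mapsto L^{R^A}$ is not automatic. Without (\ref{eq: localDist}) for all Borel $A$ you cannot invoke Theorem~\ref{Gen_FS}, and the argument stalls.

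The paper sidesteps this circularity by working at the level of functions rather than sets. It first records that the class $\widetilde{\mathfrak D^{\bm\mu}}$ of $g$'s for which $M^g$ is a local martingale is linear and closed under locally bounded pointwise convergence of $(g,G',G'')$ (via localization and dominated convergence). A monotone-class argument on \emph{functions}, starting from the rational $g_{[a,b)}$'s, then shows $x\mapsto\lVert x\rVert\,h(\arg x)\in\widetilde{\mathfrak D^{\bm\mu}}$ for every bounded measurable $h$ with $\int h\,\mathrm d{\bm\nu}=0$. Separately, for any $g\in\mathfrak D^{\bm\mu}$ with $g_\theta'(0+)\equiv 0$ the proof of Theorem~\ref{Gen_FS} goes through \emph{without} (\ref{eq: localDist}), because the only place that hypothesis enters is the limit (\ref{007}), which is trivially zero when $g_\theta'(0+)\equiv 0$. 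The general $g\in\mathfrak D^{\bm\mu}$ is then handled by writing $g=g^{(1)}+g^{(2)}$ with $g^{(1)}(x)=\lVert x\rVert\,g'_{\arg x}(0+)$ and $g^{(2)}=g-g^{(1)}$; the $\mathfrak D^{\bm\mu}_+$ case follows by stripping off $c\,\lVert x\rVert$ with $c=\int g_\theta'(0+)\,{\bm\nu}(\mathrm d\theta)\ge 0$ and using $g_3\in\mathfrak E$.
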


\noindent 
{\it Proof of Proposition \ref{SM,Unique}:} We   proceed as in \textsc{Karatzas \& Shreve} (1991), proof of Theorem 5.4.20, including Lemma 5.4.18 and Lemma 5.4.19. It is easy to check that all the arguments there apply to our context (with   some standard localization   and application of optional sampling   to submartingales), except for the final step of the proof of Lemma 5.4.19. To get through it, we only need to find a countable collection $\,\mathfrak E\,\subset   \mathfrak D^{\bm \mu}_{+}\,$  with the property that, in order to show that  $\,M^{g} (\cdot\,;\omega_2)\,$ is a continuous local martingale (resp., submartingale) for every function $\,g \in  \mathfrak D^{\bm \mu}$ (resp., $\mathfrak D^{\bm \mu}_{+}$), it suffices to have these properties for  all functions in $\,\mathfrak E\,$. 
We appeal now to Lemma \ref{Count}, and the proof of Proposition \ref{SM,Unique} follows. \qed

\subsection{Time-homogeneous Strongly Markovian Solutions to   (\ref{eq: SDEMP}), under only (\ref{eq: zeroLeb})}   
\label{subsec: THSMP}

Let us recall Part (a) of Proposition \ref{prop: SDEMP}. Suppose that we do not specify a measure $\, \bm \mu \,$ in advance, and that the ``thinning" condition (\ref{eq: localDist}) is not imposed. In particular, {\it with given \textsc{Borel}-measurable functions $\,{\bm b} : [0, \infty) \to \mathbb R\,$, $\,{\bm \sigma} : [0, \infty) \to \R \setminus \{ 0\}\,$ and real numbers $\, \gamma_{i},\, i=1,2\,$, we consider the system of stochastic equations (\ref{eq: SDEMP}) subject only to the  non-stickiness  condition (\ref{eq: zeroLeb}).}     
 
From Part (b) of Proposition \ref{prop: SDEMP} we know that, for a probability measure $\, \bm \mu \,$ on $\,(\mathfrak S,\, \mathcal B(\mathfrak S)) \,$ with  
\begin{equation}
\label{gamma0}
\gamma_i = \int_{ \mathfrak S}  \mathfrak f_i (z) \,  {\bm \mu} (\mathrm{d} z)\,, \qquad i=1,2\, ,
\end{equation}
  every solution to the local martingale problem associated with the triple $\, (\bm \sigma, \bm b, \bm \mu) \,$ induces a solution to the system (\ref{eq: SDEMP}), subject to (\ref{eq: zeroLeb}). {\it But can we obtain all the solutions of (\ref{eq: SDEMP}), (\ref{eq: zeroLeb}) in this way? }

The answer is negative: There are usually several probability measures $\, \bm \mu \,$ satisfying (\ref{gamma0}), so  we can construct a   solution to (\ref{eq: SDEMP}) that satisfies  (\ref{eq: zeroLeb}) and features two different ``spinning measures", both satisfying (\ref{gamma0}). Then this solution is not related to that of a local martingale problem associated with the triple $\, (\bm \sigma, \bm b, \bm \mu) \,$, for {\it any} $\, \bm \mu \,$. The construction will be given in detail at the end of this subsection (Remark \ref{Mix_Ex}).

Interestingly, if we restrict our scope to solutions with some appropriate time-homogeneous strong \textsc{Markov} properties, then each solution to (\ref{eq: SDEMP}) subject to the  non-stickiness  condition (\ref{eq: zeroLeb})  {\it is}  related to that of a local martingale problem associated with the triple $\, (\bm \sigma, \bm b, \bm \mu) \,$, for some $\, \bm \mu\,$ that depends on this solution.    This is the main result of the present subsection, Theorem \ref{thm: Gen} below.

Before stating this result, we   note that Proposition 2.6.6 ($c^{\prime}$) in \textsc{Karatzas \& Shreve} (1991), proved for strong \textsc{Markov} families, admits a version for continuous, time-homogeneous strongly Markovian processes with exactly the same proof.  We state this version here; it will be used several times in what follows.

\begin{prop}
\label{prop: THSMP}
Suppose $\, X(\cdot) \,$ is continuous and time-homogeneous strongly Markovian, in the sense of Definition \ref{def: THSM}. 
Then  for every set $\, B \in \mathcal B(C[0, \infty)^{d})$  we have
\[
\mathbb P \big(  \,  X( T + \cdot) \in B \, \vert \, \mathcal F (T) \, \big) \, =\, \mathbb P  \big( X(T+\cdot) \in B \, \vert \, X(T) \big) \, =\, \mathfrak h ( X(T))   \, , \quad \mathbb P-\text{a.e. on } \{T < \infty\} \, .
\]
for some bounded, measurable function $\,\mathfrak h: \mathbb R^{d} \to \mathbb R \,$   that may depend    on the set $\, B\,$, but not on $\, T$.
\end{prop}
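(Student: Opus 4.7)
The plan is to bootstrap the single-time identity of Definition \ref{def: THSM} up to arbitrary Borel subsets of path space by a standard Dynkin-class argument, exactly in the spirit of Proposition 2.6.6$(c^{\prime})$ in \textsc{Karatzas \& Shreve} (1991). Because $X(\cdot)$ has continuous sample paths and $C([0,\infty);\R^d)$ equipped with the topology of locally uniform convergence is a Polish space, its Borel $\sigma$-algebra coincides with the $\sigma$-algebra generated by finite-dimensional cylinder sets. It therefore suffices to verify the identity on cylinders and then extend.

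First I would handle finite-dimensional cylinders $B = \{\omega : \omega(t_1)\in \Gamma_1,\ldots,\omega(t_n)\in \Gamma_n\}$ (with $0 \le t_1 < \cdots < t_n$ and $\Gamma_k \in \mathcal B(\R^d)$) by induction on $n$. The base case $n=1$ is the hypothesis of Definition \ref{def: THSM}. For the step from $n-1$ to $n$, I note that $T+t_1$ is again a stopping time of $\mathbb F$ and use the tower property to write
\[
\mathbb P\big( X(T+t_1)\in\Gamma_1, \ldots, X(T+t_n)\in\Gamma_n \,\big|\, \mathcal F(T)\big) \,=\, \mathbb E\Big[ \mathbf 1_{\Gamma_1}\!\big(X(T+t_1)\big)\, \eta\!\big(X(T+t_1)\big) \,\Big|\, \mathcal F(T)\Big]\,,
\]
where the bounded measurable function $\eta : \R^d \to [0,1]$ is furnished by the inductive hypothesis applied at the stopping time $T+t_1$ to the shifted cylinder with times $s_k := t_k - t_1$, $k=2,\ldots,n$. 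Applying the $n=1$ conclusion once more, to the bounded measurable function $x \mapsto \mathbf 1_{\Gamma_1}(x)\,\eta(x)$ (first for indicators and then for bounded measurable functions via linearity and monotone approximation by simple functions), produces a Borel function $\mathfrak h_n : \R^d \to [0,1]$ with the required property on $\{T<\infty\}$. Measurability of $\mathfrak h_n$ as a function on $\R^d$, rather than merely as a random variable measurable with respect to $\sigma(X(T))$, is secured at every step by the Doob--Dynkin lemma.

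Finally I would extend to arbitrary Borel subsets of path space. Let $\mathcal L$ be the collection of those $B \in \mathcal B(C([0,\infty);\R^d))$ for which the conclusion of the Proposition holds with some bounded Borel $\mathfrak h_B : \R^d \to \R$. By linearity of conditional expectation and monotone convergence (which preserves the factorisation $\mathfrak h_{B}(X(T))$ by taking pointwise limits of Borel functions on $\R^d$), $\mathcal L$ is a Dynkin system; by Step~1 it contains the $\pi$-system of finite-dimensional cylinders, so Dynkin's $\pi$-$\lambda$ theorem yields $\mathcal L = \mathcal B(C([0,\infty);\R^d))$. \emph{The main obstacle} throughout is to make sure that, at every stage of both the induction and the monotone-class passage, the conditional probability genuinely factors through $X(T)$ as a Borel function on $\R^d$, so that this factorisation is not lost when taking countable unions or monotone limits; this is handled by a careful appeal to Doob--Dynkin, which is legitimate because $\R^d$ is a standard Borel space.
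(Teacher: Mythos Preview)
Your proposal is correct and follows precisely the approach the paper intends: the paper does not give its own argument but simply notes that Proposition~2.6.6\,($c^{\prime}$) of \textsc{Karatzas \& Shreve} (1991) ``admits a version for continuous, time-homogeneous strongly Markovian processes with exactly the same proof,'' and your induction on finite-dimensional cylinders followed by the Dynkin $\pi$--$\lambda$ extension is exactly that proof.
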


We   present now the main result of this section. Its proof is given in an Appendix, section \ref{sec: App}. 

  \newpage

\begin{thm} {\bf Strongly Markovian Solutions of   (\ref{eq: SDEMP}),   (\ref{eq: zeroLeb}):}
\label{thm: Gen}
Let us consider a weak solution $\, (X(\cdot), W(\cdot)),$ $( \Omega, \mathcal {F}, \mathbb P)$, $ \mathbb F= \big\{ \mathcal F({t})\big\}_{0 \le t < \infty}\,$    of the system  (\ref{eq: SDEMP}) for some given constants $\, \gamma_1, \,\gamma_2\,,$    namely 
\begin{equation*} 
X_{i}(\cdot) \, =\, X_{i}(0) + \int^{\,\cdot}_{0} {\mathfrak f}_{i}(X(t)) \Big[ {\bm b} ( \lVert X(t) \rVert ) {\mathrm d} t\, +\, {\bm \sigma} ( \lVert X(t) \rVert) {\mathrm d} W(t) \Big] + 
\gamma_i\, L^{ \lVert X\rVert}(\cdot) \,,  \quad  i = 1, 2  \,.
\end{equation*}
{\it (i)} \, The  radial part $\, \lVert X(\cdot) \rVert\,$ of the state process solves then the equation (\ref{eq: ||X||}), namely 
\begin{equation*}
\label{||X||2}
\lVert X(\cdot) \rVert \, =\,  \lVert X(0) \rVert + \int^{\cdot}_{0} {\bf 1}_{\{ \lVert X(t) \rVert > 0 \}} \, \Big( {\bm b} ( \lVert X(t) \rVert) {\mathrm d} t + {\bm \sigma} ( \lVert X(t) \rVert) {\mathrm d} W(t)\Big) + L^{||X||} (\cdot) \, . 
\end{equation*}
{\it (ii)} \, If both $\, X(\cdot)\,$ and $\, \Vert X(\cdot) \Vert \,$ are time-homogeneous, strongly Markovian processes with respect to   $\,\mathbb F^{X}=\big\{ \mathcal F^{X}({t})\big\}_{0 \le t < \infty}\,$ and  (\ref{eq: zeroLeb}) holds, then there exists a probability measure $\,\bm \mu\,$ on $\,(\mathfrak S,\, \mathcal B(\mathfrak S)) \,$     such that   $\, X(\cdot)\,$ induces a solution to the local martingale problem associated with the triple $\, (\bm \sigma, \bm b, \bm \mu) \,$. 

\smallskip
\noindent
{\it (iii)} \,If,  in addition, the state process of this weak solution satisfies the analogue 
\begin{equation}
\label{eq: Pos_Loc_Time_2}
 \mathbb{P} \big( L^{\Vert X\Vert} (\infty) >0 \big) \, >\, 0
\end{equation}
of the condition (\ref{eq: Pos_Loc_Time}),  then the measure $\,\bm \mu\,$ in (ii) is uniquely determined by $\, X(\cdot)\,$ and must satisfy (\ref{gamma0}).  
\end{thm}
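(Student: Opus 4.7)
\textbf{Proof plan for Theorem \ref{thm: Gen}.}

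Part (i) proceeds exactly as in the proof of Proposition~\ref{prop: SDEMP}(a). With the stopping times $\{\tau_m^\varepsilon\}$ of~(\ref{eq: tau rec}), Itô's rule applied to the smooth function $x \mapsto \|x\|$ on each sub-excursion interval $[\tau_{2\ell+1}^\varepsilon, \tau_{2\ell+2}^\varepsilon]$ away from $\{\|X\|=0\}$ yields the dynamics of $\|X\|$ there. Summing over $\ell$ via the decomposition~(\ref{Decompose||X||}) and passing to the limit $\varepsilon \downarrow 0$, the non-stickiness condition~(\ref{eq: zeroLeb}) kills the boundary contributions while the excursion-start terms converge to $L^{\|X\|}(\cdot)$ by the downcrossing identity of Theorem~VI.1.10 in \textsc{Revuz \& Yor} (1999), as in~(\ref{eq: SZe}); this gives~(\ref{eq: ||X||}).

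Part (ii) is the real content. With part (i) in hand, the argument at the start of Section~\ref{disc}—an application of Itô's rule to the ratios $X_i/\|X\|$ up to $\sigma_\varepsilon$—shows that the unit vector $\mathfrak f(X(\cdot))$ is constant on every excursion interval of $\|X\|$ away from the origin (the \emph{ray property}). Let $\{\mathcal E_k\}_{k\ge 1}$ be a measurable enumeration of those excursion intervals, and set $\Theta_k := \mathfrak f(X(s))$ for any $s \in \mathcal E_k$ (well-defined by the ray property). I next construct the spinning measure $\bm\mu$ by exploiting the time-homogeneous strong Markov property of both processes (Proposition~\ref{prop: THSMP}): shifting by the first hitting time of ${\bm 0}$ if $\mathrm x \neq {\bm 0}$, and then iterating at the successive hitting times of the origin (whose measurable enumeration is guaranteed by the strong Markov property of $\|X\|$), one verifies that the $\{\Theta_k\}$ are i.i.d.\ with a common law
\[
\bm\mu(B) := \mathbb P\bigl(\Theta_1 \in B\bigr), \qquad B \in \mathcal B(\mathfrak S),
\]
and jointly independent of $\|X\|$. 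The thinning identity~(\ref{eq: localDist}) is then extracted from the downcrossing characterization of local time: if $N(T,\varepsilon)$ counts the $\|X\|$-excursions on $[0,T]$ that reach height $\ge\varepsilon$ and $N_A(T,\varepsilon)$ counts those among them with $\Theta_k \in A$, one has $\varepsilon N(T,\varepsilon) \to L^{\|X\|}(T)$ and $\varepsilon N_A(T,\varepsilon) \to L^{R^A}(T)$ in probability, while the strong law of large numbers applied to the i.i.d.\ Bernoulli variables $\mathbf 1_A(\Theta_k)$ gives $N_A(T,\varepsilon)/N(T,\varepsilon) \to \bm\nu(A)$ on $\{L^{\|X\|}(T) > 0\}$. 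Combining yields $L^{R^A}(\cdot) = \bm\nu(A)\,L^{\|X\|}(\cdot)$, and Proposition~\ref{prop: SDEMP}(a) then produces the desired solution of the local martingale problem associated with $(\bm\sigma,\bm b,\bm\mu)$.

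Part (iii) is a tidy consequence. Under~(\ref{eq: Pos_Loc_Time_2}), infinitely many excursions occur with positive probability, so the empirical distribution of $\{\Theta_k\}$ converges a.s.\ to $\bm\mu$; this identifies $\bm\mu$ uniquely from the law of $X(\cdot)$. The matching identity~(\ref{gamma0}) follows by comparing the hypothesized equation~(\ref{eq: SDEMP}) with the representation recovered via Proposition~\ref{prop: SDEMP}(b) (whose local-time coefficient equals $\int_{\mathfrak S}\mathfrak f_i\,d\bm\mu$): subtracting the two expressions gives $\bigl(\gamma_i - \int\mathfrak f_i\,d\bm\mu\bigr)\,L^{\|X\|}(\cdot) \equiv 0$, so nontriviality of $L^{\|X\|}$ forces $\gamma_i = \int\mathfrak f_i\,d\bm\mu$ for $i=1,2$.

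The main obstacle is the middle step of Part (ii)—leveraging the \emph{joint} time-homogeneous strong Markov property of $X$ and $\|X\|$ to obtain the i.i.d.\ structure of the $\{\Theta_k\}$ together with their independence of $\|X\|$. A single invocation of the strong Markov property only fixes a marginal statement at one hitting time; the extension to joint independence across the countable family of excursions requires iterating at successive hitting times of the origin, and crucially uses the strong Markov property of $\|X\|$ itself to guarantee that the enumeration of excursions is measurable with respect to the conditioning $\sigma$-algebras. Without this extra ingredient one cannot rule out ``mixed'' solutions in which different excursions are governed by different spinning measures (cf.\ Remark~\ref{Mix_Ex}).
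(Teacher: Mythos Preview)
Your overall strategy---use the strong Markov property to show the excursion directions are i.i.d., then invoke the strong law of large numbers to identify the local-time contributions---is precisely the paper's approach, and Parts~(i) and~(iii) are handled correctly. But the central framework in Part~(ii) has a genuine gap.

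The phrase ``iterating at the successive hitting times of the origin'' does not make sense: the zero set of $\|X\|$ is typically a perfect set with no isolated points, so there is no second, third, \ldots\ hitting time of~$\mathbf{0}$ after $\tau(0)$. Consequently your measurable enumeration $\{\mathcal E_k\}$ of excursion intervals cannot be ordered by ``successive'' visits to the origin, and iterating the strong Markov property along such an enumeration is not well-posed. The paper avoids this by working instead with the $\varepsilon$-level stopping times $\{\tau_m^\varepsilon\}_{m\ge 0}$ of~(\ref{eq: tau rec}): these \emph{are} well-ordered stopping times, and the strong Markov property of $X$ (applied at $\tau_{2\ell}^\varepsilon$, where $X(\tau_{2\ell}^\varepsilon)=\mathbf{0}$) gives that for each fixed $\varepsilon>0$ the directions $\{\mathfrak f(X(\tau_{2\ell+1}^\varepsilon))\}_{\ell\ge 0}$ are i.i.d.\ with some law $\bm\mu^\varepsilon$. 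The non-trivial step---and the place where the strong Markov property of $\|X\|$ is actually used, not for ``measurability of the enumeration'' as you suggest---is showing that $\bm\mu^\varepsilon$ does not depend on $\varepsilon$. This is the content of the paper's Lemma~\ref{Descrip} and Step~2 of Proposition~\ref{Invariant and IID}: one compares levels $\varepsilon_1>\varepsilon_2$, and the time-homogeneous strong Markov property of $\|X\|$ (applied at $\tau_{2\ell+1}^{\varepsilon_2}$, where $\|X\|=\varepsilon_2$) is needed to show that the event $\{\max_{[\tau_{2\ell+1}^{\varepsilon_2},\tau_{2\ell+2}^{\varepsilon_2}]}\|X\|\ge\varepsilon_1\}$ is independent of $\mathcal F^X(\tau_{2\ell+1}^{\varepsilon_2})$, hence in particular of the direction $\mathfrak f(X(\tau_{2\ell+1}^{\varepsilon_2}))$. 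Without this $\varepsilon$-independence you cannot define a single $\bm\mu$, and the SLLN argument (which you correctly invoke) would give different limits along different $\varepsilon$-sequences.

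Two smaller points. First, the paper does not go through the thinning identity~(\ref{eq: localDist}) and then invoke Proposition~\ref{prop: SDEMP}(a); instead it re-runs the proof of the Freidlin-Sheu formula (Theorem~\ref{Gen_FS}) directly, replacing the step~(\ref{007}) that relied on~(\ref{eq: localDist}) by the SLLN applied to the i.i.d.\ sequence $\{g'_{\Theta(\tau_{2\ell+1}^\varepsilon)}(0+)\}_\ell$. Your route via~(\ref{eq: localDist}) would also work once the i.i.d.\ structure is in place, but you would need to argue separately that $R^A(\cdot)=\|X(\cdot)\|\,\mathbf 1_A(\mathrm{arg}(X(\cdot)))$ is a semimartingale and that $\varepsilon N_A(T,\varepsilon)\to L^{R^A}(T)$. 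Second, you do not address the possibility that the $\tau_m^\varepsilon$ may be infinite with positive probability (i.e., $X$ may not return to the origin infinitely often); the paper handles this in a separate ``Part~B'' by augmenting the probability space with auxiliary i.i.d.\ $\mathfrak S$-valued variables on the event $\{\tau_{2\ell+1}^\varepsilon=\infty\}$.
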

 
\begin{remark}
\label{remark: 9.1}
The existence and determination of $\,\bm\mu\,$ in Theorem \ref{thm: Gen}(ii)(iii)  are reminiscent of what happens for the skew BM, where one can ``read off"  from the  \textsc{Harrison \& Shepp} (1981) equation -- e.g.\,\,(\ref{eq: HS}) below -- what the skewness parameter is. The measure $\,\bm\mu\,$ here, however, cannot be decided only through equation (\ref{eq: SDEMP}), as one can usually find many $\, \bm \mu$'s satisfying (9.1), given $\, \gamma_1, \,\gamma_2$.  Rather, $\,\bm\mu\,$ can be observed from the paths of a given solution, as (\ref{eq: UniqMU}) and Proposition \ref{Invariant and IID}  will show in the proof of Theorem \ref{thm: Gen}.
 \end{remark}
 
Under appropriate conditions on    $\, (\bm \sigma, \bm b) \,$   in (\ref{eq: SDEMP}), we will only need $\, X(\cdot) \,$ itself to be time-homogeneous and strongly Markovian with respect to $\,\mathbb F^{X}\,$ in Theorem \ref{thm: Gen}{\it (ii)}. The following lemma guarantees this. 

\begin{lm}
\label{LM ||X||}
With the setting and   assumptions of Theorem \ref{thm: Gen}, 
suppose  that the local submartingale problem of subsection \ref{sec_511} associated with the pair $\,(\bm \sigma, \bm b)\,$   is well-posed. 

Then $\Vert X(\cdot) \Vert$ is time-homogeneous strongly Markovian with respect to $\,\mathbb F^{X}\,$.
\end{lm}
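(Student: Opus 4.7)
\textbf{Proof plan for Lemma \ref{LM ||X||}.} The plan is to first establish that, under $\mathbb{P}$, the radial process $\|X(\cdot)\|$ induces a solution to the local submartingale problem of subsection~\ref{sec_511} associated with $(\bm\sigma,\bm b)$ started from $\|X(0)\|$; and then to leverage well-posedness to upgrade this into the time-homogeneous strong Markov property with respect to the full filtration $\mathbb{F}^X$.

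First I would start from the radial equation (\ref{eq: ||X||}), which is already available from Theorem~\ref{thm: Gen}(i). For any $\psi\in C^2([0,\infty);\mathbb{R})$ with $\psi'(0+)\ge 0$, applying It\^o's rule to $\psi(\|X(\cdot)\|)$ and using the fact that $L^{\|X\|}$ is flat off $\{t:\|X(t)\|=0\}$, I obtain
\[
K^\psi(t)\;:=\;\psi(\|X(t)\|)-\psi(\|X(0)\|)-\int_0^t\mathcal G\psi(\|X(s)\|)\mathbf{1}_{\{\|X(s)\|>0\}}\,\mathrm ds\;=\;N^\psi(t)+\psi'(0+)\,L^{\|X\|}(t),
\]
where $N^\psi(t):=\int_0^t\psi'(\|X(s)\|)\mathbf{1}_{\{\|X(s)\|>0\}}\bm\sigma(\|X(s)\|)\,\mathrm dW(s)$ is a continuous $\mathbb{F}$-local martingale. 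Since $N^\psi(\cdot)$ and $L^{\|X\|}(\cdot)$ are adapted to $\mathbb{F}^{\|X\|}\subseteq\mathbb{F}^X$, the standard reduction (conditional expectation versus a sub-filtration preserves the (sub)martingale property, with localizing sequence given by hitting times of compacts) shows that $K^\psi$ is a continuous $\mathbb{F}^X$-local submartingale, and a local martingale whenever $\psi'(0+)=0$. Combined with the non-stickiness condition (\ref{eq: zeroLeb}), this means that $\mathbb{P}\circ\|X(\cdot)\|^{-1}$ solves the local submartingale problem associated with $(\bm\sigma,\bm b)$; by the assumed well-posedness, this solution is the unique member $\mathbb{Q}^{\|X(0)\|}$ of a family $\{\mathbb{Q}^r\}_{r\ge0}$ on $\Omega_1$.

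The second step is to transfer this to arbitrary $\mathbb{F}^X$-stopping times $T$. Since $\mathbb{F}^X\subseteq\mathbb{F}$, every such $T$ is also an $\mathbb{F}$-stopping time, and by the strong Markov property of Brownian motion, $W^{(T)}(\cdot):=W(T+\cdot)-W(T)$ is a Brownian motion independent of $\mathcal F(T)\supseteq\mathcal F^X(T)$. Substituting into (\ref{eq: ||X||}) on $[T,T+\cdot)$ yields, on $\{T<\infty\}$,
\[
\|X(T+t)\|=\|X(T)\|+\int_0^t\mathbf{1}_{\{\|X(T+u)\|>0\}}\bigl(\bm b(\|X(T+u)\|)\,\mathrm du+\bm\sigma(\|X(T+u)\|)\,\mathrm dW^{(T)}(u)\bigr)+L^{\|X(T+\cdot)\|}(t).
\]
Taking a regular conditional probability of $\|X(T+\cdot)\|$ given $\mathcal F^X(T)$ (possible since the path space is Polish) and repeating the It\^o argument of the first step, the conditional law again solves the local submartingale problem associated with $(\bm\sigma,\bm b)$, now with initial value $\|X(T)\|$. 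By uniqueness it must equal $\mathbb{Q}^{\|X(T)\|}$, which is a Borel-measurable function of $\|X(T)\|$ alone; this yields
\[
\mathbb{P}\bigl(\|X(T+t)\|\in\Gamma\bigm|\mathcal F^X(T)\bigr)=\mathbb{Q}^{\|X(T)\|}\bigl(\omega_1(t)\in\Gamma\bigr)=:\mathfrak g(\|X(T)\|),
\]
with $\mathfrak g$ independent of $T$. Conditioning this identity on $\|X(T)\|$ (which is $\mathcal F^X(T)$-measurable) gives the same right-hand side, which is exactly the time-homogeneous strong Markov property of $\|X(\cdot)\|$ with respect to $\mathbb{F}^X$ in the sense of Definition~\ref{def: THSM}.

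\textbf{Main obstacle.} The delicate point is step three: the conditional shift argument works at the level of \emph{$\mathbb{F}^X$-stopping times} (not merely $\mathbb{F}^{\|X\|}$-stopping times, for which a direct appeal to Proposition~\ref{SM,Unique} would suffice). The key enabler is that the driving Brownian motion $W$ in (\ref{eq: ||X||}) lives on the richer filtration $\mathbb{F}\supseteq\mathbb{F}^X$, so its strong Markov property furnishes an independent post-$T$ Brownian motion $W^{(T)}$ with which to re-run the It\^o computation, and the uniqueness supplied by the well-posedness hypothesis then collapses the conditional distribution to a function of $\|X(T)\|$ alone.
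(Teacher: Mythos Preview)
Your proposal is correct and follows the same overall architecture as the paper: show that $\|X(\cdot)\|$ furnishes a solution to the local submartingale problem for $(\bm\sigma,\bm b)$ with respect to $\mathbb{F}^X$, then use well-posedness at the level of regular conditional probabilities to deduce the time-homogeneous strong Markov property.

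The difference is in how the post-$T$ step is executed. You introduce the shifted Brownian motion $W^{(T)}$ and propose to ``re-run the It\^o computation'' under the regular conditional probability given $\mathcal F^X(T)$. This is valid in spirit but the phrase hides some work: stochastic integrals are not pathwise objects, so one must argue that the shifted SDE, and hence the It\^o expansion, persists under the conditional measure $\mathbb Q_\omega$ for a.e.\ $\omega$. The paper sidesteps this entirely by never invoking $W^{(T)}$: once it is established that each $K^\psi(\cdot;\|X\|)$ is an $\mathbb F^X$-local (sub)martingale (your first paragraph already does this), the paper appeals directly to the abstract Stroock--Varadhan machinery packaged in Lemmas~5.4.18--19 and Theorem~5.4.20 of \textsc{Karatzas \& Shreve} (1991), which transfer local (sub)martingale properties of the pathwise-defined processes $K^\psi$ to the regular conditional probabilities. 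That route also requires a countable determining class of test functions $\psi$, which the paper identifies explicitly in the Remark following the proof (the functions $r\mapsto r$, $r\mapsto r^2$, and smooth $f_n$ with $f_n'(0+)=0$, $f_n(r)=r$ for $r\ge 1/n$); your SDE-based route, if the conditional-measure step is made rigorous, would not need this countability reduction. Both arguments arrive at the same conclusion; yours is more concrete, the paper's more streamlined.
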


\noindent
{\it Proof of Lemma \ref{LM ||X||}:}
We obtain the     equation   (\ref{eq: ||X||}) for the radial  part $\,\Vert X(\cdot) \Vert \,$ from Theorem \ref{thm: Gen}(i).  Applying \textsc{It\^o}'s formula to it  in the context of subsection \ref{sec_511}, we see  that for every function $\,\psi \in C^{2} ( [0, \infty); \mathbb R) \,$ with $\,\psi^{\prime}(0+) \ge 0\,$ (resp. $\,\psi^{\prime}(0+) = 0\,$) , the process $\, K^{\psi}  (\cdot\, ; \Vert X(\cdot) \Vert)\,$ is a continuous local submartingale (resp. martingale) with respect to the filtration $\, \mathbb F\,$. This process is also adapted to $\,\mathbb F^{X}\,$ and $\, \mathcal F^{X}(t) \subseteq \mathcal F(t) \,$ holds for all $\, t \ge 0\,$, so the statement in the last sentence   still holds with $\, \mathbb F\,$ replaced by $\,\mathbb F^{X}\,$. 

\smallskip
Following the idea of Lemma 5.4.18 and Lemma 5.4.19 in \textsc{Karatzas \& Shreve} (1991), we denote by $\, \mathbb Q_{\omega}(A)=\mathbb Q(\omega ; A): \, \Omega \times \mathcal F \mapsto [0,1]   \,$ the regular conditional probability for $\, \mathcal F \,$ given $\, \mathcal F^{X}(T) \,$, where $\, T \,$ is a bounded stopping time of $\,\mathbb F^X\,$.  For every $\, \omega \in \Omega \,$, we define the probability measure $\, \mathbb P_{\omega} \,$ on  $\, \big(C[0, \infty), \mathcal B\big(C[0, \infty)\big)\big)\,$  via $\, \,\mathbb P_{\omega}(F):= \mathbb Q_{\omega}\big(\Vert X(T+ \,\cdot\,) \Vert \in F\big) , \, \,\,\forall \,\, F \in \mathcal B(C[0, \infty))$.

 \smallskip

With this notation  and the conclusion in the first paragraph of this proof,  we can follow   the arguments in the aforementioned two lemmas  to show that for a.e.$\,\, \omega \in \Omega $, the probability measure $\, \mathbb P_{\omega} \,$   solves the local submartingale problem associated with the pair $\,(\bm \sigma, \bm b)\,$, starting at $\, \Vert X(T, \omega) \Vert$. Combining this with the well-posedness of the  local submartingale problem, we prove  Lemma \ref{LM ||X||} by applying the proof of Theorem 5.4.20 in \textsc{Karatzas \& Shreve} (1991).    \qed

\begin{remark}
Just as in the proof of Proposition \ref{SM,Unique},   the above argument   needs a ``countable representatives" result like Lemma \ref{Count}. Here it suffices to take   functions of the form $ f(x)=x \,$, $ \, g(x)=x^{2} \,$, and for every $\, n\, \in \mathbb{N} \,$ a function $\, f_{n}(\cdot) \,$ such that $\, f_n' (0+) =0\,$ and $\, f_n (x) =x\,$ for $\, x \ge (1/n)\,$. 
\end{remark}
 
In conjunction with Lemma \ref{LM ||X||}, Theorem \ref{thm: Gen} has the following corollary.  
 \newpage

\begin{cor}
\label{cor: Gen}
Suppose that the conditions (\ref{eq: zeroLeb}), (\ref{eq: Pos_Loc_Time_2}) are  satisfied by a weak solution $\, (X(\cdot), W(\cdot)), \,( \Omega, $ $\mathcal {F}, \mathbb P), \, \mathbb F$ $= \big\{ \mathcal F({t})\big\}_{0 \le t < \infty}\,$   of the system of  equations (\ref{eq: SDEMP}),  for some given real numbers $\, \gamma_1, \,\gamma_2\,.$ Suppose also  that   the local submartingale problem associated with the pair $\,(\bm \sigma, \bm b)\,$ is well-posed.  

\smallskip
If the state process $\, X(\cdot)\,$ of this weak solution is time-homogeneous and strongly Markovian with respect to $\,\mathbb F^{X}$,      then    it determines   a probability measure $\,\bm \mu\,$ on $\,(\mathfrak S,\, \mathcal B(\mathfrak S)) \,$ which satisfies (\ref{gamma0}), and is such that $\, X(\cdot)\,$ induces a solution to the local martingale problem associated with the triple $\, (\bm \sigma, \bm b, \bm \mu) \,$. 
\end{cor}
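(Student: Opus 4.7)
The plan is to invoke Theorem \ref{thm: Gen} directly, after using Lemma \ref{LM ||X||} to supply the missing hypothesis about $\Vert X(\cdot)\Vert$. Specifically, Theorem \ref{thm: Gen}(ii) requires both the planar process $X(\cdot)$ and its radial part $\Vert X(\cdot)\Vert$ to be time-homogeneous strongly Markovian with respect to $\mathbb F^X$; the corollary only assumes the former explicitly. But the hypothesis that the local submartingale problem associated with $(\bm\sigma,\bm b)$ is well-posed is precisely what is needed to upgrade the weak solution to the equation \eqref{eq: ||X||} (established in Theorem \ref{thm: Gen}(i)) into a time-homogeneous strongly Markovian process for $\Vert X(\cdot)\Vert$, and this is exactly the content of Lemma \ref{LM ||X||}.

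So the plan is: first, appeal to Theorem \ref{thm: Gen}(i) to conclude that $\Vert X(\cdot)\Vert$ satisfies \eqref{eq: ||X||}; second, apply Lemma \ref{LM ||X||}, whose hypotheses are met by assumption, to conclude that $\Vert X(\cdot)\Vert$ is time-homogeneous strongly Markovian with respect to $\mathbb F^X$; third, now that both $X(\cdot)$ and $\Vert X(\cdot)\Vert$ enjoy this property and the non-stickiness condition \eqref{eq: zeroLeb} holds, Theorem \ref{thm: Gen}(ii) produces a probability measure $\bm\mu$ on $(\mathfrak S,\mathcal B(\mathfrak S))$ such that $X(\cdot)$ induces a solution to the local martingale problem associated with $(\bm\sigma,\bm b,\bm\mu)$; finally, since the extra condition \eqref{eq: Pos_Loc_Time_2} holds by hypothesis, Theorem \ref{thm: Gen}(iii) furnishes the uniqueness of $\bm\mu$ and the identity \eqref{gamma0}.

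There is essentially no obstacle: the corollary is a clean packaging of Theorem \ref{thm: Gen} with Lemma \ref{LM ||X||}, intended to dispense with the somewhat awkward hypothesis on $\Vert X(\cdot)\Vert$ in favour of the more palatable well-posedness assumption on the one-dimensional submartingale problem. The only thing to be careful about is that Lemma \ref{LM ||X||} is stated under ``the setting and assumptions of Theorem \ref{thm: Gen}'', which means we must already have a weak solution in hand satisfying \eqref{eq: zeroLeb}; this is granted in the corollary's hypotheses, so the chain of implications closes without further work.
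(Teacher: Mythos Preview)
Your proposal is correct and matches the paper's approach exactly: the paper states the corollary as following directly ``in conjunction with Lemma \ref{LM ||X||}'' from Theorem \ref{thm: Gen}, and your write-up simply spells out the chain Theorem \ref{thm: Gen}(i) $\to$ Lemma \ref{LM ||X||} $\to$ Theorem \ref{thm: Gen}(ii),(iii).
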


\subsection{The case of \textsc{Walsh} Brownian Motion}  
 \label{subsec: THSMP_WBM}

Let us specialize now the system of equations (\ref{eq: SDEMP}) to  the case $\,\bm b = \mathbf{ 0} ,\, \bm {\sigma} = \mathbf{ 1}\,$ as in Proposition \ref{prop: ID}, namely 
\begin{equation}
\label{eq: SDEBM}
X_{i}(\cdot) \, =\,  {\rm x}_{i} + \int^{\, \cdot}_{0} \mathfrak f_{i}\big(X(t)\big) \,{\mathrm d} W(t) +   \gamma_{i} \, L^{ \,||X||}(\cdot) \, ,  \qquad i=1,2  \,.
\end{equation}
\noindent
 We shall  show that when $\,\gamma_{1}^{2}+\gamma_{2}^{2}\leq 1\,$   this system,  coupled with the time-homogeneous strong \textsc{Markov} property,  characterizes  \textsc{Walsh} Brownian motions under the  non-stickiness  condition (\ref{eq: zeroLeb}). 
We note that in the statement and proof of the next proposition, neither $\, (\gamma_{1}, \gamma_{2})\,$ nor $\, \bm \mu\,$ are specified in advance; rather, we view them as related via (\ref{gamma0}).   

\begin{prop}{\bf  A New Characterization of \textsc{Walsh} Brownian Motions  
:} 
\label{SIEC: BM}
Assume that $\, Z (\cdot)\,$ is a continuous, adapted planar process on some filtered probability space $\,( \widetilde{\Omega}, \widetilde{\mathcal F}, \widetilde{\mathbb P}) \,$, $\, \widetilde{\mathbb F} = \big\{ \widetilde{\mathcal F}(t)\big\}_{0 \le t < \infty} \,$.
Then the following two assertions are equivalent: 

\smallskip
\noindent
(i) $\, \,Z (\cdot)\,$ is a \textsc{Walsh} Brownian motion, defined via the semigroup (\ref{eq: FSEMIG}), for some spinning measure $\, \bm \mu\,$. 

\smallskip
\noindent
(ii) \,For some   
pair of real numbers $\, (\gamma_{1}, \gamma_{2})\,$ with $\, \gamma_{1}^{2}+\gamma_{2}^{2}\leq 1\,$,   there exists a weak solution $\, (X(\cdot), W(\cdot)),\, ( \Omega,$ $\mathcal {F}, \mathbb P),$ $\mathbb F= \big\{ \mathcal F({t})\big\}_{0 \le t < \infty}\,$ to the system of equations (\ref{eq: SDEBM}),   such that  $\, X(\cdot):$  is time-homogeneous strongly Markovian with respect to $\,\mathbb F^{X};$  satisfies the   condition (\ref{eq: zeroLeb}); and   has the same distribution as $\, Z(\cdot)\,$.
 
 \smallskip
When these assertions hold,   the   measure $\, \bm \mu\,$ of the statement {\it (i),} and the co\"efficients $\, \gamma_{1}, \gamma_{2}\,$ of the statement {\it (ii),} satisfy the relationship (\ref{gamma0}).   
\end{prop}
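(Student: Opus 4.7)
\textbf{Proof proposal for Proposition \ref{SIEC: BM}.}

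The plan is to treat the two implications separately, noting that the substance is in $(ii) \Rightarrow (i)$; the converse and the identification of $\bm\mu$ with $(\gamma_1,\gamma_2)$ follow quickly from earlier results. For $(i)\Rightarrow(ii)$, I would start from the Walsh Brownian motion $Z(\cdot)$ with spinning measure $\bm\mu$ and set $\gamma_i := \int_{\mathfrak S}\mathfrak{f}_i(z)\,\bm\mu(\mathrm{d}z)$ as in (\ref{gamma0}); because $\mathfrak{f}(z)=z$ on the unit circle $\mathfrak S$ and $\bm\mu$ is a probability measure there, Cauchy--Schwarz gives $\gamma_1^2+\gamma_2^2 = |\int z\,\bm\mu(\mathrm{d}z)|^2 \le \int \|z\|^2\,\bm\mu(\mathrm{d}z)=1$. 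Proposition \ref{prop: ID} then furnishes a weak solution $(X,W)$ of (\ref{eq: SDEBM}) whose law is that of $Z(\cdot)$ and which satisfies (\ref{eq: zeroLeb}); the Feller character of the semigroup (\ref{eq: FSEMIG}) recalled in section \ref{subsec: WBM} gives the time-homogeneous strong \textsc{Markov} property of $X(\cdot)$ with respect to $\mathbb F^X$.

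For $(ii)\Rightarrow(i)$, start with the weak solution $(X,W)$ of (\ref{eq: SDEBM}). Theorem \ref{thm: Gen}(i) already yields the radial decomposition (\ref{eq: ||X||}), which together with (\ref{eq: zeroLeb}) identifies $\|X(\cdot)\|$ as a reflecting Brownian motion. Since the local submartingale problem for the pair $(\mathbf 1,\mathbf 0)$ is well-posed (Proposition \ref{prop: MP1}), Lemma \ref{LM ||X||} promotes $\|X(\cdot)\|$ to a time-homogeneous strongly Markovian process with respect to $\mathbb F^X$. Combined with the assumption that $X(\cdot)$ itself has this property, both hypotheses of Theorem \ref{thm: Gen}(ii) are in place, and this theorem supplies a probability measure $\bm\mu$ on $(\mathfrak S,\mathcal B(\mathfrak S))$ such that $X(\cdot)$ induces a solution of the local martingale problem associated with the triple $(\mathbf 1,\mathbf 0,\bm\mu)$. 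By Proposition \ref{prop: MP1} this local martingale problem is well-posed, and by Proposition \ref{MartProb} together with Remark \ref{remark: 6.1} the \textsc{Walsh} Brownian motion with spinning measure $\bm\mu$ (defined via the semigroup (\ref{eq: FSEMIG})) is one of its solutions; uniqueness forces $X(\cdot)$, hence $Z(\cdot)$, to have the law of this \textsc{Walsh} Brownian motion.

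For the final assertion, I would note that since $\|X(\cdot)\|$ is reflecting Brownian motion we have $\mathbb P\bigl(L^{\|X\|}(\infty)>0\bigr)=1$, so the condition (\ref{eq: Pos_Loc_Time_2}) holds; Theorem \ref{thm: Gen}(iii) then asserts that $\bm\mu$ is uniquely determined and satisfies (\ref{gamma0}). The main conceptual obstacle is really that Theorem \ref{thm: Gen}(ii) requires the strong \textsc{Markov} property of \emph{both} $X(\cdot)$ and $\|X(\cdot)\|$; this is exactly what Lemma \ref{LM ||X||} buys us from the well-posedness of the one-dimensional submartingale problem, so the entire argument is a clean synthesis of results already established in the paper, without the need to re-enter the excursion-theoretic construction.
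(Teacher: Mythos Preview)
Your proposal is correct and follows essentially the same approach as the paper's proof. The only cosmetic differences are that the paper cites Corollary \ref{cor: Gen} directly (which packages Lemma \ref{LM ||X||} and Theorem \ref{thm: Gen}(ii)(iii) together with the condition (\ref{eq: Pos_Loc_Time_2})), whereas you unpack these ingredients explicitly, and for $(i)\Rightarrow(ii)$ the paper routes through Propositions \ref{MartProb} and \ref{prop: SDEMP} rather than Proposition \ref{prop: ID}; the logical content is the same.
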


\noindent
{\it Proof
: (i) $\Rightarrow$ (ii).} On the strength of Propositions \ref{MartProb} and \ref{prop: SDEMP}, the process $\, Z (\cdot)\,$ induces a weak solution of (\ref{eq: SDEBM}) subject to (\ref{eq: zeroLeb}), where $\, \gamma_{1}, \gamma_{2}\,$ are given by (\ref{gamma0}) and therefore satisfy $\, \gamma_{1}^{2}+\gamma_{2}^{2}\leq 1\,$. Since $\, Z (\cdot)\,$ is time-homogeneous strongly Markovian with respect its own  filtration, so is this solution.  
 
\smallskip
\noindent
{\it (ii) $\Rightarrow$ (i).}  Appealing to Proposition \ref{prop: MP1}, we see that the local submartingale problem associated with the pair $\, (\mathbf 1, \mathbf 0) \,$ is well-posed. By Theorem \ref{thm: Gen}(i) we obtain that   the radial part of $\, X(\cdot)\,$ satisfies
\begin{equation} 
\label{eq: ||X||BM}
\Vert X(\cdot) \Vert = \Vert X(0) \Vert + \int_{0}^{\cdot} {\bf 1}_{ \{ \Vert X(t) \Vert >0 \}} {\mathrm d} W(t) + L^{ \Vert X \Vert} (t) = \Vert X(0) \Vert + W(\cdot)+ L^{ \Vert X \Vert} (\cdot)\,,
\end{equation}
with the help of the non-stickiness condition (\ref{eq: zeroLeb}). Therefore, $\,\Vert X(\cdot) \Vert\,$ is the \textsc{Skorokhod} reflection of the Brownian motion $\,\Vert X(0) \Vert + W(\cdot)\,$,
and satisfies $\,\mathbb{P} \big( L^{\Vert X\Vert} (\infty) = \infty \big) =1\,$, so the condition (\ref{eq: Pos_Loc_Time_2}) follows.

Now from Corollary \ref{cor: Gen}, the weak solution posited in {\it (ii)}  induces a solution to the local martingale problem associated with the triple $\, (\mathbf{ 1}, \mathbf{ 0},  {\bm \mu}) \,$, 
for some probability measure $\, \bm \mu\,$ that satisfies (\ref{gamma0}).   Propositions \ref{MartProb} and \ref{prop: MP1} show that  $\, X(\cdot)\,$ is a \textsc{Walsh} Brownian motion with spinning measure $\,\bm \mu\,$, and so is $\, Z(\cdot)\,$. \qed

\smallskip
\begin{remark} {\it Similarities and Differences:} 
Proposition  \ref{SIEC: BM} and Theorem \ref{IFF} show  that the system of equations (\ref{eq: SDEBM}), with the condition $\,\gamma_{1}^{2}+\gamma_{2}^{2}\leq 1\,$ on the co\"efficients, is a two-dimensional analogue of the  \textsc{Harrison \& Shepp} (1981) equation for the skew Brownian motion. But with the following caveat:   
 
The equations (\ref{eq: SDEBM}), (\ref{eq: zeroLeb}) characterize \textsc{Walsh} Brownian motions only when we restrict attention to time-homogeneous strongly Markovian processes. {\it If  this restriction is not imposed, there will be solutions to the system (\ref{eq: SDEBM})  that are not \textsc{Walsh} Brownian motions.}  Such solutions are  discussed in the next remark.

Furthermore, (\ref{eq: SDEBM}) does not describe a unique \textsc{Walsh} Brownian motion, but may be satisfied by many such  motions with different spinning measures (cf. Remark \ref{remark: 9.1}). 
By contrast,   we can read off  the flipping probability from the co\"efficient in the equation for the one-dimensional skew Brownian motion. The construction in Remark \ref{Mix_Ex} right below is actually based on this observation. 
\end{remark}

\begin{remark}
\label{Mix_Ex}
{\it A solution to the system of equations (\ref{eq: SDEBM}) that features two different spinning measures:}  Consider the system of equations (\ref{eq: SDEBM}) with $\, \gamma_{1}=\gamma_{2}=0\,$ and $\,\rm x=(0,0)$, and note that  both   measures $$\bm \mu_{1} = \frac{1}{\,2\,}\, \delta_{(1,0)} + \frac{1}{\,2\,}\, \delta_{(-1,0)} \qquad \text{and} \qquad \,\bm \mu_{2} = \frac{1}{\,2\,}\, \delta_{(0,1)} + \frac{1}{\,2\,}\, \delta_{(0,-1)} $$    satisfy (\ref{gamma0}).   Let $\, X(\cdot)\,$ be a \textsc{Walsh} Brownian motion that solves the system (\ref{eq: SDEBM}) with $\, X(0)=(0,0)\,$, $\,\gamma_{1} = \gamma_{2} = 0\,$, spinning measure $\,\bm \mu_{1}\,$ and driving Brownian motion $\, B(\cdot)\,$. Let $\, Y(\cdot) \,$ be another \textsc{Walsh} Brownian motion that solves  (\ref{eq: SDEBM}) with $\,  Y(0)=(1,0)\,$, $\,\gamma_{1} = \gamma_{2} = 0\,$, spinning measure $\,\bm \mu_{2}\,$ and driver  $\,\widetilde B(\cdot) := B(\tau_{(1,0)} + \cdot)\,, $ another Brownian motion. Now define $\, \tau_{(1,0)} := \inf \{ t \geq 0 : X(t) = (1,0) \}\,$ and
 $$\, 
 Z(t)\, := \,X(t),\,\, \,\,0\,\leq\,t < \tau_{(1,0)}    \, ,\qquad \text{and} \qquad   Z(\tau_{(1,0)} + t) := Y (t), \,\,\,\,\forall \, \,\,t \geq 0 \,.
 $$   
The so-defined process $\, Z(\cdot)\,$ solves (\ref{eq: SDEBM}) with $Z(0)=(0,0)$ , $\gamma_{1} = \gamma_{2} = 0\,$ and driving Brownian motion $\, B(\cdot)\,$, but is {\it not} a \textsc{Walsh} Brownian motion: it switches from $\,\bm \mu_{1}\,$ to $\,\bm \mu_{2}\,$ after time $\,\tau_{(1,0)}\,$. It is also not time-homogeneous strongly Markovian,  by virtue of either Proposition \ref{SIEC: BM} or   elementary observations. 
\end{remark}

 \section{Examples}  
 \label{Ex} 

\begin{example} {\bf \textsc{Walsh}'s Brownian Motion and   Spider Martingales:} 
 \label{rem: spider} 
When the spinning  measure $\,{\bm \mu} \,$ in Theorem \ref{prop: skewTanaka}  is a discrete probability charging a  finite number of rays that pass through the origin,  and the  driving semimartingale $\,U(\cdot)\,$ is   Brownian motion, the process $\,X(\cdot)\,$ becomes the original \textsc{Walsh}  Brownian motion $\,{\bm W}(\cdot)\,$ with roundhouse singularities in multipole fields as in Proposition \ref{prop: ID}.    Given a finite number  $\,m  \ge 2  \,$ of distinct angles $\, \{ \theta_{\ell} \in [0, 2\pi), \,\ell = 1, \ldots, m\}\,$, let us consider $\,m\,$  rays emanating from the origin,    
$$
\,\mathcal{I}_{\ell} \, :=\, \big\{ x \in \mathbb R^{2} \setminus \{{\bm 0}\} : \text{arg} (x) \, =\, \theta_{\ell} \big\}\,,\qquad \ell \, =\, 1, \ldots ,m\,,
$$ 
and assign a discrete probability measure $\,{\bm \mu} \,$ with weights $\,p_{\ell} \in (0,1)\,$, $\, \sum_{\ell=1}^{m} p_{\ell} = 1\,$, such that   
\begin{equation}
\label{eq: discrete}
\, {\bm \mu} \big( \{ (\cos ( \theta_{\ell}), \sin (\theta_{\ell}))\}\big) \, =\, \mathbb P \big( \text{arg}({\bm \xi}_{1}\big) \, =\, \theta_{\ell}) \, =\, p_{\ell} \,,\qquad \ell \, =\, 1, \ldots ,m\,.
\end{equation}
 Using \textsc{Markov} semigroups  and   excursions, \textsc{Barlow, Pitman \& Yor} (1989)  study  \textsc{Walsh}'s Brownian motion $\, {\bm W}(\cdot) \,$ on the collection of rays $\, \bigcup_{\ell=1}^{m} \mathcal{I}_{\ell} \,\cup \{ {\bm 0}\} \,$. Their approach has been  generalized to ``multiple spider martingales" by \textsc{Yor} (1997), and has been studied   by \textsc{Tsirel'son} (1997), \textsc{Barlow, \'Emery, Knight, Song \& Yor} (1998), \textsc{Watanabe} (1999) and \textsc{Mansuy \& Yor} (2006), pp.\,103-116. 
\end{example}

\begin{example} {\bf The Case of Two Rays:} Let us consider the setup of the previous example with $\, m=2\,$ and $\, \theta_1=0\,$, $\, \theta_2 = \theta \in (0,   \pi]\,$, as well as $\,\mathbb P \big( \text{arg}({\bm \xi}_{1}\big)= \theta ) = p \in (0,1)\,$. The equations of (\ref{eq: skewTanaka_2}) 
become   
$$ 
X_{1}(\cdot) \, =\,  {\rm x}_{1} + \int^{\,\cdot}_{0} \text{cos} \big( \text{arg}\big(X(t)\big) \big) \,{\mathrm d} U(t) + 
\big( 1 - p + p \cos (\theta) \big)
\, L^{ ||X|| }(\cdot) \, ,  
$$
$$ 
X_{2}(\cdot) \, =\,  {\rm x}_{2} + \int^{\,\cdot}_{0} \text{sin} \big( \text{arg}\big(X(t)\big) \big) \,{\mathrm d} U(t) + 
p \,   \text{sin} (\theta)  \, L^{ ||X|| }(\cdot) 
$$
with $\, L^{ X_{1} }(\cdot) = \big( 1 - p + p \,   \text{cos}^+ (\theta) \big)\, L^{ ||X|| }(\cdot)\,$ and $\, L^{ X_{2} }(\cdot) =  p \,   \text{sin}^+ (\theta)  \, L^{ ||X|| }(\cdot)\,$.

   \smallskip
 \noindent
{\bf Case I:} With $\, \theta = \pi\,$, and with $\, x_2=0\,$ for simplicity, the second of these equations has the trivial solution $ X_2(\cdot) \equiv 0\,$, whereas the first can be cast in the form of   the celebrated \textsc{Harrison \& Shepp} (1981) equation 
\begin{equation}
\label{eq: HS}
X_{1} (\cdot) \, =\, x_{1} + V_{1}(\cdot) + \frac{\, 1 - 2\, p\, }{1 - p\, } L^{X_{1}}(\cdot) \, ,
\qquad \text{ driven by } \quad 
V_{1}(\cdot) \, :=  \int^{\cdot}_{0} \text{sgn} (X_{1}(t))  \, {\mathrm d} U(t) \,  . 
\end{equation}
 \newpage
 \noindent
   As these authors showed, when  $\, U(\cdot)\,$ is Brownian  motion  the above   equation has a pathwise unique, strong solution with respect to the Brownian motion $\,V_{1}(\cdot) $,  and in this case $\,X_1(\cdot)\,$ is   skew Brownian motion. 
   
   When written in terms of the original driver $\,U(\cdot)  $, the above equation for $\, X_1 (\cdot)\,$  is a skew version of the \textsc{Tanaka} equation.   In particular, Proposition 2.1 of \textsc{Ichiba \& Karatzas} (2014) establishes the filtration comparisons $\, \mathbb F^{U}(\cdot) \subsetneq   \mathbb F^{V_{1}}(\cdot)  = \mathbb F^{X_{1}}(\cdot) = \mathbb F^{(X_{1}, X_{2})}(\cdot)\,$ when $\,U(\cdot)\,$ is Brownian motion.  

\medskip
\noindent
{\bf Case II:} When $\, \theta \in (0, \pi) $, we assume  for simplicity $\, \text{arg}({\rm x}) \in \{ 0 , \, \theta\} \,$ and consider the  process
$$
\Upsilon(\cdot) \, :=\, \frac{ \, - X_{2}(\cdot) \, }{\sin (\theta)} \cdot {\bf 1}_{\{X_{2}(\cdot) > 0\}} + \frac{\, X_{1}(\cdot)\, }{\cos(0)} \cdot {\bf 1}_{\{X_{2}(\cdot)\, =\, 0\}} \,:
$$
we flatten the state space by rotating the ray. This process also satisfies a \textsc{Harrison-Shepp}-type equation  
\[
\Upsilon(\cdot) \, =\, \Upsilon(0) + V_{\bullet}(\cdot) + \frac{\, 1 - 2\, p\, }{1 - p\, } L^{\Upsilon}(\cdot) \, \qquad \text{ driven by } \quad  V_{\bullet}(\cdot) \, :=   \int^{\cdot}_{0} \text{sgn} (\Upsilon(t)) \, {\mathrm d} U(\cdot)\, ;
\]
and conversely,  the co\"ordinate processes are given in terms of $\,\Upsilon (\cdot) \,$ as 
$$
X_{1}(\cdot) \, =\, \Upsilon(\cdot) \cdot {\bf 1}_{\{\Upsilon(\cdot) > 0\}} - \Upsilon (\cdot) \cos (\theta) \cdot {\bf 1}_{\{\Upsilon (\cdot) \le 0 \}} \,, \qquad \,X_{2}(\cdot) \, =\,  - \Upsilon (\cdot) \sin (\theta) \cdot {\bf 1}_{\{\Upsilon (\cdot) \le 0 \}} \,.
$$
 If $\, U(\cdot)\,$ is standard Brownian motion, then so is $\,V_{\bullet}(\cdot)\,$; in this case  $\,\Upsilon (\cdot) \,$ becomes a skew Brownian motion, and  we obtain  as before the filtration comparisons
$\, 
\mathbb F^{U}(\cdot) \,  \subsetneq \, \mathbb F^{V_{\bullet}}(\cdot) \, =\, F^{\Upsilon}(\cdot) \, =\, \mathbb F^{(X_{1}, X_{2})}(\cdot)\,$. 
 
\smallskip
\noindent
$\bullet~$ 
We have shown that the filtration $\,\mathbb F^{(X_{1}, X_{2})}(\cdot)\,$ of a \textsc{Walsh} Brownian motion on two rays coincides with the filtration  generated by some standard Brownian  motion,  and   is   {\it strictly finer} than the filtration $ \mathbb F^{U}(\cdot) \,$ generated by its driving   Brownian motion. 

\smallskip
\noindent
$\bullet~$  
Suppose   the driver  $ U(\cdot) $ is a continuous local martingale with $U(0)=0$ and $\langle U \rangle (\infty) = \infty$, and consider its \textsc{Dambis-Dubins-Schwarz} representation 
 $\, U(\cdot) = {\bm \beta} (\langle U \rangle (\cdot) )\,$ with $\, {\bm \beta} (\cdot)\,$ a standard Brownian motion. 

From the above considerations and in conjunction with Proposition 2.2 in \textsc{Ichiba \& Karatzas} (2014) we see that, in the case of a spinning measure $\, {\bm \mu}\,$ that charges exactly two points on the unit circumference, uniqueness in distribution holds   for the system (\ref{eq: skewTanaka_1}) subject to (\ref{length}) and  (\ref{eq: localDist}),  provided that either

\smallskip
\noindent 
(i) $\, \,U(\cdot)\,$ is {\it pure} $($i.e., $\langle U \rangle ( t)$ is $\,{\cal F}^{{\bm \beta}}(\infty)-$measurable, for every $t \in [0, \infty))$; or that 

\smallskip
\noindent 
(ii) \,the quadratic variation process $\langle U \rangle (\cdot)  $ is adapted to a Brownian motion $\, \Gamma (\cdot) = ( \Gamma_1 (\cdot), \cdots, \Gamma_n (\cdot))'\,$ with values in some Euclidean space and independent of the   Brownian motion $\, {\bm \beta} (\cdot)\,$. 
\end{example}

\begin{example} {\bf \textsc{Tsirel'son}'s triple point:} 
When $\,\alpha_{i}^{(+)} = \alpha_{i}^{(-)} \, $ for $\,i \, =\, 1,2\,$, the equations (\ref{eq: skewTanaka}) and (\ref{eq: skewTanaka_2}) become, respectively,  
\[
X_{i}(T) \, =\,  x_{i} + \int^{T}_{0} \mathfrak f_{i}\big(X(t)\big)\, {\mathrm d} S(t)   \qquad \text{and} \qquad  X_{i}(T) \, =\,  x_{i} + \int^{T}_{0} \mathfrak f_{i}\big(X(t)\big)\, {\mathrm d} U(t) \, ; \quad i \, =\, 1, 2 \, .   
\]
This is the case when the common probability distribution $\,{\bm \mu} \,$ of the I.I.D. random variables $\,\{ {\bm \xi}_{1}, {\bm \xi}_{2}, \ldots \} \,$ in (\ref{eq: exp}) has zero expectation, namely $\, \mathbb E[ {\bm \xi}_{1} ] \, =\,  {\bm 0}\,$.     
For instance, when $\, {\bm \mu} \,$ assigns equal weights of $\,1   /   3\,$ to three points at angles $\,\theta_{0} + (2 \pi \ell / 3)\,$, $\,\ell \, =\, 0, 1, 2\,$ on the unit circumference $\,\mathfrak S\,$ that trisect it,  namely,    
\[
\mathbb P \Big({\bm \xi}_{1} = \big(\cos ( \theta_{0} + (2 \pi \ell / 3))\,, \sin (\theta_{0} + (2 \pi \ell / 3)) \big)^{\prime} \Big) \, =\,  1  / 3 \, ; \quad \ell \, =\, 0,1,2 
\]
 for some $\,\theta_{0} \in [0, 2 \pi)\,$. If, in  addition,  $ \, U(\cdot)   =  W(\cdot) \,$ is   Brownian motion,  and thus the \textsc{Skorokhod} reflection    $  \,S(\cdot)   =  W(\cdot) + \max_{\,0 \le s \le \,\cdot \,} (- W(s))^{+} 
 $ in (\ref{eq: S})  is  a reflecting Brownian motion,  we deduce from subsection \ref{La} that the corresponding planar process $\, X(\cdot) =(X_{1}(\cdot), X_{2}(\cdot))' \,$ is a martingale, to wit
$\,
X_{i}(T)=  {\mathrm x}_{i} + \int^{T}_{0} \mathfrak f_{i}\big(X(t)\big)\, {\mathrm d} W(t) \, , \,\,\, i \, =\, 1, 2 \, $.

\smallskip
It was conjectured by \textsc{Barlow, Pitman \& Yor} (1989),  and shown in the landmark paper by \textsc{Tsirel'son} (1997)  (cf. \textsc{Yor} (1997), \textsc{Mansuy \& Yor} (2006)),  
that the natural filtration of this martingale $\, X(\cdot)\,$ is not generated by {\it any}  Brownian motion of {\it any} dimension.  
\end{example}

\begin{example} \label{ex: driftWBM} {\bf \textsc{Walsh}'s Brownian motion with polar drifts:}    Let us look at the case $\, {\bm \sigma }(\cdot) \equiv 1\,$ and $\, {\bm c}(\cdot) \equiv  - \lambda \,$ for some $\, \lambda > 0\,$ in Corollary \ref{Cor: MP1}. The driving one-dimensional semimartingale $\,U(\cdot) \,$ for $\,X(\cdot)\,$ is Brownian motion with negative drift $\, - \lambda\,$ and with instantaneous reflection at the origin. It follows from  Theorem \ref{prop: skewTanaka} the process $\,X(\cdot) \, =\, (X_{1}(\cdot), X_{2}(\cdot))^{\prime} \,$ satisfies 
\[
X_{i}(T) \, =\,  {\mathrm x}_{i} + \int^{T}_{0} {\mathfrak f}_{i}(X(t)) \big( - \lambda \, {\mathrm d} t + {\mathrm d} W(t) \big) +  \gamma_i \,   
L^{ ||X || }(T) \, , \qquad 0 \le T < \infty \,  
\]
for $\,i \, =\, 1\, , \, 2\,$, where $\,W(\cdot)\,$ is one-dimensional standard Brownian motion. 

Moreover, following Proposition \ref{prop: AD}, we may replace the constant drifts by {\it drifts exhibiting angular dependence}. Suppose that $\, {\bm a}(r, \theta) \, =\,  1\,$ and $\, {\bm b} (r, \theta) \, =\, {\bm \lambda} (\theta) \,$ for some measurable function $\,{\bm \lambda}: [0, 2\pi) \to (0, \infty) \,$. The resulting process $\,Y(\cdot) \,$ in Proposition \ref{prop: AD} has the dynamics  
\[
Y(T) \, =\,  {\mathrm y} + \int^{T}_{0} {\mathfrak f}(Y(t)) \Big( - {\bm \lambda} ( \text{arg} \big(Y(t))\big) {\mathrm d} t + {\mathrm d} W(t)\Big) + {\bm \gamma} \, L^{ \lVert Y \rVert }(T) \, , \qquad 0 \le T < \infty \, . 
\]
Since the driving semimartingale is positive recurrent in $\,\mathbb R_{+}\,$, the degenerate planar process $\,X(\cdot) \,$ is positive recurrent.  Its stationary distribution is  expressed in polar co\"ordinates as 
\[
\bigg( \int^{2\pi}_{0} \frac{\bm \nu ({\mathrm d} u)}{\, 2 [ {\bm \lambda} (u)]^{2}\, }   \bigg)^{-1} \, \frac{ \, e^{-2 {\bm \lambda}(\theta)r}\, }{{\bm \lambda} (\theta)\,  } \,\,{\mathrm d} r  \, {\bm \nu } \, ({\mathrm d} \theta ) \,; \qquad r > 0 \, , \, \, \theta \in [0, 2\pi)   \,    
\]
by the distribution of occupation times and the excursion theory of \textsc{Salminen, Vallois \& Yor} (2007). \,  
If $\, {\bm \lambda} (\cdot) \equiv \lambda \,$(constant), then the stationary distribution reduces to $\, \big(2 \lambda e^{-2\, \lambda r} {\mathrm d} r \big)\, {\bm \nu} ({\mathrm d} \theta) \,$, $\, r > 0\,$, $\, \theta \in [0, 2\pi)\,$. 
\end{example}

\begin{example} {\bf \textsc{Walsh} semimartingale driven by \textsc{Bessel} processes:}   Suppose that $\,R^{2}(\cdot)\,$ is a squared  \textsc{Bessel} process with dynamics 
 $\, 
{\mathrm d} R^{2}(t)= {\delta} \, {\mathrm d} t + 2 \sqrt{ R^{2}(t)\,}\, {\mathrm d} W (t) \,  ,  
$  
where $\, {\delta} \in (1, 2) \,$ and $\, W(\cdot) \,$ is one-dimensional standard Brownian. 

We take the square root $\, \lvert  R(\cdot) \rvert \, $ of this process as the driving semimartingale, i.e., $\,U(\cdot) \, =\,  \lvert R(\cdot)\rvert \, =\,  S(\cdot) \,$ in Theorem \ref{prop: skewTanaka}. This process $\,S(\cdot) \,$ does not accumulate local time at the origin, i.e., $\,L^{S}(\cdot) \equiv 0 \,$ holds for $\,\delta \in (1, 2)\,$,  hence  the resulting planar process $\,X(\cdot)\,$ of Theorem \ref{prop: skewTanaka} has the dynamics  
\[
X_{i}(T) \, =\, {\mathrm x}_{i} + \int^{T}_{0} {\mathfrak f}_{i}(X(t)) \Big( \frac{ \, \delta - 1\, }{\, 2 \, \lVert X(t) \rVert\, } \cdot {\bf 1}_{\{ \lVert X(t) \neq 0 \rVert\}} \, {\mathrm d} t + {\mathrm d} W(t) \Big)  \, , \quad 0 \le T < \infty \,  
\]
for $\, i = 1  ,   2 $. Note that when $ \, \delta \, =\, 1 $, the process $\,X(\cdot)\,$  becomes \textsc{Walsh} Brownian motion; when $ \delta \in (0, 1)$, the semimartingale property is violated; when $ \delta \ge 2 $, the process $ R(\cdot)  $ never reaches the origin.

 \smallskip
Furthermore, and by analogy with Example \ref{ex: driftWBM}, given a measurable function $\, {\bm \delta} : [0, 2\pi) \to (1, 2)\,$ we may use the time-change technique with the dispersion $\,{\bm a}(r, \theta) \, =\,  4 \, r  \,$ and the drift $\,{\bm b}(r, \theta) \, =\,  {\bm \delta}( \theta) \,$ and consider the \textsc{Walsh} semimartingale $\,Y(\cdot) \,$ driven by angular dependent, squared-\textsc{Bessel} process   
\[
Y(T) \, =\,  {\mathrm y} + \int^{T}_{0} {\mathfrak f}(Y(t)) \Big( {\bm \delta}(\text{arg} (Y(t)) \, {\mathrm d} t + 2 \sqrt{ \,  \lVert Y(t) \rVert\, } \, {\mathrm d} W(t) \Big ) \, , \qquad 0 \le T < \infty . 
\]
Here,  the process  $\, \lVert Y (\cdot) \rVert\,$ does not accumulate local time at the origin. The corresponding scale function,   inverse function  and   stochastic clock  are given   by $\,{\bm p}_{\theta} (r) \, =\, r^{(2- {\bm \delta}(\theta))\, /\, 2} \,$, $\, {\bm q}_{\theta}(r) \, =\, r^{2 \, / \, (2 - {\bm \delta}(\theta))}\,$, and 
\[
{\mathcal T}(\cdot) \, =\, \int^{\cdot}_{0} \Big((2 - {\bm \delta}(\theta) )^{2} \, r^{- ({\bm \delta}(\theta) - 1)} \Big) \Big|_{r = \lVert Y (t) \rVert, \, \theta = \text{arg}(Y(t)) } \,{\mathrm d} t \, ,  
\]
respectively. It can be shown that the stochastic clock does not explode (cf. Lemma 3.1 of \textsc{Biane \& Yor} (1987), Proposition XI.1.11 of \textsc{Revuz \& Yor} (1999), pages 285-289 of \textsc{Rogers \& Williams} (2000) and Appendix A.1 of \textsc{Ichiba et al.} (2011)). From this process $\, Y(\cdot) \,$ we may define now the \textsc{Walsh} semimartingale 
$
\, \Xi(\cdot) = \big(\Xi_{1}(\cdot), \Xi_{2}(\cdot)\big)^{\prime}\,$ with $  \,  \Xi_{i}(\cdot)   :=  {\mathfrak f}_{i}(Y(\cdot)) \, \lVert Y(\cdot) \rVert^{1/2} \, , \, \,   i \, =\, 1, 2  \,
$
driven by a \textsc{Bessel} process with angular dependence, which  satisfies the vector integral equation derived from (\ref{eq: GenFS2}), namely, 
  \newpage
 \[
\Xi(T) \, =\, \Xi(0) + \int^{T}_{0} {\mathfrak f}(\Xi(t)) \Big(  \frac{ \, {\bm \delta}(\text{arg}(\Xi(t))) - 1\, }{ \, 2 \, \lVert \Xi (t) \rVert } \,{\bf 1}_{ \{ \lVert \Xi (t) \neq  0 \rVert\} }{\mathrm d} t + {\mathrm d} W(t) \Big) \, , \qquad 0 \le T < \infty   \,.
\] 
 \end{example}

\section{Appendix: The Proof  of Lemma \ref{Count}}  
\label{sec: App_Lem}

 \noindent
 We denote by $\,\widetilde{\mathfrak D^{\bm \mu}}$ (resp. $\widetilde{\mathfrak D^{\bm \mu}_{+}}$)   the collection of functions $\, g \,$ in $\,\mathfrak D^{\bm \mu}\,$ (resp. $\mathfrak D^{\bm \mu}_{+}$) such that $\, M^{g} (\cdot\,;\omega_2)\,$ is a continuous local martingale (resp. submartingale)  of the filtration $\, \mathbb F_{2} \,$, under $\, \mathbb Q \,$. Then we have $\, \widetilde{\mathfrak D^{\bm \mu}} \supseteq  \mathfrak D^{\bm \mu} \cap \mathfrak E  \,$ and $\,\widetilde{\mathfrak D^{\bm \mu}_{+}} \supseteq    
\mathfrak E\,$ by assumption. The goal here is to show $\, \widetilde{\mathfrak D^{\bm \mu}} = \mathfrak D^{\bm \mu} \,$ and $\, \widetilde{\mathfrak D^{\bm \mu}_{+}} = \mathfrak D^{\bm \mu}_{+} \,$.

Recalling that $\, \mathfrak E\,$ contains the  functions in Definition \ref{Class_E}(ii), we can follow the proof of Part (b) of Proposition \ref{prop: SDEMP} and show that there exists a one-dimensional standard Brownian motion $\, {W}(\cdot) \,$ on an extension of the filtered probability space $\,(\Omega_{2}, \mathcal F_{2}, \mathbb Q)$, $\mathbb F_{2}\,$ such that (\ref{eq: SDEMP}), (\ref{eq: ||X||}) hold  with $\, X(\cdot)\,$ given by (\ref{eq: XMPSDE}), or simply $\, X(\cdot):=\omega_{2}(\cdot)\,$.  It is clear, therefore, that $\int^{t}_{0} {\bf 1}_{\{ \lVert \omega_{2}(u) \rVert > 0 \}} \,\big( \vert {\bm b} ( \lVert \omega_{2}(u) \rVert) \vert + {\bm a} ( \lVert \omega_{2}(u) \rVert) \big) {\mathrm d} u \, < \infty$ holds for all $\, 0 \leq t < \infty$, $\,\mathbb Q-$a.s. We make now the following two observations. 

\smallskip
\noindent
{\bf First Observation:}   $\,\widetilde{\mathfrak D^{\bm \mu}}$  {\it  is a linear space.} This is obvious from the linearity of stochastic integrals, derivatives, and   local martingales. 

\smallskip
\noindent
{\bf Second Observation:} {\it  Suppose $\,\{ g_{n} \}_{n \in \mathbb N}  \subseteq \widetilde{\mathfrak D^{\bm \mu}}\, $ and $\,g \in \mathfrak D^{\bm \mu}$ satisfy the following: as $\, n \uparrow \infty$, $ \, g_{n}(x) \rightarrow g(x), \, \forall \, x \in \mathbb R^{2} \,$ and $\,G_{n}^{\prime}(x) \rightarrow G^{\prime}(x), \, G_{n}^{\prime\prime}(x) \rightarrow G^{\prime\prime}(x), \,\forall \, x \in \mathbb R^{2} \setminus \{ {\bm 0} \}, \,$ and all these functions ($g_{n},\, g,\, G_{n}^{\prime},\, G^{\prime},\, G_{n}^{\prime\prime},\, G^{\prime\prime}$) are uniformly bounded on every compact subset of $\,\mathbb R^{2}$. Then we have $\,g \in \widetilde{\mathfrak D^{\bm \mu}}$. }

To see this, we define stopping times  
$$\, 
T_{k} \,=\, \inf \left\{ t :   \int^{t}_{0} {\bf 1}_{\{ \lVert \omega_{2}(u) \rVert > 0 \}} \,\Big( \big \vert {\bm b} \big( \lVert \omega_{2}(u) \rVert \big) \big \vert + {\bm a} \big( \lVert \omega_{2}(u) \rVert \big) \Big) {\mathrm d} u \, \ge k \,\,\,\, \text{or} \,\,\, \,\lVert \omega_{2}(t) \rVert \ge k \right\}, \,\,\, \,\,\,k \in \mathbb N  \,,
$$  
and note that  $\, M^{g_{n}} (\cdot \wedge T_{k}\,;\omega_2),\, n \in \mathbb N \,$ are uniformly bounded local martingales, hence uniformly bounded martingales,  for all $\,k \in \mathbb N\,$; and that     $\, \lim_{n \to \infty } \, M^{g_{n}} (t \wedge T_{k}\,;\omega_2) =    M^{g} (t \wedge T_{k}\,;\omega_2)\,$ for any $\, t \in [0, \infty)$. Thus $\, M^{g} (\cdot \wedge T_{k}\,;\omega_2)\,$ is also a continuous   martingale, and the conclusion $\,g \in \widetilde{\mathfrak D^{\bm \mu}}\,$ follows.   

\smallskip
\noindent
$\bullet\,$ 
Returning to our argument, we know that for the functions of Definition \ref{Class_E}, the process $\, M^{g_{A}} (\cdot \, ;\omega_2)\,$ is a local martingale for any interval  $\, A \subseteq [0, 2\pi) \,$ of the form $\, [a, b) \,,$ where $\, a, b \,$ are rationals. Thus the same is true when $\, A \,$ is the disjoint union of such intervals, by linearity. These sets form an algebra. By the second observation and monotone class arguments, the same is also true for every \textsc{Borel} subset $\, A \,$ of $\, [0, 2 \pi) \,$. Now for any two disjoint \textsc{Borel} subsets $\, A, B    \,$ of $\,     [0, 2\pi) \,,$ we define 
$$\,
g_{A , B}(x) := \Vert x \Vert \, \big(\bm \nu (A) \, {\bf 1}_{\{\text{arg}(x) \in B \} } - \bm \nu (B) \, {\bf 1}_{ \{\text{arg}(x) \in A \} } \big) \quad \text{and note} \quad g_{A , B}(x) = \bm \nu (A) \, g_{B}(x) - \bm \nu (B) \, g_{A}(x),
$$ 
thus  $\, g_{A, B} \in \widetilde{\mathfrak D^{\bm \mu}}$ by linearity.  Starting from this and using linearity and induction, we  show that if $\,h: [0, 2 \pi) \to \R\,$ is simple and satisfies $\int_{0}^{2 \pi} \, h(\theta) \bm \nu ({\rm d} \theta) = 0 $, then the mapping $\, x \mapsto \Vert x \Vert \cdot h(\text{arg}(x))\,$ is in $\,\widetilde{\mathfrak D^{\bm \mu}}$. Using approximation and the second observation, we see that this statement is still true when ``simple" is replaced by ``bounded and measurable".

Let us recall now that, in the second paragraph of this section, we  obtained the existence of a one-dimensional standard Brownian motion $\, {W}(\cdot) \,$ on an extension of the filtered probability space $\,(\Omega_{2}, \mathcal F_{2}, \mathbb Q)$, $\mathbb F_{2}\,$, along with (\ref{eq: SDEMP}) and (\ref{eq: ||X||}), where $\, X(\cdot):=\omega_{2}(\cdot)\,$. By defining $\, S(\cdot):= \Vert X(\cdot) \Vert \,,$   we can follow the proof of Theorem \ref{Gen_FS} to establish for any given function  $\, g \in \mathfrak D^{\bm \mu} \,$ with $\, g_{\theta}^{\prime}(0+) \equiv 0\,$  the following \textsc{Freidlin-Sheu-}type semimartingale decomposition:  
\[
g( \omega_{2}(\cdot) )\,= \,g( {\rm x})+ \int_{0}^{\cdot} \, {\bf 1}_{\{ \lVert \omega_{2}(t) \rVert > 0 \}} \Big( {\bm b} ( \lVert \omega_{2}(t) \rVert ) \, G^{\prime}(\omega_{2}(t)) + \frac{1}{\, 2\, } \, {\bm a} ( \lVert \omega_{2}(t) \rVert ) \,G^{\prime\prime}(\omega_{2}(t)) \Big) {\rm d} t  
\]
\[
+ \int_{0}^{\cdot} \, {\bf 1}_{\{ \lVert \omega_{2}(t) \rVert > 0 \}} {\bm \sigma} ( \lVert \omega_{2}(t) \rVert) \, G^{\prime}(\omega_{2}(t))\, {\mathrm d} W(t)\,.
\]
The condition  (\ref{eq: localDist}) is {\it not} needed here; and neither are terms involving local time. 

This is because the use of (\ref{eq: localDist}) in the proof of Theorem \ref{Gen_FS} comes only when proving the convergence to local time as in (\ref{007}). 
But this property  holds here trivially, courtesy of $\,  g_{\theta}^{\prime}(0+) \equiv 0\,$. It follows from the above decomposition of \textsc{Freidlin-Sheu-}type  that, if $\, g \in \mathfrak D^{\bm \mu} \,$ satisfies $\, g_{\theta}^{\prime}(0+) \equiv 0\,$, then $\,g \in \widetilde{\mathfrak D^{\bm \mu}}$.

 \smallskip
Finally, we observe that every   $\, g \in \mathfrak D^{\bm \mu} \,$ can be decomposed as $\, g= g^{(1)}+g^{(2)} \,$, where the function $\, x \mapsto  g^{(1)}(x):=\Vert x \Vert \cdot g_{\theta}^{\prime}(0+)$ is in $\,\widetilde{\mathfrak D^{\bm \mu}} \,$ by the first paragraph of this bullet, and the function $\, g^{(2)} := g - g^{(1)} \in \mathfrak D^{\bm \mu} \,$ satisfies $\, \big(g^{(2)}_{\theta}\big)^{\prime}(0+) \equiv 0\,$. With  the considerations above, we see $\,g \in \widetilde{\mathfrak D^{\bm \mu}}$, thus $\, \widetilde{\mathfrak D^{\bm \mu}}=\mathfrak D^{\bm \mu} \,$. We decompose then every function $\, g \in \mathfrak D^{\bm \mu}_{+} \,$ as $\, g= g_{(1)}+g_{(2)} \,$, where $\, g_{(1)}(x):= c \, \Vert x \Vert  \,$  
 with a constant $\,c := \int_{0}^{2\pi} g_{\theta}^{\prime}(0+) {\bm \nu}({\mathrm d} \theta)\,\geq 0\,$ and $\, g_{(2)} :=  g - g_{(1)} \in \mathfrak D^{\bm \mu} \,$  (cf. Remark \ref{remark: 6.1}). Here $\, M^{g_{(2)}} (\cdot \, ;\omega_2)\,$ is a local martingale, and $\, M^{g_{(1)}} (\cdot \, ;\omega_2) = c \, M^{g_{3}}(\cdot \, ; \omega_{2})  \,$ is a local submartingale, since the mapping  $\, x \mapsto g_{3}(x) = \Vert x \Vert \,$ belongs to $\, \mathfrak E \subseteq  \widetilde{\mathfrak D^{\bm \mu}_{+}} \,$ (cf. Definition \ref{Class_E} (ii)). Thus $\, M^{g} (\cdot \, ;\omega_2) \,$ is also a local submartingale and $\,g \in \widetilde{\mathfrak D^{\bm \mu}_{+}}\,$. We conclude then $\, \widetilde{\mathfrak D^{\bm \mu}_{+}} = \mathfrak D^{\bm \mu}_{+}\,$, and the proof of Lemma \ref{Count} is complete.  \qed

\section{Appendix: The Proof   of Theorem \ref{thm: Gen}}  
 \label{sec: App}

 \noindent
We first identify the measure $\,\bm \mu\,$ from $\, X(\cdot)\,$, using the time-homogeneous strong \textsc{Markov} property of this process. Then we  establish a \textsc{Freidlin-Sheu-}type formula for $X(\cdot)$, so as to relate this process to a solution of the local martingale problem associated with the triple $\, (\bm \sigma, \bm b, \bm \mu) \,$.

\medskip
\noindent
$\bullet~$
By the result of Part (a) of Proposition \ref{prop: SDEMP},  
  we obtain the equation (\ref{eq: ||X||}), thus prove Part (i) of Theorem \ref{thm: Gen}.  We also know that the ``direction process" $\, \mathfrak f\big( X(\cdot) \big)= \big( \mathfrak f_{1}\big( X(\cdot)\big), \mathfrak f_{2}\big( X(\cdot)\big) \big) \,$ is constant on every excursion interval of $\,\Vert X(t) \Vert\,$,  by applying the idea in the argument at the beginning of section \ref{disc}.

\smallskip
\noindent
$\bullet~$ 
For every $\, \varepsilon > 0\,$, we define the stopping times $\,\big\{ \tau_{m}^{\varepsilon} \,, \, m \in \N_0\big\} \,$ as in (\ref{eq: tau rec}).  
 
 \medskip
 \noindent
{\bf PROOF OF THEOREM \ref{thm: Gen}(ii), PART A: }  
Let us start by assuming  that, with probability one, all these stopping times $\,\big\{ \tau_{m}^{\varepsilon} \,, \, m \in \N_0\big\} \,$ are finite. Then for every $\, \varepsilon > 0\,$, $\, \ell \in \mathbb N_{0} \,$, we define also the 
measure  
\begin{equation} 
\label{eq: UniqMU}
\, \bm \mu^{\varepsilon}_{\ell} (B)\,: =\, \mathbb P\, \big( \mathfrak f (X(\tau_{ 2 \ell +1}^{\varepsilon})) \in B \big) , \,\quad \forall \, \, B \in \mathcal B (\mathfrak S)\, .
\end{equation}

\begin{prop}
\label{Invariant and IID}
The measure 
$\bm \mu^{\varepsilon}_{\ell} \,$ just introduced does not depend on either   
 $\,\varepsilon \,$ or $\, \ell \,$, so we can define $\, \bm \mu := \bm \mu^{\varepsilon}_{\ell}, \, \, \forall \, \varepsilon > 0, \,\ell \in \mathbb N_{0} \,$.  Furthermore,     $\, \big\{ \,\mathfrak f (X(\tau_{2 \ell + 1}^{\varepsilon}))\, \big\}_{\ell \in \mathbb N_{0}} \,$ is a sequence of independent random variables  with common distribution $\, \bm \mu \,,$ \,for every fixed $\,\varepsilon > 0 $. 
\end{prop}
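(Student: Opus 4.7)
\textbf{Proof plan for Proposition \ref{Invariant and IID}.}  The plan divides into three tasks: (a) show that $\bm\mu^\varepsilon_\ell$ does not depend on $\ell$ for each fixed $\varepsilon$; (b) establish mutual independence of $\{\mathfrak f(X(\tau_{2\ell+1}^\varepsilon))\}_{\ell\in\N_0}$ for fixed $\varepsilon$; and (c) show that this common distribution does not depend on $\varepsilon$ either.  Throughout I will rely on two inputs: (i) the built-in identity $X(\tau_{2\ell}^\varepsilon)=\bm 0$ for every $\ell\in\N_0$, which is immediate from the recursive definition (\ref{eq: tau rec}); and (ii) the time-homogeneous strong Markov property of $X(\cdot)$, applied in the path-space form of Proposition \ref{prop: THSMP}.

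For (a) and (b), the first step is to fix $\varepsilon>0$ and write $\mathfrak f(X(\tau_{2\ell+1}^\varepsilon))=\Phi(X(\tau_{2\ell}^\varepsilon+\cdot))$, where $\Phi:C([0,\infty);\R^2)\to\mathfrak S\cup\{\bm 0\}$ is the measurable functional $\Phi(\omega):=\mathfrak f(\omega(\sigma(\omega)))$ with $\sigma(\omega):=\inf\{t>0:\|\omega(t)\|\ge\varepsilon\}$.  Proposition \ref{prop: THSMP} applied at the stopping time $T=\tau_{2\ell}^\varepsilon$ will then yield, for every $B\in\mathcal B(\mathfrak S)$,
\[
\mathbb P\bigl(\mathfrak f(X(\tau_{2\ell+1}^\varepsilon))\in B\,\big|\,\mathcal F^X(\tau_{2\ell}^\varepsilon)\bigr)\,=\,\mathfrak h_B\bigl(X(\tau_{2\ell}^\varepsilon)\bigr)\,=\,\mathfrak h_B(\bm 0),
\]
a deterministic constant independent of $\ell$.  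This settles (a) by setting $\bm\mu(B):=\mathfrak h_B(\bm 0)$, and simultaneously shows that $\mathfrak f(X(\tau_{2\ell+1}^\varepsilon))$ is independent of $\mathcal F^X(\tau_{2\ell}^\varepsilon)$; since $\mathfrak f(X(\tau_{2k+1}^\varepsilon))$ for $k<\ell$ is $\mathcal F^X(\tau_{2\ell}^\varepsilon)$-measurable, (b) will then follow by induction on $\ell$.

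The substance of the argument is (c).  Fix $0<\varepsilon'<\varepsilon$, and let $\mathbb P^{\bm 0}$ denote the deterministic conditional law of $X(\tau_0^\varepsilon+\cdot)$ identified in (a); under $\mathbb P^{\bm 0}$ the coordinate process $\omega(\cdot)$ starts from $\bm 0$, so it suffices to prove $\bm\mu^\varepsilon=\bm\mu^{\varepsilon'}$ for the $\mathbb P^{\bm 0}$-laws of $\mathfrak f(\omega(\sigma^r))$, with $\sigma^r:=\inf\{t>0:\|\omega(t)\|\ge r\}$.  Since $\sigma^{\varepsilon'}\le\sigma^\varepsilon$, I would set $\xi:=\mathfrak f(\omega(\sigma^{\varepsilon'}))\sim\bm\mu^{\varepsilon'}$ and apply the strong Markov property a second time at $\sigma^{\varepsilon'}$, obtaining that $\omega(\sigma^{\varepsilon'}+\cdot)$ has law $\mathbb P^{\varepsilon'\xi}$.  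Combined with the fact (recorded at the outset of the proof of Theorem \ref{thm: Gen}(ii)) that $\mathfrak f(X(\cdot))$ is constant on every excursion of $\|X\|$ away from the origin, this forces $\omega$ to remain on the ray $\{r\xi:r>0\}$ throughout its current excursion, during which $\|\omega\|$ satisfies the plain one-dimensional SDE $\mathrm dr=\bm b(r)\mathrm dt+\bm\sigma(r)\mathrm dW$ with $r(0)=\varepsilon'$ (no local-time contribution arises before the first return to $\bm 0$).  Letting $p=p(\varepsilon',\varepsilon)\in[0,1]$ be the associated one-dimensional probability of reaching level $\varepsilon$ before returning to $0$, one has $\mathfrak f(\omega(\sigma^\varepsilon))=\xi$ on the event ``reach $\varepsilon$ first'', while on the complementary event $\omega$ returns to $\bm 0$ and a final application of the strong Markov property there redistributes the direction at the next hitting of level $\varepsilon$ according to $\bm\mu^\varepsilon$.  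This yields the fixed-point identity
\[
\bm\mu^\varepsilon(B)\,=\,p\,\bm\mu^{\varepsilon'}(B)+(1-p)\,\bm\mu^\varepsilon(B),\qquad B\in\mathcal B(\mathfrak S),
\]
so that $\bm\mu^\varepsilon=\bm\mu^{\varepsilon'}$ follows the moment $p>0$ is established.

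The only real technical obstacle is the verification that $p>0$.  Since $\bm\sigma$ is nowhere zero on $[0,\infty)$, the one-dimensional diffusion $r(\cdot)$ is non-degenerate in the interior of $(0,\varepsilon)$, and a standard time-change argument reducing it to a Brownian motion on $[\varepsilon'/2,\varepsilon]$ shows that both boundary points are reached with positive probability from the interior starting point $\varepsilon'$.  Alternatively, under the standing Part~A assumption that all the $\tau_m^\varepsilon$ are finite, $p=0$ would force $\|X\|$ to be absorbed at $0$ before reaching $\varepsilon$ on every excursion starting from a point of modulus $\varepsilon'$, obstructing the finiteness of $\tau_3^\varepsilon$.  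Either route closes the argument.
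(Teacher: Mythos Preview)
Your treatment of (a) and (b) coincides with the paper's Step~1 almost verbatim: apply Proposition~\ref{prop: THSMP} at $T=\tau_{2\ell}^\varepsilon$, use $X(\tau_{2\ell}^\varepsilon)\equiv\bm 0$ to see the conditional probability is a constant $\mathfrak h_B(\bm 0)$ independent of $\ell$, and deduce independence by induction. For (c) you take a genuinely different route. The paper partitions $\Omega$ according to which $\varepsilon_2$-downcrossing interval contains $\tau_1^{\varepsilon_1}$, proves an auxiliary independence result (Lemma~\ref{Descrip}) for the three events involved, and sums to get $\bm\mu^{\varepsilon_1}(B)=\bm\mu^{\varepsilon_2}(B)\cdot 1$ directly. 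Your renewal/fixed-point identity $\bm\mu^\varepsilon=p\,\bm\mu^{\varepsilon'}+(1-p)\,\bm\mu^\varepsilon$ is an equally valid and somewhat tidier organization of the same idea.

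There is, however, one real gap. You justify the existence of a single number $p=p(\varepsilon',\varepsilon)$ by saying that $\|\omega\|$ ``satisfies the plain one-dimensional SDE $\mathrm dr=\bm b(r)\,\mathrm dt+\bm\sigma(r)\,\mathrm dW$'' and then speak of ``the associated one-dimensional probability''. But Theorem~\ref{thm: Gen} does \emph{not} assume well-posedness of that SDE (contrast Lemma~\ref{LM ||X||}), so merely satisfying it does not pin down the exit probability; by the strong Markov property of $X$ alone, the conditional probability $\mathbb P(\sigma^\varepsilon<\tau_0'\mid\mathcal F^X(\sigma^{\varepsilon'}))$ is only a function of $X(\sigma^{\varepsilon'})=\varepsilon'\xi$ and could a~priori depend on the direction $\xi$. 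If it did, your fixed-point equation would collapse into $\bm\mu^\varepsilon(B)=\int_B p(\xi)\,\bm\mu^{\varepsilon'}(\mathrm d\xi)+\bigl(1-\int p(\xi)\,\bm\mu^{\varepsilon'}(\mathrm d\xi)\bigr)\bm\mu^\varepsilon(B)$, which no longer forces $\bm\mu^\varepsilon=\bm\mu^{\varepsilon'}$. What makes $p$ constant is precisely the \emph{second} hypothesis of Theorem~\ref{thm: Gen}(ii) that you never invoke: $\|X(\cdot)\|$ is itself time-homogeneous strongly Markovian with respect to $\mathbb F^X$. Applying Proposition~\ref{prop: THSMP} to $\|X\|$ at $\sigma^{\varepsilon'}$ yields that this conditional probability is a function of $\|X(\sigma^{\varepsilon'})\|=\varepsilon'$ alone, hence a deterministic constant $p$. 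The paper uses this same hypothesis at the corresponding spot, namely in the proof of Lemma~\ref{Descrip}(b). Once you insert it, your argument closes; your second route to $p>0$ via the Part~A finiteness of the $\tau_m^\varepsilon$ is then sound (it actually contradicts finiteness of $\tau_1^\varepsilon$, not $\tau_3^\varepsilon$), whereas your first route via a time-change to Brownian motion should be dropped, since it tacitly assumes local integrability of $1/\bm\sigma^2$ not present in the hypotheses of Theorem~\ref{thm: Gen}.
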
 

 \noindent
{\it Proof
: Step 1.}
We shall show in this step that $\,\mathfrak f (X(\tau_{2 \ell + 1}^{\varepsilon}))\,$ is independent of $\,\mathcal F^{X}(\tau_{2 \ell}^{\varepsilon})\,$ for any $\,\varepsilon > 0, \,\ell \in \mathbb N_{0}\,$, and that the random variables $\, \{\mathfrak f (X(\tau_{2 \ell + 1}^{\varepsilon}))\}_{\ell \in \mathbb N_{0}} \,$ are I.I.D. for any fixed $\,\varepsilon > 0 $. 
  
\smallskip
By Proposition \ref{prop: THSMP},  we have for every $\,   \varepsilon > 0, \,\ell \in \mathbb N_{0}, \,  B \in \mathcal B ( \mathfrak S)$, the identity 
\[
\mathbb P \big( \mathfrak f (X(\tau_{2 \ell + 1}^{\varepsilon})) \, \in \, B \, \big \vert \, \mathcal F^{X}(\tau_{2 \ell}^{\varepsilon})\big) = \mathbb P \big( X(\tau_{2 \ell }^{\varepsilon} + \,\cdot) \, \in \, A_{1} \, \big \vert \, \mathcal F^{X}(\tau_{2 \ell}^{\varepsilon}) \big)
= \mathbb P \big( X(\tau_{2 \ell }^{\varepsilon} + \,\cdot) \, \in \, A_{1} \, \big \vert  \, X(\tau_{2 \ell }^{\varepsilon}) \big)  \, . 
\]
Here $$\, A_{1} : = \big\{ \omega \in C[0, \infty)^{2} : \mathfrak f (\omega (\tau_{1}^{\varepsilon}(\omega))) \in B, \, \omega (0) \, =\, 0 \big\} \in \mathcal B \big( C [0, \infty)^{2} \big) \,  ,$$ 
\noindent
and the above conditional probability also equals $\, \mathfrak h_{1}(X(\tau_{2 \ell }^{\varepsilon}))\,$, for some bounded measurable function $\,\mathfrak h_{1}: \mathbb R^{2} \to \mathbb R \,$   that depends only on $\, A_{1} \,$. Now because $X(\tau_{2 \ell}^{\varepsilon}) \equiv 0 \,$, this conditional probability is a constant that is irrelevant to $\,\tau^{\varepsilon}_{2\ell}\,$, in particular, to $\,\ell$. We deduce that   $\,\mathfrak f (X(\tau_{2 \ell + 1}^{\varepsilon}))\,$ is independent of $\,\mathcal F^{X}(\tau_{2 \ell}^{\varepsilon})\,$, and its distribution does not depend on $\, \ell \,$. Therefore,  the random variables in   $\, \big\{\mathfrak f (X(\tau_{2 \ell + 1}^{\varepsilon}))\big\}_{\ell \in \mathbb N_{0}} \,$ are I.I.D.  

\noindent
{\it Step 2:} 
On the strength of Step 1, we can define $\, \bm \mu^{\varepsilon}:=  \bm \mu^{\varepsilon}_{\ell}\,, \,\, \forall \,\,\ell \in \mathbb N_{0}\,$. We shall show in this step that $\,\bm \mu^{\varepsilon}\,$ does not depend on $\,\varepsilon \,$. Once this is done, we shall obtain Proposition \ref{Invariant and IID} by combining the results of the two steps. Let $\varepsilon_{1} > \varepsilon_{2} > 0$. We shall prove the claim 
 \newpage
$$
\bm \mu^{\varepsilon_{1}}(B) \,= \,\bm \mu^{\varepsilon_{2}}(B)\,,\,\,\, \, \,\,\,\forall \, \,B \in \mathcal B ( \mathfrak S)\,.
$$ 
Since $\Vert X(\tau_{1}^{\varepsilon_{1}}) \Vert = \varepsilon_{1} > \varepsilon_{2}$, and $\, \Vert X(\cdot) \Vert \leq \varepsilon_{2}\,$ on every $\, [\tau_{2 \ell}^{\varepsilon_{2}}, \tau_{2 \ell+1}^{\varepsilon_{2}}]\,$, we see that for a.e. $\,\omega \in \Omega\,$ there exists a unique $\ell_{2} \in \mathbb N_{0}\,$ (depending on $\omega$), such that $\,\tau_{2 \ell_{2} +1}^{\varepsilon_{2}} < \tau_{1}^{\varepsilon_{1}} < \tau_{2 \ell_{2} +2}^{\varepsilon_{2}}\,$. Then we can   partition $\,\Omega = \bigcup_{\ell \in \mathbb N_{0}} \{ \tau_{2 \ell +1}^{\varepsilon_{2}} < \tau_{1}^{\varepsilon_{1}} < \tau_{2 \ell +2}^{\varepsilon_{2}} \} $, where the right-hand side is a disjoint union. On   $\,\{ \tau_{2 \ell +1}^{\varepsilon_{2}} < \tau_{1}^{\varepsilon_{1}} < \tau_{2 \ell +2}^{\varepsilon_{2}} \}$, we note that $\,\tau_{1}^{\varepsilon_{1}}\,$ and  $\, \tau_{2 \ell +1}^{\varepsilon_{2}}\,$ are on the same excursion interval of $\, \Vert X(\cdot)\Vert\,$. Then from the considerations in the first bullet, we have $\, \mathfrak f (X(\tau_{1}^{\varepsilon_{1}})) = \mathfrak f (X(\tau_{2 \ell +1}^{\varepsilon_{2}}))\,$ on the event $\,\{ \tau_{2 \ell +1}^{\varepsilon_{2}} < \tau_{1}^{\varepsilon_{1}} < \tau_{2 \ell +2}^{\varepsilon_{2}} \}$.

On the strength of Lemma \ref{Descrip} below, we can write
\[
\bm \mu^{\varepsilon_{1}} (B) = \mathbb P \big( \mathfrak f (X(\tau_{ 1}^{\varepsilon_{1}})) \in B \big) = \sum_{\ell \in \mathbb N_{0}} \mathbb P \Big( \{ \mathfrak f (X(\tau_{ 1}^{\varepsilon_{1}})) \in B \} \bigcap \{\tau_{2 \ell +1}^{\varepsilon_{2}} < \tau_{1}^{\varepsilon_{1}} < \tau_{2 \ell +2}^{\varepsilon_{2}} \} \Big)
\]
\[
\,= \sum_{\ell \in \mathbb N_{0}} \mathbb P \Big( \{ \mathfrak f (X(\tau_{ 2 \ell + 1}^{\varepsilon_{2}})) \in B \} \bigcap \{ \tau_{2 \ell}^{\varepsilon_{2}} < \tau_{1}^{\varepsilon_{1}} \} \bigcap \Big\{ \max_{\tau_{2 \ell +1}^{\varepsilon_{2}} \leq t \leq \tau_{2 \ell +2}^{\varepsilon_{2}}} \Vert X(t) \Vert \geq \varepsilon_{1} \Big\} \Big)
\]
\[
= \bm \mu^{\varepsilon_{2}} (B) \sum_{\ell \in \mathbb N_{0}} \mathbb P \Big( \big\{ \tau_{2 \ell}^{\varepsilon_{2}} < \tau_{1}^{\varepsilon_{1}} \big\} \bigcap \Big\{ \max_{\tau_{2 \ell +1}^{\varepsilon_{2}} \leq t \leq \tau_{2 \ell +2}^{\varepsilon_{2}}} \Vert X(t) \Vert \geq \varepsilon_{1} \Big\} \Big)
\]
\[
=\,\bm \mu^{\varepsilon_{2}} (B) \sum_{\ell \in \mathbb N_{0}} \mathbb P \big( \tau_{2 \ell +1}^{\varepsilon_{2}} < \tau_{1}^{\varepsilon_{1}} < \tau_{2 \ell +2}^{\varepsilon_{2}} \big) \,=\,\bm \mu^{\varepsilon_{2}} (B). 
\]
 This way we complete Step 2, and Proposition  \ref{Invariant and IID} is proved. \qed

 \begin{lm}
\label{Descrip}
(a) We have the comparisons \, $\tau_{2 \ell +1}^{\varepsilon_{2}} < \tau_{1}^{\varepsilon_{1}} < \tau_{2 \ell +2}^{\varepsilon_{2}} \,$, if and only if $\,\tau_{2 \ell}^{\varepsilon_{2}} < \tau_{1}^{\varepsilon_{1}}$ and $\, \max_{\tau_{2 \ell +1}^{\varepsilon_{2}} \leq t \leq \tau_{2 \ell +2}^{\varepsilon_{2}}} \Vert X(t) \Vert \geq \varepsilon_{1} $ hold. 

\smallskip
\noindent
(b)\, $\,\forall \, B \in \mathcal B(\mathfrak S)\,$, the three events $\, \{ \mathfrak f (X(\tau_{ 2 \ell + 1}^{\varepsilon_{2}})) \in B \}, \, \{ \tau_{2 \ell}^{\varepsilon_{2}} < \tau_{1}^{\varepsilon_{1}} \} ,\, \{ \max_{\tau_{2 \ell +1}^{\varepsilon_{2}} \leq t \leq \tau_{2 \ell +2}^{\varepsilon_{2}}} \Vert X(t) \Vert \geq \varepsilon_{1} \} \,$ are independent. 
\end{lm}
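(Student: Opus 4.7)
\textbf{Proof proposal for Lemma \ref{Descrip}.} Part (a) is essentially bookkeeping. The forward direction follows immediately from the nesting $\tau_{2\ell}^{\varepsilon_2} < \tau_{2\ell+1}^{\varepsilon_2} < \tau_1^{\varepsilon_1} < \tau_{2\ell+2}^{\varepsilon_2}$ together with $\Vert X(\tau_1^{\varepsilon_1}) \Vert = \varepsilon_1$. For the reverse direction I would invoke two continuity/monotonicity facts. First, on the ``up-crossing'' sub-interval $[\tau_{2\ell}^{\varepsilon_2}, \tau_{2\ell+1}^{\varepsilon_2}]$ one has $\Vert X(t) \Vert \le \varepsilon_2 < \varepsilon_1$ by the very definition of $\tau_{2\ell+1}^{\varepsilon_2}$ in (\ref{eq: tau rec}); together with $\tau_{2\ell}^{\varepsilon_2} < \tau_1^{\varepsilon_1}$ (which says the process has not yet reached level $\varepsilon_1$ by time $\tau_{2\ell}^{\varepsilon_2}$), this forces $\tau_1^{\varepsilon_1} > \tau_{2\ell+1}^{\varepsilon_2}$. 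Second, the hypothesis that the running maximum of $\Vert X \Vert$ over $[\tau_{2\ell+1}^{\varepsilon_2}, \tau_{2\ell+2}^{\varepsilon_2}]$ reaches $\varepsilon_1$ combined with continuity of $\Vert X \Vert$ yields a $t$ in this interval with $\Vert X(t) \Vert = \varepsilon_1$, so $\tau_1^{\varepsilon_1} \le \tau_{2\ell+2}^{\varepsilon_2}$; strictness is automatic because $\Vert X(\tau_{2\ell+2}^{\varepsilon_2}) \Vert = 0 \neq \varepsilon_1$.

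Part (b) is the substantive claim, and my plan is to use two applications of the strong Markov property at stopping times whose associated state values are deterministic, so that conditional distributions collapse to ordinary ones. Before applying it, I would record the measurability structure: $E_2 \in \mathcal F^X(\tau_{2\ell}^{\varepsilon_2}) \subseteq \mathcal F^X(\tau_{2\ell+1}^{\varepsilon_2})$ (the event depends only on whether the path of $X$ has reached norm $\varepsilon_1$ by $\tau_{2\ell}^{\varepsilon_2}$); $E_1 \in \mathcal F^X(\tau_{2\ell+1}^{\varepsilon_2})$; and, crucially, $E_3 \in \sigma\bigl(\Vert X(\tau_{2\ell+1}^{\varepsilon_2}+\cdot) \Vert\bigr)$, since both the length $\tau_{2\ell+2}^{\varepsilon_2} - \tau_{2\ell+1}^{\varepsilon_2}$ and the running maximum in the definition of $E_3$ are functionals of the shifted radial path alone. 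Applying Proposition \ref{prop: THSMP} to $\Vert X(\cdot)\Vert$ at $\tau_{2\ell+1}^{\varepsilon_2}$ then shows that, because $\Vert X(\tau_{2\ell+1}^{\varepsilon_2}) \Vert = \varepsilon_2$ is a deterministic constant, the conditional distribution of $\Vert X(\tau_{2\ell+1}^{\varepsilon_2}+\cdot) \Vert$ given $\mathcal F^X(\tau_{2\ell+1}^{\varepsilon_2})$ is a fixed probability measure; consequently $E_3$ is independent of $\mathcal F^X(\tau_{2\ell+1}^{\varepsilon_2})$, hence of the pair $(E_1, E_2)$. A second application of Proposition \ref{prop: THSMP} to $X(\cdot)$ itself at $\tau_{2\ell}^{\varepsilon_2}$, using $X(\tau_{2\ell}^{\varepsilon_2}) = \mathbf{0}$, renders $\sigma(X(\tau_{2\ell}^{\varepsilon_2} + \cdot))$ independent of $\mathcal F^X(\tau_{2\ell}^{\varepsilon_2})$; since $E_1$ is measurable with respect to the former and $E_2$ with respect to the latter, we obtain $E_1 \perp E_2$. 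Combining these two gives $\mathbb P(E_1 \cap E_2 \cap E_3) = \mathbb P(E_3) \, \mathbb P(E_1 \cap E_2) = \mathbb P(E_1) \, \mathbb P(E_2) \, \mathbb P(E_3)$, and pairwise independence is recorded along the way.

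The main obstacle I anticipate is the first of these two strong Markov applications. What is needed is that the conditional probability of $E_3$ given $\mathcal F^X(\tau_{2\ell+1}^{\varepsilon_2})$ -- and not merely given the smaller $\mathcal F^{\Vert X \Vert}(\tau_{2\ell+1}^{\varepsilon_2})$ -- be deterministic, because $E_1$ encodes the angular information $\mathfrak f(X(\tau_{2\ell+1}^{\varepsilon_2}))$, which is invisible to the radial filtration. Fortunately the hypothesis of Theorem \ref{thm: Gen}(ii) supplies exactly this: $\Vert X(\cdot) \Vert$ is time-homogeneous strongly Markovian with respect to the larger filtration $\mathbb F^X$, so Proposition \ref{prop: THSMP} produces a conditional probability that is a function of $\Vert X(\tau_{2\ell+1}^{\varepsilon_2}) \Vert = \varepsilon_2$ alone, washing out all angular dependence. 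Without this strengthening of the Markov property, one would be forced to argue separately that the excursion shape of $\Vert X(\cdot) \Vert$ governed by (\ref{eq: ||X||}) is distributionally insensitive to the direction on which the excursion travels -- a statement that is nontrivial in the absence of a weak-uniqueness theorem for the radial equation.
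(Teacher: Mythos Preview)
Your proposal is correct and follows essentially the same route as the paper: Part (a) is handled by the same nesting/continuity bookkeeping, and Part (b) is proved by exactly the two strong-Markov applications you describe---one for $\Vert X(\cdot)\Vert$ at $\tau_{2\ell+1}^{\varepsilon_2}$ (using $\Vert X(\tau_{2\ell+1}^{\varepsilon_2})\Vert=\varepsilon_2$ deterministic) to decouple $E_3$ from $\mathcal F^X(\tau_{2\ell+1}^{\varepsilon_2})$, and one for $X(\cdot)$ at $\tau_{2\ell}^{\varepsilon_2}$ (using $X(\tau_{2\ell}^{\varepsilon_2})=\mathbf 0$) to decouple $E_1$ from $E_2$. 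Your closing paragraph correctly isolates the one non-obvious point, namely that the Markov property of $\Vert X\Vert$ must hold relative to the \emph{larger} filtration $\mathbb F^X$, which is precisely what the hypothesis of Theorem \ref{thm: Gen}(ii) provides.
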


 \noindent
{\it Proof of Lemma \ref{Descrip}:} (a)\, It is fairly clear that, if $\,\tau_{2 \ell +1}^{\varepsilon_{2}} < \tau_{1}^{\varepsilon_{1}} < \tau_{2 \ell +2}^{\varepsilon_{2}} \,$, then $\, \tau_{2 \ell}^{\varepsilon_{2}} <\tau_{2 \ell +1}^{\varepsilon_{2}} < \tau_{1}^{\varepsilon_{1}}\,$, and $\, \max_{\tau_{2 \ell +1}^{\varepsilon_{2}} \leq t \leq \tau_{2 \ell +2}^{\varepsilon_{2}}} \Vert X(t) \Vert \geq \Vert X(\tau_{1}^{\varepsilon_{1}}) \Vert = \varepsilon_{1} \,$.   

\smallskip
Conversely, if $\,\tau_{2 \ell}^{\varepsilon_{2}} < \tau_{1}^{\varepsilon_{1}} \,$, then since $\Vert X(t) \Vert \leq \varepsilon_{2}$ for $ t \in [ \tau_{2 \ell}^{\varepsilon_{2}} , \tau_{2 \ell +1 }^{\varepsilon_{2}} ] $, we have $\tau_{2 \ell +1 }^{\varepsilon_{2}} < \tau_{1}^{\varepsilon_{1}}$.
On the other hand, if $\, \max_{\tau_{2 \ell +1}^{\varepsilon_{2}} \leq t \leq \tau_{2 \ell +2}^{\varepsilon_{2}}} \Vert X(t) \Vert \geq \varepsilon_{1} \,$, then $\exists \,\,t \in (\tau_{2 \ell +1}^{\varepsilon_{2}} , \tau_{2 \ell +2}^{\varepsilon_{2}} ) \subset (\tau_{0}^{\varepsilon_{2}} , \tau_{2 \ell +2}^{\varepsilon_{2}} ) = (\tau_{0}^{\varepsilon_{1}} , \tau_{2 \ell +2}^{\varepsilon_{2}} )\,$, such that $\,\Vert X(t) \Vert \geq \varepsilon_{1} \,$. Thus $\,\tau_{1}^{\varepsilon_{1}} < \tau_{2 \ell +2}^{\varepsilon_{2}} $, concluding the  proof of Part (a) of Lemma \ref{Descrip}.

\smallskip
\noindent
(b)\, By Step 1, proof of Proposition  \ref{Invariant and IID}, $\,\{ \mathfrak f (X(\tau_{ 2 \ell + 1}^{\varepsilon_{2}})) \in B \}\, $ is independent of $ \, \mathcal F^{X} (\tau_{2 \ell}^{\varepsilon_{2}})$. But $\{ \tau_{2 \ell}^{\varepsilon_{2}} < \tau_{1}^{\varepsilon_{1}} \} \in \mathcal F^{X} (\tau_{2 \ell}^{\varepsilon_{2}}) $, so $\, \{ \mathfrak f (X(\tau_{ 2 \ell + 1}^{\varepsilon_{2}})) \in B \} \,$ and $\, \{ \tau_{2 \ell}^{\varepsilon_{2}} < \tau_{1}^{\varepsilon_{1}} \}\,$ are independent, and both belong to $\,\mathcal F^{X} (\tau_{2 \ell +1}^{\varepsilon_{2}})$.

 \smallskip
Let $\, A_{2}\,:=\, \big \{ \omega \in C[0, \infty) : \omega (\cdot) \,\,\text{hits} \,\, \varepsilon_{1} \,\,\, \text{before hitting} \,\, 0 \,\, \text{ with } \,\,\omega(0) = \varepsilon_{2}  \big\} \, \in \, \mathcal B ( C [0, \infty) )\,.$
   Proposition \ref{prop: THSMP} applied to $\, \Vert X(\cdot)\Vert \,$  gives  that 
$$\,
\mathbb P \Big( \max_{\tau_{2 \ell +1}^{\varepsilon_{2}} \leq t \leq \tau_{2 \ell +2}^{\varepsilon_{2}}} \Vert X(t) \Vert \geq \varepsilon_{1} \, \big| \, \mathcal F^{X} (\tau_{2 \ell +1}^{\varepsilon_{2}}) \Big) \, = \,\mathbb P \big( \Vert X(\tau_{2 \ell +1}^{\varepsilon_{2}}+ \, \cdot)   \Vert \in A_{2} \, \big|  \, \mathcal F^{X} (\tau_{2 \ell +1}^{\varepsilon_{2}}) \big)
$$ 
  equals  
$ \,  \mathbb P \big( \Vert X(\tau_{2 \ell +1}^{\varepsilon_{2}}+   \cdot) \Vert \in A_{2} \, \big| \,\Vert X(\tau_{2 \ell +1}^{\varepsilon_{2}}) \Vert \big) \, ,$ 
 a measurable function of $\,\Vert X(\tau_{2 \ell +1}^{\varepsilon_{2}}) \Vert\,$. But we have $\,\Vert X(\tau_{2 \ell +1}^{\varepsilon_{2}}) \Vert \equiv \varepsilon_{2}$, and therefore the event $\, \big\{ \max_{\tau_{2 \ell +1}^{\varepsilon_{2}} \leq t \leq \tau_{2 \ell +2}^{\varepsilon_{2}}} \Vert X(t) \Vert \geq \varepsilon_{1} \big\} \,$ is independent of $\,\mathcal F^{X} (\tau_{2 \ell +1}^{\varepsilon_{2}})$. Combining this observation with the last paragraph, we complete the argument for Part (b). 
 
 This concludes the proof of Lemma \ref{Descrip}. \qed 
 
 \medskip
\noindent
$\bullet~$
With $\,\bm \mu\,$ defined as in Proposition \ref{Invariant and IID}, let $\,\bm \nu\,$ be the ``angular measure" on $([0, 2 \pi), \mathcal B([0, 2 \pi)))$ induced by   $\,\bm \mu\,$ on $\,(\mathfrak S,\, \mathcal B(\mathfrak S))\, $, through (\ref{nu_mu}). Thus with $\, \Theta(\cdot):= \text{arg}(X(\cdot)) \,$,   for every fixed $\,\varepsilon > 0 \,$   the random variables $\,\{ \Theta(\tau_{2 \ell + 1}^{\varepsilon})\}_{\ell \in \mathbb N_{0}} \,$ are I.I.D. with common distribution $\bm \nu$, following Proposition \ref{Invariant and IID}. 
  \newpage
We   turn now to the proof of the \textsc{Freidlin-Sheu} formula for $\,X(\cdot)\,$ in this setting: {\it For every function $\, g:    \mathbb R^{2} \rightarrow \R\,$   in the class $\,\mathfrak{D}\,$, defined as in subsection \ref{FSa}, we have } 
  \[
g( X(\cdot) )= g( {\rm x})+ \int_{0}^{\cdot} \, {\bf 1}_{\{ \lVert X(t) \rVert > 0 \}} \Big( {\bm b} ( \lVert X(t) \rVert ) \, G^{\prime}(X(t)) + \frac{1}{\, 2\, } \, {\bm a} ( \lVert X(t) \rVert ) \,G^{\prime\prime}(X(t)) \Big) {\rm d} t ~~~~~~~~~~~~~~~~~~~~~~
\]
\begin{equation}
\label{GenFS: Diffusion}
~~~~~~~~~~~~~~~~+\,\int_{0}^{\cdot} \, {\bf 1}_{\{ \lVert X(t) \rVert > 0 \}} {\bm \sigma} ( \lVert X(t) \rVert) \, G^{\prime}(X(t))\, {\mathrm d} W(t) \,+ \Big(\int_{0}^{2\pi} g^{\prime}_\theta (0+) \,  {\bm \nu}({\mathrm d} \theta) \Big) L^{\Vert X \Vert}(T)\, .
\end{equation}
With the considerations   at the   start of this section and   $\, S(\cdot):= \Vert X(\cdot) \Vert \,$, we can proceed exactly as the proof of Theorem \ref{Gen_FS}, except for the step of proving (\ref{007}), because now we do not have the help of (\ref{eq: localDist}).

We claim this convergence   still  holds. Setting $\,N(T, \varepsilon) \, :=\, \sharp \big\{ \ell \in \mathbb{N} \,: \tau^{\varepsilon}_{2 \ell } < T \big\}\,$ and   $\, h(\theta):= g^{\prime}_{\theta}(0+)$,   
\[
\sum_{\{\ell \, :\, \tau^{\varepsilon}_{2\ell+1}  < T\}} \varepsilon \, g^{\prime}_{\Theta(\tau^{\varepsilon}_{2\ell + 1})}(0+) \, =\, \varepsilon \, N(T, \varepsilon) \, \cdot \, \frac{1}{\, N(T, \varepsilon)\, } \sum_{\ell =0}^{N(T, \varepsilon)-1} h(\Theta(\tau^{\varepsilon}_{2\ell + 1})) \, + O(\varepsilon). 
\]
First, we have $\, \varepsilon \, N(T, \varepsilon) \,\xrightarrow[\varepsilon \downarrow 0 ]{} \, L^{\Vert X \Vert} (T)$ in probability, by Theorem VI.1.10 in \textsc{Revuz \& Yor} (1999). 
 Next, by the strong law of large numbers, we have $\,\frac{1}{N } \sum_{\ell =0}^{N-1} h(\Theta(\tau^{\varepsilon}_{2\ell + 1})) \, \xrightarrow[ N \rightarrow \infty ]{}\, \int_{0}^{2\pi} h(\theta) {\bm \nu}({\mathrm d} \theta) \,,$ a.e., for any fixed $\,\varepsilon$. By the definition of limit, we have $ \,\sup_{n \geq N} \big| (\frac{1}{n } \sum_{\ell =0}^{n-1} h(\Theta(\tau^{\varepsilon}_{2\ell + 1}))) - \int_{0}^{2\pi} h(\theta) {\bm \nu}({\mathrm d} \theta) \big| \, \xrightarrow[N \rightarrow \infty ]{} 0  \,,$ a.e., so this convergence is   also valid in probability. Moreover, this convergence in probability is uniform in $\,\varepsilon\,$, because the distribution of the random variable $\,\sup_{n \geq N} \big| (\frac{1}{n } \sum_{\ell =0}^{n-1} h(\Theta(\tau^{\varepsilon}_{2\ell + 1}))) - \int_{0}^{2\pi} h(\theta) {\bm \nu}({\mathrm d} \theta) \big|\,$ does not depend on $\,\varepsilon \,$. Now it is not hard to see that we have the convergence in probability 
$$\, 
\frac{1}{\, N(T, \varepsilon)\, } \sum_{\ell =0}^{N(T, \varepsilon)-1} h(\Theta(\tau^{\varepsilon}_{2\ell + 1})) \,\,\xrightarrow[\varepsilon \downarrow 0 ]{} \,\int_{0}^{2\pi} h(\theta) {\bm \nu}({\mathrm d} \theta)\,, \qquad \text{on the event} \,\, \,\,\Big\{ N(T, \varepsilon) \xrightarrow[\varepsilon \downarrow 0 ]{} \infty \Big\}\,.
$$ 
  Thus our claim holds on this event. On the complement of this event  the terms $ \frac{1}{\, N(T, \varepsilon)\, } \sum_{\ell =0}^{N(T, \varepsilon)-1} h(\Theta(\tau^{\varepsilon}_{2\ell + 1}))$ stay bounded, and we have $\, \varepsilon \, N(T, \varepsilon) \,\xrightarrow[\varepsilon \downarrow 0 ]  \, L^{\Vert X \Vert} (T)=0$, thus  
$$
\,\sum_{\{\ell \, :\, \tau^{\varepsilon}_{2\ell+1}  < T\}} \varepsilon \, g^{\prime}_{\Theta(\tau^{\varepsilon}_{2\ell + 1})}(0+) \, \, \xrightarrow[\varepsilon \downarrow 0 ]{}  \,\,\Big(\int_{0}^{2\pi} g^{\prime}_\theta (0+) \,  {\bm \nu}({\mathrm d} \theta) \Big) L^{\Vert X \Vert}(T)\,=\,0\,, \quad \text{ in probability.} 
$$ 
This establishes our claim, and obtains the \textsc{Freidlin-Sheu} Formula (\ref{GenFS: Diffusion}) for the state process $\, X(\cdot)$ of the posited weak solution. With (\ref{GenFS: Diffusion}) just established, and (\ref{eq: zeroLeb}) valid by assumption, we see that $\,X(\cdot)\,$ generates a probability measure on $\, (C[0, \infty)^{2}, \mathcal B(C[0, \infty)^{2}))\,$  which solves the local martingale problem associated with the triple $\, (\bm \sigma, \bm b,  {\bm \mu}) \,$, where $\,\bm \mu\,$ is defined as in Proposition 
 \ref{Invariant and IID}.  This   proves Part (ii) of Theorem \ref{thm: Gen},   assuming that the stopping times $\,\{ \tau_{m}^{\varepsilon} \}_{m \in \mathbb N_{0},\, \varepsilon >0}\,$ are all finite with probability one.


 \medskip
 \noindent
{\bf PROOF OF THEOREM \ref{thm: Gen}(ii), PART B: }  
When the stopping times $\,\{ \tau_{m}^{\varepsilon} \}_{m \in \mathbb N_{0},\, \varepsilon >0}\,$ can be infinite, we   proceed as follows. 
 
\smallskip
\noindent
{\it Step 1:} If $\, \mathbb P(\tau_{0}^{\varepsilon}< \infty) =0 \,$, then $\,L^{\Vert X\Vert}(\cdot)\equiv 0\,$ and (\ref{GenFS: Diffusion}) holds for any $\,\bm\nu\,$. Thus the conclusion of Part (ii) of Theorem \ref{thm: Gen} is true for any probability measure $\,\bm\mu\,$ on $\,(\mathfrak S,\, \mathcal B(\mathfrak S)) \,$. 
If $\, \mathbb P(\tau_{0}^{\varepsilon} <\infty) >0 \,$, we know from (\ref{eq: zeroLeb}) that $\,X(\cdot)\,$ can reach the origin and leave it with positive probability. So we can pick up a $\,\varepsilon_{0}\,$ such that $\, \mathbb P (\tau_{1}^{\varepsilon_{0}} < \infty) > 0 \,$. Then for every $\, \varepsilon \in (0, \varepsilon_{0}]\,$, $\, \ell \in \mathbb N_{0} \,$, define the probability measure $\, \bm \mu^{\varepsilon}_{\ell} \,$ by 
$$\, \bm \mu^{\varepsilon}_{\ell} (B)\,: =\, \frac{\mathbb P \big( \mathfrak f (X(\tau_{ 2 \ell +1}^{\varepsilon})) {\bf 1}_{\{\tau_{ 2 \ell +1}^{\varepsilon} < \infty\} } \in B \big)}{\mathbb P \big(\tau_{ 2 \ell +1}^{\varepsilon} < \infty \big)} \,, \,\quad \forall \, \, B \in \mathcal B (\mathfrak S)\,.$$ 
This is well-defined for $\,\ell =0\,$, by our choice of $\,\varepsilon_{0}\,$. If $\,\mathbb P \big(\tau_{ 2 \ell +1}^{\varepsilon} < \infty \big)=0\,$ for some $\,\ell \geq 1\,$, we redefine $\, \bm \mu^{\varepsilon}_{\ell} \,$ by $\, \bm \mu^{\varepsilon}_{\ell} := \bm \mu^{\varepsilon}_{0} \,$. 
 \newpage

\noindent
{\it Step 2:} It is straightforward but heavier in notation, to follow the steps of Proposition \ref{Invariant and IID} and Lemma \ref{Descrip} and check that $\,\bm \mu^{\varepsilon}_{\ell} \,$ does not depend on either $\,\varepsilon \,$ or $\, \ell \,;$ so we can define $\, \bm \mu := \bm \mu^{\varepsilon}_{\ell}, \, \, \forall \, \varepsilon > 0, \,\ell \in \mathbb N_{0} \,$. Now we enlarge the original probability space by means of a countable collection of $\,\mathfrak S$-valued I.I.D. random variables $\,\{\bm \xi_{\ell}^{\varepsilon} \}_{\varepsilon \in \mathbb Q^{+}, \ell \in \mathbb N_{0}}\,$ with common distribution $\,\bm \mu\,$,   and independent of the $\,\sigma-$algebra $\,\mathcal F\,$. For every $\,\varepsilon \in \mathbb Q^{+}, \,\,\ell \in \mathbb N_{0}\,$, we define the $\,\mathfrak S$-valued   $\,\,\, \widetilde{\mathfrak f} \big(X(\tau_{2\ell +1}^{\varepsilon})\big):=  \mathfrak f \big(X(\tau_{2\ell +1}^{\varepsilon})\big) \, {\bf 1}_{\{\tau_{ 2 \ell +1}^{\varepsilon}<\infty\}} + \, \bm \xi_{\ell}^{\varepsilon}\, {\bf 1}_{\{\tau_{ 2 \ell +1}^{\varepsilon}=\infty\}} \, .$  

\smallskip
It is   straightforward but tedious, to check that for any $\,  \varepsilon \in \mathbb Q^{+}\,$, the random variables $\, \big\{ \,\widetilde{\mathfrak f} (X(\tau_{2 \ell + 1}^{\varepsilon})) \big\}_{\ell \in \mathbb N_{0}} \,$ are independent with common distribution $\, \bm \mu \,$. Then in the same way as in the last subsection, we can argue the convergence in (\ref{007}) 
along rationals. The proof of Part (ii) of Theorem \ref{thm: Gen} is now complete.

 \bigskip
 \noindent
{\bf PROOF OF THEOREM \ref{thm: Gen}(iii):}  
Finally, we argue Part (iii). Under the assumptions  for Parts (ii) and (iii), let $\, \bm\mu\,$ be some probability measure for which   the conclusion   (ii) holds. Then by   Proposition \ref{prop: SDEMP}(b), we know that $\, X(\cdot)\,$ also solves (\ref{eq: SDEMP}) with $\, \gamma_{i}\,$ replaced by $\,\int_{ \mathfrak S}  \mathfrak f_i (z) \,  {\bm \mu} (\mathrm{d} z)\,$. Thus we must have $\, \gamma_{i}= \int_{ \mathfrak S}  \mathfrak f_i (z) \,  {\bm \mu} (\mathrm{d} z)$, which is    (\ref{gamma0}), on the strength of $\,\mathbb{P} \big( L^{\Vert X\Vert} (\infty) >0 \big) > 0\,$. Moreover  (\ref{eq: localDist}) also holds, namely 
$$
L^{R^{A}}(\cdot) \, \equiv \, {\bm \nu} (A) \, L^{ \,\lVert X \rVert}(\cdot) \, , \,\,\,\, \,\,\,\, \, \forall \,\,\, A \in \mathcal B ([0, 2\pi)) \,,
$$
with   $\,R^{A}(\cdot) =   \lVert X (\cdot) \rVert \cdot {\bf 1}_A \big(\text{arg}\big(X(\cdot)\big) \big)  \,$. Thanks to $\,\mathbb{P} \big( L^{\Vert X\Vert} (\infty) >0 \big) > 0\,$ again, we see from the above relationship  that $\, X(\cdot)\,$ uniquely determines $\,\bm \nu $, thus also   $\,\bm\mu\,$.    The  proof of Theorem \ref{thm: Gen} is now complete. 
\qed

\medskip

\section*{Bibliography}

  \noindent \textsc{Barlow, M.T., \'Emery, M., Knight, F.B., Song, S. \& Yor, M.} (1998)  Autour d'un th\'eor\`eme de Tsirel'son sur des filtration browniennes. {\it S\'eminaire de Probabilit\'es XXXII}. Lecture Notes in Mathematics {\bf 1686}, 264-305.  Springer-Verlag, New York.

\medskip

\noindent \textsc{Barlow, M.T., Pitman, J.W. \& Yor, M.} (1989)  On Walsh's {B}rownian motions. {\it S\'eminaire de Probabilit\'es  XXIII}. Lecture Notes in Mathematics {\bf 1372}, 275-293. Springer-Verlag, New York.

 
 \medskip  
 \noindent \textsc{Biane, P. and Yor, M.} (1987)  Valeurs principales associ\'ees aux temps locaux browniens. {\it Bull. Sci. Math.}  {\bf 111},  23-101. 
 
 \medskip 
 
 \noindent \textsc{Chen, Z.-Q. \& Fukushima, M.} (2015) One-point reflection. {\it Stochastic Processes and Their Applications} {\bf 125},   1368-1393. 
  
\medskip
\noindent \textsc{Engelbert, H.J. \& Schmidt, W.} (1984) On one-dimensional stochastic differential equations with generalized drift. {\it Lecture Notes in Control and Information Sciences} {\bf 69}, 143-155. Springer-Verlag, NY.

\medskip 

\noindent \textsc{Evans, S.N. \& Sowers, R.B. } (2003) {Pinching and twisting Markov processes}. {\it Annals of Probability} {\bf 31}, 486-527.

\medskip 

\noindent \textsc{Fitzsimmons, P.J. \& Kuter, K.E.} (2014)  Harmonic functions on Walsh's {B}rownian motion. {\it Stochastic Processes and their Applications} {\bf 124}, 2228-2248. 

\medskip 

\noindent \textsc{Freidlin, M. \& Sheu, S.} (2000)  Diffusion processes on graphs: stochastic differential equations, large deviation principle. {\it Probability Theory and Related Fields} {\bf 116} 181-220.
 
\medskip 
\noindent \textsc{Hajri, H.} (2011)  Stochastic flows related to Walsh Brownian motion. {\it Electronic  Journal of  Probability} {\bf 16}, 1563-1599.  

\medskip

\noindent \textsc{Hajri, H. \& Touhami, W.} (2014) It\^o's formula for Walsh's Brownian motion and applications. {\it Statistics and Probability Letters} {\bf 87} 48-53. 

\medskip 

\noindent
\textsc{Harrison, J.M. \& Shepp, L.A.} (1981)  On skew Brownian motion. {\it Annals of Probability} {\bf  9}, 309-313.

\medskip 

\noindent \textsc{Ichiba, T. \& Karatzas, I.} (2014)  Skew-unfolding the {S}korokohod reflection of a continuous semimartingale. {\it Stochastic Analysis and Applications: In Honour of T.J. Lyons} (Dan Crisan et al., eds), 349-376. 

\medskip 

\noindent
\textsc{Ichiba, T., Karatzas, I. \& Prokaj, V.} (2013) Diffusions with rank-based characteristics and values in the nonnegative quadrant. {\it Bernoulli} {\bf 19}, 2455-2493. 

\medskip 

\noindent 
\textsc{Ichiba, T., Papathanakos, V., Banner, A., Karatzas, I., Fernholz, E.R.}  (2011) Hybrid Atlas models. {\it Ann. Appl.
Probab.} {\bf 21}, 609-644. 

\medskip 
 
\noindent \textsc{Ikeda, N. \& Watanabe, S.} (1989) {\it Stochastic Differential Equations and Diffusion Processes.} Second edition, North Holland Publ. Co., Amsterdam Oxford New York; Kodansha Ltd., Tokyo. 
 
 \medskip  
 
 \noindent\textsc{It\^o, K. \& Mc$\,${K}ean, H.P., Jr.}  (1963)  Brownian motions on a half-line. {\it Illinois J.   Math.}  {\bf 7}, 181-231.  

\medskip 

\noindent \textsc{Jacod, J.} (1977) A general theorem of representation for martingales. In {\it ``Probability"} (Univ. of Illinois at Urbana, March 1976; J.L. Doob Editor).  {\it Proceedings of Symposia in Pure Mathematics} {\bf   XXXI}, 37-53. American Mathematical Society, Providence, R.I.

\medskip 

\noindent 
\textsc{Kang, W. \& Ramanan, K.}  (2014) On the submartingale problem for reflected diffusions in domains with piecewise smooth boundaries. Preprint available at $~${\it http://arxiv.org/abs/1412.0729}.

\medskip 
 \noindent    \textsc{Karatzas, I. \& Shreve, S.E.}  (1991)  {\it Brownian Motion and Stochastic Calculus.}   Springer-Verlag, NY.

\medskip 

\noindent \textsc{Mansuy, R. \& Yor, M.} (2006)  Random times and enlargements of filtrations in a {Brownian} setting. {\it Lecture Notes in Mathematics} {\bf 1873}. Springer-Verlag, New York.


\medskip 
\noindent \textsc{Picard, J.} (2005) Stochastic calculus and martingales on trees. {\it Annales de l' Institut Henri Poincar\'e (S\'er. B: Probabilit\'es et Statistique) } {\bf 41}, 631-683.

\medskip 
\noindent \textsc{Prokaj, V.} (2009)  Unfolding the Skorokhod reflection of a semimartingale.   {\it Statistics and Probability Letters} {\bf 79}, 534-536.

 \medskip 
 \noindent
  \textsc{Revuz, D. \& Yor, M.} (1999) {\it Continuous Martingales and Brownian Motion.} Third Edition, Springer-Verlag, New York. 

\medskip 

\noindent \textsc{Rogers, L.C.G. \& Williams, D.} (2000)  {\it Diffusions, Markov Processes, and Martingales. Vol.  2: It\^o Calculus.} Cambridge University Press, Cambridge. 

\medskip 

\noindent \textsc{Salminen, P., Vallois, P. \& Yor, M.} (2007)  On the excursion theory for linear diffusions. {\it Japanese Journal of Mathematics} {\bf 2}, 97-127. 

\medskip
\noindent \textsc{Schmidt, W.} (1989)  On stochastic differential equations with reflecting barriers. {\it Mathematische Nachrichten} {\bf 142}, 135-148.

\medskip 

\noindent \textsc{Stroock, D.W. \& Varadhan, S.R.S.} (1971) Diffusion processes with boundary conditions. {\it Communications in Pure and Applied Mathematics} {\bf 24}, 147-225. 

\medskip 
\noindent \textsc{Tsirel'son, B.} (1997). Triple points: From non-Brownian filtration to harmonic measures. {\it Geometric and Functional Analysis} {\bf 7}, 1096-1142.

\medskip

\noindent \textsc{Walsh, J.B.} (1978) A diffusion with a discontinuous local time. {\it Ast\'erisque} {\bf 52-53}, 37-45.

\medskip 

\noindent \textsc{Watanabe, S.} (1999) The existence of a multiple spider martingale in the natural filtration of a certain diffusion in the plane. {\it S\'eminaire de Probabilit\'es  XXXIII}. {Lecture Notes in Mathematics} {\bf 1709} 277-290. 

\medskip

\noindent \textsc{Yor, M.} (1997) {\it Some Aspects of Brownian Motion. Part II: Some Recent Martingale Problems.} Birkh\"auser, Basel and Boston.

\end{document}